\theoremstyle{theorem}
\newtheorem{thmx}{Theorem}
\numberwithin{equation}{subsection}
\newtheorem{theorem}[equation]{Theorem}
\newtheorem{proposition}[equation]{Proposition}
\newtheorem{corollary}[equation]{Corollary}
\newtheorem{lemma}[equation]{Lemma}
\theoremstyle{definition}
\newtheorem{definition}[equation]{Definition}
\theoremstyle{remark}
\newtheorem{remark}[equation]{Remark}
\newcommand{\cind}{{\textup{c-ind}}}
\newcommand{\fkg}{\mathfrak{g}}
\newcommand{\T}{\mathbb{T}}
\newcommand{\F}{\mathbb{F}}
\newcommand{\ol}{\overline}
\newcommand{\ov}{\overline}
\newcommand{\ra}{\rightarrow}
\newcommand{\hra}{\hookrightarrow}
\newcommand{\A}{\mathbb A}
\newcommand{\ad}{\textup{ad}}
\newcommand{\Aut}{\textup{Aut}}
\newcommand{\C}{\mathbb C}
\newcommand{\Cox}{\textup{Cox}}
\newcommand{\End}{\textup{End}}
\newcommand{\fkp}{\mathfrak p}
\newcommand{\fkm}{\mathfrak m}
\newcommand{\Frob}{\textup{Frob}}
\newcommand{\G}{\mathbb G}
\newcommand{\Gal}{{\textup{Gal}}}
\newcommand{\GL}{{\textup {GL}}}
\newcommand{\Mat}{{\textup {Mat}}}
\newcommand{\Hom}{\textup{Hom}}
\newcommand{\Lie}{\tu{Lie}\,}
\newcommand{\one}{\textbf{\textup{1}}}
\newcommand{\Q}{\mathbb Q}
\newcommand{\Qp}{{\mathbb{Q}_p}}
\newcommand{\R}{\mathbb R}
\newcommand{\Res}{\textup{Res}}
\newcommand{\sep}{\textup{sep}}
\DeclareMathOperator{\vol}{vol}
\newcommand{\Spec}{\textup{spec}}
\newcommand{\tu}[1]{\textup{#1}}
\newcommand{\Z}{\mathbb Z}
\newcommand{\Spl}{\textup{Spl}}
\DeclareMathOperator{\elli}{ell}
\newcommand{\wt}{\widetilde}
\providecommand{\abs}[1]{\left\lvert#1\right\rvert}
\DeclareMathOperator{\pr}{pr}
\def\onetoraldatum/{1-toral datum}
\def\zerotoraldatum/{0-toral datum}
\def\zerotoraldata/{0-toral data}
\def\onetoraldata/{1-toral data}
\newcommand{\Tsp}{T^{\textup{sp}}}
\newcommand{\Wsp}{W^{\textup{sp}}}
\DeclareMathOperator{\Cent}{Cent}  	  
\newcommand{\bA}{\mathbb{A}}
\newcommand{\bF}{\mathbb{F}}
\newcommand{\bG}{\mathbb{G}}
\newcommand{\bQ}{\mathbb{Q}}
\newcommand{\bR}{\mathbb{R}}
\newcommand{\bT}{\mathbb{T}}
\newcommand{\bZ}{\mathbb{Z}}
\newcommand{\cA}{\mathcal{A}}
\newcommand{\cF}{\mathcal{F}}
\newcommand{\cO}{\mathcal{O}}
\newcommand{\cP}{\mathcal{P}}
\DeclareMathOperator{\fn}{\mathfrak{n}}
\newcommand{\fh}{\mathfrak{h}}
\newcommand{\fr}{\mathfrak{r}}
\newcommand{\ft}{\mathfrak{t}}
\newcommand{\sA}{\mathscr{A}}
\newcommand{\sB}{\mathscr{B}}
\newcommand{\Zp}{\mathbb{Z}_p}
\newcommand{\Qpbar}{\overline{\mathbb{Q}}_p}
\newcommand{\ZZ}{\mathbb Z}
\newcommand{\QQ}{\mathbb Q}
\newcommand{\ev}{\mathrm{ev}}
\newcommand{\Fbar}{\overline{\mathbb F}_p}
\newcommand{\RR}{\mathbb R}
\newcommand{\mm}{\mathfrak m}
\newcommand{\Fpbar}{\Fbar}
\newcommand{\cont}{\mathrm{cont}}
\newcommand{\Htilde}{\widetilde{H}^0}
\newcommand{\rhobar}{\bar{\rho}}
\begin{document}

\title[Congruences and supercuspidal representations]{Congruences of algebraic automorphic forms and supercuspidal representations}
\author{Jessica Fintzen}
\address{University of Cambridge, Cambridge, UK and Duke University, Durham, NC, USA} 
\email{fintzen@maths.cam.ac.uk and fintzen@math.duke.edu}
\author{Sug Woo Shin}
\address{Department of Mathematics, UC Berkeley, Berkeley, CA 94720, USA $/\!/$ Korea Institute for Advanced Study, 85 Hoegiro,
Dongdaemun-gu, Seoul 130-722, Republic of Korea}
\email{sug.woo.shin@berkeley.edu}
\date{}

\begin{abstract}
	  Let $G$ be a connected reductive group over a totally real field $F$ which is compact modulo center at archimedean places. We find congruences modulo an arbitrary power of $p$ between the space of arbitrary automorphic forms on $G(\A_F)$ and that of automorphic forms with supercuspidal components at $p$, provided that $p$ is larger than the Coxeter number of the absolute Weyl group of $G$. We illustrate how such congruences can be applied in the construction of Galois representations.
	
	Our proof is based on type theory for representations of $p$-adic groups, generalizing the prototypical case of $\GL_2$ in \cite[\S7]{Scholze-LT} to general reductive groups. We exhibit a plethora of new supercuspidal types consisting of arbitrarily small compact open subgroups and characters thereof. We expect these results of independent interest to have further applications. For example, we extend the result by Emerton--Pa\v{s}k\=unas on density of supercuspidal points from definite unitary groups to general $G$ as above.
\end{abstract}

\makeatletter
\let\@wraptoccontribs\wraptoccontribs
\makeatother

\contrib[with appendix C by]{Vytautas Pa\v{s}k\=unas}
\address{Fakult\"{a}t f\"{u}r Mathematik, Universit\"{a}t Duisburg--Essen, 45117 Essen, Germany}
\email{paskunas@uni-due.de}
\contrib[and appendix D by]{Rapha\"el Beuzart-Plessis}
\address{Aix Marseille Univ, CNRS, Centrale Marseille I2M, Marseille, France}
\email{raphael.beuzart-plessis@univ-amu.fr}

\maketitle

\setlength{\parskip}{0pt}
\setcounter{tocdepth}{2}
\tableofcontents
\setlength{\parskip}{1ex plus 0.5ex minus 0.2ex}

\section*{Introduction}

 \vspace{.1in}

Congruences between automorphic forms have been an essential tool in number theory since Ramanujan's discovery of congruences for the $\tau$-function, for instance in Iwasawa theory and the Langlands program. Over time, several approaches to congruences have been developed via Fourier coefficients, geometry of Shimura varieties, Hida theory, eigenvarieties, cohomology theories, trace formula, and automorphy lifting.

In this paper we construct novel
congruences between automorphic forms in quite a general setting using \emph{type theory} of $p$-adic groups, generalizing the argument in \cite[\S7]{Scholze-LT} for certain quaternionic automorphic forms. More precisely, we produce congruences mod $p^m$ (in the sense of Theorem \ref{thm:B} below) between arbitrary automorphic forms of general reductive groups $G$ over totally real number fields that are compact modulo center at infinity with automorphic forms that are supercuspidal at $p$ under the assumption that $p$ is larger than the Coxeter number of the absolute Weyl group of $G$. In order to obtain these congruences, we prove various results about supercuspidal types that we expect to be helpful for a wide array of applications beyond those explored in this paper.

\textbf{Global results}

In order to describe our global results in more details, let $G$ be a connected reductive group over a totally real field $F$ whose $\R$-points are compact modulo center under every real embedding. Fix an open compact subgroup $U^p\subset G(\A_F^{\infty,p})$. Let $U_p\subset \prod_{w|p} G(F_w)$ be an open compact subgroup, let $A$ denote a commutative ring with unity, and $\psi_p:U_p\ra A^\times$ a smooth character that yields an action of $U_p$ on $A$. Write $M(U^pU_p,A)$ for the space of $A$-valued automorphic forms of level $U^pU_p$ equivariant for the $U_p$-action on $A$ via $\psi_p$.
 See \S\ref{sec:congruence} below for the precise definition of this space and the Hecke algebra $\bT(U^pU_{p},A)$ acting on it. When $A=\Z_p$ and $\psi_p$ is trivial, the corresponding space is denoted by $M(U^pU_p,\Z_p)$.
	Now for each $m\in \Z_{\ge 1}$, put $A_m:=\Z_p[T]/(1+T+T^2+\cdots + T^{p^m-1})$. There is an obvious ring isomorphism $A_m/(T-1)\simeq \Z/p^m\Z$ induced by $T\mapsto 1$.

Our main global theorem is the following, where $\Cox(G)\in \Z_{\ge 1}$ denotes the maximum of the Coxeter numbers of the irreducible subsystems of the absolute root system for $G$. (The table of Coxeter numbers is given above Proposition \ref{prop:0-toral-abundance}. If $G$ is a torus, set $\Cox(G)=1$.).

\begin{thmx}[Theorem \ref{prop:congruence}]\label{thm:B}Assume $p> \Cox(G)$.
	Then there exist
	\begin{itemize}
		\item	  a basis of compact open neighborhoods $\{U_{p, m}\}_{m \geq 1}$ of $1 \in \prod_{w|p} G(F_w)$ such that
$U_{p,m'}$ is normal in $U_{p,m}$ whenever $m'\ge m$, and
		\item a smooth character $\psi_m: U_{p,m} \ra A_m^\times$ for each $m\geq 1$,
	\end{itemize}
	such that we have isomorphisms of $\bZ_p/(p^m)$-modules (where the $U_{p,m}$-action in $M(\,\cdot\,)$ is trivial on the left hand side and through $\psi_m$ on the right hand side)
	\begin{equation} \label{eq:thmB-space-mod-pm}
	M(U^pU_{p,m}, \bZ_p/(p^m)) \simeq (M(U^pU_{p,m}, A_m/(T-1))) \tag{A.i}
	\end{equation}
	that are compatible with the action of $\T^S(U^p U_{p,m},\bZ_p/(p^m))$ on both sides via the
	$\Z_p$-algebra isomorphism
	\begin{equation} \label{eq:thmB-Hecke-mod-pm} \T^S(U^p U_{p,m},\Z_p/(p^m)) \simeq \T^S(U^p U_{p,m},A_m/(T-1)). \tag{A.ii} \end{equation}
	Moreover, every automorphic representation of $G(\bA_F)$ that contributes to
	$$(M(U^pU_{p,m}, A_m)) \otimes_{\bZ_p} \overline\bQ_p$$  is supercuspidal at all places above $p$.
\end{thmx}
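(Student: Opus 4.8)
The plan is to reduce the supercuspidality assertion to the local type theory of the earlier sections --- the production, under the hypothesis $p>\Cox(G)$, of arbitrarily small compact open subgroups carrying characters that force supercuspidality --- by decomposing $M(U^pU_{p,m},A_m)\otimes_{\Z_p}\overline{\Q}_p$ along the eigenvalues of $T$. Since $A_m=\Z_p[T]/(1+T+\cdots+T^{p^m-1})$ is finite free over $\Z_p$, the $\overline{\Q}_p$-algebra $A_m\otimes_{\Z_p}\overline{\Q}_p$ is étale: it equals $\prod_{\zeta}\overline{\Q}_p$, the product over the roots $\zeta$ of $1+T+\cdots+T^{p^m-1}$ in $\overline{\Q}_p$, i.e.\ over the $p^m$-th roots of unity $\ne 1$, the structure map of the $\zeta$-factor being $T\mapsto\zeta$. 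Composing $\psi_m\colon U_{p,m}\to A_m^{\times}$ with this map produces a smooth character $\psi_m^{(\zeta)}\colon U_{p,m}\to\overline{\Q}_p^{\times}$. I would take $U_{p,m}=\prod_{w\mid p}U_{w,m}$ and $\psi_m=\boxtimes_{w\mid p}\psi_{w,m}$ for suitable local data $(U_{w,m},\psi_{w,m})$, so that $\psi_m^{(\zeta)}=\boxtimes_{w\mid p}\psi_{w,m}^{(\zeta)}$.

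Because a space of automorphic forms decomposes along any product decomposition of its coefficient ring, applying this to $A_m\otimes_{\Z_p}\overline{\Q}_p=\prod_{\zeta}\overline{\Q}_p$ yields a $\T^S$-equivariant decomposition
\[
M(U^pU_{p,m},A_m)\otimes_{\Z_p}\overline{\Q}_p\;\simeq\;\bigoplus_{\zeta}M\big(U^pU_{p,m},\overline{\Q}_p\big),
\]
where on the $\zeta$-summand $U_{p,m}$ acts through $\psi_m^{(\zeta)}$. As usual for algebraic automorphic forms on a group compact modulo centre at infinity, the $\zeta$-summand is $\bigoplus_{\pi}m(\pi)\cdot d_\zeta(\pi)$, where $\pi$ ranges over the automorphic representations of $G(\A_F)$ of the relevant archimedean type, $m(\pi)$ is the automorphic multiplicity, and $d_\zeta(\pi)$ is the dimension of the subspace of $\pi^\infty$ consisting of vectors fixed by $U^p$ on which $U_{p,m}$ acts via $\psi_m^{(\zeta)}$. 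Hence if $\pi$ contributes to $M(U^pU_{p,m},A_m)\otimes_{\Z_p}\overline{\Q}_p$, then for some $p^m$-th root of unity $\zeta\ne 1$ the representation $\pi_p=\bigotimes_{w\mid p}\pi_w$ contains the character $\psi_m^{(\zeta)}$ of $U_{p,m}$; since $U_{p,m}=\prod_{w\mid p}U_{w,m}$ and $\psi_m^{(\zeta)}=\boxtimes_{w\mid p}\psi_{w,m}^{(\zeta)}$, each $\pi_w$ with $w\mid p$ then contains the character $\psi_{w,m}^{(\zeta)}$ of $U_{w,m}$.

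It then remains to feed in the local input: the pairs $(U_{w,m},\psi_{w,m})$ are to be produced --- using $p>\Cox(G)$, the abundance of $0$-toral (resp.\ $1$-toral) data, Yu's construction, and the exhaustion results of the earlier sections --- so that \emph{every} specialization $(U_{w,m},\psi_{w,m}^{(\zeta)})$ is a \emph{supercuspidal type}, meaning that any irreducible smooth $\overline{\Q}_p$-representation of $G(F_w)$ containing the character $\psi_{w,m}^{(\zeta)}$ of $U_{w,m}$ is supercuspidal. Granting this, every $\pi_w$ with $w\mid p$ is supercuspidal, which is the assertion. I expect this local statement to be the whole difficulty, and within it two points are delicate: (i) arranging that the $U_{w,m}$ shrink to $1$ and satisfy the prescribed normality while keeping the supercuspidal-type property --- this forces one to work at deep Moy--Prasad level, and $p>\Cox(G)$ is what guarantees that the requisite regular (elliptic) elements exist at every depth; and (ii) ensuring the property survives \emph{all} the twists $\psi_{w,m}^{(\zeta)}$, i.e.\ building $\psi_{w,m}$ around a rigid core --- encoding an element whose centralizer is anisotropic modulo the centre of $G$, which forces supercuspidality by a minimal $K$-type argument --- that the $\mu_{p^m}$-valued universal twist leaves untouched. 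Once that local package is in hand, the reduction above (decompose along $\zeta$, then read off the local types) is formal and uniform in $G$.
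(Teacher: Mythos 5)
Your proposal addresses only the \emph{Moreover} assertion---the supercuspidality of contributions to $M(U^pU_{p,m},A_m)\otimes_{\Z_p}\overline\Q_p$---and there it is correct and takes essentially the same route as the paper: decompose $A_m\otimes_{\Z_p}\overline\Q_p$ into its $\zeta$-eigenlines (one for each nontrivial $p^m$-th root of unity), read off that each contributing $\pi_p$ must contain one of the characters $\psi_m^{(\zeta)}$ of $U_{p,m}$, and feed in the local statement that every such $(U_{p,m},\psi_m^{(\zeta)})$ is a supercuspidal type. That local statement is precisely the defining property of the omni-supercuspidal types $(U_{p,m},\lambda_m)$ produced by Theorem \ref{thm:existence-omnisctypes} (with $\psi_m=\iota\circ\lambda_m$ for $\iota\colon\Z/p^m\Z\hookrightarrow A_m^\times$, $a\mapsto T^a$); whether you build $U_{p,m}$ as a product over $w\mid p$ yourself or just invoke that theorem for the product group $G_p=\prod_{w\mid p}G(F_w)$ is an organizational detail, since Step~3 of the proof of Theorem \ref{thm:existence-omnisctypes} already handles direct products.

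The genuine gap is that you never address \eqref{eq:thmB-space-mod-pm} and \eqref{eq:thmB-Hecke-mod-pm}, which are the actual congruence and the heart of the theorem. Nothing in your write-up explains why $M(U^pU_{p,m},\Z_p/(p^m))$ and $M(U^pU_{p,m},A_m/(T-1))$ should be isomorphic as $\T^S$-modules. With the paper's choice of $\psi_m$ this is short but it must be said: setting $T=1$ turns $1+T+\cdots+T^{p^m-1}$ into $p^m$, so $A_m/(T-1)\simeq\Z_p/(p^m)$ canonically; and since $\psi_m$ takes values in the image of $\Z/p^m\Z$ (i.e.\ in powers of $T$), the induced $U_{p,m}$-action on $A_m/(T-1)$ is trivial, hence the two coefficient $\Z_p[U_{p,m}]$-modules $\Z_p/(p^m)$ (with trivial action) and $A_m/(T-1)$ (with the $\psi_m$-action) are isomorphic, which yields \eqref{eq:thmB-space-mod-pm} by functoriality of $M(U^pU_{p,m},-)$; since this map is induced by an isomorphism of coefficient modules and $\T^S$ acts through double cosets away from $S$, the isomorphism is automatically $\T^S$-equivariant and induces \eqref{eq:thmB-Hecke-mod-pm}. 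Without these lines, the theorem's main claim---that an \emph{arbitrary} automorphic form mod $p^m$ is congruent to one living in the $A_m$-coefficient space, which is supercuspidal at $p$---is not established, and the \emph{Moreover} paragraph you did prove has no bridge back to $M(U^pU_{p,m},\Z_p/(p^m))$.
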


When $G$ is a definite unitary group which splits at $p$, Theorem \ref{thm:B} was obtained for all primes $p$ via Bushnell--Kutzko types independently by Kegang Liu. The proof will appear in his forthcoming work on generalizing some of the main constructions in \cite{Scholze-LT} from $\GL_2$ to $\GL_n$.

In fact it is technically convenient to allow self-direct sums on both sides of \eqref{eq:thmB-space-mod-pm}, see Theorem \ref{prop:congruence} below.
We also prove the analogue of the theorem for non-constant coefficients instead of the constant coefficient $\bZ_p$. See Theorem \ref{prop:congruenceV} for the precise statement.
The normal subgroup property of $\{U_{p, m}\}_{m \geq 1}$ in Theorem \ref{thm:B} is not used in the applications in this paper, but might be helpful in some settings, e.g., see \cite[\S4]{EP18}.

Notice that \eqref{eq:thmB-space-mod-pm} is a congruence modulo an arbitrary power of $p$ between the space $M(U^pU_{p,m},{\Z_p})$ that represents automorphic forms of arbitrary level (as one can choose smaller $U^p$ and larger $m$) and the space $M(U^pU_{p,m},A_m)$ representing automorphic forms that are supercuspidal at $p$.
	As such we expect Theorem \ref{thm:B} to be widely applicable, by reducing a question about automorphic representations to the case when a local component is supercuspidal, for instance in the construction of automorphic Galois representations as observed in \cite[Rem.~7.4]{Scholze-LT}. Indeed we illustrate such an application in \S\ref{subsec:app} to reprove the construction of $p$-adic Galois representations associated with regular C-algebraic conjugate self-dual automorphic representations $\Pi$ of $\GL_N$ over a CM field, by reducing to the analogous result of \cite{Clo91,HT01} which assumes that $\Pi$ has a discrete series representation at a finite prime. (Compare with Theorems \ref{thm:HT} and \ref{thm:main-app}.) We achieve this via the congruences of Theorem \ref{thm:B}, assuming  $p>N$ (which should be unnecessary if Liu's result mentioned above is applied). Although this kind of argument is standard (cf.~\cite[1.3]{Tay91}), we supply details as a guide to utilize our theorem in an interesting context.

We also mention a related result of Emerton--Pa\v{s}k\=unas \cite[Thm.~5.1]{EP18} that in the spectrum of a localized Hecke algebra of a definite unitary group, the points arising from automorphic representations with supercuspidal components at $p$ are Zariski dense (when $p$ is a prime such that the unitary group is isomorphic to a general linear group locally at $p$). They start from the notion of ``capture'' \cite[\S2.4]{CDP14}, which can be powered by type theory for $\GL_n$ due to Bushnell--Kutzko. While their theorem and our Theorem \ref{thm:B} do not imply each other, Pa\v{s}k\=unas suggested to us that our local Theorem \ref{thm:A} below should provide sufficient input for their argument to go through for general $G$ as above. We confirm his suggestion to extend their density result.

To explain the statement, we assume that the center of $G$ has the same $\Q$-rank and $\R$-rank as in  \cite[\S5]{EP18}. Define the completed cohomology (cf.~\eqref{eq:completed-cohomology})
$$\tilde H^0(U^p):= \varprojlim\limits_{m\ge 1}\varinjlim\limits_{U_p} M(U^p U_p,\Z_p/(p^m)),$$
where the second limit is over open compact subgroups of $G(F\otimes_{\Q}\Q_p)$.
 The space $\tilde H^0(U^p)$ is acted on by the ``big'' Hecke algebra $\T^S(U^p)$ defined as a projective limit of
 $\T^S(U^p U_p,\Z_p/(p^m))$ over $U_p$ and $m$. Let $\mathfrak m$ be an open maximal ideal of $\T^S(U^p U_p,\Z_p/(p^m))$ for the profinite topology.
 It follows from the definition that classical forms are dense in $\tilde H^0(U^p)$, which consists of $p$-adic automorphic forms, but we show that the density statement still holds when the component at $p$ is required to be supercuspidal.
(See \S\ref{subsec:density} below for undefined notions and the precise formulation.)

\begin{thmx}[cf.~Corollary \ref{cor:capture} and Theorem \ref{thm:zariskidense}]\label{thmx:density}
  Assume $p>\Cox(G)$. Then classical automorphic forms with fixed weight which are supercuspidal at $p$ form a dense subspace in $\tilde H^0(U^p)$. In the spectrum of the $\mathfrak m$-adic completion of $\T^S(U^p)$, such classical automorphic forms are Zariski dense.
\end{thmx}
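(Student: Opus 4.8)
The plan is to prove the first (density) assertion — which, in the terminology of \cite[\S2.4]{CDP14}, is the \emph{capture} statement recorded as Corollary \ref{cor:capture} — and then to extract the Zariski density from it by a standard faithfulness argument. Density can be obtained in two equivalent ways: directly from the congruences of Theorem \ref{thm:B}, or, following \cite[\S5]{EP18} (as Pa\v{s}k\=unas suggested), by feeding the supercuspidal types of Theorem \ref{thm:A} into the capture machinery of \cite{CDP14}. I will outline the first route. Throughout, $p>\Cox(G)$ enters only through Theorems \ref{thm:A} and \ref{thm:B}, and one keeps in force the center-rank hypothesis imported from \cite[\S5]{EP18} so that $\tilde H^0(U^p)$ is admissible completed cohomology of the expected form, with classical forms dense in it.

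For density it suffices, since $\tilde H^0(U^p)=\varprojlim_m\varinjlim_{U_p}M(U^pU_p,\bZ_p/(p^m))$, to show that for every $f\in\tilde H^0(U^p)$ and every $m\ge1$ there is a classical automorphic form of the prescribed weight (the trivial one in the constant-coefficient case, the fixed $V$ in the setting of Theorem \ref{prop:congruenceV}), supercuspidal at every $w\mid p$, whose associated vector in $\tilde H^0(U^p)$ agrees with $f$ modulo $p^m$. I would argue: the reduction $f\bmod p^m$ has some finite level; as $\{U_{p,m'}\}_{m'}$ is a basis of neighborhoods of $1$, after increasing $m$ we may assume this reduction lies in $M(U^pU_{p,m},\bZ_p/(p^m))$ for the subgroups $U_{p,m}$ produced by Theorem \ref{thm:B}; and, the spaces of algebraic automorphic forms on a group compact modulo center at infinity being (after a harmless shrinking of $U^p$) free over their coefficient ring, we may realize it as the reduction of a class in $M(U^pU_{p,m},\bZ_p)$. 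Applying the isomorphism \eqref{eq:thmB-space-mod-pm} at level $U_{p,m}$, this class corresponds to an element of $M(U^pU_{p,m},A_m/(T-1))$, which lifts to $M(U^pU_{p,m},A_m)$; and by the last clause of Theorem \ref{thm:B}, after $\otimes_{\bZ_p}\overline\bQ_p$ that lift is a combination of classical automorphic forms of the prescribed weight supercuspidal at every $w\mid p$. Unwinding \eqref{eq:thmB-space-mod-pm} then exhibits $f\bmod p^m$ as coming from the span of such forms, which is the desired approximation. (Equivalently, this says that the family of locally algebraic representations of the shape ``prescribed weight $\otimes$ supercuspidal'' captures $\tilde H^0(U^p)$; proving capture instead along the lines of \cite[\S5]{EP18} requires as its only new ingredient a supply of supercuspidal types supported on arbitrarily small compact open subgroups — in fact of characters thereof — which is exactly Theorem \ref{thm:A}.)

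For the Zariski density I would fix the open maximal ideal $\mathfrak m$, write $R$ for the $\mathfrak m$-adic completion of $\T^S(U^p)$ and $\tilde H^0(U^p)_{\mathfrak m}$ for the associated localization of $\tilde H^0(U^p)$, on which $R$ acts faithfully (the action of $\T^S(U^p)$ on $\tilde H^0(U^p)$ is faithful by construction, and this is inherited by the $\mathfrak m$-part, as in \cite[\S5]{EP18}). Localizing the density statement at $\mathfrak m$, the classical automorphic forms of the prescribed weight supercuspidal at $p$ with mod-$p$ Hecke eigensystem $\mathfrak m$ span a $p$-adically dense $\bZ_p$-submodule $D$ of $\tilde H^0(U^p)_{\mathfrak m}$. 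Now suppose $x\in R$ vanishes at every point of the spectrum of $R$ attached to such a form. Since $\T^S$ acts on each automorphic isotypic component through its Satake parameter, this means $x$ annihilates $D$; by $p$-adic continuity of the $R$-action it annihilates the closure of $D$, namely all of $\tilde H^0(U^p)_{\mathfrak m}$, so $x=0$. Hence the ideal cutting out these points is zero, so they are Zariski dense in the spectrum of $R$ (and $R$ is reduced).

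The one nonformal point is internal to the density step: verifying that the supercuspidal types of Theorem \ref{thm:A}, equivalently the congruences of Theorem \ref{thm:B}, furnish exactly the local input that the capture argument of \cite[\S5]{EP18} needs in this generality. The remainder is bookkeeping — identifying $A_m$ with $\bZ_p/(p^m)$ through $A_m/(T-1)\simeq\bZ/p^m\bZ$, tracking the prescribed weight through the identifications, the freeness used to produce the lifts, and the admissibility and faithfulness statements borrowed from \cite{EP18}.
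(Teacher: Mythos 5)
Your first route is a genuinely different approach from the paper's. The paper passes through the capture machinery of Emerton--Pa\v{s}k\=unas: Proposition \ref{prop:capture} shows, by a Nakayama argument over $\cO[[U_p]]$ (exploiting that $\psi^{\circ*}_m/(\varpi_m)$ is the trivial $U_{p,m}$-module), that the irreducible constituents of $\textup{Ind}^{U_p}_{U_{p,m}}\psi_m$ capture $\cO[[U_p]]$; Corollary \ref{cor:capture} then feeds this through \cite[Lem.~2.8, 2.9]{EP18} to get capture of $\tilde H_0(U^p)$ for arbitrary weight $W$ and reads off density via \cite[Lem.~2.10]{CDP14}; and Theorem \ref{thm:zariskidense} invokes \cite[Prop.~2.11, Rmk.~2.13]{EP18}. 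You bypass that framework and unwind \eqref{eq:thmB-space-mod-pm} by hand, which is conceptually simpler, but two points need more care. First, ``unwinding \eqref{eq:thmB-space-mod-pm}'' stands in for an actual calculation: writing $\tilde f=\sum_{i=0}^{p^m-2}\tilde f_iT^i$ in the $\Z_p$-basis $\{T^i\}$ of $A_m$ and using $\sum_{\chi\neq 1}\chi(1)^i=p^m\delta_{i=0}-1$ for $0\le i\le p^m-2$, one finds $\sum_{\chi\neq 1}\tilde f_\chi=\sum_i\tilde f_i-p^m\tilde f_0\equiv f\pmod{p^m}$, each $\tilde f_\chi$ lying in the image of the evaluation map from the supercuspidal type $(U_{p,m},(\chi\circ\lambda_m)^{-1})$; this is the desired approximation, and it should be spelled out. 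Second, for nontrivial weight $W$ this is not merely bookkeeping: $f\bmod p^m$ lives in $\tilde H^0(U^p)/p^m$, not in $M(U^pU_{p,m},V_{\Z_p}/p^m)$, and the paper passes from trivial weight to weight $W$ by the tensor-stability of capture \cite[Lem.~2.8]{EP18}; some analogue of that step is still needed in your route. Your Zariski-density argument (the intersection of the relevant primes is zero by density, continuity of the Hecke action, and faithfulness on completed cohomology) is cleaner than the paper's appeal to \cite[Prop.~2.11, Rmk.~2.13]{EP18} and avoids the Noetherianity caveat the paper must note. What the paper's route buys is Proposition \ref{prop:capture} itself, of independent module-theoretic interest and reused in Appendix \ref{app:Galois-reps}; what your route buys is transparency in how the congruence \eqref{eq:thmB-space-mod-pm} manufactures supercuspidal approximants.
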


This theorem has a potential application to a torsion and $p$-adic functoriality result following the outline of \cite[\S5.2]{EP18} when a classical Langlands functoriality from a group $G_1$ to another group $G_2$ is available for automorphic representations on $G_1$ which are supercuspidal at $p$. (We thank Pa\v{s}k\=unas for pointing this out to us.)
For a Jacquet--Langlands-type example, let $G_1$ and $G_2$ be the unit groups of central quaternion algebras over $\Q$ with $G_1$ unramified at $p$ but $G_2$ ramified at $p$, and assume that the set of ramified primes away from $p$ for $G_1$ is contained in that for $G_2$. Then \emph{loc.~cit.}~constructs a transfer from $G_1$ to $G_2$ on the level of completed cohomology, overcoming the local obstruction at $p$ in the classical Jacquet--Langlands that principal series of $G_1(\Q_p)$ do not transfer to $G_2(\Q_p)$. See \cite[3.3.2]{EmertonICM} for a related discussion. (A similar transfer of torsion classes for Shimura curves is obtained in \cite[Cor.~7.3]{Scholze-LT} by a somewhat different argument based on a version of Theorem \ref{thm:B}; this approach should extend to more general groups by using our Theorem \ref{thm:B} and its variants.)

Pa\v{s}k\=unas kindly wrote Appendix \ref{app:Galois-reps} for us in which he shows that the big Hecke algebras are Noetherian in the setup of definite unitary groups. He also constructs automorphic Galois representations for Hecke eigensystems in the completed cohomology only from the analogous result by Clozel \cite{Clo91} via the density result of \cite{EP18}. In particular this gives yet another argument to remove the local hypothesis from \cite{Clo91}, which has the advantage that no restriction on $p$ is required, as it is the case for Bushnell--Kutzko's type theory for $\GL_n$. For general reductive groups, we hope that Theorem \ref{thmx:density} will be similarly useful.

\textbf{Local results}

The data in Theorem \ref{thm:B} are constructed via a new variant of types for representations of $p$-adic groups that we call \textit{omni-supercuspidal types}. The aim of the theory of types is to classify complex smooth irreducible representations of $p$-adic groups up to some natural equivalence in terms of representations of compact open subgroups. Theorems about the existence of $\mathfrak s$-types (types that single out precisely one Bernstein component $\mathfrak s$) lie at the heart of many results in the representation theory of $p$-adic groups and play an important role in the construction of an explicit local Langlands correspondence and the study of its fine structure. The idea of omni-supercuspidal types is that it is harmless for some applications if a type cuts out a potentially large family of supercuspidal representations, not just a single supercuspidal Bernstein component, but that it is important to control the shape of the types better.

Let us elaborate. For our global application, we need to be able to choose the compact open subgroups arbitrarily small in our types. It is also desirable to require the representation of our compact open subgroup detecting supercuspidality to be one-dimensional. Unfortunately the irreducible representations of the compact open subgroups that form $\mathfrak s$-types have neither properties in general. Nevertheless, using the theory of $\mathfrak s$-types, we show that there exists a plethora of omni-supercuspidal types that satisfy the two desiderata and therefore exhibit a much easier structure than $\mathfrak s$-types. For readers who are mainly interested in determining if a certain representation is supercuspidal, our omni-supercuspidal types (Definition \ref{def:omni} and Theorem \ref{thm:existence-omnisctypes}) and the supercuspidal types arising from our intermediate result, Proposition \ref{prop:sc-type}, allow therefore significantly more flexibility and easier detection.

To be more precise, let us introduce some notation. Let $F$ be a finite extension of $\Q_p$, and $G$ a connected reductive group over $F$ with $\dim G\ge 1$.
 A supercuspidal type for $G(F)$ means a pair $(U,\rho)$, where $\rho$ is an irreducible smooth complex representation of an open compact subgroup $U$ of $G(F)$ such that every irreducible smooth complex representation $\pi$ of $G(F)$ for which $\pi|_{U}$ contains $\rho$ is supercuspidal. Note that a supercuspidal type may pick out several Bernstein components.

 We define an \textit{omni-supercuspidal type of level $p^m$} (with $m\in \Z_{\ge 1}$) to be a pair $(U,\lambda)$, where $\lambda$ is a smooth $\Z/p^m\Z$-valued character on an open compact subgroup $U$ of $G(F)$ such that $(U,\chi\circ \lambda)$ is a supercuspidal type for \emph{every} nontrivial character $\chi:\Z/p^m\Z\ra \C^*$.
 The flexibility allowed for $\chi$ is extremely helpful for producing congruences of automorphic forms.
Our main novelty is a \emph{constructive proof} of the following theorem about the existence of such omni-supercuspidal types, where $\textup{Cox}(G)$ is defined as before.

\begin{thmx}[Theorem \ref{thm:existence-omnisctypes}]\label{thm:A}
Suppose $p>\textup{Cox}(G)$. Then there exists an infinite sequence of omni-supercuspidal types $\{(U_m,\lambda_m)\}_{m \in  \Z_{\ge 1}}$ such that $(U_m,\lambda_m)$ has level $p^m$ and $\{U_m\}_{m \in  \Z_{\ge 1}}$ forms a basis of open neighborhoods of $1$ and such that $U_{m'}$ is normal in $U_m$ whenever $m'\ge m$.
\end{thmx}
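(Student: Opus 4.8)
The plan is to deduce the statement from two inputs: Proposition~\ref{prop:sc-type}, which manufactures supercuspidal types that are \emph{generic characters} of Moy--Prasad subgroups, and Proposition~\ref{prop:0-toral-abundance}, which under the hypothesis $p>\Cox(G)$ provides, at a well-chosen point $x$ of the Bruhat--Tits building of $G$ over $F$, such generic characters at cofinally many positive depths. Concretely, one fixes $x$ once and for all and records that for cofinally many $d>0$ the layer $\fg_{x,d}/\fg_{x,d+}$ carries, via a Moy--Prasad isomorphism and an invariant pairing (available since $p$ is large), a nonempty $\Gm$-stable locus of functionals that are generic in the sense needed by Proposition~\ref{prop:sc-type}. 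Recall why such a generic character $\theta$ of $G_{x,d}/G_{x,d+}$ is a supercuspidal type: for a proper parabolic $P=MN$, Mackey theory presents $\Res_{G_{x,d}}\, i^G_P(\sigma)$ as a sum of inductions of characters trivial on the subgroups $G_{x,d}\cap gNg^{-1}$, while genericity makes $\theta$ nontrivial on each of these; thus $\theta$ occurs in no proper parabolic induction, hence only in supercuspidals. By $\Gm$-stability, $(G_{x,d},\theta^u)$ is then a supercuspidal type for every $u\in\bF_p^\times$.

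For each $m\ge1$ I would pick generic depths $d^{(m)}_1<\dots<d^{(m)}_m$ that are close together (so that $2d^{(m)}_1>d^{(m)}_m$, hence $G_{x,d^{(m)}_1}/G_{x,d^{(m)}_m+}$ is abelian) with $d^{(m)}_1\to\infty$, set $U_m:=G_{x,d^{(m)}_1}$, and construct a smooth character $\lambda_m\colon U_m\to\bZ/p^m\bZ$ of order exactly $p^m$ whose order drops by a factor of $p$ precisely at each of the layers $d^{(m)}_i$ and for which, for every $i$, the order-$p$ character $\lambda_m^{p^{m-i}}$ factors through $G_{x,d^{(m)}_i}/G_{x,d^{(m)}_i+}$ as a generic functional. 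Such a $\lambda_m$ is built by descending induction on $i$: realize a generic functional at the deepest layer $d^{(m)}_m$, then at each step extend the character to the next quotient (abelian by the closeness hypothesis) and use the freedom of extension to prescribe a generic leading term at $d^{(m)}_i$. The family $\{U_m\}$ is a neighborhood basis of $1$ since $d^{(m)}_1\to\infty$, and $U_{m'}$ is normal in $U_m$ for $m'\ge m$ by the Moy--Prasad commutator bound $[G_{x,a},G_{x,b}]\subseteq G_{x,a+b}$.

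It then remains to verify that $(U_m,\lambda_m)$ is an omni-supercuspidal type of level $p^m$. A nontrivial $\chi\colon\bZ/p^m\bZ\to\bC^\times$ has order $p^e$ for a unique $e\in\{1,\dots,m\}$, so $\chi\circ\lambda_m=\lambda_m^k$ with $v_p(k)=m-e$. Bookkeeping with $p$-adic valuations of exponents shows this character has Moy--Prasad depth exactly $d^{(m)}_e$, that its restriction to $G_{x,d^{(m)}_e}$ factors through $G_{x,d^{(m)}_e}/G_{x,d^{(m)}_e+}$, and that there it is a nonzero $\bF_p^\times$-multiple of the generic functional prescribed at $d^{(m)}_e$, hence generic. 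By the first paragraph, $(G_{x,d^{(m)}_e},(\chi\circ\lambda_m)|_{G_{x,d^{(m)}_e}})$ is a supercuspidal type, and since this subgroup lies in $U_m$ and all representations involved are one-dimensional, so is $(U_m,\chi\circ\lambda_m)$. (If $G$ is a torus there are no proper parabolics and every irreducible smooth representation is supercuspidal, so one only needs any surjective $\bZ/p^m\bZ$-valued character of a small compact open subgroup, which clearly exists.)

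I expect the main difficulty to be arranging that \emph{every} nontrivial twist, not only those of maximal order $p^m$, lands on a generic layer: because raising $\lambda_m$ to a $p$-th power kills its shallowest surviving layer, a naive choice would expose a layer carrying a non-generic functional, and this is exactly why the construction must thread a generic functional through all $m$ jump layers of $\lambda_m$ rather than only the deepest one, controlling the depth and leading term of each power $\lambda_m^k$ in terms of $v_p(k)$. The remaining delicate points are the extension step producing $\lambda_m$ with the prescribed nested leading terms (where the closeness of the $d^{(m)}_i$ keeps us in the abelian range), and invoking Proposition~\ref{prop:0-toral-abundance} in the form that $p>\Cox(G)$ yields generic functionals at cofinally many depths at one fixed point, together with the $\Gm$-stability needed to handle the twists.
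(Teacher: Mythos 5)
Your strategy shares the skeleton of the paper's proof, but as written it contains one genuine gap and omits a second substantial ingredient.

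The construction of $\lambda_m$ by ``threading a generic functional through all $m$ jump layers,'' extending step by step and ``using the freedom of extension'' to prescribe a generic leading term at each $d^{(m)}_i$, is not justified, and abelianness of $G_{x,d_1}/G_{x,d_m+}$ alone does not make the extension problem go away. If the prescribed functionals $Y_i$ on the layers $\fg_{x,d_i}/\fg_{x,d_i+}$ are chosen independently, a homomorphism $\lambda_m\colon A\to\bZ/p^m\bZ$ on $A:=G_{x,d_1}/G_{x,d_m+}$ with the required compatibilities may simply not exist: any such $\lambda_m$ sends the $p$-torsion of $A$ into $p^{m-1}\bZ/p^m\bZ$, so $\lambda_m^{p^{m-1}}$ is forced to vanish on every element of $G_{x,d_1}$ whose image in $A$ has order $<p^m$; if the generic $Y_1$ you prescribed does not happen to vanish on the image of these elements in $G_{x,d_1}/G_{x,d_1+}$, there is no $\lambda_m$ realizing it. (Concretely, $\Hom(\bZ/p\bZ,\bZ/p^2\bZ)\to\Hom(\bZ/p\bZ,\bZ/p\bZ)$ is the zero map, so a $\bZ/p\bZ$ summand of $A$ cannot carry independent information about $\lambda_m\bmod p$.) The paper's Lemma~\ref{lem:0-toral-sc-type} shows that in fact there is no freedom and no obstruction: one fixes a single $G$-generic $X\in\fg^*_{y,-r}$ at a single depth $r$ with $2e_Fm-1<r\le 2e_Fm$, lets $\hat\phi$ be the associated Yu character, and takes $\lambda_m$ to be its truncation to $\bZ/p^m\bZ$ (Lemma~\ref{lem:order-p^m} gives the order $p^m$). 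Then $\hat\phi^{i}$ with $v(i)=k$ is governed on the layer at depth $r-ke_F$ by $iX$, and $iX$ is \emph{automatically} generic of depth $-(r-ke_F)$ because $v((iX)(H_{\check\alpha}))=-(r-ke_F)$ for every coroot $\check\alpha$; the $m$ jump layers and their leading terms are thus the forced choice $d_i=r-(m-i)e_F$, $Y_i=p^{m-i}X$, and nothing needs to be threaded. If this is what you intend, you should say so explicitly; if you intend to allow other choices of $Y_i$, the extension step is a real hole.

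Second, Proposition~\ref{prop:0-toral-abundance} supplies $0$-toral data only for absolutely simple $G$, whereas Theorem~\ref{thm:A} is for an arbitrary connected reductive group over $F$, not assumed tamely ramified. The paper's proof of Theorem~\ref{thm:existence-omnisctypes} spends Steps 1--4 on precisely this reduction: tori are handled directly (Step~1); simple simply connected groups reduce to absolutely simple ones via Weil restriction using $(\Res_{F'/F}G')(F)=G'(F')$ (Step~2); simply connected groups are products of simple ones and Flath's theorem factors irreducibles accordingly (Step~3); and the general case uses the isogeny $Z^0\times G_{\mathrm{sc}}\to G$, the finite decomposition of $\pi$ restricted to $f(Z^0(F)\times G_{\mathrm{sc}}(F))$, and a conjugated-parabolic argument to conclude (Step~4). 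Your proposal works at a single point $x$ in the building of a group admitting a tame elliptic maximal torus with abundant $G$-generic characters, i.e.\ essentially the absolutely simple (or, via Proposition~\ref{prop:1-toral-abundance}, the tamely ramified) case; the reduction to general $G$ is a necessary further step that you have not addressed.
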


Our proof provides an explicit description of $U_m$ and $\lambda_m$ using the Moy--Prasad filtration. To give an outline of our approach, suppose for simplicity of exposition that $G$ is an absolutely simple group. (The reduction to this case is carried out in the proof of Theorem \ref{thm:existence-omnisctypes}.) Then $G$ is tamely ramified thanks to the assumption that $p>\Cox(G)$.
Let $T$ be a tamely ramified elliptic maximal torus of $G$, and $\phi:T(F)\ra\C^*$ a smooth character of depth $r\in \R_{>0}$ that is $G$-generic of depth $r$ in the sense of \cite[\S9]{Yu01}. We call such a triple $(T,r,\phi)$ a \emph{0-toral datum}.

 Then our approach is to show (a) that there is an infinite supply of 0-toral data (with same torus $T$) with $r\ra\infty$ and (b) that each 0-toral datum gives rise to an omni-supercuspidal type whose level grows along with $r$. (For general $G$ we consider \onetoraldata/, introduced in Definition \ref{def:1-toral}, as there may not be enough 0-toral data.)

To prove the abundance of 0-toral data (Proposition \ref{prop:0-toral-abundance} below; see Proposition \ref{prop:1-toral-abundance} for a result on 1-toral data), we exhibit a $G$-generic element (of some depth) in the dual of the Lie algebra of $T$.
 After reducing to the case that $G$ is quasi-split, we construct $T$ by giving a favorable Galois 1-cocycle to twist a maximally split maximal torus. To exhibit a $G$-generic element, we use the Moy--Prasad filtration and eventually demonstrate a solution to a certain system of equations; here an additional difficulty comes from a Galois-equivariance condition. While we treat most cases uniformly, we carry out some explicit computations for types $D_{2N+1}$ and $E_6$ that can be found in Appendices \ref{app:Dodd} and \ref{app:E6}.

The second step is to construct omni-supercuspidal types from 0-toral data. By making some additional choices one can enlarge the 0-toral datum to an input for the construction of  Adler (\!\!\cite{Adl98}) and Yu (\!\!\cite{Yu01}).
The construction of Adler and Yu yields then a supercuspidal type $(K,\rho)$ such that the compact induction $\cind^{G(F)}_K \rho$ is irreducible and supercuspidal. Unfortunately, $\rho$ may not be a character, and the groups $K$ (as $r$ and $\phi$ vary) do not form a basis of open neighborhoods because $K\supset T(F)$. However, $\rho$ restricted to the Moy--Prasad filtration subgroup $G_{y,r}\subset K$, where $y$ denotes the point of the Bruhat--Tits building of $G$ corresponding to $T$, is given by a character $\hat \phi$.
We show that $(G_{y,r}, \hat \phi)$ is a supercuspidal type (which in the 0-toral datum case essentially follows from Adler (\!\!\cite{Adl98}), but we also treat the more complicated 1-toral datum case, see Proposition \ref{prop:sc-type}). The character $\hat \phi$ can also be defined on the larger group $G_{y,\frac{r}{2}+}$, and $\hat \phi|_{G_{y,\frac{r}{2}+}}$ factors through a surjective $\Z/p^m\Z$-valued character $\lambda$, where $m$ is proportional to~$r$. Finally we deduce that  $(G(F)_{y,\frac{r}{2}+},\lambda)$ is an omni-supercuspidal type of level $p^m$.

Let us remark on the case when $F$ is a local function field in characteristic $p$. In fact we prove most intermediate results for arbitrary nonarchimedean local fields. In particular Proposition \ref{prop:sc-type} also holds for local function fields. This proposition of independent interest provides a rather small compact open subgroup together with a character (the pair $(G_{y,r}, \hat \phi)$ in the case of 0-toral data) detecting supercuspidality.
It is only in the last crucial step when moving from these one-dimensional supercuspidal types to omni-supercuspidal types that we require the field $F$ to have characteristic zero. The reason is that for function fields $F$ every element of $G(F)_{y,\frac{r}{2}+}/G(F)_{y,r+}$ has order $p$ (as the quotient is a vector space over a finite field) while we require elements of order $p^m$ for our construction of omni-supercuspidal types in order to achieve that $\lambda$ surjects onto $\bZ/p^m\bZ$.

\textbf{From the local results to the global results}

We sketch how to obtain Theorem \ref{thm:B} from Theorem \ref{thm:A} by adopting an idea from \cite[\S7]{Scholze-LT}.\footnote{This is a variant of the older idea to produce congruences of group cohomology via two coefficient modules which contain common factors modulo $p^m$. See \cite[p.5]{TaylorThesis} for instance.}
We use the notation from Theorem \ref{thm:B} and assume for simplicity that $F=\bQ$. From Theorem \ref{thm:A} we obtain a sequence of omni-supercuspidal types  $(U_{p,m},\lambda_{m})_{m \in \bZ_{\geq 1}}$ of level $p^m$ for the $p$-adic group $G \times_\bQ \bQ_p$. We define the character $\psi_{m}$ to be the composite map
$$\psi_{m}:U_{p,m}\stackrel{\lambda_{m}}{\twoheadrightarrow} \Z/p^m\Z \hra A^\times_m,$$
where the second map sends $a$ mod $p^m$ to $T^a$. Then $\psi_{m}$ mod $(T-1)$ is the trivial character on $A_m/(T-1)\simeq \Z_p/(p^m)$, giving the isomorphism \eqref{eq:thmB-space-mod-pm} of Theorem \ref{thm:B}, which can be checked to be Hecke equivariant.
The omni-supercuspidal property of $(U_{p,m},\psi_{m})$ implies that $M(U^pU_{p,m},A_m) \otimes_{\bZ_p} \overline\bQ_p$ is accounted for by automorphic representations with supercuspidal components at $p$. In fact this consideration initially motivated our definition of omni-supercuspidal types.

\textbf{Another approach to omni-supercuspidal types (Appendix D)}

After the paper had been submitted for publication, Beuzart-Plessis discovered another proof of our main local Theorem \ref{thm:A} based on the ideas of \cite{BP16}, where he proves the existence of supercuspidal representations. This is the content of Appendix D. 

Beuzart-Plessis' approach and ours are complementary.
His argument has the advantage that it is simpler and requires no constraints on $p$. Even though his proof is non-constructive, the existence statement of Theorem~\ref{thm:A} is enough to imply Theorems~A and~B (without conditions on $p$). 
In contrast, our proof gives us a large supply of \emph{explicit} omni-supercuspidal types, i.e., we know precisely which supercuspidal representations contain our omni-supercuspidal types and what their Langlands parameters are.
 In particular, our explicit approach leads to a congruence with an automorphic representation whose local component is not only supercuspidal but satisfies other (e.g., endoscopic) properties.  Such an additional control is expected to be useful for global applications (when the group is not a form of $\GL_n$).
  Moreover, our approach yields results in type theory of independent interest.

\section*{Guide for the reader}

The structure of the paper should be clear from the table of contents, but we guide the reader to navigate more easily.
At a first reading, the reader might want to concentrate on 0-toral data (\S\ref{sec:0-toral}) and skip 1-toral data (\S\ref{subsec:1-toral}) as this will significantly reduce notational burden while not sacrificing too much of the main theorems. (See Remark \ref{rem:existence-omnisctypes} and the beginning of \S\ref{subsec:1-toral}.)

One may treat the abundance of 0-toral data for absolutely simple groups (proved in \S\ref{subsec:abundance} together with Appendices \ref{app:Dodd} and \ref{app:E6}) as a black box even though this is the basic engine of our method. The main takeaway is Proposition \ref{prop:0-toral-abundance}. If the reader wants to get a sense of its proof, a good idea might be to focus on the split type A case (Case 2 of the proof of Proposition \ref{prop:0-toral-abundance}). Section \ref{subsec:main-construction} is devoted to the main construction of this paper, namely how to go from 0-toral (and 1-toral) data to omni-supercuspidal types. The key technical input is Proposition \ref{prop:sc-type}. This is a result of independent interest in type theory and implies the subsequent lemmas in its own momentum, paving the way to the main local theorem (Theorem \ref{thm:existence-omnisctypes}).

If the reader is merely interested in global applications, it is possible to go through only basic local definitions and start in \S\ref{sec:congruence}, taking Theorem \ref{thm:existence-omnisctypes} on faith. Sections \ref{subsec:constant-single} and \ref{subsec:non-constant} are concerned with the application of Theorem \ref{thm:existence-omnisctypes}  to build congruences between automorphic forms. While \S\ref{subsec:app} only reproves a known result on construction of Galois representations, we hope that the reader will find the details helpful for their own applications.
Another application is given in \S\ref{subsec:density} to show that the supercuspidal part is dense in the completed cohomology in a suitable sense. In Appendix \ref{app:Galois-reps}, the themes of \S\ref{subsec:app} and \S\ref{subsec:density} intersect: Pa\v{s}k\=unas explains how the density statement can be used to construct automorphic Galois representations via congruences. Appendix D explains another approach to the main local theorem independently from  the main text.

\section*{Notation and Conventions}

We assume some familiarity with \cite{Yu01} in that we do not recall every definition or notion used in \cite{Yu01} (e.g. Bruhat--Tits buildings, Moy--Prasad subgroups). However we do provide precise reference points for various facts we import from the paper.

The symbol for the trivial representation (of any group) is $\one$. Write $\Z_{>0}$ (resp. $\Z_{\ge 0}$) for the set of positive (resp. nonnegative) integers.

Every reductive group is assumed to be \emph{connected} and nontrivial (so that $\dim \ge 1$) in the main text without further comments.

Let $k$ be a field.
Let $\ol k$ (resp. $k^{\textup{sep}}$) denote an algebraic (resp. separable) closure of $k$. When considering algebraic field extensions of $k$, we consider them inside the algebraic closure $\overline k$, which we implicitly fix once and for all. (For instance, this applies when $k$ is the base field $F$ of the main text, which is local in the first two sections and global in the last section.)
Let $X$ be an affine $k$-scheme, and $k'/k$ be a field extension and $k''$ a subfield of $k$. Then we write $X\times_k k'$ or $X_{k'}$ for the base change $X\times_{\Spec (k)} \Spec( k')$, and $\Res_{k/k''} X$ for the Weil restriction of scalars (which is represented by a $k''$-scheme).

Let $F$ be a nonarchimedean local field.  Then we write $\cO_F$ for the ring of integers and $k_F$ for the residue field. We use $v:\ol F \ra \Q\cup\{\infty\}$ to designate the additive $p$-adic valuation map sending uniformizers of $F$ to $1$, and we write $|\cdot|_F:F^\times \ra \R_{>0}^\times$ for the modulus character sending uniformizers to $(\#k_F)^{-1}$. We fix a nontrivial additive character  $\Psi$ on $F$ which is nontrivial on $\cO_F$ but trivial on elements with positive valuations. If $E$ is a finite extension of $F$, then we can extend $\Psi$ to $E$.
We fix such an extension and denote it by $\Psi$ as well.

Let $G$ be a reductive group over $F$ (connected by the aforementioned convention).
We say $G$ is \emph{tamely ramified} (over $F$) if some maximal torus of $G$ splits over a tamely ramified extension of $F$. We write $G_{\ad}$ for the adjoint quotient of $G$,
 and
 $\mathfrak g$ for the Lie algebra of $G$. By $Z(G)$ we mean the center of $G$.
Denote the absolute Weyl group of $G$ by $W=W_G$, and its Coxeter number by $\Cox(G)$. For a (not necessarily maximal) split torus $T \subset G$, we denote by $\Phi(G,T)$ the set of ($F$-rational) roots of $G$ with respect to $T$. For each $\alpha\in \Phi(G,T)$ we write $\check\alpha:\G_m\ra T$ for the corresponding coroot, and $\mathfrak{g}_\alpha$ for the subspace of $\mathfrak{g}$ on which $T$ acts via $\alpha$.

We write $\sB(G,F)$ for the (enlarged) Bruhat--Tits building of $G$ over $F$. When $F'$ is a finite tamely ramified extension of $F$, and $T$ a maximally $F'$-split maximal torus of $G_{F'}$ (defined over $F'$), we write $\sA(T,F')$ for the apartment of $T$ over $F'$. Both $\sB(G,F)$ and $\sA(T,F')$ are embedded in $\sB(G,F')$ so their intersection as in D2 of \S\ref{sec:0-toral} below makes sense.
Given a point $y\in \sB(G,F')$ and $s\in \R_{\ge 0}$ (resp. $s \in \bR$), let $G(F')_{y,s}$ (resp. $\fkg(F')_{y,s}$) denote the Moy--Prasad filtration in $G(F')$ (resp. $\mathfrak g(F')$). All Moy--Prasad filtrations are normalized with respect to the fixed valuation $v$.
Write $G(F')_{y,s+}:=\cup_{r>s} G(F')_{y,r}$ and $\fkg(F')_{y,s+}:=\cup_{r>s} \mathfrak{g}(F')_{y,r}$. Moreover, we denote by $[y]$  the image of the point $y$ in the reduced Bruhat--Tits building, and we write $G(F')_{[y]}$ for the stabilizer of $[y]$ in $G(F')$ (under the action of $G(F')$ on the reduced Bruhat--Tits building).
We often abbreviate $G(F)_{y,s}$, $\mathfrak g(F)_{y,s}$, etc as
$G_{y,s}$, $\mathfrak g_{y,s}$, etc, when $F$ is the base field. Similarly we may abuse notation and write $\mathfrak{g}$ for $\mathfrak{g}(F)$.
We denote by $\mathfrak{g}^*$ the $F$-linear dual of $\mathfrak{g}(F)$, and write $\mathfrak{g}^*_{y,s}$ for its Moy--Prasad filtration submodule at $y$ of depth $s$. More generally, if $V$ is an $F'$-vector space, then $V^*$ denotes its $F'$-linear dual.
Let $K$ be an open and closed subgroup of $G(F')$. For a smooth representation $\rho$ of $K$, we write $\cind^{G(F')}_K \rho$ for the compactly induced representation, defined to be the subspace of the usual induction consisting of smooth functions on $G(F')$ whose supports are compact modulo $K$.

Let $\A$ denote the ring of ad\`eles of $\bQ$. Write $\A_k:=\A\otimes_\Q k$ when $k$ is a finite extension of $\Q$. When $S$ is a set of places of $k$, we denote by $\A_k^S$ the subring of elements in $\A_k$ whose components are zero at the places in $S$. E.g. $\A_k^\infty$ denotes the ring of finite ad\`eles over $k$.

\textbf{Acknowledgments}

Both authors heartily thank the organizers (Francesco Baldassarri, Stefano Morra, Matteo Longo) of the 2019 Padova School on Serre conjectures and the $p$-adic Langlands program, where the collaboration got off the ground, and the organizers (Brandon Levin, Rebecca
Bellovin, Matthew Emerton and David Savitt) of the Modularity and Moduli Spaces workshop in Oaxaca in 2019
for giving them a valuable opportunity to discuss with  Matthew Emerton and Vytautas Pa\v{s}k\=unas.
The authors are grateful to Emerton and Pa\v{s}k\=unas for explaining their paper \cite{EP18}. Special thanks are owed to Pa\v{s}k\=unas for writing an appendix and helping with \S\ref{subsec:density}, especially the proof of Proposition \ref{prop:capture}.
We also express our gratitude to Rapha\"el Beuzart-Plessis for writing a note explaining another approach to omni-supercuspidal types and graciously offering it to be included as an appendix.
SWS thanks Tasho Kaletha for valuable feedback and Arno Kret for ongoing joint work which provided inspiration for this paper.
JF is partially supported by NSF grant DMS-1802234 / DMS-2055230 and a Royal Society University Research Fellowship. SWS is partially supported by NSF grant DMS-1802039 and NSF RTG grant DMS-1646385.

\section{Construction of supercuspidal types}\label{sec:sc-types}

Let $F$ be a nonarchimedean local field with residue field $k_F$. The characteristic of $k_F$ is a prime $p>0$.
Let $G$ be a connected reductive group over $F$.
(Most results in the first two sections hold in this generality, sometimes under the condition that $p$ exceeds the Coxeter number of $G$, except that some key statements in \S\ref{subsec:main-construction} require $\textup{char}(F)=0$.)
To fix the idea, every representation is considered on a $\C$-vector space in the first two sections, but everything goes through without change with an algebraically closed field of characteristic zero (e.g. $\ol\Q_p$) as the coefficient field. In \S\ref{sec:congruence} we will thus freely import results from the earlier sections with arbitrary algebraically closed coefficient fields of characteristic zero.

\begin{definition}
  Let $\rho$ be a smooth finite-dimensional representation of an open compact subgroup $U\subset G(F)$.
  We call such a pair $(U,\rho)$ a \textbf{supercuspidal type} if every irreducible smooth representation $\pi$ of $G(F)$ with
  $\Hom_U(\rho,\pi)\neq \{0\}$ is supercuspidal.
\end{definition}

If $(U,\rho)$ is a supercuspidal type, then it cuts out (possibly several) supercuspidal Bernstein components, thus deserving the name.
In type theory, it is typical to construct a supercuspidal type singling out each individual supercuspidal Bernstein component when possible.
We do not insist on this but instead ask for other properties (cf.~\S\ref{subsec:main-construction} below) with a view towards omni-supercuspidal types. The construction data of \cite{Yu01} are too general for us to have a good control, so we restrict our attention to 0-toral and 1-toral data as they should be still general enough for applications.
 Most results of this section are recollections from  \cite{Adl98,Yu01,Fintzen-exhaustion, Fintzen-Yuworks}.

For the remainder of Section \ref{sec:sc-types} we assume that $G$ splits over a tamely ramified extension of $F$.

\subsection{Construction of supercuspidal types from 0-toral data}\label{sec:0-toral}

\begin{definition}\label{def:0-toral}
  A \textbf{0-toral datum} (for $G$) consists of a triple $(T,r,\phi)$, where
\begin{itemize}
  \item $T\subset G$ is a tamely ramified elliptic maximal torus over $F$,
  \item $r\in \R_{>0}$,\footnote{We disregard $r=0$ as it is enough to consider positive depth for our intended applications.}
  \item $\phi:T(F)\ra\C^*$ is a smooth character of depth $r$. If $G \neq T$, then we require $\phi$ to be $G$-generic in the sense of \cite[\S9]{Yu01} (relative to any, or, equivalently, every point in $\sB(G,F)\cap \sA(T,F')$, where $F'$ denotes a finite tame extension of $F$ over which $T$ is split).
\end{itemize}
\end{definition}

A 0-toral-datum gives rise to the following input for Yu's construction of supercuspidal representations, where we use the notation of \cite[\S3]{Yu01} (see also \cite[\S3]{HM08}):
 \begin{itemize}
  \item[D1:] $G^0=T$ is an elliptic maximal torus of $G^1=G$ over $F$, and $T$ is split over a finite tamely ramified extension $F'/F$.
  \item[D2:] $y\in \sB(G,F)\cap \sA(T,F')$ is a point, which we fix once and for all. (Note that the image $[y]$ of the point $y$ in the reduced Bruhat--Tits building does not depend on the choice of $y$.)
    \item[D3:] $r=r_0=r_1>0$ is a real number.
  \item[D4:] $\rho$ is the trivial representation of $K^0:=T(F)$.
  \item[D5:] $\phi=\phi_0:T(F)\ra \C^*$ is a character that is $G$-generic (relative to $y$) of depth $r$ in the sense of \cite[\S9]{Yu01}. The character $\phi_1$ is trivial.
\end{itemize}
\begin{remark}
Note that if $G=T$, then this datum is strictly speaking not satisfying Condition D1 of \cite[\S3]{Yu01} because Condition D1 requires $G^0 \subsetneq G^1$. However, as Yu also points out in \cite[\S15, p.616]{Yu01}, we can equally well work with this ``generalized'' datum.
\end{remark}

Later we will vary $r$ and $\phi$ for a given $T$. The point $y$ will remain fixed.
Following Yu, we write $K^1:=T(F)G(F)_{y,\frac{r}{2}}$ and ${}^0 K^1:=T(F)_y G(F)_{y,\frac{r}{2}}$. In \cite{Yu01}, Yu constructs an irreducible smooth representation $\rho_1$ of $K^1$, and letting $^0 \rho_1:=\rho_1|_{^0 K^1}$, shows the following.

\begin{theorem}[\S2.5 of \cite{Adl98}; Prop 4.6, Thm 15.1 and Cor 15.3 of \cite{Yu01}]\label{thm:Yu-thm}
 The compactly induced representation
 $$\pi:=\cind^{G(F)}_{K^1} \rho_1,$$
  is irreducible and supercuspidal of depth $r$. Moreover, every irreducible smooth representation $\pi'$ of $G(F)$ with $\Hom_{^0 K^1}({}^0 \rho_1,\pi')\neq \{0\}$ is in the same Bernstein component as $\pi$. In particular, $({}^0 K^1,{}^0 \rho_1)$ is a supercuspidal type.
\end{theorem}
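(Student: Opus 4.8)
The plan is to deduce everything from the work of Adler \cite{Adl98} and Yu \cite{Yu01}, since the statement is essentially a repackaging of their main results applied to the specific (generalized) datum D1--D5 attached to a $0$-toral datum. First I would observe that the input D1--D5 satisfies the hypotheses needed to run Yu's construction: the torus $T$ is tamely ramified and elliptic by definition of a $0$-toral datum, the character $\phi = \phi_0$ is $G$-generic of depth $r$ relative to the fixed point $y \in \sB(G,F) \cap \sA(T,F')$ (independence of the choice of $y$ in its building class is exactly why we may fix $y$ once and for all), and $\phi_1$ is trivial so that the higher genericity conditions are vacuous. In the degenerate case $G = T$ one invokes the remark of \cite[\S15, p.~616]{Yu01} to legitimize the generalized datum. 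With these checks in place, \cite[Prop.~4.6]{Yu01} produces the irreducible smooth representation $\rho_1$ of $K^1 = T(F) G(F)_{y,\frac r2}$, and \cite[Thm.~15.1]{Yu01} (together with \cite[\S2.5]{Adl98} for the depth-$r$ supercuspidality, or equivalently Yu's own argument) gives that $\pi := \cind^{G(F)}_{K^1} \rho_1$ is irreducible and supercuspidal of depth $r$. This handles the first sentence of the theorem.

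For the second sentence, I would appeal to \cite[Cor.~15.3]{Yu01}: that corollary states precisely that $({}^0K^1, {}^0\rho_1)$ is a type for the Bernstein component containing $\pi$ — equivalently, any irreducible smooth $\pi'$ with $\Hom_{{}^0K^1}({}^0\rho_1, \pi') \neq \{0\}$ lies in the same Bernstein component as $\pi$. Here ${}^0\rho_1 = \rho_1|_{{}^0K^1}$ and ${}^0K^1 = T(F)_y G(F)_{y,\frac r2}$, matching Yu's notation. The key point feeding into \cite[Cor.~15.3]{Yu01} is that the inducing datum is "$0$-toral" in Yu's sense — i.e.\ $K^0 = T(F)$ with $\rho$ trivial — so the intertwining and Hecke-algebra computations in \cite[\S\S11--15]{Yu01} simplify to the statement that the Hecke algebra is supported on ${}^0K^1$ itself, whence the component-rigidity.

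Finally, the last assertion — that $({}^0K^1, {}^0\rho_1)$ is a \emph{supercuspidal type} in the sense of the Definition above — is immediate: since $\pi$ is supercuspidal and every $\pi'$ with $\Hom_{{}^0K^1}({}^0\rho_1,\pi') \neq \{0\}$ lies in the Bernstein component of $\pi$, which consists entirely of supercuspidal representations (a single supercuspidal Bernstein component, being a cuspidal-support class with no proper Levi, is made up of supercuspidals), any such $\pi'$ is supercuspidal. I do not expect any genuine obstacle here: the real content lies in \cite{Adl98,Yu01}, and the only things to verify carefully are (i) that the $0$-toral datum genuinely furnishes a legitimate (possibly generalized) Yu datum, and (ii) that one is quoting the right statements from \cite{Yu01} — Prop.~4.6 for the existence of $\rho_1$, Thm.~15.1 for irreducibility/supercuspidality of the compact induction, and Cor.~15.3 for the type property. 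The mild subtlety worth flagging is the $G = T$ boundary case, where D1 of \cite{Yu01} formally fails and one must cite Yu's own remark that the construction still goes through.
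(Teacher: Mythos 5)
Your proposal is correct and takes essentially the same approach as the paper: the paper gives no proof for this statement, only the bracketed citation to Adler \S2.5 and Yu Prop.~4.6, Thm.~15.1, and Cor.~15.3. Your unpacking of those references — the existence of $\rho_1$ from Prop.~4.6, irreducibility/supercuspidality of $\pi$ from Thm.~15.1 together with Adler, the Bernstein-component rigidity from Cor.~15.3, the observation that a supercuspidal Bernstein component consists only of supercuspidal representations, and the caveat about the degenerate $G = T$ datum via \cite[\S15, p.~616]{Yu01} — is exactly what the paper's citation is asking the reader to assemble.
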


\begin{remark}
	These representations were already among those constructed by Adler in \cite{Adl98}. However, we have used Yu's notation here to be consistent with Section \ref{subsec:1-toral}, in which we introduce a class of representations that is more general than that arising from a 0-toral-data.
\end{remark}

\begin{remark}
	The representations obtained from \zerotoraldata/ are the same as the ones constructed from what DeBacker and Spice call ``toral cuspidal $G$-pairs'' in \cite{DS18}. DeBacker and Spice refer to the resulting representations as ``toral supercuspidal representations'' in their introduction.
\end{remark}

During Yu's construction of $\rho_1$, he constructs a smooth character $\hat\phi:T(F)G(F)_{y,\frac{r}{2}+}\ra \C^*$ (denoted $\hat\phi$ in \cite{Yu01}), which is trivial on $(T,G)(F)_{y,(r+,\frac{r}{2}+)}$. Let $J^1_+:=(T,G)(F)_{y,(r,\frac{r}{2}+)}$. Then we show

\begin{proposition}\label{prop:J1-sc-type}
  Assume $p$ does not divide the order of the absolute Weyl group of $G$. Then
  the pair $(J^1_+,\hat\phi|_{J^1_+})$ is a supercuspidal type.
  \end{proposition}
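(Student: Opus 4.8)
If $G=T$ there are no proper parabolic subgroups of $G$ and the assertion is vacuous, so assume $G\neq T$. The plan is to reduce to the Moy--Prasad subgroup $G_{y,r}\subseteq J^1_+$ and then apply the classical principle, essentially due to Adler, that a generic minimal $K$-type attached to an elliptic torus is a supercuspidal type. First I would identify $\hat\phi|_{G_{y,r}}$: since $\hat\phi$ is trivial on $(T,G)(F)_{y,(r+,\frac r2+)}$, in particular on every root group piece $U_\alpha(F)_{y,s}$ with $s>\frac r2$, and restricts to $\phi$ on $T(F)_{y,r}$, the character $\hat\phi|_{G_{y,r}}$ factors through $G_{y,r}/G_{y,r+}\simeq\fg_{y,r}/\fg_{y,r+}$ and is the character $\chi_{X^*}$ associated to an element $X^*\in\fg^*_{y,-r}$ that is supported on $\ft^*$ and has depth exactly $-r$. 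Fixing a $G$-invariant identification $\fg\simeq\fg^*$ (available since $p\nmid|W_G|$), let $X\in\ft(F)$ correspond to $X^*$; the $G$-genericity of $\phi$ forces $\alpha(X)\neq 0$ for every root $\alpha$, hence $C_G(X)^\circ=T$, and as $X\in\ft(F)$ is semisimple and $T$ is $F$-elliptic, $X$ is an elliptic regular semisimple element. Since $G_{y,r}\subseteq J^1_+$, every irreducible smooth $\pi$ with $\Hom_{J^1_+}(\hat\phi|_{J^1_+},\pi)\neq 0$ satisfies $\Hom_{G_{y,r}}(\chi_{X^*},\pi)\neq 0$, so it is enough to show that $(G_{y,r},\chi_{X^*})$ is a supercuspidal type.

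Suppose then that $\pi$ is irreducible with $\Hom_{G_{y,r}}(\chi_{X^*},\pi)\neq 0$ but $\pi$ is not supercuspidal; then $\pi$ is a subquotient of $\Ind_Q^{G(F)}\sigma$ for a proper parabolic $Q=LN$ of $G$ and some smooth $\sigma$ of $L(F)$. Passing to the $\chi_{X^*}$-isotypic part for the compact group $G_{y,r}$ is exact, so $\Hom_{G_{y,r}}(\chi_{X^*},\Ind_Q^{G(F)}\sigma)\neq 0$; by Mackey's decomposition $\Ind_Q^{G(F)}\sigma|_{G_{y,r}}\simeq\bigoplus_{g\in Q\backslash G(F)/G_{y,r}}\Ind_{g^{-1}Qg\cap G_{y,r}}^{G_{y,r}}(\sigma^g)$ (a finite sum, since $G(F)/Q(F)$ is compact) there is $g\in G(F)$ with $\Hom_{g^{-1}Qg\cap G_{y,r}}(\chi_{X^*},\sigma^g)\neq 0$, and since $\sigma^g$ is trivial on the normal subgroup $g^{-1}N(F)g\cap G_{y,r}$, this forces $\chi_{X^*}$ to be trivial on $g^{-1}N(F)g\cap G_{y,r}$. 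Via the Moy--Prasad isomorphism this says the functional $Z\mapsto\langle X^*,Z\rangle$ vanishes on a suitable $\cO_F$-lattice in $\mathfrak n_{Q^g}(F)$, whence $\langle X^*,\mathfrak n_{Q^g}\rangle=0$ over $\ol F$, i.e.\ $X$ lies in $(\Lie L^g)+(\Lie\ol N^g)$, the Lie algebra of the parabolic opposite to $Q^g=g^{-1}Qg$. As $X$ is semisimple it is then conjugate, by an element of $g^{-1}N(F)g$, into $(\Lie L^g)(F)$, so some $G(F)$-conjugate of the split torus $A_L$ is contained in $C_G(X)=T$ (note $C_G(X)$ is connected); this contradicts the ellipticity of $T$, since $\dim A_L>\dim A_G=\dim A_T$ as $Q$ is proper. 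Therefore $\pi$ is supercuspidal, completing the proof.

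The step I expect to be the main obstacle is the passage, above, from ``$\chi_{X^*}$ trivial on $g^{-1}N(F)g\cap G_{y,r}$'' to ``$\langle X^*,\mathfrak n_{Q^g}\rangle=0$ over $\ol F$'': a priori one learns only that $\langle X^*,-\rangle$ lands in $\pF$ on one $\cO_F$-lattice in $\mathfrak n_{Q^g}(F)$, and deducing that it vanishes identically requires the full strength of $G$-genericity of $\phi$ (not merely that $\phi$ has depth $r$), controlling $X^*$ at the correct Moy--Prasad level relative to every proper parabolic. This is the kind of estimate carried out by Adler in \cite[\S2]{Adl98}; its more involved $1$-toral analogue, which also subsumes the present statement, is Proposition~\ref{prop:sc-type}. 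An alternative that avoids such estimates is a Clifford-theory argument: $J^1_+$ is normal in ${}^0K^1$ with $\hat\phi|_{J^1_+}$ stabilized by conjugation, the commutator pairing $(\bar a,\bar b)\mapsto\hat\phi([a,b])$ makes $J^1/J^1_+$ a symplectic $k_F$-vector space that is nondegenerate precisely because $\phi$ is $G$-generic, and Stone--von Neumann together with the Weil representation (available since $p\neq 2$) identify each irreducible representation of ${}^0K^1$ lying over $\hat\phi|_{J^1_+}$ with the representation ${}^0\rho_1$ attached to a refactored $0$-toral datum $(T,r,\phi\hat\chi)$; supercuspidality then follows from Theorem~\ref{thm:Yu-thm}.
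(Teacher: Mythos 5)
You take a genuinely different route from the paper (the paper simply deduces this statement as a special case of the more general 1-toral Proposition~\ref{prop:J1-sc-type-2}, whose proof invokes the machinery of \cite{Fintzen-exhaustion}), but your proof has a real gap, which you honestly flag yourself and then do not close. The reduction to $(G_{y,r},\chi_{X^*})$ is fine: since $G_{y,r}\subset J^1_+$ and $\hat\phi|_{G_{y,r}}=\chi_{X^*}$, it suffices to show $(G_{y,r},\chi_{X^*})$ is a supercuspidal type, and the Mackey decomposition correctly reduces this to showing that $\chi_{X^*}$ cannot be trivial on $g^{-1}N(F)g\cap G_{y,r}$ for any proper parabolic $Q=LN$ and $g\in G(F)$. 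But the inference from ``$\chi_{X^*}$ trivial on this subgroup'' to ``$\langle X^*,\fn_{Q^g}\rangle=0$ over $\ol F$'' is not a formal consequence: one learns only that $\langle X^*,\cdot\rangle$ maps an $\cO_F$-lattice into $\pF$, and to upgrade this to identical vanishing one has to know that the pairing of a $G$-generic $X^*\in\ft^*$ of depth $-r$ against the level-$r$ piece of $\fn_{Q^g}$ is either zero or \emph{exactly} $\cO_F$, never $\pF$ or deeper. This is a genuine Moy--Prasad depth estimate, and it is subtle because $T$ need not lie in $Q^g$, so $\fn_{Q^g}$ is not a sum of $T$-root spaces over $\ol F$; one must control the interaction between the apartment of $T$ and the parahoric data of $Q^g$. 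This is precisely the technical content of \cite[\S 2.3--2.4]{Adl98}, and you point to it, but you do not supply it. Citing Proposition~\ref{prop:sc-type} to fill this would be circular in the paper's logical ordering, since that proposition is proved using the very statement under consideration.

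Your Clifford-theoretic alternative at the end is closer in spirit to the actual proof and is a reasonable patch, but it too is only a sketch, and the key assertion --- that \emph{every} irreducible ${}^0K^1$-representation lying over $\hat\phi|_{J^1_+}$ arises as ${}^0\rho_1$ for some refactored $0$-toral datum $(T,r,\phi\hat\chi)$ --- is not obvious and would need a proof. One must check that the twists coming from characters of ${}^0K^1/J^1$ can always be absorbed into a new $G$-generic $\phi'$ of the same depth with the same restriction to $T(F)_{y,r}$; this is a refactorization lemma, not a formal consequence of Stone--von Neumann and the Weil representation. So, while both of your routes are promising and differ interestingly from the paper's reduction to the 1-toral case, neither is carried to completion: the Mackey argument needs the Adler-style genericity estimate, and the Clifford argument needs a refactorization/exhaustion lemma. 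The paper obtains both, in much greater generality, by quoting \cite[Cor.~8.3]{Fintzen-exhaustion}.
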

  \begin{proof}
This is a special case of Proposition \ref{prop:J1-sc-type-2} proved in the next section.
  \end{proof}

\subsection{Construction  of supercuspidal types from \onetoraldata/}\label{subsec:1-toral}

Since non-simple reductive groups might not admit a \zerotoraldatum/ in general, we introduce the slightly more general notion of \onetoraldata/.
A reader who is only interested in the final result about the mere existence of enough omni-supercuspidal types, Theorem \ref{thm:existence-omnisctypes}, is welcome to skip all discussions surrounding \onetoraldata/, see also Remark \ref{rem:existence-omnisctypes}. The motivation for the introduction of \onetoraldata/ is to provide the reader with the opportunity  to choose the compact open subgroup appearing in the omni-supercuspidal type to be a Moy--Prasad filtration subgroup. Moreover, we believe that the intermediate results motivated by \onetoraldata/, in particular Proposition \ref{prop:sc-type}, are of independent interest to representation theorists. 

\begin{definition}\label{def:1-toral}
	A \textbf{\onetoraldatum/}
	 is a tuple $((G^0, \hdots, G^d), (r_0, \hdots, r_{d-1}) , (\phi_0, \hdots, \phi_{d-1}))$, where
	\begin{itemize}
		\item  $G^0$ is an elliptic, maximal torus of $G$ and either
		\begin{itemize}
		\item $G^0=G^1=G$, or
		\item $G^0 \subsetneq G^1 \subsetneq \cdots \subsetneq G^{d-1} \subsetneq G^d=G$ is a sequence of twisted Levi subgroups that split over some tamely ramified extension $F'$ of $F$,
		\end{itemize}
		\item $0 < r_0 < r_1 < \hdots < r_{d-1} <r_0+1$ is a sequence of real numbers,
		\item  $\phi_i$ is a smooth character of $G^i(F)$ of depth $r_i$ that is $G^{i+1}$-generic relative to $y$ of depth $r_i$ (if $G^0 \neq G$) for $0 \leq i \leq d-1$ and some (thus every) $y\in \sB(G,F)\cap \sA(G^0,F')$.
	\end{itemize}
\end{definition}

Since $G^0$ is a torus, we will also  write $T$ instead of $G^0$.

A \onetoraldatum/ gives rise to a tuple
\begin{equation}\label{eq:Yu-tuple}
((G^0, \hdots, G^d), y, (r_0, \hdots, r_{d-1}, r_d=r_{d-1}), \rho , (\phi_0, \hdots, \phi_{d-1}, \phi_d=1))
\end{equation}
as in \cite[\S3]{Yu01} (or as in \cite[\S15]{Yu01} in the case of $G=T$) by setting
	\begin{itemize}
		\item $y$ to be a point in $\sB(G,F)\cap \sA(G^0,F')$, which we fix once and for all (note that $[y]$ does not depend on the choice of $y$ in $\sB(G,F)\cap \sA(G^0,F')$),
		\item $\rho$ to be the trivial representation.
	\end{itemize}

\begin{remark}
	A \zerotoraldatum/ is a \onetoraldatum/ with the additional condition that $d=1$. For the reader interested in our choice of nomenclature: The ``0'' in ``0-toral'' refers to the difference between $r_d$ and $r_0$ being 0, while the ``1'' in ``1-toral'' is motivated by the difference between $r_d$ and $r_0$ being smaller than 1.
\end{remark}

\begin{remark}
	The decision to set $\phi_d=1$ is not a serious restriction (and could be removed if desired). By setting $\phi_d=1$ we only exclude additional twists by characters of $G(F)$ if $G$ is not a torus. This convention has the advantage that the translation between the notion in \cite{Yu01} and the notation in \cite{Fintzen-exhaustion} is easier, i.e. we do not have to distinguish the cases $\phi_d=1$ and $\phi_d\neq 1$.
\end{remark}

For $0 \leq i \leq d-1$, we write $H^i$ for the derived subgroup of $G^i$. We set $H^d:=G^d$ (not the derived subgroup of $G^d$ unless $G^d$ is semisimple).
We abbreviate for $1 \leq i \leq d$,
\begin{eqnarray*}
	G^i_{y,r_{i-1}, \frac{r_{i-1}}{2}+} & := & (G^{i-1},G^i)(F)_{y,(r_{i-1}, \frac{r_{i-1}}{2}+)}, \\
	H^i_{y,r_{i-1}, \frac{r_{i-1}}{2}+} & := & H^i(F) \cap  G^i_{y,r_{i-1}, \frac{r_{i-1}}{2}+}.
\end{eqnarray*}
and define $\fh^i_{y,r_{i-1}, \frac{r_{i-1}}{2}+}$ analogously, where $\mathfrak{g}^i$ and $\fh^i$ denote the Lie algebras of $G^i$ and $H^i$, respectively, and set
\begin{eqnarray*}
	J_+^1 &: = &  H^1_{y,r_{0}, \frac{r_{0}}{2}+}	H^2_{y,r_{1}, \frac{r_{1}}{2}+} \hdots H^{d-1}_{y,r_{d-2}, \frac{r_{d-2}}{2}+}	H^{d}_{y,r_{d-1}, \frac{r_{d-1}}{2}+}, \\
	&=& H^1_{y,r_{0}, \frac{r_{0}}{2}+}	H^2_{y,r_{1}, \frac{r_{1}}{2}+} \hdots H^{d-1}_{y,r_{d-2}, \frac{r_{d-2}}{2}+}	G^{d}_{y,r_{d-1}, \frac{r_{d-1}}{2}+}. \\
	K^d & := & G^0_{[y]} H^1_{y,r_{0}, \frac{r_{0}}{2}}	H^2_{y,r_{1}, \frac{r_{1}}{2}} \hdots H^{d-1}_{y,r_{d-2}, \frac{r_{d-2}}{2}}	H^{d}_{y,r_{d-1}, \frac{r_{d-1}}{2}}, \\
	{^0K}^d & := & G^0_{y} H^1_{y,r_{0}, \frac{r_{0}}{2}}	H^2_{y,r_{1}, \frac{r_{1}}{2}} \hdots H^{d-1}_{y,r_{d-2}, \frac{r_{d-2}}{2}}	H^{d}_{y,r_{d-1}, \frac{r_{d-1}}{2}}. \\
\end{eqnarray*}

From the tuple \eqref{eq:Yu-tuple}, Yu constructs in \cite[\S4]{Yu01} an irreducible representation $\rho_d$ of $K^d$ such that the following theorem holds, letting $^0 \rho_d:=\rho_d|_{^0 K^d}$.
\begin{theorem}[Prop 4.6, Thm 15.1, Cor 15.3 of \cite{Yu01} and Thm 3.1 of \cite{Fintzen-Yuworks}]\label{thm:Yu-thm}
	The compactly induced representation
	$$\pi:=\cind^{G(F)}_{K^d} \rho_d,$$
	is irreducible and supercuspidal of depth $r_d$. Moreover, every irreducible smooth representation $\pi'$ of $G(F)$ with $\Hom_{^0 K^d}({}^0 \rho_d,\pi')\neq \{0\}$ is in the same Bernstein component as $\pi$. In particular, $({}^0 K^d,{}^0 \rho_d)$ is a supercuspidal type.
\end{theorem}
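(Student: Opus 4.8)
The statement collects results of \cite{Yu01} and \cite{Fintzen-Yuworks}, so the plan is to verify that the tuple \eqref{eq:Yu-tuple} attached to a 1-toral datum satisfies the hypotheses of Yu's construction (a ``generic cuspidal $G$-datum'' in the sense of \cite[\S3]{Yu01}, or the variant in \cite[\S15]{Yu01} when $G^0=G$) and then to quote the relevant theorems, pointing to \cite{Fintzen-Yuworks} where the original argument of \cite{Yu01} needs a correction. First I would check Yu's conditions one at a time: the sequence $G^0\subsetneq\cdots\subsetneq G^d=G$ of twisted Levi subgroups splitting over a tame extension (or $G^0=G$) and the point $y\in\sB(G,F)\cap\sA(G^0,F')$ are part of Definition \ref{def:1-toral}; the depth condition $0<r_0\le\cdots\le r_{d-1}=r_d$ follows from the strict inequalities there (the extra bound $r_{d-1}<r_0+1$ plays no role for Yu's construction); the ``$\rho$-datum'' is $\rho=\one$ on $K^0=G^0_{[y]}$, and since $G^0=T$ is an elliptic maximal torus we have $G^0_{[y]}=T(F)$, which is compact modulo $Z(G)(F)$, so $\cind_{G^0_{[y]}}^{G^0(F)}\one=\one$ is (trivially) irreducible and supercuspidal as required; and the genericity of each $\phi_i$ (namely $G^{i+1}$-generic relative to $y$ of depth $r_i$, with $\phi_d:=\one$) is exactly Yu's condition on the characters. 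I would also note that the group $K^d$ as written in \eqref{eq:Yu-tuple} using the derived subgroups $H^i$ is the open compact subgroup occurring in Yu's construction (in the reformulation used in \cite{Fintzen-exhaustion, Fintzen-Yuworks}), since the factor $G^0_{[y]}=T(F)$ contains the connected centers of all the $G^i$ and the remaining root-subgroup contributions lie in the respective $H^i$.

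Granting that \eqref{eq:Yu-tuple} is a valid datum, I would invoke Yu's output: \cite[Prop.~4.6]{Yu01} gives the irreducible representation $\rho_d$ of $K^d$, and \cite[Thm.~15.1]{Yu01} shows $\pi=\cind_{K^d}^{G(F)}\rho_d$ is irreducible and supercuspidal, its depth being $r_d=r_{d-1}$ by the Moy--Prasad computation there (the genericity of $\phi_{d-1}$ at depth $r_{d-1}$ ensuring the depth is exactly $r_d$ and not smaller). The one point of real substance --- which I would flag as the crux rather than an obstacle --- is that the irreducibility of $\pi$ rests on a claim in \cite{Yu01} whose original proof has a gap; this is repaired by \cite[Thm.~3.1]{Fintzen-Yuworks}, hence its appearance in the statement.

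For the remaining two assertions I would argue as follows. By \cite[Cor.~15.3]{Yu01}, any irreducible smooth representation $\pi'$ of $G(F)$ with $\Hom_{{}^0K^d}({}^0\rho_d,\pi')\ne 0$ lies in the same Bernstein component as $\pi$. Since $\pi$ is supercuspidal, that Bernstein component is the inertial class of the cuspidal pair $(G(F),\pi)$ and so contains only supercuspidal representations; therefore $\pi'$ is supercuspidal. By the definition of supercuspidal type recalled at the start of \S\ref{sec:sc-types}, this says precisely that $({}^0K^d,{}^0\rho_d)$ is a supercuspidal type, which completes the plan. The leftover work --- matching the paper's normalizations of Moy--Prasad filtrations and depths with those of \cite{Yu01}, and unwinding $G^0_{[y]}=T(F)$ for an elliptic torus --- is routine bookkeeping with no real difficulty.
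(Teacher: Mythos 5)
Your proposal is correct and takes the same approach as the paper: the paper states this theorem purely as a citation result, and what you do — verify that the tuple \eqref{eq:Yu-tuple} built from a 1-toral datum satisfies Yu's conditions D1--D5 (with the observation that $G^0_{[y]}=T(F)$ for the elliptic torus $G^0=T$, so the $\rho$-condition is vacuous), then quote \cite[Prop.~4.6, Thm.~15.1, Cor.~15.3]{Yu01} and \cite[Thm.~3.1]{Fintzen-Yuworks} for the gap repair in the irreducibility argument, and finally observe that membership in a supercuspidal Bernstein component forces supercuspidality — is exactly the intended reading. Nothing is missing.
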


The representation $\rho_d$ restricted to $J_+^1$ is given by the character $\hat \phi = \prod_{0 \leq i \leq d-1} \hat \phi_i|_{J_+^1}$ (times identity), where $\hat \phi_i$ is defined as in \cite[\S4]{Yu01}, i.e. $\hat \phi_i$ is the unique character of $(G^0)_{[y]}(G^{i})_{y,0}G_{y,\frac{r_i}{2}+}$ that satisfies
\begin{itemize}
	\item $\hat \phi_i|_{(G^0)_{[y]}(G^{i})_{y,0}}=\phi_i|_{(G^0)_{[y]}(G^{i})_{y,0}}$, and
	\item $\hat \phi_i|_{G_{y,\frac{r_i}{2}+}} $ factors through
	\begin{eqnarray*}
		G_{y,\frac{r_i}{2}+}/G_{y,r_i+} & \simeq& \mathfrak{g}_{y,\frac{r_i}{2}+}/\mathfrak{g}_{y,r_i+} = (\mathfrak{g}^{i} \oplus \fr^i)_{x,\frac{r_i}{2}+}/(\mathfrak{g}^{i} \oplus \fr^i)_{x,r_i+} \\
		& \ra & (\mathfrak{g}^{i})_{x,\frac{r_i}{2}+}/(\mathfrak{g}^{i})_{x,r_i+} \simeq
		(G^{i})_{x,\frac{r_i}{2}+}/(G^{i})_{x,r_i+},
	\end{eqnarray*}
	on which it is induced by $\phi_i$. Here $\fr^i$ is defined to be $\mathfrak{g} \cap \bigoplus_{\alpha \in \Phi(G_{F'},T_{F'}) \setminus \Phi(G^{i}_{F'},T_{F'})} \mathfrak{g}({F'})_\alpha$ for some maximal torus $T$ of $G^{i}$ that splits over a tame extension ${F'}$ of $F$ with $y \in \sA(T,{F'})$, and the surjection $\mathfrak{g}^{i} \oplus \fr^i \twoheadrightarrow \mathfrak{g}^i$ sends $\fr^i$ to zero.
\end{itemize}

\begin{proposition}\label{prop:J1-sc-type-2}
	Assume $p$ does not divide the order of the absolute Weyl group of $G$. Then
the pair $(J^1_+,\hat\phi|_{J^1_+})$ is a supercuspidal type.
\end{proposition}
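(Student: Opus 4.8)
Let $\pi$ be an irreducible smooth representation of $G(F)$ with $\Hom_{J^1_+}(\hat\phi|_{J^1_+},\pi)\neq\{0\}$; the goal is to show that $\pi$ is supercuspidal. The plan is to deduce this from Theorem~\ref{thm:Yu-thm} by classifying the irreducible representations of ${}^0 K^d$ lying over the character $\hat\phi|_{J^1_+}$. Recall that $J^1_+$ is normal in ${}^0 K^d$ and that, by the description of $\rho_d$ recalled above, ${}^0\rho_d|_{J^1_+}$ is a multiple of $\hat\phi|_{J^1_+}$; in particular $\hat\phi|_{J^1_+}$ is ${}^0 K^d$-invariant. Decomposing $\pi|_{{}^0 K^d}$ into irreducibles, some irreducible $\tau$ of ${}^0 K^d$ occurs in $\pi|_{{}^0 K^d}$ with $\Hom_{J^1_+}(\hat\phi|_{J^1_+},\tau)\neq\{0\}$. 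It therefore suffices to prove that \emph{every} such $\tau$ satisfies: $({}^0 K^d,\tau)$ is a supercuspidal type. Indeed, $\pi$ then contains $\tau$ and is hence supercuspidal.

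To prove this, I would invoke Clifford theory for $J^1_+\trianglelefteq {}^0 K^d$ together with the explicit Heisenberg--Weil description of $\rho_d$ from \cite{Yu01} (in the normalization of \cite{Fintzen-Yuworks}). Writing $J^1:=H^1_{y,r_0,\frac{r_0}{2}}\cdots H^d_{y,r_{d-1},\frac{r_{d-1}}{2}}$, one has ${}^0 K^d=G^0_y\cdot J^1$, the quotient $J^1/J^1_+$ is Yu's symplectic $k_F$-space, and the alternating pairing on ${}^0 K^d/J^1_+$ attached to $\hat\phi|_{J^1_+}$ is nondegenerate on $J^1/J^1_+$ and trivial on the finite abelian quotient $G^0_y/(G^0_y\cap J^1_+)$; the mere existence of $\rho_d$ forces the associated cohomology class to split over that abelian quotient and the Heisenberg representation to extend to a Weil-type representation of ${}^0 K^d/J^1_+$. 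Consequently every irreducible $\tau$ over $\hat\phi|_{J^1_+}$ is of the form $\tau\cong {}^0\rho_d\otimes\chi$, where $\chi$ is a character of the finite abelian group $G^0_y/(G^0_y\cap J^1_+)$, inflated to ${}^0 K^d$.

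It remains to recognise each $\tau={}^0\rho_d\otimes\chi$ as the output of Yu's construction for a \onetoraldatum/. Starting from the original datum, replace $\phi_0$ by $\phi_0\cdot\tilde\chi$ for some extension $\tilde\chi$ of $\chi$ to $T(F)=G^0(F)$, keeping $(G^\bullet,r_\bullet,\phi_1,\dots,\phi_{d-1})$ unchanged. Since $\chi$ is trivial on $G^0_y\cap J^1_+$ it has depth $<r_0$, so $\phi_0\cdot\tilde\chi$ still has depth $r_0$ with the same leading term; in particular it is again $G^1$-generic, and we obtain a valid \onetoraldatum/ $e$ with the same twisted Levi sequence, depths, and point $y$, hence the same groups ${}^0 K^e={}^0 K^d$, whose associated Yu representation equals $\tau$. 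Theorem~\ref{thm:Yu-thm} applied to $e$ now yields that $({}^0 K^d,\tau)$ is a supercuspidal type, completing the argument; the case $d=1$ (where $J^1_+=(T,G)(F)_{y,(r,\frac{r}{2}+)}$) gives Proposition~\ref{prop:J1-sc-type}. The hypothesis $p\nmid |W_G|$ and the standing tameness assumption enter to guarantee that \onetoraldata/ furnish genuine inputs for Yu's construction (existence and stability of $G^{i+1}$-generic characters, and the Moy--Prasad filtration isomorphisms $G_{y,s+}/G_{y,t+}\simeq \mathfrak g_{y,s+}/\mathfrak g_{y,t+}$ used in defining $\hat\phi$) and that the Heisenberg--Weil formalism over $\mathbb{C}$ behaves as stated.

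I expect the main obstacle to be the Clifford-theoretic step: pinning down \emph{all} irreducibles of ${}^0 K^d$ over the single character $\hat\phi|_{J^1_+}$ requires carefully unwinding the Heisenberg--Weil constituents $\eta_i,\kappa_i$ of $\rho_d$ in \cite{Yu01}, and in particular checking that the commutator pairing attached to $\hat\phi$ --- which a priori only makes sense on $J^1_+$ --- descends to the claimed (possibly degenerate) alternating form on ${}^0 K^d/J^1_+$, and that Yu's Weil extension is, up to the finitely many twists by $\chi$, the only one. An alternative, more self-contained route would avoid ${}^0 K^d$: assuming $\pi\hookrightarrow i_P^G\sigma$ for a proper parabolic $P=MN$, apply the geometric lemma to $\pi|_{J^1_+}$ and use that the good semisimple element $\sum_i X_i^*\in\mathfrak g^*$ realized by $\hat\phi$ has centralizer $G^0=T$, which is elliptic and therefore contained in no proper Levi up to conjugacy, to derive a contradiction from the Moy--Prasad theory of unrefined minimal $K$-types; there the layered structure (as opposed to the $0$-toral case) is the technical crux.
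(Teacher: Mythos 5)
Your proposed route is genuinely different from the one in the paper. You want to descend to the open compact ${}^0 K^d$, invoke Clifford theory for $J^1_+\trianglelefteq {}^0 K^d$, classify all irreducibles $\tau$ over $\hat\phi|_{J^1_+}$ as twists ${}^0\rho_d\otimes\chi$, and then recognise each $\tau$ as the output of Yu's construction applied to a modified \onetoraldatum/ (replace $\phi_0$ by $\phi_0\cdot\tilde\chi$), finally quoting Theorem~\ref{thm:Yu-thm}. The paper instead bypasses ${}^0 K^d$ and Clifford theory entirely: from the presence of $\hat\phi$ in $\pi|_{J^1_+}$ it directly manufactures a ``maximal datum'' in the sense of \cite[Def.~4.6]{Fintzen-exhaustion} (using that $T^{\mathrm{der}}=\{1\}$ to complete a truncated extended datum for free) and then applies the supercuspidality criterion \cite[Cor.~8.3]{Fintzen-exhaustion}. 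The paper's argument is shorter because that criterion is a black box doing exactly the bookkeeping your sketch outsources to the Heisenberg--Weil and Clifford machinery; it also sidesteps the need to verify that the twisted datum produces precisely ${}^0\rho_d\otimes\chi$.

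There is a real gap in your argument at exactly the point you flag. For $d>1$, $J^1/J^1_+$ is not a single symplectic $k_F$-space with a single Heisenberg cocycle attached to $\hat\phi$: Yu's construction is an iterated Heisenberg--Weil extension across the levels $i=0,\dots,d-1$, with $J^{i+1}/J^{i+1}_+$ (in Yu's notation) carrying a separate pairing coming from $\hat\phi_i$, and the quotient $J^1/J^1_+$ decomposes accordingly. To carry out the Clifford step cleanly you would need to show that the commutator pairing $(x,y)\mapsto\hat\phi([x,y])$ on $J^1/J^1_+$ is nondegenerate and that the resulting twisted group algebra $\mathbb{C}[{}^0 K^d/J^1_+,\alpha]$ has all of its irreducibles of the form $\tilde\omega\otimes\chi$ with $\chi$ a character inflated from ${}^0 K^d/J^1$; for $d>1$ this requires tracking all $d$ Heisenberg--Weil layers and showing the cross-level commutator contributions are compatible with (and do not destroy) the block structure. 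This is not automatic from the ``mere existence of $\rho_d$,'' and the paper deliberately avoids this by using Fintzen's framework, which proves the needed statement as part of the exhaustion theorem. For $d=1$ your plan is essentially Adler's (cf.\ Remark~\ref{rem:sc-type} and \cite[\S2.3]{Adl98}); the difficulty is entirely in the $d>1$ case, which the paper handles by the argument above. A secondary but real point: when you replace $\phi_0$ by $\phi_0\cdot\tilde\chi$ you should choose $\tilde\chi$ trivial on $T(F)_y\cap J^1_+$ (this is possible since $\chi$ is), verify that $\phi_0\cdot\tilde\chi$ remains $G^1$-generic of depth $r_0$ (true, since $\tilde\chi$ has strictly smaller depth and leaves the leading term untouched), and then check that the Yu representation attached to the new datum really is ${}^0\rho_d\otimes\chi$ on ${}^0 K^d$, not merely some representation containing $\hat\phi|_{J^1_+}$; this requires unwinding the $\hat\phi_i$ and the Weil normalizations, another place where the paper's approach is lighter.
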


\begin{remark}The hypothesis on $p$ may not be optimal, but imposed here to import results from \cite{Fintzen-exhaustion,Kal-reg} which assume it. The condition clearly holds if $p$ is larger than the Coxeter number of the Weyl group.
\end{remark}

\begin{proof}[Proof of Proposition \ref{prop:J1-sc-type-2}] The statement is obvious if $G$ is a torus, hence we assume that $G$ is not a torus for the remainder of the proof.
	Let $(\pi, V)$ be an irreducible smooth representation of $G(F)$ such that $V$ contains a one-dimensional subspace $V'$ on which the subgroup $J^1_+ \subset G(F)$ acts via $\hat \phi$. We need to show that $\pi$ is supercuspidal.
	
	Our strategy consists of deriving from the action of $J^1_+ \subset G(F)$ via $\hat \phi$ on $V'$ a maximal datum for $(\pi, V)$ (in the sense of \cite[Def.~4.6]{Fintzen-exhaustion}). This can be achieved because the character $\hat \phi$ on $J^1_+$ encodes the information of a truncated extended datum in the sense of \cite[Def.~4.1]{Fintzen-exhaustion}. The fact that $G^0$ is a torus, hence has trivial derived group, allows us to complete the truncated extended datum to a maximal datum for $(\pi, V)$ by adding the trivial representation of $(G^0)^{\mathrm{der}}=\{1\}$. Then we can apply \cite[Cor.~8.3]{Fintzen-exhaustion}, which is a criterion to deduce that $(\pi, V)$ is supercuspidal from properties of the previously constructed maximal datum for $(\pi, V)$. Let us provide the details.
	
	Since $p$  does not divide the order of the absolute Weyl group of $G$,  the character $\phi_j$ is trivial on $H^j(F) \cap G^j(F)_{y,0+}$ (\!\!\cite[Lemma~3.5.1]{Kal-reg}).
 Hence  $\hat \phi_j|_{H^{i+1}_{y,r_i,\frac{r_i}{2}+}}$ is trivial for $0 \leq i< j <d$. Moreover,  $\hat \phi_j|_{H^{i+1}_{y,r_i,\frac{r_i}{2}+}}$  is trivial for $d> i > j \geq 0$ by the second bullet point of the definition of $\hat \phi_j$. Thus $\hat \phi|_{H^{i+1}_{y,r_i,\frac{r_i}{2}+}}=\hat \phi_i|_{H^{i+1}_{y,r_i,\frac{r_i}{2}+}}$.
	We let $X_i \in \mathfrak{g}^*_{y,-r_i}$ such that the character $\hat \phi|_{H^{i+1}_{y,r_i,\frac{r_i}{2}+}}=\hat \phi_i|_{H^{i+1}_{y,r_i,\frac{r_i}{2}+}}$ viewed as a character of
	$$H^{i+1}_{y,r_i,\frac{r_i}{2}+}/H^{i+1}_{y,r_i+} \simeq \fh^{i+1}_{y,r_i,\frac{r_i}{2}+}/\fh^{i+1}_{y,r_i+}$$ is given by $\Psi \circ X_i$. Since $\phi_i$ is $G^{i+1}$-generic relative to $y$ of depth $r_i$,
	by Yu's definition of genericity \cite[\S8 and \S9]{Yu01}
	we can choose $X_i$ to have the following extra properties: firstly $X_i\in (\Lie(Z(G^i))(F))^* \subset (\mathfrak{g}^i)^*$ (see \cite[\S8]{Yu01} for the definition of this inclusion), and secondly
	$$ v(X_i(H_{\check \alpha}))=-r_i \quad  \text{for all } \alpha \in \Phi(G^{i+1}_{F^{\sep}},T_{F^{\sep}}) \setminus \Phi(G^{i}_{F^{\sep}},T_{F^{\sep}}),$$
where $H_{\check\alpha}:=d\check\alpha(1)\subset \Lie T(F^{\sep})\subset \mathfrak{g}^i(F^{\sep})$. (Here $d \check \alpha: \bG_a \ra \Lie T$ denotes the Lie algebra morphism arising from $\check \alpha: \bG_m \ra T$, and recall that we write $T=G^0$.)
	Since $X_i \in (\Lie(Z(G^i))(F))^*$, we have that
		$$ X_i(H_{\check \alpha})=0 \quad  \text{for all } \alpha \in  \Phi(G^{i}_{F^{\sep}},T_{F^{\sep}})$$
	 and that the $G^{i+1}$-orbit of $X_i$ is closed.

	 Thus $X_i$ is almost stable and generic of depth $-r_i$ at $y$ as an element of $(\mathfrak{g}^{i+1})^*$ in the sense of \cite[Definition~3.1 and Definition~3.5]{Fintzen-exhaustion}.
 By \cite[Cor.~3.8.]{Fintzen-exhaustion} this implies that $X_i$ is almost strongly stable and generic of depth $-r_i$ at $y$. Moreover, since $\phi_i$ is $G^{i+1}$-generic, we have $G^i=\Cent_{G^{i+1}}(X_i)$.  Hence the tuple $(y, (r_i)_{d-1 \geq i \geq 0}, (X_i)_{d-1 \geq i \geq 0}, (G^i)_{d \geq i \geq 0})$ is a truncated extended datum of length $d$ in the sense of \cite[Def.~4.1]{Fintzen-exhaustion}.\footnote{Beware that the sequence of twisted Levi subgroups is increasing in Yu's data, but decreasing in \cite{Fintzen-exhaustion}.}
 Since $J^1_+$ acts on $V'$ via $\hat \phi$, since $(G^0)^{\mathrm{der}}=T^{\mathrm{der}}=\{1\}$, and since $\sB(G^0,F)$ consists of a single facet, we conclude that $(y, (X_i)_{d-1 \geq i \geq 0}, \one)$ is a maximal datum for $(\pi, V)$ in the sense of \cite[Def.~4.6]{Fintzen-exhaustion}.
 	
	Since $y$ is a facet of minimal dimension of $\sB(G^0,F)=\sB(T,F)$ and $Z(G^0)/Z(G)$ is anisotropic, \cite[Cor.~8.3]{Fintzen-exhaustion} implies that $(\pi, V)$ is supercuspidal.
\end{proof}

\section{Construction of a family of omni-supercuspidal types}\label{sec:omni-supercuspidal}

  We have seen that 0-toral and 1-toral data yield supercuspidal types.
  Upgrading this, we will see in this section that 0-toral data and 1-toral data give rise to what we call omni-supercuspidal types (see Definition \ref{def:omni} below), which eventually lead to interesting congruences of automorphic forms in the global setup. Thus it is crucial to have a family of omni-supercuspidal types of level tending to infinity, whose existence we prove in this section.

\subsection{Abundance of 0-toral data and 1-toral data}\label{subsec:abundance}

  The current subsection is devoted to show that 1-toral data exist in abundance under a mild assumption on $p$. We construct an infinite family of 0-toral data of increasing level for absolutely simple groups as an intermediate step.

We begin with a preliminary lemma that will be useful for explicit constructions later.
\begin{lemma} \label{Lemma-a}
	Let $n \in \bZ_{\geq 2}$ and let  $E$ be the degree $n$ unramified extension of $F$. Let $\sigma$ be a generator of $\Gal(E/F)$. Suppose that $p \nmid n$. Then there exists an element $e$ in $E$ with the following properties:
	\begin{itemize}
		\item  $v(e)=0$
		\item  $\sum_{1\leq i \leq n} \sigma^i(e) = 0$
		\item  The image $\bar e$ of $e$ in the residue field $k_E$ of $E$ is a generator for the field extension  $k_E/k_F$.
	\end{itemize}
\end{lemma}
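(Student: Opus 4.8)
The plan is to produce $e$ in two stages: first locate the desired element inside the residue field $k_E$, and then lift it to $\cO_E$.

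First I would settle the residue-field version: there should be $\alpha \in k_E$ with $k_F(\alpha)=k_E$ and $\mathrm{Tr}_{k_E/k_F}(\alpha)=0$. Indeed, $k_E/k_F$ is separable of degree $n$, so pick any $\beta$ with $k_F(\beta)=k_E$ (e.g.\ a generator of the cyclic group $k_E^\times$), set $t:=\mathrm{Tr}_{k_E/k_F}(\beta)\in k_F$, and put $\alpha:=\beta-t/n$. This makes sense precisely because $p\nmid n$ forces $n$ to be invertible in $k_F$; then $k_F(\alpha)=k_F(\beta)=k_E$ since $t/n\in k_F$, while $\mathrm{Tr}_{k_E/k_F}(\alpha)=t-n\cdot(t/n)=0$ because the trace of a base-field element is multiplication by $n$. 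This is the one and only place where the hypothesis $p\nmid n$ enters.

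Next I would lift. Since $E/F$ is unramified, the trace map $\mathrm{Tr}_{E/F}\colon\cO_E\to\cO_F$ is surjective (by successive approximation, starting from the surjectivity of $\mathrm{Tr}_{k_E/k_F}$ and using completeness), and its reduction modulo the maximal ideals is $\mathrm{Tr}_{k_E/k_F}$. Hence, in the short exact sequence $0\to N\to\cO_E\xrightarrow{\mathrm{Tr}_{E/F}}\cO_F\to 0$ with $N:=\ker\mathrm{Tr}_{E/F}$, tensoring with $k_F$ (right-exactness) shows that the image of $N$ in $k_E$ equals $\ker(\mathrm{Tr}_{k_E/k_F})$. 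Since $\alpha$ lies in that kernel, I may choose $e\in N$ reducing to $\alpha$. (Alternatively, without the homological bookkeeping: lift $\alpha$ to any unit $e_0\in\cO_E^\times$; its trace lies in $\pF$ because the reduction of $\mathrm{Tr}_{E/F}(e_0)$ is $\mathrm{Tr}_{k_E/k_F}(\alpha)=0$; then use $\mathrm{Tr}_{E/F}(\mathfrak p_E)=\pF$, again from unramifiedness, to find $\delta\in\mathfrak p_E$ with $\mathrm{Tr}_{E/F}(e_0+\delta)=0$, and take $e:=e_0+\delta$.)

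Finally I would verify the three conclusions for this $e$: $v(e)=0$ because $\bar e=\alpha\neq 0$; $\sum_{1\le i\le n}\sigma^i(e)=\mathrm{Tr}_{E/F}(e)=0$ because $\{\sigma^i:1\le i\le n\}=\Gal(E/F)$ (as $\sigma$ has order $n$); and $\bar e=\alpha$ generates $k_E/k_F$ by construction. The only real content is the residue-field claim in the second paragraph — everything about traces in unramified extensions is standard — so I expect no serious obstacle; the hypothesis $p\nmid n$ is genuinely needed, as one already sees for $n=p=2$ over $k_F=\mathbb F_2$, where every element of $k_E=\mathbb F_4$ of trace $0$ lies in $k_F$.
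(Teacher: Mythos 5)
Your proof is correct. It shares the paper's central idea---use $p\nmid n$ to make $n$ invertible and subtract off an appropriate multiple of the trace---but organizes it differently. You first solve the problem entirely in the residue field (finding $\alpha\in k_E$ with $k_F(\alpha)=k_E$ and $\mathrm{Tr}_{k_E/k_F}(\alpha)=0$), and then invoke a separate lifting step, namely surjectivity of $\mathrm{Tr}_{E/F}\colon\cO_E\to\cO_F$ (and $\mathrm{Tr}_{E/F}(\mathfrak p_E)=\mathfrak p_F$) for unramified extensions, to produce an $e\in\cO_E$ of exact trace zero reducing to $\alpha$. The paper instead performs the whole adjustment at the level of $\cO_E$: starting from a lift $e_0$ of a generator, it sets $e:=n\,e_0 - \mathrm{Tr}_{E/F}(e_0)$, which has trace $n\,\mathrm{Tr}(e_0)-n\,\mathrm{Tr}(e_0)=0$ on the nose and still reduces to a generator (since $\overline{\mathrm{Tr}(e_0)}\in k_F$ and $n\in k_F^\times$). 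The paper's version buys a shorter argument that never needs the surjectivity of the trace in unramified extensions; your version buys a cleaner conceptual separation between the residue-field computation (where $p\nmid n$ is used) and the lifting (where unramifiedness is used), and your closing example $n=p=2$, $k_F=\mathbb F_2$, $k_E=\mathbb F_4$ correctly shows the hypothesis $p\nmid n$ cannot be dropped.
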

\begin{proof}
Let $e_0 \in E\setminus F$ be an element of valuation zero whose image $\ol e_0$ in $k_E$ generates the extension of finite fields $k_E/k_F$. If $\sum_{1\leq i \leq n} \sigma^i(e_0) = 0$, then we are done, so suppose $\sum_{1\leq i \leq n} \sigma^i(e_0) \neq 0$. Set $e_1 := \sum_{1\leq i \leq n} \sigma^i(e_0)$ and $e:=n\cdot e_0 - e_1$. Then $\sigma(e_1)=e_1$ and $e_1 \in \cO_F$. Hence  the image $\bar e$ of $e:=n \cdot e_0-e_1$ in $k_E$ is a generator for the field extension  $k_E/k_F$ and therefore $v(e)=0$.
Moreover $\sum_{1 \leq i \leq n}\sigma^i(e)=n\sum_{1 \leq i \leq n} \sigma^i(e_0) - n e_1 =0$, so the proof is finished.
\end{proof}

Recall that we write $\Cox(G)\in \Z_{\ge1}$ for the Coxeter number of the absolute Weyl group $W$ of $G$. Note that if $G$ is an absolutely simple reductive group and $p > \Cox(G)$, then $G$ splits over a tamely ramified extension of $F$.
For the reader's convenience, we recall the table of Coxeter numbers for irreducible root systems of all types:
$$
\begin{array}{|c|c|c|c|c|c|c|c|c|c|}
  \hline
  \textup{type~of~} W & A_s & B_s~(s\ge2) & C_s~(s\ge3) & D_s~(s\ge4) & E_6 & E_7 & E_8 & F_4 & G_2 \\\hline
  \textup{Cox}(G)  & s+1 & 2s & 2s & 2s-2 & 12 & 18 & 30 & 12 & 6 \\
  \hline
\end{array}
$$

\begin{proposition} \label{prop:0-toral-abundance}
	Let $G$ be an absolutely simple reductive group over $F$. Assume that $p > \Cox(G)$. Then there exists a tamely ramified elliptic maximal torus $T$ of $G$ that enjoys the following property: For every $n\in \Z_{\ge1}$, there exist a real number $r$ with $n < r \leq n+1$ and a character $\phi:T(F) \ra \mathbb{C}^*$ of depth $r$ such that $(T, r, \phi)$ is a 0-toral datum.
\end{proposition}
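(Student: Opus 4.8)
The plan is to reduce to the case that $G$ is quasi-split, then to pin down \emph{one} convenient tamely ramified elliptic maximal torus $T$, and finally to exhibit $G$-generic characters of arbitrarily large depth on that single $T$. For the reduction: let $G^*$ be the quasi-split inner form of $G$. Genericity in the sense of \cite[\S9]{Yu01}, the depth of a character, and (absolute) ellipticity of a maximal torus all depend only on the action of the torus on the (co)roots together with the Galois action, and these are transported unchanged by an inner twist identifying a maximal torus of $G^*$ with one of $G$; since an elliptic maximal torus of a quasi-split group can be transferred to any inner form, it is enough to treat $G=G^*$.

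Assume then $G$ quasi-split and fix a maximally split maximal torus $T_0\subset G$. I would take $T$ to be the twist of $T_0$ by a Galois $1$-cocycle whose image in the relevant finite quotient of $N_G(T_0)(\ol F)/T_0(\ol F)\rtimes\Out(G)$ is cyclic, generated by a lift of a Coxeter element (or its analogue for the quasi-split outer structure). Because $p>\Cox(G)$, the order of this element is prime to $p$, so the cyclic extension cut out by the cocycle can be chosen tamely ramified; hence $T$ is tamely ramified. A Coxeter element acts on $X_*(T_0)\otimes\bR$ without nonzero fixed vector, so $T$ is anisotropic, i.e.\ elliptic. (Concretely, for split type $A_s$ one may simply take $T=\ker(N\colon\Res_{E/F}\G_m\to\G_m)$ with $E/F$ unramified of degree $s+1$.)

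The heart of the matter is to produce, for suitable $r>0$, an element $X\in\ft^*(F)=(\Lie(Z(G^0))(F))^*$ of depth $-r$ at the chosen point $y$ satisfying Yu's condition GE1, i.e.\ $v(X(H_{\check\alpha}))=-r$ for every absolute root $\alpha\in\Phi(G_{F^{\sep}},T_{F^{\sep}})$; the companion condition GE2 is then automatic, since $p>\Cox(G)$ forces $p\nmid|W|$. For $X\in\ft^*(F)$ the valuations $v(X(H_{\check\alpha}))$ are automatically constant along each Galois orbit of roots, so GE1 becomes finitely many conditions across the orbits, which upon reduction modulo the next Moy--Prasad step turn into a system of equations over the residue field $k_F$, subject to a Galois-descent constraint. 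In the cases treated uniformly, Lemma \ref{Lemma-a} is exactly the needed device: it supplies an element $e$ of an unramified extension $E/F$ with $v(e)=0$, zero trace, and residue generating $k_E/k_F$; identifying $e$ with an element of $\ft^*(F)$ via the trace form and setting $X=\varpi^{-r}e$, the numbers $X(H_{\check\alpha})$ become $\varpi^{-r}$ times nonzero differences of distinct $k_F$-conjugates of $\bar e$, hence have valuation exactly $-r$. This yields one $0$-toral $X_0$ of depth $-r_0$ with $r_0\in(0,1]$, and then for each $n\ge1$ the element $\varpi^{-n}X_0$ lies in $\ft^*(F)$, has depth $-(r_0+n)$ with $r_0+n\in(n,n+1]$, and still satisfies GE1; so the same $T$ works for every $n$. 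Finally the character $\phi$ is recovered from $X$ as in \cite{Yu01}: $\Psi\circ X$ determines a character of $T(F)_r$ through $T(F)_r/T(F)_{r+}\cong\ft_r/\ft_{r+}$ of depth exactly $r$ (as $X\notin\ft^*_{(-r)+}$), which extends, $T(F)$ being compact, to a smooth character $\phi\colon T(F)\to\mathbb{C}^*$ of depth $r$; by construction $(T,r,\phi)$ is a $0$-toral datum.

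The main obstacle is the construction of $X$: solving the residue-field system compatibly with Galois descent when $T$ is a genuinely twisted (possibly ramified) torus, so that the valuation conditions for all root orbits hold simultaneously with the $F$-rationality constraint. For two types the uniform argument runs into an arithmetic obstruction and must be replaced by an explicit computation; this is what Appendices \ref{app:Dodd} and \ref{app:E6} carry out, handling $D_{2N+1}$ and $E_6$ respectively.
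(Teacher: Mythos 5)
Your high-level plan (reduce to quasi-split, twist a maximally split torus by a Weyl-group cocycle, exhibit a $G$-generic functional, then scale by $\varpi^{-n}$ to get all depths) matches the paper's. But you have conflated the paper's two cases into one incorrect ``uniform'' mechanism. The paper's Case~1, which handles the bulk of the types (split $B_s,C_s,D_{2N},E_7,E_8,F_4,G_2$ and all the quasi-split non-split forms), does \emph{not} twist by a Coxeter element: it twists by an element $w$ with $w\delta=-1$ (i.e.\ the Galois action on $X_*(T)$ is $-1$), using a \emph{quadratic} extension $E/F$. The functional $X$ is then taken to be \emph{constant} on simple coroots, so $X(H_{\check\beta})=m_{\check\beta}\, a\,\varpi_F^{-(n+1)}$ with $m_{\check\beta}\in\Z$, $0<\lvert m_{\check\beta}\rvert\le\Cox(G)-1<p$; genericity is immediate from this bound on coefficients in the highest coroot, not from any ``differences of conjugates'' argument. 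Your description --- that $X(H_{\check\alpha})$ are nonzero differences of distinct $k_F$-conjugates of a trace-zero element supplied by Lemma~\ref{Lemma-a} --- matches only the paper's Case~2 (split $A_N$, where the coroots genuinely are differences $e_i-e_j$ and the torus is twisted by a Coxeter element of order $N+1$). The appendices then handle the two remaining split types $D_{2N+1}$ and $E_6$, and for $E_6$ the element used is $w_h^4$ (a power of the Coxeter element, of order $3$), not the Coxeter element itself.

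This is a genuine gap: as written, your ``uniform'' argument does not apply to the $-1\in W\delta$ types at all, since a Coxeter twist of $B_s$ (say) requires a degree-$\Cox(G)$ extension, a different descent constraint than zero trace (one needs $\sigma^s(a)=-a$, not $\sum_i\sigma^i(a)=0$), and a verification that sums $\sigma^{i-1}(a)+\sigma^{j-1}(a)$ and doublings $2\sigma^{i-1}(a)$ (not just differences) have valuation zero --- none of which you supply and none of which follows from Lemma~\ref{Lemma-a} as stated. You also omit the ramified quadratic subcase, which the paper treats separately with $\sigma(\varpi_E)=-\varpi_E$. Your choice of torus may well admit generic elements (the Coxeter torus is always elliptic and regular), but that would need a case-by-case calculation of exactly the kind the paper reserves for $D_{2N+1}$ and $E_6$, whereas the $w_0=-1$ twist in Case~1 makes the verification essentially trivial. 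If you want to pursue the all-Coxeter variant, you must replace the ``differences of conjugates'' heuristic with an actual valuation check on every coroot orbit for each Lie type; otherwise, adopt the paper's dichotomy between $-1\in W\delta$ and its complement.
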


\begin{proof}

	We first claim that it suffices to show that $G$ contains an elliptic maximal torus $T$ such that for every $n \in \bZ_{\geq 1}$, there exists a $G$-generic element $X_n \in \ft^*$ of depth $-r$ in the sense of \cite[\S~8]{Yu01} with $n < r \leq n+1$. Suppose that $T$ is such a torus accommodating such an $X_n$ for every $n$. Let $y \in \sA(T, F') \cap \sB(G, F)$ as in \S\ref{sec:0-toral}. Then we can compose $X_n$ with the fixed additive character $\Psi$ to obtain a generic character $\phi$ of $\ft_{r}/\ft_{r+}=\ft_{y,r}/\ft_{y,r+}\simeq T_{y,r}/T_{y,r+}$, which we can view as a character of $T_{y,r}$. Since the character takes image in the divisible group $\mathbb{C}^*$,
	we can extend it to a character of $T(F)$, which we also denote by $\phi$. Then $\phi$ is a $G$-generic character of $T(F)$ of depth $r$ and $(T, r, \phi)$ is a 0-toral datum.
	
	Let us show the existence of $T$ and $X_n$ as above.
For simplicity we write $X$ for $X_n$ when there is no danger of confusion.
	Since over a non-archimedean local field every anisotropic, maximal torus transfers to all inner forms (see \cite[\S10]{Kot86} and \cite[Lemma~3.2.1]{Kal-reg}), we may assume without loss of generality that $G$ is \emph{quasi-split}. (See \cite[3.2]{Kal-reg} to review what transfer of maximal tori means.) Let $\Tsp$ be a maximal torus of $G$ that is maximally split. Denote by $X_*(\Tsp)$ the group of cocharacters of $\Tsp \times_F F^{\textup{sep}}$. Let $E'$ be the splitting field of $\Tsp$, which is finite Galois over $F$. Then the action of $\Gal(F^{\textup{sep}}/F)$ on $X_*(\Tsp)$ factors through $\Gal(E'/F)$. We denote by $\Wsp=N(\Tsp)/\Tsp$ the Weyl group (scheme) of $\Tsp \subset G$.

  Before we go further, we outline the strategy for the rest of the proof. Firstly, we will choose a finite Galois extension $E/F$ (in $F^{\textup{sep}}$) as well as a 1-cocycle of $\Gal(E/F)$ with values in $\Wsp$.  A careful choice will allow us to ``twist'' $\Tsp$ to an elliptic maximal torus $T$ over $F$. Put $\tilde E:=EE'$. (In fact we will have $\tilde E=E$ except possibly for type $D_{2N}$.)
  Secondly, we choose $r\in (n,n+1]$ in such a way that the remaining steps will work and describe an $\cO_{\tilde E}$-linear functional $X$ on the finite free $\cO_{\tilde E}$-module $\ft(\tilde E)_r$ by fixing a convenient basis for $\ft(\tilde E)_r$. Thirdly, we make explicit the conditions that $X\in \ft^*$ (namely $X$ is $\Gal(\tilde E/F)$-equivariant as a linear functional) and that $X$ is $G$-generic of depth $-r$. This yields a list of constraints for the coordinates of $X$ for the fixed (dual) basis. Finally we exhibit a choice of coordinates satisfying all constraints.

  We are about to divide the proof into two cases. The case of type $D_{2N}$ is to receive a special treatment. Assuming $G$ is not of type $D_{2N}$, we define
  $$\delta\in \Aut_{\Z}(X_*(\Tsp))$$
  encoding the action of $\Gal(E'/F)$ on $X_*(\Tsp)$ as follows.
  If $G$ is split, then $E'=F$ and define $\delta$ to be the identity automorphism.
  If $G$ is not split (still excluding $D_{2N}$), then $[E':F]=2$. Write $\sigma$ for the nontrivial element of $\Gal(E'/F)$, and let $\delta$ stand for the action of $\sigma$ in this case.

 Viewing the absolute Weyl group $W:=\Wsp(F^{\textup{sep}})$ as a subgroup of $\Aut_{\Z}(X_*(\Tsp))$, we distinguish two cases as follows.
	
\vspace{.1in}
	
	\noindent \textbf{Case 1: $G$ is of type $D_{2N}$, or $-1 \in W\delta$}\\
	First suppose that $G$ is not of type $D_{2N}$. Let $E$ be a quadratic extension of $F$ that contains $E'$. Then there exists $w \in \Wsp(E)$ such that $w\delta =-1$ (i.e. multiplication by $-1$ on $X_*(\Tsp)$). Let $\bar f: \Gal(F^{\textup{sep}}/F) \ra W$ be the group homomorphism that factors through $\Gal(E/F)$ and sends the non-trivial element $\sigma$ of $\Gal(E/F)$ to $w$. Then $\bar f$ is a 1-cocycle giving an element of the Galois cohomology $H^1(F, \Wsp)$, because $\sigma \in \Gal(E/F)$ acts on $W$ via conjugation by $\delta$.
	
	If $G$ is of type $D_{2N}$, observe that $\Gal(E'/F)$ is one of the following groups: $\{0\},~ \bZ/2\bZ, ~ \bZ/3\bZ$, or $S_3$. By \cite[Lemma~2.2]{Fintzen-good} there exists a quadratic extension $E$ of $F$ such that $E \cap E'= F$, i.e. $\Gal(EE'/F)=\Gal(E/F) \times \Gal(E'/F)$ canonically. Since the center of  $W$ is $\{\pm 1\}$ in this case (\hspace{1sp}\cite[3.19~Cor., Table~3.1, 6.3~Prop.(d)]{Hum90}), we have $-1 \in \Wsp(F)$. Then the group homomorphism $\bar f: \Gal(F^{\textup{sep}}/F) \ra W$ defined by factoring through $\Gal(E/F)$ and sending the non-trivial element $\sigma$ of $\Gal(E/F)$ to $-1$ is an element of $H^1(F, \Wsp)$.

	By \cite[Main~Theorem~1.1]{Rag04} every element of $H^1(F, \Wsp)$ lifts to an element in $H^1(F, N(\Tsp))$ that is contained in $\ker(H^1(F,N(\Tsp)) \ra H^1(F, G))$. Let us denote by $f$ such a lift of $\bar f$ (in both cases). Then $f$ gives rise to (the conjugacy class of) a maximal torus $T$ in $G$ over $F$. The torus $T$ is split over $EE'$, and the nontrivial element $\sigma$ of $\Gal(E/F) \leq \Gal(EE'/F)$ acts on $X_*(T)$ via $\overline{f}(\sigma)\delta=-1$, where we set $\delta=1$ if $G$ is of type $D_{2N}$.

 Hence $T$ is an elliptic maximal torus of $G$.
	
	To avoid convoluted notation, we separate cases according as the quadratic extension $E/F$ is unramified or ramified. Write $\wt E=EE'$.
	
	Suppose first that $E/F$ is unramified, and let $\varpi_F$ be a uniformizer of $F$. Since $p > \Cox(G)$, hence $p \nmid \abs{W}$, the $\cO_{\wt E}$-module $\ft(\wt E)_{y,n+1}$ is spanned by $\{\varpi_F^{n+1} H_{\check \alpha}\}_{\check \alpha \in \check \Delta}$ as a free module, where $\check \Delta$ is a choice of simple coroots of $T \times_F \wt E$  and $H_{\check \alpha}=d\check\alpha(1)$, where $d\check\alpha$ denotes the map $\bG_a \ra \Lie(T \times_F \wt E)$ induced by the coroot $\check \alpha: \bG_m \ra T$. If $G$ is of type $D_{2N}$ we assume in addition that $\check \Delta$ is preserved under the action of $\Gal(\tilde E/E)$, which is possible by the definition of $T$ and $G$ being quasi-split. (Choose  $\check \Delta$ corresponding to a Borel subgroup of $G_E$ over $E$ containing $T_E$, noting that $T_E\simeq \Tsp_E$.)
	Let $a \in E$ with $v(a)=0$ such that $\sigma(a)=-a$, which exists by Lemma \ref{Lemma-a}. Let $X$ be the $\cO_{\wt E}$-linear functional on $\ft(\wt E)_{y,n+1}$ defined by $X(\varpi_F^{n+1} H_{\check\alpha})=a$ for $\check \alpha \in \check \Delta$. Then
	$$X(\sigma(\varpi_F^{n+1} H_{\check\alpha}))=X(\varpi_F^{n+1} H_{-\check\alpha})=X(-\varpi_F^{n+1} H_{\check\alpha})=-a=\sigma(a)=\sigma(X(\varpi_F^{n+1} H_{\check\alpha})), $$
	 and $X$ is also stable under $\Gal(E'/F)$. Hence $X$ defines an $\cO_F$-linear functional on $\ft_{x,n+1}=(\ft(\wt E)_{x,n+1})^{\Gal(\wt E/F)}$. Note that for every coroot $\check \beta$ (of $G_{\wt E}$ with respect to $T_{\wt E}$), we have $X(H_{\check \beta})=m_{\check \beta} a \varpi_F^{-(n+1)}$ for some non-zero integer $m_{\check \beta}$ with $-\Cox(G) < m_{\check \beta} < \Cox(G)$, because the sum of the coefficients of the highest coroot in terms of simple coroots is $\Cox(G)-1$. Hence, since $p > \Cox(G)$, we have $v(X(H_{\check \beta}))=-(n+1)$ for all coroots $\check \beta$ of $G_{\wt E}$ with respect to $T_{\wt E}$. Thus $X$ is $G$-generic of depth $n+1$ in the sense of \cite[\S~8]{Yu01} by our assumption on $p$ and \cite[Lemma~8.1]{Yu01}.

	It remains to treat the case that $E/F$ is totally ramified. Since $p \neq 2$, the quadratic extension $E/F$ is tamely ramified and we may choose a uniformizer $\varpi_E$ of $E$ such that $\sigma(\varpi_E)=-\varpi_E$. Then, as $p > \Cox(G)$, the $\cO_{\wt E}$-module  $\ft(\wt E)_{y,\frac{2n+1}{2}}$ is generated by $\{\varpi_E^{2n+1}H_{\check\alpha}\}_{\check \alpha \in \check\Delta}$, where $\check \Delta$ is chosen as above. We define an $\cO_{\wt E}$-linear functional $X$ on $\ft(\wt E)_{y,\frac{2n+1}{2}}$ by setting $X(\varpi_E^{{2n+1}}H_{\check \alpha})=1$ for $\check \alpha \in \check \Delta$. Then $X(\sigma(\varpi_E^{{2n+1}}H_{\check \alpha}))=1$, and, if $G$ is of type $D_{2N}$, then $X(\tau(\varpi_E^{{2n+1}}H_{\check \alpha}))=X(\varpi_E^{{2n+1}}H_{\tau(\check\alpha)})=1$ for $\tau\in \Gal(\tilde E/E)$. Thus $X$ descends to an $\cO_F$-linear functional on $\ft_{y,\frac{2n+1}{2}}$. Moreover, $X$ is $G$-generic of depth $n+\frac{1}{2}$, because $p > \Cox(G)$. This concludes Case 1.
	
\vspace{.1in}
	
	\noindent  \textbf{Case 2: $G$ is not of type $D_{2N}$ and  $-1 \notin W\delta$}\\
	We claim that in this case $G$ is a split group of type $A_N$, $D_{2N+1}$ or $E_6$ for some integer $N\ge 1$. (In the $D_{2N+1}$-case, $N\ge 2$.) If $G$ is split, this follows from \cite[3.19~Cor.~and Table~3.1]{Hum90}. If $G$ is non-split and not of type $D_{2N}$, then $G$ is of type $A_N$, $D_{2N+1}$ or $E_6$ and $\delta$ is induced by the non-trivial Dynkin diagram automorphism with respect to some set of simple coroots $\check\Delta$. Let $w_0$ be the longest element in the Weyl group $W$ (for simple reflections corresponding to $\check \Delta$). Then $w_0(\check\Delta)=-\check\Delta$. Since $-1 \notin W$ for type $A_N$, $D_{2N+1}$ and $E_6$, we have $w_0 \neq -1$ and therefore $-w_0$ is a nontrivial automorphism of $X_*(T^{\rm{sp}})$ that preserves $\check\Delta$. Hence $-w_0=\delta$, i.e. $-1=w_0\delta \in W\delta$. Thus none of the non-split groups appears in Case 2.
	
	We treat the three cases separately.
When $G$ is a \textbf{split group of type $D_{2N+1}$ or $E_6$}, we exhibit the existence of an appropriate $G$-generic element $X$ in the Lie algebra of a suitable torus by explicit calculations; the details are deferred to Appendix \ref{app:Dodd} and Appendix \ref{app:E6}.

  The remaining case is when \textbf{$G$ is a split group of type $A_N$}. We denote by $\check \alpha_1, \hdots, \check \alpha_N$ simple coroots such that the simple reflections $s_{\check \alpha_i}$ and $s_{\check \alpha_j}$ commute if and only if $\abs{i-j} \neq 1$. Then the Weyl group $W$ contains an element $w$ of order $N+1$ such that $w(\check \alpha_i)=\check \alpha_{i+1}$ for $1\leq i < N$ and $w(\check \alpha_N)=-\sum_{1 \leq i \leq N} \check \alpha_i$. We denote by $E$ the unramified extension of $F$ of degree $N+1$, and let $\sigma$ be a generator of $\Gal(E/F)$.
 Then the map $f: \Gal(F^{\textup{sep}}/F) \twoheadrightarrow \Gal(E/F) \ra W$ defined by sending $\sigma$ to $w$ is an element of $H^1(F,W)$ that gives rise to (the conjugacy class of) an elliptic maximal torus $T$ in $G$ over $F$. Since $p > \Cox(G)=N+1$, the set $\{\varpi_F^{n+1} H_{\check \alpha_i}\}_{1 \leq i \leq N}$ forms an $\cO_E$-basis for $\ft(E)_{y,n+1}$. Let $a \in E$ be an element of valuation zero such that $\sum_{1 \leq i \leq N+1} \sigma^i(a)=0$ and the image $\bar a$ of $a$ in the residue field $k_E$ is a generator for the field extension $k_E/k_F$ (see Lemma \ref{Lemma-a}). Then the linear functional $X$ on $\ft(E)_{y,n+1}$ defined by $X(\varpi_F^{n+1}H_{\check \alpha_i})=\sigma^{i-1}(a)$ descends to an $\cO_F$-linear functional on $\ft_{y,n+1}$.
 Moreover we claim that
 $$v(a+\sigma(a)+\hdots +\sigma^j(a))=0,\qquad \textup{for}\quad 1 \leq j \leq N-1.$$
  Suppose this is false. Then $\bar a+\sigma(\bar a)+\hdots +\sigma^j(\bar a)=0$, where $\bar a$ denotes the image of $a$ in the residue field $k_E$.
  We apply $\sigma$ to the last equation and subtract the original equation to obtain
   that $\sigma^{j+1}(\bar a)=\bar a$. This contradicts that $\bar a$ is a generator of $k_E/k_F$ because $j+1<N+1$.
	Since all coroots of $A_N$ are of the form $\check \alpha_i + \check \alpha_{i+1} + \hdots +\check \alpha_j$ for $1 \leq i \leq j \leq N$, we deduce that $X$ is $G$-generic of depth $n+1$.

\end{proof}

\begin{corollary} \label{cor:0-toral-abundance-adjoint}
	Let $G$ be an adjoint simple reductive group over $F$ that is tamely ramified. Assume that $p > \Cox(G)$. Then there exists a tamely ramified elliptic maximal torus $T$ of $G$ that enjoys the following property: For every $n\in \Z_{\ge1}$, there exists a real number $r$ with $n < r \leq n+1$ and a character $\phi:T(F) \ra \mathbb{C}^*$ of depth $r$ such that $(T, r, \phi)$ is a 0-toral datum.
\end{corollary}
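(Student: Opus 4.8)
The plan is to deduce the corollary from Proposition~\ref{prop:0-toral-abundance} by a restriction-of-scalars argument. Since $G$ is $F$-simple and adjoint, the structure theory of reductive groups provides a finite separable extension $E/F$ and an absolutely simple adjoint group $H$ over $E$ with $G\cong \Res_{E/F}H$ (adjointness of $H$ follows from $Z(\Res_{E/F}H)=\Res_{E/F}Z(H)$). The absolute root system of $G$ is a disjoint union of $[E:F]$ copies of that of $H$, so $\Cox(H)=\Cox(G)$ and the hypothesis $p>\Cox(H)$ needed for Proposition~\ref{prop:0-toral-abundance} holds over $E$. Moreover, since $G$ is tamely ramified over $F$, the extension $E/F$ is tamely ramified: any maximal torus of $G$ has the form $\Res_{E/F}S$, and a tame splitting field $L/F$ for it (which we may enlarge so as to contain the Galois closure of $E/F$) must contain $E$, whence $p\nmid e(E/F)$.

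Next I would record the dictionary between $G$ over $F$ and $H$ over $E$ furnished by Weil restriction. Maximal tori correspond via $T=\Res_{E/F}S$, with $T(F)=S(E)$; ellipticity transfers, i.e.\ $T$ is $F$-anisotropic if and only if $S$ is $E$-anisotropic (using $Z(G)=Z(H)=1$, and Shapiro's lemma on cocharacter lattices); and tameness transfers, i.e.\ $T$ splits over a tame extension of $F$ if and only if $S$ splits over a tame extension of $E$ (using that $E/F$ is tame). Restriction of scalars is also compatible with Bruhat--Tits theory: $\sB(G,F)=\sB(H,E)$, the apartment $\sA(T,F')$ is identified with the apartment of $S$ over the corresponding extension of $E$, and with respect to the valuation $v$ of $\ol F$ one has $(\Res_{E/F}S)(F)_{y,s}=S(E)_{y,s}$; hence a smooth character of $T(F)=S(E)$ has the same depth (measured with $v$) whether regarded for $G/F$ or for $H/E$. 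Finally, Yu's genericity condition of \cite[\S8--\S9]{Yu01} for such a character relative to $G$ is equivalent to the one relative to $H$, because $\mathfrak g=\Res_{E/F}\mathfrak h$ identifies the coroots and the relevant Moy--Prasad lattices in $\mathfrak t^{*}$ entering conditions GE1--GE2. Combining these, $(T,r,\phi)$ is a \zerotoraldatum/ for $G$ (with $r$ the $v$-depth) if and only if $(S,r,\phi)$ is a \zerotoraldatum/ for $H$ (with $r$ the $v$-depth).

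Now I would apply Proposition~\ref{prop:0-toral-abundance} to $H$ over $E$ to obtain a tamely ramified elliptic maximal torus $S$ of $H$ such that for every $m\in\Z_{\ge 1}$ there is a \zerotoraldatum/ $(S,\rho_m,\phi_m)$ with $m<\rho_m\le m+1$, where $\rho_m$ is the depth in the normalization of \emph{loc.\ cit.}\ (so that a uniformizer of $E$ has valuation $1$); thus $\rho_m=e(E/F)\cdot(\text{the }v\text{-depth of }\phi_m)$. Set $T:=\Res_{E/F}S$, a tamely ramified elliptic maximal torus of $G$ by the previous paragraph. Given $n\in\Z_{\ge 1}$, put $m:=\lceil n\,e(E/F)\rceil$; then $m\ge n\,e(E/F)\ge n\ge 1$, and since $m< n\,e(E/F)+1\le (n+1)\,e(E/F)$ while $m$ and $(n+1)\,e(E/F)$ are integers, we get $m+1\le (n+1)\,e(E/F)$. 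Hence the $v$-depth $r:=\rho_m/e(E/F)$ of $\phi_m$ satisfies $n\le m/e(E/F)< r\le (m+1)/e(E/F)\le n+1$, so $(T,r,\phi_m)$ is a \zerotoraldatum/ for $G$ with $n<r\le n+1$, as required. (When $E=F$ this is just Proposition~\ref{prop:0-toral-abundance}.)

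The substance is entirely in the second paragraph, and all of it is standard material on Weil restriction and Bruhat--Tits theory; the only points requiring real care are the equivalence of Yu's genericity for $\Res_{E/F}H$ with that for $H$ and the consistent bookkeeping of the two valuation normalizations on $E$ (once that is pinned down, the interval condition $n<r\le n+1$ is pure arithmetic in the integer $e(E/F)\ge 1$). I do not anticipate a genuine obstacle.
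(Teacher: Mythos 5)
Your proof is correct and follows essentially the same route as the paper's: both write $G\cong\Res_{E/F}H$ for a tamely ramified extension $E/F$ and an absolutely simple adjoint group $H$, apply Proposition~\ref{prop:0-toral-abundance} to $H$ over $E$, transfer the resulting torus and 0-toral data to $G$ via restriction of scalars, and reconcile the two depth normalizations by dividing by $e(E/F)$. The only cosmetic difference is that the paper first reduces to producing a $G$-generic element in $\ft^*$ and then verifies genericity directly across the Galois orbit of coroots, whereas you assert the transfer of genericity for characters; the underlying check (Galois-invariance of the valuation applied to the decomposition of $\check\Phi$ over $\Hom_F(E,\overline F)$) is the same.
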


\begin{proof}
  As in the proof of Proposition \ref{prop:0-toral-abundance}, we may and will show the existence of $G$-generic elements of depth $-r$ in the Lie algebra instead of $G$-generic characters of depth $r$.

  By assumption, we can take $G=\Res_{F'/F} G'$ for a finite tamely ramified extension $F'/F$ and an absolutely simple adjoint group $G'$ over $F'$.
  As usual $F'$ is a subfield of $F^{\sep}$; the inclusion is denoted by $\iota_0$.
  By Proposition \ref{prop:0-toral-abundance} there exists a tamely ramified elliptic maximal torus $T'$ of $G'$ over $F'$ such that for each $n'\in \Z_{\ge 1}$, there exists $X\in \ft'(F')$ which is $G'$-generic of depth $-r'$ with $n'<r'\le n'+1$ when the depth is normalized with respect to the valuation  $v':=e \, v : F' \twoheadrightarrow \bZ \cup \{\infty\}$, where $e$ denotes the ramification index of $F'/F$. Writing $\check\Phi'$ for the set of coroots of $G'_{F^{sep}}$ with respect to $T'_{F^{sep}}$, we have
  \begin{equation}\label{eq:G'-generic}
   v'(X(H_{\check\beta}))=-r',\qquad \forall \check\beta\in \check\Phi'.
  \end{equation}
  Then $T:=\Res_{F'/F} T'$ is an elliptic maximal torus of $G$ over $F$. Under the canonical isomorphism $\ft(F)\simeq \ft'(F')$,
  we will show that $X$ viewed as an element of $\ft(F)$ is $G$-generic of depth $-r'/e$. Clearly this finishes the proof. (For each $n$, consider $n'=en$ so that $n<r'/e\le n+\frac{1}{e}\le n+1$.)

  We need some preparation. Over $F^{\sep}$ we have compatible direct sum decompositions
  $$\begin{array}{ccccc}
    \mathfrak{g}(F^{\sep}) & = & \mathfrak{g}'(F')\otimes_F F^{\sep} & = & \bigoplus\limits_{\iota\in \Hom_F(F', F^{\sep})} \mathfrak{g}'(F')\otimes_{F',\iota} F^{\sep}\\
        \cup  &  & \cup & & \cup \\
       \ft(F^{\sep}) & = & \ft'(F')\otimes_F F^{\sep} & = & \bigoplus\limits_{\iota\in \Hom_F(F', F^{\sep})} \ft'(F')\otimes_{F',\iota} F^{\sep}
  \end{array}$$
  Correspondingly the set $\check\Phi$ of coroots of $G_{F^{\sep}}$ with respect to $T_{F^{\sep}}$ decomposes as
  $$\check\Phi = \coprod_{\iota\in \Hom_F(F',F^{\sep})} \check\Phi'_{\iota},$$
 where $\check\Phi'_{\iota}$ is the set of coroots of $G'\times_{F',\iota} F^{\sep}$ with respect to $T'\times_{F',\iota} F^{\sep}$, so that $\check\Phi'=\check\Phi'_{\iota_0}$.

  We are ready to verify that $X$ is $G$-generic of depth $-r'/e$. Namely let us show
    \begin{equation}\label{eq:G-generic}
    v(X(H_{\check\alpha}))=-r'/e,\qquad \forall \check\alpha\in \check\Phi.
    \end{equation}
  Recall that $v'=ev$. So the condition holds for $\check\beta\in \check\Phi'=\check\Phi'_{\iota_0}$ by \eqref{eq:G'-generic}.
  But $\Gal(F^{\sep}/F)$ acts on $\check\Phi$ in a way compatible with its action on the index set $\Hom_F(F',F^{\sep})$. Thus
  the $\Gal(F^{\sep}/F)$-orbit of $\check\Phi'$ exhausts $\check\Phi$. On the other hand, since the valuation is Galois-invariant,
  $$-r'/e= v(X(H_{\check\beta})) = v(\sigma(X(H_{\check\beta}))) = v(X(H_{\sigma(\check\beta)})),\qquad \sigma\in \Gal(F^{\sep}/F),\quad \check\beta\in \check\Phi'.$$
  This completes the proof of \eqref{eq:G-generic}. We are done.
\end{proof}

\begin{proposition}\label{prop:1-toral-abundance}
	Let $G$ be a tamely ramified reductive group over $F$.
	Assume that $p > \Cox(G)$. Then there exists an elliptic maximal torus $T$ of $G$ that splits over a tamely ramified extension with the following property: For every $n \in \Z_{\ge1}$, there exists a \onetoraldatum/ with $G^0=T$, $n < r_0 \leq r_{d-1} \leq n+1$ and, if $G$ is not a torus, then the characters $\phi_0, \hdots, \phi_{d-1}$ are trivial on the center of $G$.
\end{proposition}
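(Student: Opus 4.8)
The plan is to reduce to the case that $G$ is a direct product of adjoint simple groups, and then to assemble a $1$-toral datum by stacking the $0$-toral data produced for the individual simple factors by Corollary \ref{cor:0-toral-abundance-adjoint}, grouping the factors according to the depths of their $0$-toral characters.

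If $G=T$ is a torus there is nothing to do beyond producing a smooth character of $G(F)$ of some depth $r_0\in(n,n+1]$; such a character exists because for a tamely ramified torus the Moy--Prasad quotients $G(F)_{y,s}/G(F)_{y,s+}$ are nontrivial for a set of $s$ that meets every unit interval. So assume $G$ is not a torus. I would first pass to the adjoint group. The group $G_{\ad}$ is tamely ramified with $\Cox(G_{\ad})=\Cox(G)<p$, and it is a direct product $G_{\ad}=\prod_{j=1}^{m}G_j$ (with $m\ge1$) of adjoint simple groups, each tamely ramified with $\Cox(G_j)\le\Cox(G)<p$. Granting a $1$-toral datum $((M^0,\dots,M^d),(r_0,\dots,r_{d-1}),(\psi_0,\dots,\psi_{d-1}))$ for $G_{\ad}$ with $M^0=T_{\ad}$ and $n<r_0\le r_{d-1}\le n+1$, I would pull it back along $\pi\colon G\twoheadrightarrow G_{\ad}$, taking $T:=\pi^{-1}(T_{\ad})$, $G^i:=\pi^{-1}(M^i)$, and $\phi_i:=\psi_i\circ\pi|_{G^i(F)}$. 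Since $\ker\pi=Z(G)$ is contained in every maximal torus of $G$, each $G^i$ is a connected twisted Levi subgroup containing $T$; it splits over a tame extension (it contains $T$, which is an extension of the tamely ramified torus $T_{\ad}$ by the tamely ramified group $Z(G)$, hence has unipotent — thus trivial — inertia action on its torsion-free character lattice); the torus $T$ is elliptic because $T/Z(G)\cong T_{\ad}$ is anisotropic; and $\phi_i$ is trivial on $Z(G)(F)$ because $Z(G)=\ker(G^i\to M^i)$. The two facts to check are that $\phi_i$ keeps the depth $r_i$ and stays $G^{i+1}$-generic: genericity survives because a central isogeny preserves coroots, so $d\pi$ carries the relevant $H_{\check\alpha}$ to the corresponding coroot vectors for $G_{\ad}$, and the depth survives because $p>\Cox(G)$ forces $p\nmid|\pi_0(Z(G))|$, whence $\pi$ restricts to a surjection $G^i(F)_{y,s}\twoheadrightarrow M^i(F)_{y,s}$ for every $s>0$.

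It remains to build, for $G=\prod_{j=1}^{m}G_j$ with each $G_j$ adjoint simple and tamely ramified and $p>\Cox(G_j)$, a $1$-toral datum for each $n\ge1$. Fix $n$. By Corollary \ref{cor:0-toral-abundance-adjoint} there are, for every $j$, a tamely ramified elliptic maximal torus $T_j\subseteq G_j$ and a $0$-toral datum $(T_j,r_j,\phi^{(j)})$ with $n<r_j\le n+1$; put $T:=\prod_jT_j$, an elliptic maximal torus of $G$ splitting over the tame compositum of the splitting fields. Relabelling, assume $r_1\le\cdots\le r_m$, let $s_1<\cdots<s_d$ be the distinct values among the $r_j$, and set $I_\ell:=\{\,j:r_j=s_\ell\,\}$, a block of consecutive indices. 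For $0\le\ell\le d$ put $G^\ell:=\big(\prod_{j\in I_1\cup\cdots\cup I_\ell}G_j\big)\times\big(\prod_{j\in I_{\ell+1}\cup\cdots\cup I_d}T_j\big)$, which also equals $\Cent_G\big(\prod_{j\notin I_1\cup\cdots\cup I_\ell}T_j\big)$; then $G^0=T$, $G^d=G$, the $G^\ell$ form a strictly increasing chain of twisted Levi subgroups containing $T$ and splitting over the same tame extension, and I declare the $\ell$-th depth of the datum to be $s_\ell$. Let $\phi_{\ell-1}$ be the character of $G^{\ell-1}(F)=\big(\prod_{j\in I_1\cup\cdots\cup I_{\ell-1}}G_j(F)\big)\times\big(\prod_{j\in I_\ell\cup\cdots\cup I_d}T_j(F)\big)$ that equals $\phi^{(j)}$ on the $T_j(F)$-component for each $j\in I_\ell$ and is trivial on every other component. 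Then $\phi_{\ell-1}$ has depth $s_\ell$, and it is $G^\ell$-generic of depth $s_\ell$: the associated element $X_{\ell-1}$ is supported on the $\ft_j$ with $j\in I_\ell$, hence lies in $(\Lie Z(G^{\ell-1}))^*$; it satisfies $v(X_{\ell-1}(H_{\check\alpha}))=-s_\ell$ for every coroot $\check\alpha$ of $G^\ell$ not of $G^{\ell-1}$, since those are precisely the coroots of the $G_j$ with $j\in I_\ell$, where the assertion is the $G_j$-genericity of $\phi^{(j)}$; and $\Cent_{G^\ell}(X_{\ell-1})=G^{\ell-1}$. Finally $n<s_1\le s_d\le n+1$ and $s_d\le n+1<s_1+1$, and all $\phi_{\ell-1}$ are trivial on $Z(G)=\prod_jZ(G_j)=1$. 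This is the required $1$-toral datum.

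The main obstacle I anticipate is exactly this last assembly: converting the unordered list of $0$-toral depths $r_j\in(n,n+1]$, one per simple factor, into a genuine twisted Levi chain whose successive depths are strictly increasing and confined to a single unit interval. Grouping factors of equal depth into the blocks $I_\ell$ is the mechanism that forces the chain and the accompanying tensor-product characters to satisfy the definition of a $1$-toral datum, and once that is in place the genericity verification is mechanical. A secondary technical point, confined to the reduction step, is the compatibility of Moy--Prasad filtrations under $\pi\colon G\to G_{\ad}$, where the hypothesis $p>\Cox(G)$ enters once more through $p\nmid|\pi_0(Z(G))|$.
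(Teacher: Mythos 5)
Your proof is correct and takes essentially the same route as the paper: obtain 0-toral data on each adjoint simple factor of $G_{\ad}$ from Corollary \ref{cor:0-toral-abundance-adjoint}, lift to $G$ using $p\nmid|\pi_1(G_{\ad})|$ to preserve depths, and group the factors by their depths to form the twisted Levi chain and accompanying characters. Your two-step organization (pull back from $G_{\ad}$, then assemble on $G_{\ad}$) and the paper's one-shot construction of $T=\pr^{-1}(T_1\times\cdots\times T_N)$ with $G^k=T\,\pr^{-1}\bigl(\prod_{1\le m\le i_{k+2}-1}G_m\bigr)$ differ only in bookkeeping.
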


\begin{proof}
	If $G$ is a torus, then $T=G$, and for every $n \in \Z_{\ge1}$ there exists $n < r_0 \leq n+1$ such that $T_{r_0} \neq T_{r_0+}$.
	 (This follows, for example, from the Moy--Prasad isomorphism from the analogous assertion for the Lie algebra of $T$; the latter is easy since an increase of 1 in the index corresponds to multiplication by a uniformizer.)
	 Hence there exists a (non-trivial) depth-$r_0$ character $\phi_0$ of $T$, and $((T,G), (r_0), (\phi_0))$ is a \onetoraldatum/.
	
	So assume for the remainder of the proof that $G$ is not a torus.
Let $G_1, \hdots, G_N$ be the simple factors of the adjoint quotient $G_{\ad}$ of $G$. Then there exists  a surjection $\pr: G \ra \prod_{1 \leq i \leq N} G_i$ whose kernel is the center of $G$. For $1 \leq i \leq N$, let $T_i$ be an elliptic maximal torus of $G_i$, and $\phi_i: T_i(F) \ra \mathbb{C}^*$ a character of depth $r_i$ as provided by Proposition \ref{cor:0-toral-abundance-adjoint}. Set $T:=\pr^{-1}(T_1 \times \hdots \times T_N)$. Since $p > \Cox(G)$, the prime $p$ does not divide the order of $\pi_1(G_{\ad})$, and hence the depth of $\phi_i \circ \pr_i: T(F) \ra \mathbb{C}^*$ is $r_i$, where $\pr_i$ denotes the composition of $\pr$ with the projection $ \prod_{1 \leq m \leq N} G_m \twoheadrightarrow G_i$.
We assume without loss of generality that $r_1 \leq r_2 \leq \hdots \leq r_N$. Let $1 = i_1 < i_2 <   \hdots < i_j \leq N$ be integers such that
$$r_{i_1} = \hdots = r_{i_2-1} < r_{i_2} = \hdots = r_{i_3-1} < r_{i_3} = \hdots < r_{i_{j}} = \hdots = r_N.$$
  For $0 \leq k \leq j-1 $, set $\wt r_k=r_{i_{k+1}}$, and define $G^k :=T\pr^{-1}(\prod_{1 \leq m \leq i_{k+2}-1}G_m) \subset G$ and
   $\phi^T_{k}:=\prod_{i_{k+1} \leq m \leq i_{k+2}-1} (\phi_m \circ \pr_m)|_{G^k}$. Then $$((G^0, \hdots, G^{j-1}, G^j:=G), (\wt r_0, \hdots, \wt r_{j-1}), (\phi^T_0, \hdots, \phi^T_{j-1}))$$
   is a desired \onetoraldatum/.
\end{proof}

  As a corollary of the above proof when $N=1$, we strengthen Corollary \ref{cor:0-toral-abundance-adjoint} to allow non-adjoint groups.

\begin{corollary} \label{cor:0-toral-abundance}
	Let $G$ be a tamely ramified reductive group over $F$ whose adjoint quotient $G_{\ad}$ is simple. Assume that $p > \Cox(G)$. Then there exists a tamely ramified elliptic maximal torus $T$ of $G$ that enjoys the following property: For every $n\in \Z_{\ge1}$, there exists a real number $r$ with $n < r \leq n+1$ and a character $\phi:T(F) \ra \mathbb{C}^*$ of depth $r$ such that $(T, r, \phi)$ is a 0-toral datum.
\end{corollary}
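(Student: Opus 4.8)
The plan is to derive the corollary from the adjoint case, Corollary~\ref{cor:0-toral-abundance-adjoint}, by transporting a \zerotoraldatum/ along the adjoint quotient $\pr\colon G\twoheadrightarrow G_{\ad}$ (whose kernel is $Z(G)$). Concretely this is the $N=1$ specialization of the construction in the proof of Proposition~\ref{prop:1-toral-abundance}: when $G_{\ad}$ is simple, the chain of twisted Levi subgroups produced there has length one, so the \onetoraldatum/ it yields is already a \zerotoraldatum/.

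First I would apply Corollary~\ref{cor:0-toral-abundance-adjoint} to $G_{\ad}$ — this is legitimate since $G$ tamely ramified forces $G_{\ad}$ tamely ramified, and $\Cox(G_{\ad})=\Cox(G)<p$ — obtaining a tamely ramified elliptic (i.e.\ anisotropic) maximal torus $\ol T$ of $G_{\ad}$ such that for each $n\in\Z_{\ge1}$ there exist $r\in(n,n+1]$ and a $G_{\ad}$-generic character $\ol\phi\colon\ol T(F)\to\bC^*$ of depth $r$. Next I would set $T:=\pr^{-1}(\ol T)$. Because $Z(G)$ lies in every maximal torus of $G$, $T$ is a maximal torus of $G$ over $F$ with $\pr(T)=\ol T$; it is elliptic since $T/Z(G)^\circ$ is isogenous to the anisotropic torus $\ol T$, and it splits over a tamely ramified extension of $F$ (a compositum of tame splitting fields of $\ol T$ and of the central torus $Z(G)^\circ$). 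Putting $\phi:=\ol\phi\circ\pr\colon T(F)\to\bC^*$ and fixing $y\in\sB(G,F)\cap\sA(T,F')$ lying above the point chosen for $\ol T$, it then remains to check that $(T,r,\phi)$ is a \zerotoraldatum/.

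The essential input here — the same one used in the proof of Proposition~\ref{prop:1-toral-abundance} — is that $p>\Cox(G)$ implies $p\nmid|\pi_1(G_{\ad})|$, since for $G_{\ad}$ simple this order equals the connection index, which never exceeds $\Cox(G)$ by the table of Coxeter numbers above. Consequently $\pr$ restricts to a separable isogeny from the derived torus of $T$ onto $\ol T=T_{\ad}$, hence induces an isomorphism on the corresponding Lie algebras carrying each coroot vector $H_{\check\alpha}$ of $(G,T)$ to the corresponding one for $(G_{\ad},\ol T)$, and an isomorphism on the graded pieces of the relevant Moy--Prasad filtrations; in particular $\phi=\ol\phi\circ\pr$ still has depth $r$, with leading functional $X\in\mathfrak t^*_{-r}$ equal to the pullback along $d\pr$ of the leading functional $\ol X$ of $\ol\phi$. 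Since Yu's genericity conditions of \cite[\S8, \S9]{Yu01} for $X$ are expressed solely through the valuations $v(X(H_{\check\alpha}))$ over the coroots $\check\alpha$ and the root datum attached to $(G,T)$ — exactly the data matched up by $\pr$ with that of $(G_{\ad},\ol T)$ — the $G_{\ad}$-genericity of $\ol X$ of depth $-r$ yields the $G$-genericity of $X$ of depth $-r$. Hence $(T,r,\phi)$ is a \zerotoraldatum/, and letting $n$ range over $\Z_{\ge1}$ finishes the proof.

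The only step I expect to require genuine care is the transfer of Yu-genericity along $\pr$ in the last paragraph; but once $p$ is prime to the connection index, $\pr$ identifies the root data and the relevant Moy--Prasad filtrations of $(G,T)$ and $(G_{\ad},\ol T)$, so this is bookkeeping rather than a new idea — and it is precisely the verification implicit in the $N=1$ instance of the proof of Proposition~\ref{prop:1-toral-abundance}, which one may simply cite.
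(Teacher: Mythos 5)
Your proof is correct and is essentially the paper's own argument: the paper proves this corollary simply by citing "the above proof when $N=1$," i.e.\ the specialization of the proof of Proposition~\ref{prop:1-toral-abundance} to a single simple factor, and your writeup just makes that specialization explicit (construct $T=\pr^{-1}(\ol T)$, pull back $\ol\phi$, and use $p\nmid|\pi_1(G_{\ad})|$ — which follows from $p>\Cox(G)$ — to preserve depth and Yu-genericity).
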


\subsection{Construction of omni-supercuspidal types from 0-toral and 1-toral data}\label{subsec:main-construction}

  In this subsection we introduce the notion of omni-supercuspidal types and construct such types from 0-toral and 1-toral data.
  The results of the last subsection then allow us to deduce the main theorem that there exists a large family of omni-supercuspidal types in an appropriate sense.
   The following is the key definition of this section.

\begin{definition}\label{def:omni}
  Let $m\in \Z_{\ge1}$.
  An \textbf{omni-supercuspidal type} of level $p^m$ is a pair $(U,\lambda)$, where $U$ is an open compact subgroup of $G(F)$ and $\lambda:U\twoheadrightarrow \Z/p^m\Z$ is a smooth surjective group morphism such that $(U,\psi\circ \lambda)$ is a supercuspidal type for every nontrivial character $\psi:\Z/p^m\Z\ra \C^*$.
\end{definition}

\begin{remark}\label{rem:level-lowering}
  Let $m>m'\ge 1$ and write $\textup{pr}_{m,m'}:\Z/p^m\Z\twoheadrightarrow\Z/p^{m'}\Z$ for the canonical projection. If $(U,\lambda)$ is an omni-supercuspidal type of level $p^m$, then $(U,\textup{pr}_{m,m'}\circ \lambda)$ is an omni-supercuspidal type of level $p^{m'}$.
\end{remark}

We assume from here until Corollary \ref{cor:existence-omnisctypes} that $G$ is tamely ramified over $F$. (In Theorem \ref{thm:existence-omnisctypes} the group $G$ need not be tamely ramified.)
Fix a \onetoraldatum/
\begin{equation}\label{eq:one-toral-datum}
(G^0, \hdots, G^d),(r_0, \hdots, r_{d-1}), (\phi_0, \hdots, \phi_{d-1}))
\end{equation}
which yields the following input for Yu's construction
$$ ((G^0, \hdots, G^d), y, (r_0, \hdots, r_{d-1}, r_d=r_{d-1}), \rho=\one , (\phi_0, \hdots, \phi_{d-1}, \phi_d=1)). $$
Recall that we introduced an open compact subgroup $J^1_+$ and a smooth character $\hat \phi$ in \S\ref{subsec:1-toral}. The following allows us more flexibility compared to the supercuspidal type $(J^1_+,\hat\phi|_{J^1_+})$ we had before. Combined with Proposition \ref{prop:1-toral-abundance}, it provides an explicit construction of a character on a rather small compact open subgroup that detects supercuspidality. We expect this result to be of independent interest.

\begin{proposition}\label{prop:sc-type}
		Assume $p$ does not divide the order of the absolute Weyl group of $G$. Then
	for every group $U$ such that
	$$G(F)_{y,r_0} \subset U \subset G(F)_{y,\frac{r_d}{2}+},$$
	 the pair $(U, \hat \phi|_U)$ is a supercuspidal type.
\end{proposition}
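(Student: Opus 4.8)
The plan is to deduce Proposition~\ref{prop:sc-type} from Proposition~\ref{prop:J1-sc-type-2}, which has already established that $(J^1_+, \hat\phi|_{J^1_+})$ is a supercuspidal type. The statement to be proved extends this to any intermediate group $U$ with $G(F)_{y,r_0} \subset U \subset G(F)_{y,\frac{r_d}{2}+}$, so the strategy splits naturally into two directions: shrinking (passing from $J^1_+$ or a larger group down to a smaller one that still detects supercuspidality) and growing (enlarging the group).

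First I would record the elementary observation that the supercuspidal type property is monotone under \emph{shrinking} the subgroup in the following precise sense: if $(U_2, \chi_2)$ is a supercuspidal type and $U_1 \subset U_2$ is an open subgroup with $\chi_1 := \chi_2|_{U_1}$, then $(U_1, \chi_1)$ need \emph{not} be a supercuspidal type in general — containment can introduce spurious non-supercuspidal $\pi$. So the naive monotonicity is the wrong tool; instead the right statement runs the other way. Therefore the main point is really to show that \emph{enlarging} $J^1_+$ (or whatever small group one fixes) to $U \subset G(F)_{y,\frac{r_d}{2}+}$, or showing that any $U$ containing $G(F)_{y,r_0}$ inside $G(F)_{y,\frac{r_d}{2}+}$ works, preserves the property. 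Concretely, I would show two things: (1) $(G(F)_{y,r_0}, \hat\phi|_{G(F)_{y,r_0}})$ is a supercuspidal type, and (2) if $(U', \hat\phi|_{U'})$ is a supercuspidal type and $U' \subset U \subset G(F)_{y,\frac{r_d}{2}+}$, then $(U, \hat\phi|_U)$ is a supercuspidal type.

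For (2), the key is that if $\pi$ is irreducible smooth with $\Hom_U(\hat\phi|_U, \pi) \ne \{0\}$, then \emph{a fortiori} $\Hom_{U'}(\hat\phi|_{U'}, \pi) \ne \{0\}$, because a vector on which $U$ acts by $\hat\phi|_U$ is in particular a vector on which $U'$ acts by $\hat\phi|_{U'}$. Hence $\pi$ is supercuspidal by the hypothesis on $U'$. So the genuinely monotone direction \emph{is} enlargement: larger groups give stronger constraints, hence still pick out supercuspidals. This reduces everything to proving (1): that the \emph{smallest} admissible group $G(F)_{y,r_0}$, together with $\hat\phi$ restricted to it, already forms a supercuspidal type. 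Here I would re-examine the proof of Proposition~\ref{prop:J1-sc-type-2}: it extracts from the action of $J^1_+$ via $\hat\phi$ a truncated extended datum in the sense of \cite[Def.~4.1]{Fintzen-exhaustion} and then invokes \cite[Cor.~8.3]{Fintzen-exhaustion}. The elements $X_i \in \mathfrak{g}^*_{y,-r_i}$ and the genericity data are read off from $\hat\phi|_{H^{i+1}_{y,r_i,\frac{r_i}{2}+}}$, and one checks that $G(F)_{y,r_0}$ contains each $H^{i+1}_{y,r_i,\frac{r_i}{2}+}$ — indeed $H^{i+1}_{y,r_i,\frac{r_i}{2}+} \subset G(F)_{y,\frac{r_i}{2}+} \subset G(F)_{y,\frac{r_0}{2}+}$, and since $r_0 > 0$ we have $G(F)_{y,r_0}\subset G(F)_{y,\frac{r_0}{2}+}$... one must be careful here with the exact inclusions. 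The cleanest route is: every $H^{i+1}_{y,r_i,\frac{r_i}{2}+}$ lies inside $G(F)_{y,\frac{r_d}{2}+}$ (since $\frac{r_i}{2}+ \geq \frac{r_0}{2}+$ and... again check), so the \emph{same} truncated extended datum is recovered from the action of \emph{any} $U$ in the stated range, provided $U$ contains all the relevant $H^{i+1}_{y,r_i,\frac{r_i}{2}+}$. That containment is exactly what $G(F)_{y,r_0} \subset U$ guarantees, because $r_0 \leq r_i$ forces $H^{i+1}_{y,r_i,\frac{r_i}{2}+} \subset G^{i+1}(F)_{y,r_i} \subset G(F)_{y,r_0}\subset U$.

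Thus the unified argument is: for any $U$ with $G(F)_{y,r_0} \subset U \subset G(F)_{y,\frac{r_d}{2}+}$, and any irreducible smooth $(\pi,V)$ with a line $V'$ on which $U$ acts by $\hat\phi|_U$, the subgroups $H^{i+1}_{y,r_i,\frac{r_i}{2}+}$ ($0 \le i \le d-1$) all lie in $U$, so $V'$ is acted on by each of them via $\hat\phi$; this yields, exactly as in the proof of Proposition~\ref{prop:J1-sc-type-2}, the elements $X_i$, the truncated extended datum $(y,(r_i),(X_i),(G^i))$, and — using $(G^0)^{\mathrm{der}} = \{1\}$ — a maximal datum for $(\pi,V)$ in the sense of \cite[Def.~4.6]{Fintzen-exhaustion}; then \cite[Cor.~8.3]{Fintzen-exhaustion} gives supercuspidality of $\pi$. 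The main obstacle I anticipate is purely bookkeeping: verifying the chain of Moy--Prasad inclusions $H^{i+1}_{y,r_i,\frac{r_i}{2}+} \subset G(F)_{y,r_0} \subset U \subset G(F)_{y,\frac{r_d}{2}+}$ carefully for all $i$ (keeping track of which depth index governs each factor and that $r_0 \le r_i \le r_d < r_0 + 1$), and confirming that the twisted-Levi factors of the Moy--Prasad filtration behave as expected under intersection — i.e.\ that the construction in Proposition~\ref{prop:J1-sc-type-2} of $X_i$ from $\hat\phi|_{H^{i+1}_{y,r_i,\frac{r_i}{2}+}}$ goes through verbatim once we only know $H^{i+1}_{y,r_i,\frac{r_i}{2}+} \subset U$ rather than $U = J^1_+$. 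No new representation-theoretic input beyond Proposition~\ref{prop:J1-sc-type-2} and \cite{Fintzen-exhaustion} should be needed.
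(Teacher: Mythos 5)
Your reduction to the single case $U=G(F)_{y,r_0}$ is correct and agrees with the paper (if $\pi$ contains $\hat\phi|_U$ for some $U\supset G(F)_{y,r_0}$ then it contains $\hat\phi|_{G(F)_{y,r_0}}$, so it suffices to treat the smallest allowed group). Your other preliminary observation --- that shrinking a supercuspidal type may destroy the property --- is also the right intuition. But your treatment of the base case rests on a false inclusion and the whole argument collapses there.

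You claim $H^{i+1}_{y,r_i,\frac{r_i}{2}+}\subset G^{i+1}(F)_{y,r_i}\subset G(F)_{y,r_0}$. The first inclusion is backwards. By definition, $H^{i+1}_{y,r_i,\frac{r_i}{2}+}$ sits inside the Moy--Prasad group $(G^i,G^{i+1})(F)_{y,(r_i,\frac{r_i}{2}+)}$, which has its $\mathfrak g^i$-part at depth $\geq r_i$ but its perpendicular part at depth $\geq \frac{r_i}{2}+$. Since $\frac{r_i}{2}+<r_i$, this is a \emph{larger} group than $G^{i+1}(F)_{y,r_i}$, not a smaller one; it contains elements of depth strictly between $\frac{r_i}{2}$ and $r_i$ in the directions transverse to $\mathfrak g^i$. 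Moreover the standing assumption $r_0>r_d/2$ gives $\frac{r_i}{2}<r_0$ for every $i$, so these transverse directions are shallower than $r_0$ and the groups $H^{i+1}_{y,r_i,\frac{r_i}{2}+}$ are genuinely not contained in $G(F)_{y,r_0}$. Consequently one cannot simply ``read off'' the characters $\hat\phi|_{H^{i+1}_{y,r_i,\frac{r_i}{2}+}}$ from the action of $U=G(F)_{y,r_0}$ on the eigenline $V'$: that action says nothing a priori about how the larger groups $H^{i+1}_{y,r_i,\frac{r_i}{2}+}$ act. In fact, under $r_0>r_d/2$, one has $J^1_+\supset G(F)_{y,r_0}$, so passing from $G(F)_{y,r_0}$ to $J^1_+$ is precisely the \emph{enlargement} step you yourself flagged as the hard direction --- starting from a $\hat\phi$-eigenvector for the small group, one must produce a $\hat\phi$-eigenvector for the strictly larger group $J^1_+$ in order to feed into Proposition~\ref{prop:J1-sc-type-2}.

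This is exactly where the paper's proof does real work: it performs a nested induction, repeatedly invoking \cite[Cor.~5.2]{Fintzen-exhaustion} (and, implicitly for the $d=1$ special case, Adler's \cite[2.3.4]{Adl98}) to ``promote'' the eigenvector step by step from $G(F)_{y,r_0}$ through the intermediate groups $J_1,\dots,J_d=J^1_+$, controlling at each stage the character on $H^{d-j+1}_{y,r_{d-j},d'+}$ as $d'$ descends from $r_{d-j}-n_0\epsilon$ down to $\frac{r_{d-j}}{2}$. None of this follows from the containment bookkeeping you describe, because the containments run the other way. You would need to replace your ``unified argument'' with a genuine eigenvector-propagation argument of this type.
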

\begin{remark}
  We have $G(F)_{y,r_0}\subset G(F)_{y,\frac{r_d}{2}+}$ (so that the proposition is not vacuous) if and only if $r_0>r_d/2$, which is always satisfied if $r_0>1$.
\end{remark}

\begin{remark}\label{rem:sc-type}
  When the 1-toral data come from 0-toral data (so that $d=1$, $r=r_0$), the proposition essentially follows from Adler's work.
  More precisely,
  it suffices to handle the case $U=G(F)_{y,r}$.  This case follows immediately from the case $G(F)_{y,r}\subset U= J^1_+\subset G(F)_{y,\frac{r}{2}+}$ (Proposition \ref{prop:J1-sc-type}) thanks to \cite[2.3.4]{Adl98}\footnote{For the reader who likes to check the proof, ``$X_1 \in \mathfrak{m}_{x,-r/2}^\perp$'' in the proof of \cite[2.3.3]{Adl98} (using Adler's notation) should have been  ``$X_1 \in \mathfrak{m}_{x,-r+}^\perp$'', and ``$g \in G_{x,\frac{r}{2}}$'' should have been ``$g \in G_{x,0+}$''.}.
\end{remark}

\begin{proof}
We may and will assume that $r_0>r_d/2$.
 It suffices to consider the case $U=G(F)_{y,r_0}$. If $G$ is a torus, there is nothing to prove, so we assume that $G$ is not a torus for the remainder of the proof.
Let $(\pi,V)$ be an irreducible smooth representation of $G(F)$ and suppose that $(\pi|_{G(F)_{y,r_0}}, V)$ contains the character $\hat\phi|_{G(F)_{y,r_0}}$. By Proposition \ref{prop:J1-sc-type-2} it is enough to show that then  $(\pi|_{J_+^1}, V)$ contains the character $\hat\phi|_{J_+^1}$. Define
$$ J_{0}=G_{y,r_0} , \enskip
J_i  = 	H^{d-i}_{y,r_0} H^{d-i+1}_{y,r_{d-i}, \frac{r_{d-i}}{2}+} \cdots H^{d-1}_{y,r_{d-2}, \frac{r_{d-2}}{2}+}	H^{d}_{y,r_{d-1}, \frac{r_{d-1}}{2}+} \,(1 \leq i \leq d-1), \enskip  J_d=J_+^1.
$$
We show by induction on $j$ that $(\pi|_{J_j}, V)$ contains the character $\hat\phi|_{J_j}$. For $j=0$ the statement is true by assumption. Thus let $1 \leq j \leq d$ and assume the induction hypothesis that $(\pi|_{J_{j-1}}, V)$ contains the character $\hat\phi|_{J_{j-1}}$. We denote by $V_{j-1}$ the largest subspace of $V$ on which $J_{j-1}$ acts via $\hat \phi$.
Let $X_i \in \mathfrak{g}^*_{y,-r_i}$ be such that $\hat \phi_i|_{H^{i+1}_{y,r_i}}$ viewed as a character of
$$H^{i+1}_{y,r_i}/H^{i+1}_{y,r_i+} \simeq \fh^{i+1}_{y,r_i}/\fh^{i+1}_{y,r_i+}$$
 is given by $\Psi \circ X_i$. Since $\phi_i$ is $G^{i+1}$-generic of depth $r_i$ relative to $y$, we can choose $X_i$ to be almost stable and generic of depth $-r_i$ at $y$ (as an element of $(\mathfrak{g}^{i+1})^*$)  in the sense of \cite[Definition~3.1 and Definition~3.5]{Fintzen-exhaustion}; compare with the proof of Propositions \ref{prop:J1-sc-type-2}.
 By \cite[Cor.~3.8.]{Fintzen-exhaustion} this implies that $X_i$ is almost strongly stable and generic of depth $-r_i$ at $y$. Hence the tuple $(y, (X_i)_{d-1 \geq i \geq 0})$ is a truncated datum in the sense of \cite[\S4]{Fintzen-exhaustion}. Moreover, by \cite[Lemma~3.5.1.]{Kal-reg} the character $\phi_k$ is trivial on $H^{k}_{y,0+}$ and hence
  also trivial on $H^{d-i+1}_{y,0+}$ for $d-i+1 \leq k$. Thus $\hat \phi_k$ is trivial on $H^{d-i+1}_{y,r_{d-i}, \frac{r_{d-i}}{2}+}$ if $k \neq d-i$.
   Therefore the action of  $H^{d-i+1}_{y,r_{d-i}, \frac{r_{d-i}}{2}+}$ on $V_{j-1}$ is given by $\hat \phi_{d-i}$ for $1 \leq i \leq j-1$, which means that the action is  as illustrated below:
 $$
   H^{d-i+1}_{y,r_{d-i}, \frac{r_{d-i}}{2}+} ~ \twoheadrightarrow ~ H^{d-i+1}_{y,r_{d-i}, \frac{r_{d-i}}{2}+}/H^{d-i+1}_{y,r_{d-i}+}
   ~ \simeq ~ \fh^{d-i+1}_{y,r_{d-i}, \frac{r_{d-i}}{2}+}/\fh^{d-i+1}_{y,r_{d-i}+} ~~ \stackrel{\Psi \circ X_{d-i}}{\curvearrowright} ~~ V_{j-1}.
   $$

 We can now apply \cite[Cor.~5.2]{Fintzen-exhaustion} repeatedly as in the proof of \cite[Cor.~5.4]{Fintzen-exhaustion}. More precisely, we use the following assignment of notation, where the left hand side denotes the objects in \cite[Cor.~5.2]{Fintzen-exhaustion}  using (only here) the notation of \cite{Fintzen-exhaustion} and the right hand side denotes the objects defined above:
\begin{eqnarray*}
	&& n:=d,  X_i:=X_{d-i}, \, r_i:=r_{d-i}, \,  j:=d-j, \,  H_i:=H^{d-i+1}, \\
	&&	\, T_j:=T\cap H^{d-j+1}=G^0 \cap  H^{d-j+1}, \, \varphi:= \Psi .
\end{eqnarray*}
Choose $\epsilon\in (0,\frac{r_0}{4})$ such that $H^{d-j+1}_{y,r_{d-j}-2\epsilon}=H^{d-j+1}_{y,r_{d-j}}$.
To avoid confusion we write $d'$ (instead of $d$) for the positive number $d$ that occurs in \cite[Cor.~5.2]{Fintzen-exhaustion}, and we set
$$d' :=\max\left(\frac{r_{d-j}}{2}, r_{d-j}-n_0\epsilon\right), $$
where $n_0$ is the smallest integer in $\bZ_{\geq 3}$ such that $r_{d-j}-n_0\epsilon<r_0$.  Then either $d'+\epsilon \ge r_0$ or $d'+\epsilon \geq  r_{d-j}-2\epsilon$, which implies together with the assumption $H^{d-j+1}_{y,r_{d-j}-2\epsilon}=H^{d-j+1}_{y,r_{d-j}}$
(needed in the latter case that $d'+\epsilon \geq  r_{d-j}-2\epsilon$) that $H^{d-j+1}_{y,r_{d-j}+, d'+ \epsilon+} \subset H^{d-j+1}_{y,r_{d-j}+, r_0}$.
Since $H^{d-j+1}_{y,r_{d-j}+, r_0}$ acts via $\hat \phi_{d-j}$ on $V_{j-1}$, and $\hat \phi_{d-j}$ is trivial when restricted to $H^{d-j+1}_{y,r_{d-j}+, r_0}$ (by construction/definition)
the group $H^{d-j+1}_{y,r_{d-j}+, d'+ \epsilon+}$ acts trivially on $V_{j-1}$ as well.
 Moreover, using that $r_{d-j} > r_0 > \frac{r_{d-j}}{2}$, we see that the commutator subgroup $\left[J_{j-1}, H^{d-j+1}_{y,r_{d-j}, \frac{r_{d-j}}{2}+}\right]$ is contained in
 	$$H^{d-j+1}_{y,r_{d-j}+, r_0+} H^{d-j+2}_{y,r_{d-j+1}+, \frac{r_{d-j+1}}{2}+} \cdots H^{d-1}_{y,r_{d-2}+, \frac{r_{d-2}}{2}+}	H^{d}_{y,r_{d-1}+, \frac{r_{d-1}}{2}+} \, \subset J_{j-1} $$
 	and hence acts trivially on $V_{j-1}$. Therefore  $H^{d-j+1}_{y,r_{d-j}, d'+}$ preserves $V_{j-1}$ and we can find a nonzero subspace $V'$ on which the action of  $H^{d-j+1}_{y,r_{d-j}, d'+}$ factors through $H^{d-j+1}_{y,r_{d-j}, d'+}/H^{d-j+1}_{y,r_{d-j}+}$ and is given by $\Psi \circ(X_{d-j}+C_{n_0})$ for some $C_{n_0} \in (\fh^{d-j+1})^*_{y,-(d'+\epsilon)}$ that is trivial on $\ft^j \oplus  \fh^{d-j}$.
  Moreover,
$$2d'-r_{d-j}+2\epsilon\le  d'-\min\left(\frac{r_{d-j}}{2}, 3\epsilon\right)+2\epsilon<d'<r_0.$$
 Hence, applying \cite[Cor.~5.2]{Fintzen-exhaustion} we obtain a nonzero subspace $V''$ of $V$
on which
\begin{itemize}
\item the action of $H^{d-j}_{y,r_0}$ is given by $\hat \phi$ (\hspace{1sp}\cite[Cor.~5.2(iv)]{Fintzen-exhaustion}),
\item for $1 \leq i \leq j-1$, the action of $H^{d-i+1}_{y,r_{d-i}, \frac{r_{d-i}}{2}+}$
is given by
 $$
   H^{d-i+1}_{y,r_{d-i}, \frac{r_{d-i}}{2}+} ~ \twoheadrightarrow ~ H^{d-i+1}_{y,r_{d-i}, \frac{r_{d-i}}{2}+}/H^{d-i+1}_{y,r_{d-i}+}
   ~ \simeq ~  \fh^{d-i+1}_{y,r_{d-i}, \frac{r_{d-i}}{2}+}/\fh^{d-i+1}_{y,r_{d-i}+} ~~ \stackrel{\Psi \circ X_{d-i}}{\curvearrowright} ~~ V''
   $$
(this implies that $H^{d-i+1}_{y,r_{d-i}, \frac{r_{d-i}}{2}+}$ acts via $\hat \phi$, because $\hat \phi_k$ is trivial on $H^{d-i+1}_{y,r_{d-i}, \frac{r_{d-i}}{2}+}$ if $k \neq d-i$) (\hspace{1sp}\cite[Cor.~5.2(ii)]{Fintzen-exhaustion}),
\item  the action of $H^{d-j+1}_{y,r_{d-j}, d'+}$
is given by
 $$
  H^{d-j+1}_{y,r_{d-j}, d'+} ~ \twoheadrightarrow ~ H^{d-j+1}_{y,r_{d-j}, d'+}/H^{d-j+1}_{y,r_{d-j}+}
   ~ \simeq ~ \fh^{d-j+1}_{y,r_{d-j}, d'+}/\fh^{d-j+1}_{y,r_{d-j}+} ~~ \stackrel{\Psi \circ X_{d-j}}{\curvearrowright} ~~ V''
   $$
   (\hspace{1sp}\cite[Cor.~5.2(iii)]{Fintzen-exhaustion}).
\end{itemize}
If $d'=r_{d-j}/2$, then $J_j$ acts on $V''$ via $\hat\phi$ so we are done.
Otherwise, we achieve $d'=r_{d-j}/2$ by a recursion as follows.
We choose $V''$ to be as large as possible with the above properties, then we use the same reasoning as above with $V''$ in place of $V_{j-1}$, and apply \cite[Cor.~5.2]{Fintzen-exhaustion} repeatedly (at each step replacing $V_{j-1}$ by the newly obtained subspace, with $d'$ playing the role of $d$ in \cite[Cor.~5.2]{Fintzen-exhaustion}) as the value of $d'$ goes through:
$$d'=r_{d-j}-(n_0+1)\epsilon,~ r_{d-j}-(n_0+2)\epsilon,~ \hdots,~ r_{d-j}-(n_0+N_0)\epsilon,~ \frac{r_{d-j}}{2},$$
 where $N_0$ is the largest integer for which $r_{d-j}-(n_0+N_0)\epsilon > \frac{r_{d-j}}{2}$.
  (If $N_0\le 0$, we only consider the case $d'=\frac{r_{d-j}}{2}$.)
 After the final step of recursion, we obtain a nonzero subspace of $V$ on which $J_j$ acts via $\hat \phi|_{J_j}$. This completes the inductive proof.
\end{proof}

Write $e_F$ for the absolute ramification index of $F$.
Let $m$ be an arbitrary positive integer and set
$$n:=2e_Fm-1 .$$
Assume that the 1-toral datum \eqref{eq:one-toral-datum} satisfies
\begin{equation} \label{eq:rd} n < r_0 \leq r_{d-1}=r_d \leq n+1. \end{equation}
Such a 1-toral datum always exists by Proposition \ref{prop:1-toral-abundance} under our current assumption that $G$ is tamely ramified.
To produce an omni-supercuspidal type on the group $G(F)_{y,\frac{r_d}{2}+}$, we want to know:

\begin{lemma}\label{lem:order-p^m}
 Assume that $\textup{char}(F)=0$, that $r_d$ is as in \eqref{eq:rd}, and that $G$ is tamely ramified. Then the image of
  $$\hat\phi: G(F)_{y,\frac{r_d}{2}+}/G(F)_{y,r_d+}\ra \C^*$$
  is a cyclic group of order $p^m$.
\end{lemma}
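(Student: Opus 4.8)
The plan is to compute the order of the image by sandwiching it: an easy divisibility bound $\le p^m$ coming from the $p$-power structure of Moy--Prasad quotients, and a matching lower bound obtained by restricting $\hat\phi$ to a suitable subtorus on which the genericity of the top-depth character forces full depth. Since the image is a finite subgroup of $\C^\times$ it is automatically cyclic, and it is a $p$-group because $G(F)_{y,\frac{r_d}{2}+}/G(F)_{y,r_d+}$ is pro-$p$; so it suffices to show the order equals $p^m$.

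First the upper bound. The group $Q:=G(F)_{y,\frac{r_d}{2}+}/G(F)_{y,r_d+}$ is abelian, since a commutator of two elements of depth $>\frac{r_d}{2}$ has depth $>r_d$. Hence the Moy--Prasad isomorphism identifies $Q$, as an abelian group, with $\mathfrak g(F)_{y,\frac{r_d}{2}+}/\mathfrak g(F)_{y,r_d+}$ in a way that carries the $p$-th power map to multiplication by $p$. As $\textup{char}(F)=0$ we have $v(p)=e_F$, so $p\cdot\mathfrak g(F)_{y,s}=\mathfrak g(F)_{y,s+e_F}$ ($\varpi_F$ raising the filtration index by $1$); iterating, $p^m\cdot\mathfrak g(F)_{y,\frac{r_d}{2}+}=\mathfrak g(F)_{y,(\frac{r_d}{2}+me_F)+}\subseteq\mathfrak g(F)_{y,r_d+}$, because $\frac{r_d}{2}+me_F\ge r_d$ (equivalently $r_d\le n+1$). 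Therefore $\hat\phi^{p^m}=\one$ and $|\mathrm{im}\,\hat\phi|\le p^m$. This is exactly the step that fails in characteristic $p$, where $p$ annihilates $\mathfrak g(F)_{y,s}$ so that $Q$ is killed by $p$.

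For the lower bound I would restrict $\hat\phi$ to the subtorus $S:=Z(G^{d-1})$. Since the centre lies in every maximal torus, $S\subseteq G^0=T\subseteq G^i$ for all $i$, and hence $S(F)_{y,s}$ (for $s>0$) lies in the domain on which each $\hat\phi_i$ coincides with $\phi_i$; thus $\hat\phi|_{S(F)}=\chi$, where $\chi:=\prod_{i=0}^{d-1}\phi_i|_{S(F)}$. Because the depths are strictly increasing, $r_0<\cdots<r_{d-1}=r_d$, the characters $\phi_i$ with $i<d-1$ are trivial on $S(F)_{y,r_d}$, so $\chi$ agrees with $\phi_{d-1}$ on the deepest layer $S(F)_{y,r_d}/S(F)_{y,r_d+}$; and $\phi_{d-1}$ is nontrivial there since, by Yu's notion of genericity, it is given on this layer by an element $X_{d-1}\in(\Lie S)^*$ of depth exactly $-r_d$. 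Hence $\chi$ is a character of $S(F)$ of depth exactly $r_d$. Now, using $\textup{char}(F)=0$ again, the $p^{m-1}$-th power map sends $S(F)_{y,\frac{r_d}{2}+}$ onto $S(F)_{y,(\frac{r_d}{2}+(m-1)e_F)+}$, and this last group still contains $S(F)_{y,r_d}$ because $\frac{r_d}{2}+(m-1)e_F<r_d$ (equivalently $r_d>2(m-1)e_F$, which follows from $r_d>n=2e_Fm-1$). So there is $t\in S(F)_{y,(\frac{r_d}{2}+(m-1)e_F)+}$ with $\chi(t)\neq 1$, and writing $t=s^{p^{m-1}}$ with $s\in S(F)_{y,\frac{r_d}{2}+}$ we get $\hat\phi(s)^{p^{m-1}}=\chi(t)\neq 1$, so $\hat\phi(s)$ has order at least $p^m$. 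Combined with the upper bound, $\mathrm{im}\,\hat\phi$ is cyclic of order exactly $p^m$.

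The only genuinely delicate input is the behaviour of the $p$-power map on Moy--Prasad subgroups in characteristic zero: that it shifts the filtration index by exactly $e_F$ and, via the Moy--Prasad/exponential isomorphism, induces multiplication by $p$ on the associated lattices (on the abelian quotient $Q$, and on the torus $S$). One has to know this isomorphism is available in the relevant depth range -- here $\frac{r_d}{2}$ is far above $\tfrac{e_F}{p-1}$, so convergence of $\exp$ and $\log$ is not in question -- and that it is compatible with taking $\Gal$-invariants when $S$ splits over a tame extension; both are standard in the tame setting but should be spelled out. The choice $n=2e_Fm-1$ is calibrated precisely so that $\frac{r_d}{2}+me_F\ge r_d$ and $\frac{r_d}{2}+(m-1)e_F<r_d$ hold simultaneously for every $r_d\in(n,n+1]$, which is what makes the two bounds meet.
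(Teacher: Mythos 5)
Your proof is correct and takes essentially the same approach as the paper. The upper bound is identical: $p^m$ annihilates $\fkg(F)_{y,\frac{r_d}{2}+}/\fkg(F)_{y,r_d+}$ because $\frac{r_d}{2}+m e_F\ge r_d$, so the image of $\hat\phi$, being a finite subgroup of $\C^\times$, is cyclic of order dividing $p^m$. For the lower bound, both arguments rest on the same key input — genericity of $\phi_{d-1}$, via the element $X_{d-1}\in(\Lie Z(G^{d-1}))^*$ of depth exactly $-r_d$, forces $\hat\phi$ to be nontrivial at depth exactly $r_d$ on a subtorus of $T=G^0$ — but the packaging differs slightly. The paper restricts to the whole elliptic maximal torus $T$ and argues by contradiction: if $\mathrm{im}(\hat\phi)$ were killed by $p^{m-1}$, then $\hat\phi$ would vanish on $p^{m-1}\fkg(F)_{y,\frac{r_d}{2}+}=\fkg(F)_{y,(\frac{r_d}{2}+(m-1)e_F)+}\supseteq T(F)_{r_d}$, contradicting $\hat\phi_{d-1}|_{T(F)_{r_{d-1}}}\neq1$. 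You instead restrict to $S=Z(G^{d-1})\subseteq T$ and argue directly, producing a $p^{m-1}$-th root $s\in S(F)_{y,\frac{r_d}{2}+}$ of an element $t$ with $\chi(t)\neq 1$, so that $\hat\phi(s)$ has order $\ge p^m$. The two routes are logically equivalent (contrapositive versus direct); the paper's use of $T$ is marginally lighter because it does not need to verify $S(F)_{y,r_d}\neq S(F)_{y,r_d+}$, though this too is immediate from $X_{d-1}\in\mathfrak s^*$ having depth $-r_d$, while your version makes the role of genericity more explicit.
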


\begin{remark}
  If $\textup{char}(F)=p$, then the last isomorphism in \eqref{eq:order-pm} in the proof below breaks down, and the lemma is false for $m>1$.
\end{remark}

\begin{proof}
  Recall that we write $T=G^0$. Since $G$ is tamely ramified, we have
  \begin{equation}\label{eq:order-pm}
  \frac{T(F)_{r_d}}{T(F)_{r_d+}} \subset \frac{T(F)_{\frac{r_d}{2}+}}{T(F)_{r_d+}} \subset  \frac{G(F)_{y,\frac{r_d}{2}+}}{G(F)_{y,r_d+}}
  \simeq \frac{\fkg(F)_{y,\frac{r_d}{2}+}}{\fkg(F)_{y,r_d+}}.
  \end{equation}
  Since $r_d\le n+1=2me_F$, and hence $r_d - \frac{r_d}{2}\leq me_F$,
  we see that every element in $\fkg(F)_{y,\frac{r_d}{2}+}/\fkg(F)_{y,r_d+}$ has order dividing $p^m$.
   Therefore the image of $\hat\phi$ is contained in a cyclic subgroup of $\C^*$ of order $p^m$. (Every finite subgroup of $\C^*$ is cyclic.)

  The image of $T(F)_{r_d}/T(F)_{r_d+}$ in $\fkg(F)_{y,\frac{r_d}{2}+}/\fkg(F)_{y,r_d+}$ is a $p$-torsion subgroup contained in $\fkg(F)_{y,(r_d-e_F)+}/\fkg(F)_{y,r_d+}$.
  Now suppose that $\mathrm{im}(\hat \phi)$ is contained in a cyclic subgroup of order $p^{m-1}$.
   Viewing $\hat\phi$ as an additive character on $\fkg(F)_{y,\frac{r_d}{2}+}/\fkg(F)_{y,r_d+}$, we then have $\hat\phi(p^{m-1}X)=0$ for $X\in \fkg(F)_{y,\frac{r_d}{2}+}/\fkg(F)_{y,r_d+}$.
   As $p^{m-1}\fkg(F)_{y,\frac{r_d}{2}+}=\fkg(F)_{y,(\frac{r_d}{2}+(m-1)e_F)+}$, it follows that $\hat \phi$ is trivial on $\fkg(F)_{y,(\frac{r_d}{2}+(m-1)e_F)+}/\fkg(F)_{y,r_d+}$, thus also trivial on $T(F)_{r_d}/T(F)_{r_d+}$ (since $\frac{r_d}{2}+(m-1)e_F<r_d$). This contradicts that $\hat\phi|_{T(F)_{r_d}}=\hat\phi_{d-1}|_{T(F)_{r_{d-1}}}\neq 1$.
  We conclude that the image of $\hat\phi$ is exactly a cyclic group of order $p^m$.
\end{proof}

  The lemma allows us to factor $\hat\phi$ as
  \begin{equation}\label{eq:factoring-phi}
   G(F)_{y,\frac{r_d}{2}+}/ G(F)_{y,r_d+} \stackrel{\lambda}{\twoheadrightarrow} \Z/p^{m}\Z \stackrel{\exp}{\hra} \C^*
  \end{equation}
  for some $\lambda$ (assuming $\textup{char}(F)=0$). Here $\exp$ stands for $n\mapsto \exp(2\pi i n/p^{m})$.
  When $\lambda$ is composed with \emph{any} other nontrivial character of $\Z/p^{m}\Z$, the composite map has the form $\hat{\phi}^i$ with $i \not\equiv 0$ (mod $p^{m}$). The following lemma shows that $\hat\phi^i$ is still a supercuspidal type.

\begin{lemma}\label{lem:0-toral-sc-type}
  Assume that $\textup{char}(F)=0$,	that $p$ does not divide the order of the absolute Weyl group of $G$, and that $G$ is tamely ramified. Let $r_0$ and $r_d$ be as in \eqref{eq:rd}.
  Then $(G(F)_{y,\frac{r_d}{2}+},\lambda)$ is an omni-supercuspidal type of level $p^m$, i.e.
  $(G(F)_{y,\frac{r_d}{2}+},\hat\phi^i)$ is a supercuspidal type for all integers $i$ with $i\not\equiv 0$ (mod $p^{m}$).
\end{lemma}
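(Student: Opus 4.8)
The plan is to realise each character $\hat\phi^i$ with $i\not\equiv 0\pmod{p^m}$ as the character $\hat\phi$ attached to another \onetoraldatum/ on the same torus $T=G^0$ and the same point $y$, and then to quote Proposition \ref{prop:sc-type}. First one reduces the range of $i$: by Lemma \ref{lem:order-p^m} the image of $\hat\phi$ on $G(F)_{y,\frac{r_d}{2}+}$ is cyclic of order $p^m$, and $\hat\phi$ is trivial on $G(F)_{y,r_d+}$, so as a character of $G(F)_{y,\frac{r_d}{2}+}$ the power $\hat\phi^i$ depends only on $i$ modulo $p^m$; hence we may assume $i=p^ju$ with $p\nmid u$ and $0\le j\le m-1$.

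Next I would form the tuple
\[
  \mathcal{D}^{(i)}:=\big((G^0,\dots,G^d),\ (r_0-je_F,\dots,r_{d-1}-je_F),\ (\phi_0^{\,i},\dots,\phi_{d-1}^{\,i})\big)
\]
and check that it is again a \onetoraldatum/. The twisted Levi sequence and the point $y$ are unchanged; the new depths are strictly increasing, with $(r_{d-1}-je_F)-(r_0-je_F)=r_{d-1}-r_0<1$, and they are positive because \eqref{eq:rd} gives $r_0>n=2e_Fm-1$, whence $r_0-je_F>(m+1)e_F-1\ge 1$ since $j\le m-1$ and $m,e_F\ge 1$. The genericity and depth of the $\phi_k^{\,i}$ are where $\textup{char}(F)=0$ re-enters: if $X_k\in\fkg^*_{y,-r_k}$ is the element of exact depth $-r_k$ representing $\phi_k$ on $G^k(F)_{y,r_k}$, then $iX_k=p^juX_k$, and since multiplication by $u$ is a bijection of $\fkg^*$ while multiplication by $p$ carries $\fkg^*_{y,s}$ bijectively onto $\fkg^*_{y,s+e_F}$, the element $iX_k$ has exact depth $-(r_k-je_F)$; its valuations along the relevant coroots all equal $-(r_k-je_F)$, so by \cite[Lemma~8.1]{Yu01} and $p\nmid|W|$ it is $G^{k+1}$-generic of depth $-(r_k-je_F)$. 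Thus $\phi_k^{\,i}$ has depth $r_k-je_F$ and is $G^{k+1}$-generic relative to $y$ of that depth, so $\mathcal{D}^{(i)}$ is a \onetoraldatum/. By the uniqueness built into Yu's definition of the characters $\hat\phi_k$ \cite[\S4]{Yu01}, the character $\hat\phi$ attached to $\mathcal{D}^{(i)}$ restricts on $G(F)_{y,\frac{r_d}{2}+}$ to $\prod_k(\hat\phi_k)^i=\hat\phi^i$.

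Finally I would apply Proposition \ref{prop:sc-type} to $\mathcal{D}^{(i)}$ with $U=G(F)_{y,\frac{r_d}{2}+}$: for $\mathcal{D}^{(i)}$ the roles of ``$r_0$'' and ``$r_d$'' are played by $r_0-je_F$ and $r_d-je_F$, and since $\frac{r_d}{2}>\frac{r_d-je_F}{2}$ we have $U\subset G(F)_{y,\frac{r_d-je_F}{2}+}$, while from $r_0-je_F>(m+1)e_F-1\ge me_F\ge\frac{r_d}{2}$ (using $r_d\le n+1=2e_Fm$) we get $G(F)_{y,r_0-je_F}\subset U$. Proposition \ref{prop:sc-type} then gives that $(G(F)_{y,\frac{r_d}{2}+},\hat\phi^i)$ is a supercuspidal type for every such $i$, and since by \eqref{eq:factoring-phi} every character $\psi\circ\lambda$ with $\psi:\Z/p^m\Z\to\C^*$ nontrivial is of the form $\hat\phi^i$ with $p^m\nmid i$, this yields that $(G(F)_{y,\frac{r_d}{2}+},\lambda)$ is an omni-supercuspidal type of level $p^m$. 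The main obstacle I anticipate lies in the second step: one has to check carefully that raising a generic character to the $p^j$-th power lowers its depth by exactly $je_F$ while preserving Yu's genericity (this is precisely where $\textup{char}(F)=0$ is needed a second time, after Lemma \ref{lem:order-p^m}), and that the output of Yu's construction applied to $\mathcal{D}^{(i)}$ is literally $\hat\phi^i$ on $G(F)_{y,\frac{r_d}{2}+}$ rather than merely some character sharing its associated datum --- this is most cleanly handled through the uniqueness clause in the definition of the $\hat\phi_k$.
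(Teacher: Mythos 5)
Your proof is correct and follows essentially the same strategy as the paper's: write $i=p^ju$ with $p\nmid u$, form the new \onetoraldatum/ with depths shifted down by $je_F=v(i)$ and characters $\phi_k^i$, verify the shifted depths fit the hypotheses of Proposition \ref{prop:sc-type}, and identify $\hat\phi^i$ with the character attached to the new datum. The paper additionally disposes of the trivial $G=T$ case and spells out the binomial-expansion argument relating the group $i$-th power map on $G^j_{y,r_j-v(i)}/G^j_{y,(r_j-v(i))+}$ to multiplication by $i$ on the Lie-algebra quotient (which is why the bound $r_j-v(i)>1$ that you verify matters), but otherwise the two arguments coincide line for line.
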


\begin{proof} Since the proof is trivial for $G=T$, we assume that $G$ is not a torus.
  Recall that $\hat\phi=\prod_{0 \leq j \leq d-1} \hat \phi_j$, where $\phi_j$ is a $G^{j+1}$-generic character of $G^j(F)$ of depth $r_j$. We may assume that $0<i<p^m$, and hence
  \begin{equation}\label{eq:v(i)}
  0\le v(i)\le (m-1)e_F = \frac{n+1}{2} - e_F \le \frac{n-1}{2}.
  \end{equation}
  We claim that $\phi_j^i$ is a $G^{j+1}$-generic character of $G^j(F)$ of depth $r_j-v(i)$.
  To see this, let $X_j \in (\mathfrak{g}^j)^*_{y,-r_j}$ be a $G^{j+1}$-generic element of depth $r_j$ such that the character $\phi_j|_{G^j_{y,r_i}}$ viewed as a character of $G^{j}_{y,r_j}/G^{j}_{y,r_j+} \simeq \mathfrak{g}^{j}_{y,r_j}/\mathfrak{g}^{j}_{y,r_j+}$ is given by $\Psi \circ X_j$.
    Note that the $i$-th power map sends $G^j_{y,r_j-v(i)}$ into $G^j_{y,r_j}$ and $G^j_{y,(r_j-v(i))+}$ into $G^j_{y,r_j+}$.
    The resulting map
    $$G^j_{y,r_j-v(i)}/G^j_{y,(r_j-v(i))+} \ra G^j_{y,r_j}/G^j_{y,r_j+}$$
   corresponds to the map
   $$\mathfrak{g}^j_{y,r_j-v(i)}/\mathfrak{g}^j_{y,(r_j-v(i))+} \xrightarrow{\cdot i} \mathfrak{g}^j_{y,r_j}/\mathfrak{g}^j_{y,r_j+}$$
    induced by multiplication by $i$. (This can be seen from the binomial expansion and the fact that $r_j-v(i)>1$.)
   Hence $\phi_j^i|_{G^j_{y,r_i-v(i)}}$ factors through $G^{j}_{y,r_j-v(i)}/G^{j}_{y,r_j-v(i)+} \simeq \mathfrak{g}^{j}_{y,r_j-v(i)}/\mathfrak{g}^{j}_{y,r_j-v(i)+}$ on which it is given by $\Psi \circ (i \cdot X_j)$. The claim has been verified.

  It follows from the claim that
  $$((G^0, \hdots, G^d), (r_0-v(i), \hdots, r_{d-1}-v(i)), (\phi_0^i, \hdots, \phi_{d-1}^i))$$
   is a \onetoraldatum/.  It is implied by \eqref{eq:v(i)} that
  $$ r_0 - v(i) > n - \frac{n-1}{2} = \frac{n+1}{2} \ge \frac{r_d}{2} .$$
  Thus $G(F)_{y,r_0-v(i)}\subset G(F)_{y,\frac{r_d}{2}+}\subset G(F)_{\frac{r_d-v(i)}{2}+}$.
   Applying Proposition \ref{prop:sc-type} to the new \onetoraldatum/ above (thus the role of $r_j$ in the proposition is played by $r_j-v(i)$) and observing that $(\hat \phi)^i |_{G(F)_{y,\frac{r_d}{2}+}}=\widehat{\phi^i}|_{G(F)_{y,\frac{r_d}{2}+}}$, we deduce that $(G(F)_{y,\frac{r_d}{2}+},\hat\phi^i)$ is a supercuspidal type.
\end{proof}

  The upshot is the following.
  \begin{corollary}\label{cor:existence-omnisctypes}
  	Assume $\textup{char}(F)=0$, $p>\Cox(G)$ and $G$ is tamely ramified.
  	Then there exists a sequence
  	$\{(U_m,\lambda_m)\}_{m\ge 1}$
  	such that
  	\begin{enumerate}
  		\item each $(U_m,\lambda_m)$ is an omni-supercuspidal type of level $p^m$,
  		\item $U_1\supset U_2\supset \cdots$, and $\{U_m\}_{m\ge 1}$ forms a basis of open neighborhoods of $1$,
     \item $U_{m'}$ is normal in $U_m$ whenever $m'\ge m$.
\item If $G$ is not a torus, then $\lambda_m|_{U_m\cap Z(G)(F)}$ is trivial.
  	\end{enumerate}
  \end{corollary}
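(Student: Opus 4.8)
The plan is to assemble the sequence $\{(U_m,\lambda_m)\}_{m\ge1}$ by running the construction of \S\ref{subsec:main-construction} on a suitable \onetoraldatum/ for each $m$, with all choices made uniformly; the genuine content lies in Proposition \ref{prop:1-toral-abundance} and Lemmas \ref{lem:order-p^m} and \ref{lem:0-toral-sc-type}, and what remains is bookkeeping.

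First I would fix, once and for all, a tamely ramified elliptic maximal torus $T$ of $G$ as produced by Proposition \ref{prop:1-toral-abundance}; crucially, the \emph{same} $T$ admits a \onetoraldatum/ for every $n$. Fix also a point $y\in\sB(G,F)\cap\sA(T,F')$ (with $F'$ a tame splitting field of $T$) and keep it fixed throughout, and write $e_F$ for the absolute ramification index of $F$. For each $m\ge1$, put $n:=2e_Fm-1$ and invoke Proposition \ref{prop:1-toral-abundance} to get a \onetoraldatum/ with $G^0=T$, satisfying \eqref{eq:rd}, and with $\phi_0,\dots,\phi_{d-1}$ trivial on $Z(G)(F)$ when $G$ is not a torus. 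Let $\hat\phi$ be the associated character on $G(F)_{y,\frac{r_d}{2}+}$. By Lemma \ref{lem:order-p^m} its image is cyclic of order $p^m$, so \eqref{eq:factoring-phi} yields a surjection $\lambda_m\colon U_m:=G(F)_{y,\frac{r_d}{2}+}\twoheadrightarrow\Z/p^m\Z$ with $\hat\phi|_{U_m}=\exp\circ\lambda_m$. Since $p>\Cox(G)$ forces $p\nmid\abs{W}$, and $\mathrm{char}(F)=0$ and tame ramification of $G$ are in force, Lemma \ref{lem:0-toral-sc-type} shows that $(U_m,\lambda_m)$ is an omni-supercuspidal type of level $p^m$; this establishes (1).

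For (2) and (3) I would carry out the depth estimate. By \eqref{eq:rd}, $r_d\in(2e_Fm-1,\,2e_Fm]$, hence $s_m:=\tfrac{r_d}{2}\in(e_Fm-\tfrac12,\,e_Fm]$; since $e_F\ge1$ these intervals move strictly to the right, so $m\le m'$ forces $s_m\le s_{m'}$ and thus $U_{m'}=G(F)_{y,s_{m'}+}\subseteq G(F)_{y,s_m+}=U_m$. Moreover $s_m\to\infty$, so $\{U_m\}_{m\ge1}$ is a basis of open neighborhoods of $1$ because $\{G(F)_{y,s+}\}_{s>0}$ is; this is (2). For (3), the Moy--Prasad commutator bound gives $[U_m,U_{m'}]\subseteq G(F)_{y,(s_m+s_{m'})+}\subseteq G(F)_{y,s_{m'}+}=U_{m'}$ for $m'\ge m$, so $U_{m'}$ is normal in $U_m$.

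Finally (4), which is vacuous when $G$ is a torus; so assume $G$ is not a torus. Recall $\hat\phi=\prod_{0\le j\le d-1}\hat\phi_j$, and that the domain of each $\hat\phi_j$ contains $(G^0)_{[y]}=T(F)_{[y]}$, on which $\hat\phi_j$ restricts to $\phi_j$. Central elements act trivially on the reduced Bruhat--Tits building (the $G(F)$-action there factors through $G(F)/Z(G)(F)$), so $Z(G)(F)\subseteq G(F)_{[y]}$; combined with $Z(G)(F)\subseteq T(F)$ this gives $Z(G)(F)\subseteq T(F)_{[y]}$. Hence every element of $U_m\cap Z(G)(F)$ lies in $(G^0)_{[y]}$, where $\hat\phi_j$ agrees with $\phi_j|_{Z(G)(F)}=1$, so $\hat\phi$ is trivial there. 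Since $\lambda_m$ and $\hat\phi|_{U_m}=\exp\circ\lambda_m$ have the same kernel ($\exp$ being injective), $\lambda_m|_{U_m\cap Z(G)(F)}$ is trivial, which is (4). I expect no serious obstacle; the points that demand a little care are the uniformity of the choices of $T$ and $y$ in $m$, the interval arithmetic in (2), and, for (4), the standard fact that central elements fix every point of the reduced building.
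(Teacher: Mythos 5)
Your proposal is essentially the paper's own proof: fix $T$ and $y$ once via Proposition \ref{prop:1-toral-abundance}, set $n=2e_Fm-1$, feed the resulting \onetoraldatum/ into Lemma \ref{lem:0-toral-sc-type} to get (1), and read off (2)--(4) from Moy--Prasad filtration properties at the fixed point $y$ and the central triviality of the $\phi_j$. The paper states (2)--(4) without the depth arithmetic and commutator bound you supply, so you have simply made explicit the routine verifications it leaves to the reader.
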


\begin{proof}[Proof of Corollary \ref{cor:existence-omnisctypes}]
	For $m \geq 1$, set $n:=2e_Fm-1$ (as above). Then by Proposition \ref{prop:1-toral-abundance} there exists a 1-toral datum $((G^0, \hdots, G^d, (r_0, \hdots, r_{d-1}), (\phi_0, \hdots, \phi_{d-1}))$ with $n < r_0 \leq r_{d-1} \leq n+1$ and $\phi_0|_{Z(G)(F)}, \hdots, \phi_{d-1}|_{Z(G)(F)}$ trivial. We may and will choose the same $G^0$ and $y$ for every $m$.
	Set $U_m:=G(F)_{y,\frac{r_d}{2}+}$ and let $\lambda_m$ be as defined in Equation \eqref{eq:factoring-phi}. Then  $(U_m,\lambda_m)$ is an omni-supercuspidal type of level $p^m$ by Lemma \ref{lem:0-toral-sc-type}. Moreover, $U_1\supset U_2\supset \cdots$, and $\{U_m\}_{m\ge 1}$ forms a basis of open neighborhoods at $1$. Property (3) follows from $U_m$ being Moy--Prasad subgroups at the same point $y$ and Property (4) follows from  $\phi_0|_{Z(G)(F)}, \hdots, \phi_{d-1}|_{Z(G)(F)}$ being trivial.
\end{proof}

 We now drop the assumption that $G$ splits over a tamely ramified extension of $F$, i.e. $G$ is an arbitrary (connected) reductive group defined over $F$.

\begin{theorem}\label{thm:existence-omnisctypes}
  Assume $\textup{char}(F)=0$ and $p>\Cox(G)$.
  Then there exists a sequence
   $\{(U_m,\lambda_m)\}_{m\ge 1}$
such that
   \begin{enumerate}
   \item each $(U_m,\lambda_m)$ is an omni-supercuspidal type of level $p^m$,
   \item $U_1\supset U_2\supset \cdots$, and $\{U_m\}_{m\ge 1}$ forms a basis of open neighborhoods of $1$,
   \item $U_{m'}$ is normal in $U_m$ whenever $m'\ge m$,
\item If $G$ is not a torus, then $\lambda_m|_{U_m\cap Z(G)(F)}$ is trivial.
   \end{enumerate}
\end{theorem}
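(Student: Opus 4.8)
The plan is to reduce the general (possibly wildly ramified) case to the tamely ramified case already settled in Corollary \ref{cor:existence-omnisctypes}. The key point is that although $G$ itself may be wildly ramified (for instance a Weil restriction along a wild extension), it is isogenous to a product of a torus and of Weil restrictions of absolutely simple simply connected groups over finite extensions $F_i/F$, and each such simple group \emph{is} tamely ramified as soon as $p>\Cox(G)$, because the Coxeter number only depends on the absolute root system and not on the ramification of $F_i/F$. First I would dispose of the case where $G$ is a torus: every smooth irreducible representation of $T(F)$ is supercuspidal, condition (4) is vacuous, and for each $m$ one can take $U_m$ to be a Moy--Prasad subgroup $T(F)_{y,s_m}$ with $s_m\uparrow\infty$ deep enough that it is a free $\bZ_p$-module of positive rank (this uses $\textup{char}(F)=0$) together with any surjection $\lambda_m:U_m\twoheadrightarrow\bZ/p^m\bZ$. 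So assume $G$ is not a torus.

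Next I would set up the isogeny. Let $Z:=Z(G)^{\circ}$ be the connected centre and $G_{\mathrm{sc}}\to G_{\mathrm{der}}$ the simply connected cover of the derived group; then $\mu\colon\widetilde G:=Z\times_F G_{\mathrm{sc}}\to G$, $(z,g)\mapsto zg$, is a central isogeny, hence finite \'etale since $\textup{char}(F)=0$, so $\mu_F\colon\widetilde G(F)\to G(F)$ is an open map with finite kernel $C$ and finite cokernel. Write $G_{\mathrm{sc}}=\prod_{i=1}^{N}\Res_{F_i/F}\Gamma_i$ with $\Gamma_i$ absolutely simple and simply connected over $F_i$ and $N\ge 1$. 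Since $\Cox(\Gamma_i)\le\Cox(G)<p$ and $\textup{char}(F_i)=0$, Corollary \ref{cor:existence-omnisctypes} applies to $\Gamma_i$ over $F_i$; together with Remark \ref{rem:level-lowering} it supplies, for each $m$, an omni-supercuspidal type $(U_i^{(m)},\lambda_i^{(m)})$ of level $p^m$ for $\Gamma_i(F_i)$ with $U_i^{(m)}$ arbitrarily small, the $U_i^{(m)}$ nested, $U_i^{(m')}$ normal in $U_i^{(m)}$ for $m'\ge m$, and $\lambda_i^{(m)}$ trivial on $U_i^{(m)}\cap Z(\Gamma_i)(F_i)$. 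I would then assemble these on $\widetilde G(F)=Z(F)\times\prod_i\Gamma_i(F_i)$: put $\widetilde U_m:=Z(F)_{y,s_m}\times\prod_i U_i^{(m)}$ with $s_m\uparrow\infty$ and $\widetilde\lambda_m:=\sum_i\lambda_i^{(m)}\circ\mathrm{pr}_i$, taking the \emph{trivial} character on the $Z(F)$-factor. Using that a smooth irreducible representation of a direct product of reductive $p$-adic groups is an external tensor product of smooth irreducibles of the factors, that such a tensor product is supercuspidal iff each factor is (representations of the torus $Z(F)$ being automatically supercuspidal), and the omni-supercuspidal property of each $(U_i^{(m)},\lambda_i^{(m)})$, one checks that $(\widetilde U_m,\widetilde\lambda_m)$ is an omni-supercuspidal type of level $p^m$ for $\widetilde G(F)$, with $\{\widetilde U_m\}$ a nested neighbourhood basis of $1$, $\widetilde U_{m'}$ normal in $\widetilde U_m$ for $m'\ge m$, and $\widetilde\lambda_m$ trivial on $\widetilde U_m\cap Z(\widetilde G)(F)$.

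Finally I would transport across $\mu$. Shrinking so that $\mu_F|_{\widetilde U_m}$ is injective (possible since $C$ is finite and $\{\widetilde U_m\}$ is a neighbourhood basis), put $U_m:=\mu_F(\widetilde U_m)$, an open compact subgroup of $G(F)$ because $\mu_F$ is open, and $\lambda_m:=\widetilde\lambda_m\circ(\mu_F|_{\widetilde U_m})^{-1}:U_m\twoheadrightarrow\bZ/p^m\bZ$. Properties (2) and (3) are then clear ($\mu_F$ is an open local homeomorphism near $1$ and a homomorphism), and (4) follows from $\mu^{-1}(Z(G))=Z(\widetilde G)$, which gives $U_m\cap Z(G)(F)=\mu_F(\widetilde U_m\cap Z(\widetilde G)(F))$, together with the triviality of $\widetilde\lambda_m$ there. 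To show $(U_m,\lambda_m)$ is omni-supercuspidal, take an irreducible smooth $\pi$ of $G(F)$ with $\pi|_{U_m}$ containing $\psi\circ\lambda_m$ for a nontrivial character $\psi$ of $\bZ/p^m\bZ$; the pullback $\widetilde\pi:=\pi\circ\mu_F$ then contains $\psi\circ\widetilde\lambda_m$ on $\widetilde U_m$, and $\widetilde\pi$ decomposes into finitely many irreducible constituents which are $G(F)$-conjugate (being the pullbacks of the constituents of $\pi$ restricted to the open normal finite-index subgroup $\mu_F(\widetilde G(F))$), at least one of which contains $\psi\circ\widetilde\lambda_m$ and is therefore supercuspidal, hence all are by $G(F)$-conjugation-invariance of supercuspidality, hence $\pi$ is supercuspidal since supercuspidality is preserved in both directions under pullback along a surjection with finite central kernel and under restriction to an open finite-index subgroup (e.g.\ via the characterisation of supercuspidal representations by compact support of matrix coefficients modulo the centre).

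The step I expect to be the main obstacle is precisely this last transport: because $\mu_F$ is only of finite index rather than surjective, the supercuspidal-type property does not pass along $\mu$ formally, and one must combine Clifford theory for the open normal finite-index subgroup $\mu_F(\widetilde G(F))\le G(F)$ with the conjugation-invariance of supercuspidality; moreover one must take care to place the trivial character on the central-torus factor $Z$ so that condition (4) is not destroyed under the transport. Everything else is routine manipulation of Moy--Prasad subgroups, Weil restriction, and external tensor products.
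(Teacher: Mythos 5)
Your proposal is correct and takes essentially the same route as the paper's proof: reduce via the central isogeny $Z^{0}\times G_{\mathrm{sc}}\to G$ to the torus and absolutely simple simply connected cases (passing through Weil restriction, where the absolute Coxeter number is unchanged so the absolutely simple factors are tamely ramified once $p>\Cox(G)$), and transport the omni-supercuspidal types across the isogeny using Clifford theory for the open normal finite-index subgroup $\mu_{F}(\widetilde G(F))\le G(F)$. The only differences are cosmetic: you invoke Corollary \ref{cor:existence-omnisctypes} for the simple case where the paper rebuilds from $0$-toral data, your torus construction is a minor variant, and your final supercuspidality transport is phrased via matrix-coefficient support where the paper directly shows vanishing of Jacquet modules following \cite[Lem.~6.2]{Xu-lifting}.
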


\begin{remark}
	Condition (4) is imposed to ensure that some global assertions in Section \ref{sec:congruence} hold when the reductive group over a totally real field is slightly more general than those considered in \cite{Gro99} (see Proposition 1.4 therein), see Remark \ref{rem:Hecke-quotient}. 
\end{remark} 

\begin{remark}\label{rem:existence-omnisctypes}
	In contrast to the proof of Corollary \ref{cor:existence-omnisctypes}, the following proof only relies on the omni-supercuspidal types constructed from 0-toral data and not the more general omni-supercuspidal types constructed from 1-toral data. We have nevertheless included the construction of supercuspidal types from 1-toral data in our discussion above as this construction might be useful for other applications.
	
	Thus we are presenting the reader with two approaches to prove the theorem. One is to prove the preceding results only in the simpler case of 0-toral data, and then deduce the theorem via multiple reduction steps. The other is to establish the intermediate results in the generality of 1-toral data. Then the final theorem (Corollary \ref{cor:existence-omnisctypes}) is immediate, as far as $G$ is further assumed to be tamely ramified.
\end{remark}

\begin{proof}[Proof of Theorem \ref{thm:existence-omnisctypes}]
	It is enough to find a sequence  $\{(U_m,\lambda_m)\}_{m\ge m_0}$ as in the theorem statement for some fixed $m_0$, since we can decrease level of an omni-supercuspidal type without changing the open compact subgroup as explained in Remark \ref{rem:level-lowering}.

The basic idea of proof  is to reduce to the case where $G$ is either a torus or an (absolutely) simple adjoint group, and then handle the two base cases. Below we use $\psi_m$ to denote an arbitrary nontrivial character of $\Z/p^m\Z$.

\medskip

\noindent\textbf{Step 1. Proof when $G$ is a torus}.

Let $G=T$ be a torus over $F$ (possibly wildly ramified). We consider the filtration subgroup $T_{0+}$, which is a pro-$p$ group. Since $T(F)$ is dense in $T$ (see e.g. \cite[III.8.13]{Borel}), the group $T(F)$ contains infinitely many elements. If $T$ is anisotropic, then the compact group $T_{0+}$ has finite index in the compact group $T(F)$, and hence $T_{0+}$ has infinitely many elements. If $T$ is not anisotropic, then $T$ contains a copy of the multiplicative group $\bG_m$ and therefore $T_{0+}$ contains infinitely many elements as well. (An alternative way to see that $T(F)$ has a pro-$p$ subgroup with infinitely many elements is by considering the exponential map from a small neighborhood of $0$ in $\Lie T(F)$ to $T(F)$.)

Take $U_1=T_{0+}$ and $U_{i+1}=\{ u^p: u\in U_i\}$ recursively. Then $\{U_i\}_{i\ge 1}$ satisfies (2).
For $1\le i<j$, as a finite abelian $p$-group, $U_i/U_j$ is a product of cyclic groups of $p$-power orders.
We claim that at least one of the cyclic groups has exact order $p^{j-i}$. Suppose not, then we would have $U_{j-1}=U_j$, which would in turn imply $U_j=U_{j+1}=\cdots$. Since $U_1$ is pro-$p$, we deduced that $U_j=U_{j+1}=\cdots=\{1\}$, which contradicts the infinitude of $U_1$. Therefore there exists a projection $\lambda_m: U_m/U_{2m}\twoheadrightarrow \Z/p^m\Z$ for $m\ge 1$. Then condition (1) obviously holds for $(U_m,\lambda_m)$.

\medskip

\noindent\textbf{Step 2. Proof when $G$ is a simple simply connected group.}

In this case $G=\Res_{F'/F}G'$ for a finite extension $F'/F$ and an absolutely simple simply connected group $G'$ over $F'$. Since $G(F)=G'(F')$, we reduce to the case when $G$ is absolutely simple.
  By Proposition \ref{prop:0-toral-abundance}, there is a tamely ramified elliptic maximal torus $T\subset G$ along with
  a sequence of 0-toral data $\{(T,r_m,\phi_m)\}_{m\ge 1}$ with $2e_Fm-1<r_m\le 2e_Fm$ for all $m$. Fix $y$ as in the paragraph below Definition \ref{def:0-toral}.
  For each $m$, set $U_m:=G(F)_{y,r_m/2+}$ and let $\lambda_m:U_m\twoheadrightarrow \Z/p^m\Z$ be defined by Equation \eqref{eq:factoring-phi}. Then Condition (1) holds thanks to Lemma \ref{lem:0-toral-sc-type}, and Condition (2) is obviously satisfied.
Condition (3) follows because the groups are Moy--Prasad filtration subgroups for the same point $y$. Since $Z(G)(F)$ is finite, there exists an integer $m_0$ such that $U_{m_0} \cap Z(G)(F) = \{1\}$, hence (4) holds for $\{(U_m,\lambda_m)\}_{m\ge m_0}$.

  \medskip

\noindent\textbf{Step 3. Proof when $G$ is a simply connected group.}

Then $G=G_1\times \cdots \times G_N$, where $G_i$ are simple simply connected groups. By Step 2, we can find a sequence of omni-supercuspidal types $\{(U_{i,m},\lambda_{i,m})\}_{m\ge 1}$ satisfying (1), (2), (3) and (4) for each $G_i$, for all $i$.
Take $U_m:=U_{1,m}\times \cdots \times U_{N,m}$ for each $m\ge 1$ and define $\lambda_m:U_m\twoheadrightarrow \Z/p^m\Z$ by $\lambda_m(u_1,...,u_m)=\sum_i \lambda_{i,m}(u_i)$. Clearly (2), (3) {and (4)} hold for $\{(U_m,\lambda_m)\}$.
To verify (1), suppose that $\pi$ is an irreducible smooth representation of $G(F)$ containing $\psi_m\circ\lambda_m$ as a $U_m$-subrepresentation for some arbitrary nontrivial character $\psi_m$ of $\Z/p^m\Z$.
By \cite[Thm.~1]{Fla79}, $\pi\simeq \otimes_{i=1}^N \pi_i$ for irreducible, smooth representations $\pi_i$ of $G_i(F)$. Since $\lambda_m$ pulls back to $\lambda_i$ along the natural inclusion $G_i\hra G$, we see that $\pi_i|_{U_{i,m}}$ contains $\psi_m\circ\lambda_{i,m}$. By Step 2, $\pi_i$ is supercuspidal for every $1\le i\le N$. We conclude that $\pi$ is also supercuspidal.

\medskip

\noindent\textbf{Step 4. Proof of the general case.}

If $G$ is a torus, we are done by Step 1, so we assume that $G$ is not a torus for the remainder of the proof.
Let $Z^0$ denote the maximal torus of $Z$, and $G_{\textup{sc}}$ the simply connected cover of the derived subgroup of $G$. The multiplication map $f: Z^0 \times G_{\textup{sc}}\ra G$ has finite kernel and cokernel (either as algebraic groups or as topological groups of $F$-points). This implies that $f$ induces an isomorphism from a small enough open subgroup in $Z^0(F)\times G_{\textup{sc}}(F)$ onto an open subgroup of $G(F)$.
By Steps 1 and 3, we have omni-supercuspidal types $(U'_m,\lambda'_m)$ for $Z^0(F)$ and $(U''_m,\lambda''_m)$ for $G_{\textup{sc}}(F)$ as in the theorem, for all $m\ge 1$.
There exists $m_0$ such that $\ker(f)\cap (U'_{m'}\times U''_{m''})=\{1\}$  for all $m',m''\ge m_0$. Take $U_m:=f(U'_m\times U''_m)$ and define $\lambda_m:U_m\twoheadrightarrow \Z/p^m\Z$ by $\lambda_m(f(u', u''))=\lambda''(u'')$ for $u'\in U'_m$, $u''\in U''_m$ and $m\ge m_0$. This is well-defined.

Condition (2) is satisfied by $(U_m,\lambda_m)$ as just defined, since $\{U'_m\times U''_m\}_{\ge 1}$ forms a basis of open neighborhoods of $1$ in $Z^0(F)\times G_{\textup{sc}}(F)$.
Conditions (3) and (4) are obviously true by construction.

 To check (1), suppose that an irreducible smooth representation $\pi$ of $G(F)$ contains $\psi_m\circ\lambda_m$ as a $U_m$-representation. Let $N$ be the unipotent radical of an $F$-rational proper parabolic subgroup of $G$. We need to show that $\pi_{N(F)}=0$.

By \cite[Lem.~6.2]{Xu-lifting}, $\pi|_{{f(Z^0(G(F))\times G_{\textup{sc}}(F))}}$ decomposes as a finite direct sum $\oplus_{i=1}^n \pi_i$ of irreducible $f(Z^0(G(F))\times G_{\textup{sc}}(F))$-representations. Without loss of generality, we may assume that $\pi_1|_{U_m}$ contains $\psi_m\circ\lambda_m$. For $i>1$, there exists $g_i\in G(F)$ such that $\pi_i\simeq {^{g_i^{-1}}\pi_1}$, where $^{g_i^{-1}}\pi_1(\gamma)=\pi_1(g_i \gamma g_i^{-1})$ for all $\gamma\in f(Z^0(F)\times G_{\textup{sc}}(F))$ (e.g. see the proof of Lemma 6.2 and Corollary 6.3 in \cite{Xu-lifting}).
For $g\in G(F)$, let $N'_g$ be the unipotent radical of a proper $F$-rational parabolic subgroup of $G_{\textup{sc}}$ such that  $f$ induces an isomorphism from $1 \times N'_g$ onto $g^{-1}N g$ (and also on the $F$-points).  By Step 3, the irreducible $G_{\textup{sc}}(F)$-representation ${\pi_1}|_{f(G_{\textup{sc}}(F))}\circ f$ is supercuspidal and so $(\pi_1 \circ f)_{N'_g(F)}=0$.
Thus we obtain as vector spaces
$$\pi_{N(F)}=\bigoplus_{i=1}^n (\pi_i)_{N(F)}
=\bigoplus_{i=1}^n (\pi_1)_{g_i N(F) g_i^{-1}} = \bigoplus_{i=1}^n (\pi_1 \circ f)_{N'_{g_i}(F)} = 0.$$
\end{proof}

\section{Congruence to automorphic forms with supercuspidal components}\label{sec:congruence}

  Now we switch to a global setup for algebraic automorphic forms as studied in \cite{Gro99}. The following notation will be used.

\begin{itemize}
\item $G$ is a reductive group over a totally real field $F$ such that $G(F\otimes_\Q \R)$ is compact modulo center. Since the statements in this section are trivial to show if $G$ is a torus, because all smooth irreducible representations of $p$-adic tori are supercuspidal, we assume through that $G$ is not a torus.
\item $v$ is a place of $F$ above $p$; $\fkp_v$ is the maximal ideal of the ring of integers $\cO_{F_v}$ in the completion $F_v$ of $F$ with respect to $v$.
\item $e_v:=e(F_v/\Q_p)$ is the absolute ramification index of $F_v$ (so $\fkp_v^{e_v}=(p)$ as ideals in $\cO_{F_v}$).
\item $G_v:=G\times_F F_v$, $G_p:=(\Res_{F/\Q} G)\times_\Q \Q_p$, $G_\infty:=(\Res_{F/\Q} G)\times_\Q\R$.
\item $S$ is a finite set of places of $F$ containing all $p$-adic and infinite places as well as all ramified places for $G$; we fix a reductive model for $G$ over $\Spec \cO_F\backslash \{\mbox{finite~places~in}~S\}$ (still denoted by $G$ for convenience), and we use this model to identify $G(\bA_F)=\prod'_v G(F_v)$ as a restricted product.
\item  $U^p:=\prod_{w\nmid p\infty} U_w$ is a (fixed) compact open subgroup of $G(\bA_F^{\infty, p})$ such that $U_w=G(\cO_{F_w})$ hyperspecial for $w \notin S$, and we write $U^{S}:=\prod_{w\notin S} U_w$.
\end{itemize}

If $U_p$
is a compact open subgroup of $G_p(\bQ_p)=\prod_{w|p} G(F_w)$, and $\Lambda$ a finitely generated $\bZ_p$-module with a continuous action of $U_p$, then we write $U:=U^pU_p$, and
$$M(U,\Lambda):= \left\{
\begin{array}{c}
  \mbox{cont.~functions}~ f:G(F)\backslash G(\A_F)/ U^p G_\infty(\R)^\circ\ra \Lambda,\vspace{.02in}\\
  \mbox{such~that}~~ f(gu_p)=u_p^{-1} f(g),~~\mbox{for}~g\in G(\A_F), ~~u_p\in U_p
\end{array}
  \right\}.$$
  Here $G_\infty(\R)^\circ$ denotes the connected component of $G_\infty(\R)$ that contains the identity. We write $\bZ_p^N$ for the free $\bZ_p$-module of rank $N$ with trivial $U_p$-action, and we drop the exponent if $N=1$. Hence $M(U, \bZ_p)$ is the space of $\bZ_p$-valued functions on the finite set $G(F)\backslash G(\A_F)/ U^pU_p G_\infty(\R)^\circ$. Note that $M(U, \Lambda)$ is a $\mathbb T^S:=\Z[U^S\backslash G(\A_F^S)/U^S]$-module under the usual double coset action. It is routine to check that
  the association of $\mathbb T^S$-modules
  $$ \Lambda \mapsto M(U,\Lambda)$$
  is functorial in $\Lambda$. In particular, if $\Lambda'\simeq \Lambda^{\oplus N}$ as $\Z_p[U_p]$-modules with $N\in \Z_{\ge 1}$, then
  $M(U,\Lambda')\simeq M(U,\Lambda^{\oplus N}) = M(U,\Lambda)^{\oplus N}$ as $\mathbb T^S$-modules, and $\mathbb T^S$ acts on the last direct sum by the diagonal action.
   We define
\begin{equation} \label{eq:TS} \mathbb T^S(U,\Lambda)\subset \End_{\bZ_p}(M(U,\Lambda))
\end{equation}
to be the $\bZ_p$-subalgebra generated by the image of $\bT^S$. (Thus $\mathbb T^S(U,\Lambda)$ is commutative.)
If $\Lambda'\simeq \Lambda^{\oplus N}$ then we have an induced isomorphism $\mathbb T^S(U,\Lambda')\simeq \mathbb T^S(U,\Lambda)$, acting equivariantly on
$M(U,\Lambda')\simeq M(U,\Lambda)^{\oplus N}$.
This observation will be used a few times in \S\ref{subsec:constant-single} and \S\ref{subsec:non-constant}.

We have an obvious action of $\pi_0(G_\infty(\R))$ by right translation on the spaces of automorphic forms considered here, cf.~Proposition 8.6 and the paragraph below (4.1) in \cite{Gro99}, commuting with the Hecke algebra actions. Every isomorphism between spaces of automorphic forms below is compatible with the $\pi_0(G_\infty(\R))$-action. That said, we will not mention $\pi_0(G_\infty(\R))$-actions again.

\subsection{Constant coefficients}\label{subsec:constant-single}
Our goal is to define congruences between arbitrary automorphic forms with constant coefficients and automorphic forms that are supercuspidal at $p$. In order to define the coefficients of the latter space, we denote by $A_m$ the $\bZ_p$-algebra $\bZ_p[T]/(1+T+\hdots+T^{p^m-1})$ for $m$ a positive integer.
Then we have a canonical $\Z_p$-algebra isomorphism $A_m/(T-1)\simeq \Z_p/(p^m)$.

\begin{theorem}\label{prop:congruence} Assume $p> \Cox(G)$ and let $N$ be a positive integer.
	 Then there exist
\begin{itemize}
\item	  a basis of compact open neighborhoods $\{U_{p, m}\}_{m \geq 1}$ of $1 \in G_p(\bQ_p)=\prod_{w|p} G(F_w)$ such that $U_{p,m'}$ is normal in $U_{p,m}$ whenever $m'\ge m$ and
\item a smooth character $\psi_m: U_{p,m} \ra A_m^\times$ for each $m\geq 1$
\end{itemize}
 such that we have isomorphisms of $\bZ_p/(p^m)$-modules (where the $U_{p,m}$-action in $M(\,\cdot\,)$ is trivial on the left hand side and through $\psi_m$ on the right hand side)
	\begin{equation} \label{eq:space-mod-pm}
	M(U^pU_{p,m}, \bZ_p^N/(p^m)) \simeq (M(U^pU_{p,m}, A_m/(T-1)))^{\oplus N}
	\end{equation}
	that are compatible with the action of $\T^S(U^p U_{p,m},\bZ_p^N/(p^m))$ on the left hand side and the diagonal action of $\T^S(U^p U_{p,m},A_m/(T-1))$ on the right hand side via the
	$\Z_p$-algebra isomorphism
	\begin{equation} \label{eq:Hecke-mod-pm} \T^S(U^p U_{p,m},\Z_p^N/(p^m)) \simeq \T^S(U^p U_{p,m},A_m/(T-1)). \end{equation}
	Moreover, every automorphic representation of $G(\bA_F)$ that contributes to
	$$(M(U^pU_{p,m}, A_m))^{\oplus N} \otimes_{\bZ_p} \overline\bQ_p$$  is supercuspidal at all places above $p$.
\end{theorem}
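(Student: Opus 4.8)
The plan is to take $U_{p,m}$ and $\psi_m$ from the local theory and then deduce the ``Moreover'' clause from the defining property of an omni-supercuspidal type. Apply Theorem~\ref{thm:existence-omnisctypes} to the connected reductive $\Q_p$-group $G_p=(\Res_{F/\Q}G)\times_\Q\Q_p$; over $\overline\bQ_p$ it is a product of copies of $G\times_F\overline F$, so the irreducible factors of its absolute root system are those of $G$ and $\Cox(G_p)=\Cox(G)<p$, while $\mathrm{char}(\Q_p)=0$. This furnishes a sequence $\{(U_{p,m},\lambda_m)\}_{m\ge1}$ of omni-supercuspidal types of level $p^m$ with $\{U_{p,m}\}$ a basis of open neighborhoods of $1\in G_p(\bQ_p)=\prod_{w\mid p}G(F_w)$, with $U_{p,m'}$ normal in $U_{p,m}$ for $m'\ge m$, and with $\lambda_m$ trivial on $U_{p,m}\cap Z(G_p)(\Q_p)$. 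Define $\psi_m\colon U_{p,m}\to A_m^\times$ by $u\mapsto T^{\lambda_m(u)}$; this is well defined since $T^{p^m}=1$ in $A_m$, and it becomes the trivial character modulo $T-1$. Standard properties of the functor $\Lambda\mapsto M(U^pU_{p,m},\Lambda)$ (it commutes with finite direct sums and with flat base change in $\Lambda$, as one sees from its presentation as a finite sum of finite-group invariants) then yield the isomorphisms \eqref{eq:space-mod-pm}, and compatibility with \eqref{eq:Hecke-mod-pm} holds because $\T^S$ is generated by Hecke operators at places away from $p$, which commute with the $U_{p,m}$-action. These verifications are routine; the substance of the theorem is the last assertion, which I would prove as follows.

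Work with $\overline\bQ_p$-coefficients throughout, which is harmless since all of Section~\ref{sec:sc-types} goes through over an arbitrary algebraically closed field of characteristic $0$. The roots of $1+T+\cdots+T^{p^m-1}$ are exactly the $p^m$-th roots of unity $\zeta\ne1$, so base change gives a ring isomorphism $A_m\otimes_{\bZ_p}\overline\bQ_p\simeq\prod_{\zeta^{p^m}=1,\,\zeta\ne1}\overline\bQ_p$ under which $T$ acts by $\zeta$ on the $\zeta$-component; hence on that component $U_{p,m}$ acts, through $\psi_m$, by the \emph{nontrivial} character $\chi_\zeta\circ\lambda_m$ with $\chi_\zeta\colon\bZ/p^m\bZ\to\overline\bQ_p^\times$, $a\mapsto\zeta^a$. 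Applying $M(U^pU_{p,m},-)$ and $(-)^{\oplus N}$ gives
$$
\big(M(U^pU_{p,m},A_m)\big)^{\oplus N}\otimes_{\bZ_p}\overline\bQ_p\;\simeq\;\bigoplus_{\zeta^{p^m}=1,\,\zeta\ne1} M\big(U^pU_{p,m},\overline\bQ_p(\chi_\zeta\circ\lambda_m)\big)^{\oplus N},
$$
where $\overline\bQ_p(\eta)$ denotes $\overline\bQ_p$ with $U_{p,m}$ acting via the character $\eta$. Consequently, any automorphic representation $\pi$ of $G(\bA_F)$ contributing to the left-hand side already contributes to $M\big(U^pU_{p,m},\overline\bQ_p(\chi_\zeta\circ\lambda_m)\big)$ for some $\zeta\ne1$.

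It remains to unwind what such a contribution says about the component of $\pi$ at $p$. Viewing $M(U^pU_{p,m},\overline\bQ_p(\eta))$ as the $\eta^{-1}$-isotypic subspace, for the right regular action of $U_{p,m}$, of the (semisimple, because $G(F\otimes_\Q\R)$ is compact modulo center) space of $\overline\bQ_p$-valued automorphic forms on $G$, a nonzero contribution of $\pi$ forces $\Hom_{U_{p,m}}\!\big((\chi_\zeta\circ\lambda_m)^{-1},\pi_p\big)\ne0$, where $\pi_p:=\bigotimes_{w\mid p}\pi_w$ is an \emph{irreducible} smooth $\overline\bQ_p$-representation of $G_p(\bQ_p)=\prod_{w\mid p}G(F_w)$, being a finite tensor product of irreducible representations. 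Since $\chi_\zeta^{-1}$ is still a nontrivial character of $\bZ/p^m\bZ$ and $(U_{p,m},\lambda_m)$ is an omni-supercuspidal type, Definition~\ref{def:omni} shows that $(U_{p,m},\chi_\zeta^{-1}\circ\lambda_m)$ is a supercuspidal type for $G_p(\bQ_p)$, so $\pi_p$ is supercuspidal; for the product group $\prod_{w\mid p}G(F_w)$ this is equivalent to each $\pi_w$, $w\mid p$, being supercuspidal. The one genuinely delicate step is this last one: translating ``$\pi$ contributes to $M(U^pU_{p,m},\overline\bQ_p(\eta))$'' into a precise $U_{p,m}$-equivariance statement about the irreducible local representation $\pi_p$ --- in particular pinning down the variance ($\eta$ versus $\eta^{-1}$) and justifying the irreducibility of $\pi_p$ --- so that the purely local Definition~\ref{def:omni} can be applied; everything preceding it is formal manipulation of coefficient rings and Hecke operators.
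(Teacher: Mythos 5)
Your proposal is correct and takes essentially the same route as the paper's own proof: obtain the omni-supercuspidal types $(U_{p,m},\lambda_m)$ from Theorem~\ref{thm:existence-omnisctypes}, define $\psi_m$ by $u\mapsto T^{\lambda_m(u)}$, observe that it becomes trivial modulo $(T-1)$ to get~\eqref{eq:space-mod-pm} and~\eqref{eq:Hecke-mod-pm}, and then decompose $A_m\otimes_{\bZ_p}\overline\bQ_p$ over the nontrivial $p^m$-th roots of unity (equivalently, over nontrivial characters $\chi$ of $\Z/p^m\Z$) to reduce supercuspidality to the defining property of an omni-supercuspidal type. The only difference is that you spell out more carefully the variance convention ($\eta$ vs.\ $\eta^{-1}$) and the irreducibility of $\pi_p$ as a representation of the product group $\prod_{w\mid p}G(F_w)$; these points are treated briefly in the paper but your elaboration is accurate and resolves in the expected way since $\chi_\zeta^{-1}$ is again a nontrivial character of $\Z/p^m\Z$.
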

\begin{remark}
	Theorem \ref{prop:congruence}  also holds if we replace $U^p$ by a compact open subgroup $U^v=\prod_{w\nmid v\infty} U_w$ of $G(\bA_F^{\infty, v})$ such that $U_w$ is hyperspecial for $w \notin S$, and $U_{p,m}$ by compact open subgroups $U_{v,m}$ of $G(F_v)$ for a place $v$ above $p$. The conclusion in this case includes that every automorphic representation $G(\bA_F)$ that contributes to 	$(M(U^vU_{v,m}, A_m))^{\oplus N} \otimes_{\bZ_p} \overline\bQ_p$  is supercuspidal at $v$. The proof works in a completely analogous way.
\end{remark}

\begin{remark}\label{rem:use-appD-1}
	The assumption that $p>\Cox(G)$ in Theorem \ref{prop:congruence} and in all other results in Section \ref{sec:congruence} can be removed by appealing to Appendix D, which proves Theorem \ref{thm:existence-omnisctypes} for all primes $p$.
\end{remark}

  Before presenting a proof, let us comment on the meaning of the proposition.
  As $m$ grows to infinity, the space $M(U^p U_{p,m},\Q_p) $, or more precisely its extension of scalars to $\ol\Q_p$, exhausts all automorphic forms on $G(\A_F)$ with constant coefficients, if we also allow $U^p$ to shrink arbitrarily.
  Thus the left hand side of \eqref{eq:space-mod-pm} represents arbitrary automorphic forms on $G$ with $\bZ_p/(p^m)$-coefficients. Loosely speaking, Theorem \ref{prop:congruence}  can be thought of as a congruence modulo a power of $p$ between arbitrary automorphic forms and those with supercuspidal components at $p$.

  A caveat is that the congruence here is between spaces of automorphic forms.
   It does not follow from our result that for an individual automorphic representation $\pi$, there exists $\pi(m)$ which is supercuspidal at $p$ such that ``$\pi\equiv \pi(m)$ mod $p^m$'' in terms of Hecke eigenvalues outside $S$. To see this, let $c_\pi:\T^S(U^p U_{p,m},\Z_p)\ra \ol \Z_p$ be the $\Z_p$-algebra morphism accounting for $\pi$. Suppose that $\T^S(U^p U_{p,m},\Z_p)/(p^m)$ is isomorphic to $\T^S(U^S U_{v,m},A_m)/(T-1)$ (which we do not know; see Remark \ref{rem:Hecke-quotient}). Taking $c_\pi$ mod $p^m$, we obtain a morphism $\T^S(U^S U_{v,m},A_m)/(T-1)\ra \ol \Z_p/(p^m)$, but now the problem is that it is unclear whether the latter lifts to a morphism $\T^S(U^S U_{v,m},A_m)\ra \ol \Z_p$. (When $m=1$, this is often possible by the Deligne--Serre lifting lemma \cite[Lemma~6.11]{Deligne-Serre}, for instance.) However, we often do not need such a lift for applications, see e.g. \S\ref{subsec:app} below and \cite[\S7]{Scholze-LT}. For these applications a statement like Theorem \ref{prop:congruence}  suffices.

\begin{proof}[Proof of Theorem \ref{prop:congruence} ]
 We let $\{(U_{p,m},\lambda_{m})\}_{m\geq 1}$ be omni-supercuspidal types of level $p^m$ for $G_p$ as in Theorem \ref{thm:existence-omnisctypes}, i.e. such that $U_{p,1}\supset U_{p,2}\supset \hdots$, the groups $\{U_{p,m}\}_{m\ge 1}$ form a basis of open neighborhoods of $1$ and $U_{p,m'}$ is normal in $U_{p,m}$ whenever $m'\ge m$.

  For each $m \geq 1$, we have the following commutative diagram of maps, where $\zeta_{p^m}$ denotes a primitive $p^m$-th root of unity in $\overline\bQ_p$.

 $$\xymatrix{
 	& & A_m\otimes_{\Z_p} \ol\Q_p \simeq \ol\Q_p^{p^m-1} & & & \{\zeta_{p^m}^{ai}\}_{i=1}^{p^m-1}\\
 	\Z/p^m \Z \ar@{^(->}[r] & A_m^\times \ar@{^(->}[r]  \ar@{->>}[d] & A_m \ar@{->>}[d]^-{\textrm{mod}~T-1} \ar@{^(->}[u] & a~\textrm{mod}~p^m \ar@{|->}[r] & T^a \ar@{|->}[d] \ar@{|->}[r] & T^a \ar@{|->}[d] \ar@{|->}[u]\\
 	& (\Z/p^m\Z)^\times \ar@{^(->}[r]   &  \Z/p^m \Z & & 1  \ar@{|->}[r] & 1
 }$$

 We define the smooth character $\psi_{m}:U_{p,m}\ra A_m^\times$ as the composite
 $$
 U_{p,m} \xrightarrow{\lambda_{m}}  ~ \Z/p^m\Z~  \hookrightarrow ~A_m^\times,
 $$
 and we let $u \in U_{p,m}$ act on $A_m$ by multiplication by $\psi_{m}(u)$.
  Since the resulting action of $U_{p,m}$ on  $A_m/(T-1)$ is trivial, we have canonical isomorphisms
 \begin{equation}\label{eq:Zp-Am-isom}
 M(U^p U_{p,m},\Z_p^N/(p^m)) \simeq  M(U^p U_{p,m},\Z_p/(p^m))^{\oplus N} \simeq  \left(M(U^p U_{p,m},A_m/(T-1))\right)^{\oplus N}
 \end{equation}
 as modules over $\Z_p/(p^m) \simeq A_m/(T-1)$. Moreover, these isomorphisms are $\bT^S$-equivariant, and observing that the action of $\bT^S$ on $\left(M(U^p U_{p,m},A_m/(T-1))\right)^{\oplus N} $ is given via the diagonal action of $\T^S(U^p U_{p,m},A_m/(T-1))$, we obtain that the isomorphisms \eqref{eq:Zp-Am-isom} are compatible with the action of
 $$\T^S(U^p U_{p,m},\Z_p^N/(p^m)) \simeq \T^S(U^p U_{p,m},A_m/(T-1)).$$

 Note that

 \begin{equation}\label{eq:M(psi-m)[1/p]}
 M(U^p U_{p,m},A_m)\otimes_{\Z_p} \ol\Q_p\simeq \bigoplus_{\chi:\Z/p^m\Z \ra \ol\Q_p^\times\atop \chi\neq 1} M(U^p U_{p,m},(\ov \bQ_p)_{\chi\circ \lambda_{m}}),
 \end{equation}
 where $(\ov \bQ_p)_{\chi\circ \lambda_{m}}$ denotes the free rank-1 $\ov \bQ_p$-module on which $u \in U_{p,m}$ acts by multiplication by $\chi\circ \lambda_{m}$. Since $(U_{p,m}, \lambda_{m})$ is omni-supercuspidal, every automorphic representation of $G(\bA_F)$ that contributes to
$(M(U^pU_{p,m}, A_m))^{\oplus N} \otimes_{\bZ_p} \overline\bQ_p$  is supercuspidal at all places above~$p$.

\end{proof}

\begin{remark}\label{rem:Hecke-quotient}
In the setup of Theorem \ref{prop:congruence}, we have $\T^S$-equivariant isomorphisms
\begin{equation} \label{eq:M-quotient}
M(U^pU_{p,m}, \bZ_p^N)/(p^m) \simeq M(U^pU_{p,m}, \bZ_p^N/(p^m)) \stackrel{\eqref{eq:space-mod-pm}}{\simeq} (M(U^pU_{p,m}, A_m/(T-1)))^{\oplus N} . 
\end{equation}
We claim that the natural map (induced by the map $A_m\ra A_m/(T-1)$ of coefficient modules)
$M(U^pU_{p,m}, A_m)/(T-1) \ra M(U^pU_{p,m}, A_m/(T-1))$
is a $\T^S$-equivariantly isomorphism for sufficiently small $U_{p,m}$. (Unlike the first isomorphism in \eqref{eq:M-quotient}, the isomorphicity is not obvious since the $U_{p,m}$-action on $A_m$ is not trivial.)
If the center of $\Res_{F/\Q}G$ has the same $\Q$-rank and $\R$-rank (that is, if $G$ satisfies the equivalent conditions of \cite[Proposition 1.4]{Gro99}; in particular every arithmetic subgroup of $G(F)$ is finite) then the claim is proved by the argument of \cite[Proposition 9.2]{Gro99}, with $A_m$, $A_m/(T-1)$, and $U_{p,m}$ playing the roles of $L_p$, $\ol L$, and $K_p$ there, respectively; the point is that the arithmetic subgroup $\Delta_\alpha$ in that argument becomes trivial when $K_p=U_{p,m}$ is small enough. Now, without the assumption on the center, we can still mimic the argument since $U_{p,m}\cap Z(G)(F)$ acts trivially on $A_m$ by condition (4) of Theorem \ref{thm:existence-omnisctypes}. This means that $U_{p,m}$ acts on $A_m$ through its image under the adjoint map $G(F\otimes_\Q \Q_p)\ra G^{\textup{ad}}(F\otimes \Q_p)$. Since $G^{\textup{ad}}$ satisfies the conditions of \cite[Proposition 1.4]{Gro99}, if $U_{p,m}$ is small enough, then all $\Delta_\alpha$ act trivially on $A_m$ in the the argument of \cite[Proposition 9.2]{Gro99}, even if $\Delta_\alpha$ need not be a finite group, so the proof there still goes through to establish the claim. Now that the claim is true, \eqref{eq:M-quotient} yields a natural $\T^S$-equivariant isomorphism (cf.~\eqref{eq:space-mod-pm})
$$M(U^pU_{p,m}, \bZ_p^N)/(p^m) \simeq (M(U^pU_{p,m}, A_m)/(T-1))^{\oplus N} . $$
 However we cannot take the quotients outside the Hecke algebras in \eqref{eq:Hecke-mod-pm}. The abstract situation is as follows. Let $M$ be a finite free $\Z_p$-module, $\alpha\in \End_{\Z_p}(M)$. Write $\ol M:=M/(p^m)$ and $\ol\alpha\in \End_{\Z_p}(\ol M)$ for the image of $\alpha$. Put $T:=\Z_p[\alpha]$ and $\ol T:=\Z/(p^m)[\ol\alpha]$ for the subalgebra of $\End_{\Z_p}(M)$ (resp. $\End_{\Z_p}(\ol M)$) generated over $\Z_p$. Then the obvious map $T/(p^m)\ra \ol T$ need not be an isomorphism: consider arbitrary $\alpha$ such that $\ol\alpha$ is the multiplicative unity.
 Despite the apparent defect, it is readily checked that
 \begin{equation}\label{eq:T-as-inv-lim}
\T^S(U^p U_{p,m_0},\Z_p^N)=\varprojlim\limits_{m\ge m_0}\T^S(U^p U_{p,m_0},\Z_p^N/(p^m))
\end{equation}
 for each integer $m_0\ge 1$, with compatible actions on the spaces of functions with coefficients in $\Z_p^N$ and $\Z_p^N/(p^m)$, respectively. If the center of $\Res_{F/\Q}G$ has the same $\Q$-rank and $\R$-rank, then the analogous statements also hold with non-constant coefficient $V_{\bZ_p}$ that we will introduce in \S\ref{subsec:non-constant} below.\footnote{Without the assumption on the center, the argument for the preceding claim still applies if $U_{p,m}\cap Z(G)(F)$ acts trivially on $V_{\bZ_p}$.}
 More precisely, we have $M(U^pU_{p,m}, V_{\bZ_p}^N)/(p^m) \simeq M(U^pU_{p,m}, V_{\bZ_p}^N/(p^m))$  as $\T^S$-modules by the same reasoning as for the claim above for $U_{p,m}$ sufficiently small. This leads to the non-constant coefficient analogue of \eqref{eq:T-as-inv-lim}. For applications, \eqref{eq:T-as-inv-lim} and its non-constant analogue are often enough, cf.~proof of Theorem \ref{thm:main-app} below (where the center of $\Res_{F/\Q}G$ has $\Q$-rank and $\R$-rank $0$).
\end{remark}

\subsection{Non-constant coefficient}\label{subsec:non-constant}

Let $L$ be a finite extension of $F$ such that $L/\Q$ is Galois and $G\times_{F} L$ is split.
Then $(\Res_{F/\Q} G)\times_{\Q} L$ is split and we denote by $\tilde {\mathbf G}$ a split reductive group  over $\cO_L$
such that $\tilde{\mathbf G}_L \simeq (\Res_{F/\Q} G)\times_{\Q} L$. Let $V$ be a linear algebraic representation of $\tilde {\mathbf G}$ over $\cO_L$.
This yields an algebraic representation of $\Res_{\cO_L/\Z} \tilde {\mathbf G}$
on $V_{\Z}$ (i.e. $V$ viewed as a $\bZ$-module) over $\Z$.
In particular, {$(\Res_{\cO_L/\Z}\tilde {\mathbf G})(\Z_p)$} acts continuously on $V_{\Z_p}=V_\bZ \otimes_\Z \Z_p$ for the $p$-adic topology. Observe that
$$(\Res_{\cO_L/\Z}\tilde {\mathbf G})(\Q_p)=\tilde{ \mathbf G}(L\otimes_\Q \Q_p) \simeq (\Res_{F/\Q} G)(L\otimes_\Q \Q_p)
= G_p(L\otimes_\Q \Q_p),$$
and the latter naturally contains $G_p(\Q_p)$ as a closed subgroup.
Therefore every sufficiently small open subgroup $U_p$ of $G_p(\Q_p)$ (as long as it maps into $(\Res_{\cO_L/\Z}\tilde {\mathbf G})(\Z_p)$ under the above isomorphism) acts on $V_{\Z_p}$. In that case, using this action, we obtain a space of automorphic forms $ M(U^pU_{p}, V_{\bZ_p}/(p^m)) $ as defined earlier.

\begin{theorem}\label{prop:congruenceV} Assume $p> \Cox(G)$.
	Then there exists a basis of compact open neighborhoods $\{U_{p, m}\}_{m \geq 1}$ of $1 \in G_p(\bQ_p)=\prod_{w|p} G(F_w)$ with $U_{p, m'}$ normal in $U_{p, m}$ for $m'\ge m$ as well as smooth actions of $U_{p,m}$ on $A_m$ arising from multiplication by a character $\psi_m: U_{p,m} \ra A_m^\times$ for $m \geq 1$ (factoring through $\Z/p^m \Z$), such that we have isomorphisms of $\bZ_p/(p^m)$-modules
	\begin{equation} \label{eq:M-non-const} M(U^pU_{p,m}, V_{\bZ_p}/(p^m)) \simeq (M(U^pU_{p,m}, A_m/(T-1)))^{\oplus \dim_{\bZ_p} V}  \end{equation}
	that are compatible with the action of $\T^S(U^p U_{p,m},V/(p^m))$ on the left hand side and the diagonal action of $\T^S(U^p U_{p,m},A_m/(T-1))$ on the right hand side via the
	$\Z_p$-algebra isomorphism
	\begin{equation} \label{eq:Hecke-mod-pm-2} \T^S(U^p U_{p,m},V_{\bZ_p}/(p^m)) \simeq \T^S(U^p U_{p,m},A_m/(T-1)). \end{equation}
	Moreover, every automorphic representation of $G(\bA_F)$ that contributes to
	$$(M(U^pU_{p,m}, A_m))^{\oplus \dim_{\bZ_p} V_{\bZ_p}} \otimes_{\bZ_p} \overline\bQ_p$$  is supercuspidal at all places above $p$.
\end{theorem}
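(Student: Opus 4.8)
The plan is to reduce the non-constant coefficient statement to the constant coefficient Theorem~\ref{prop:congruence}. The key observation is that although $U_p$ acts non-trivially on $V_{\bZ_p}$, the reduction $V_{\bZ_p}/(p^m)$ is a \emph{finite} group on which $U_p$ acts continuously for the $p$-adic topology, so the action factors through a finite quotient of $U_p$; hence every sufficiently small open subgroup of $U_p$ acts \emph{trivially} on $V_{\bZ_p}/(p^m)$, and restricted to such a subgroup $V_{\bZ_p}/(p^m)$ is just a direct sum of $d:=\dim_{\bZ_p}V_{\bZ_p}$ copies of the trivial module $\bZ_p/(p^m)$ (using that $V_{\bZ_p}=V_{\bZ}\otimes_{\bZ}\bZ_p$ is finite free over $\bZ_p$, as $\cO_L$ is finite free over $\bZ$).

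Concretely, I would first invoke Theorem~\ref{thm:existence-omnisctypes} for $G_p$ to obtain omni-supercuspidal types $\{(U'_{p,\ell},\lambda'_\ell)\}_{\ell\ge 1}$ of level $p^\ell$ with $U'_{p,1}\supset U'_{p,2}\supset\cdots$ a basis of open neighborhoods of $1$, nested and normal as stated. After discarding the finitely many $\ell$ for which $U'_{p,\ell}$ fails to land in $(\Res_{\cO_L/\bZ}\tilde{\mathbf G})(\bZ_p)$ (so that the remaining $U'_{p,\ell}$ act on $V_{\bZ_p}$), I would, for each $m$, choose $\ell(m)\ge m$, increasing in $m$, with $U'_{p,\ell(m)}$ contained in the open kernel of the $U_p$-action on $V_{\bZ_p}/(p^m)$, and set $U_{p,m}:=U'_{p,\ell(m)}$ and $\lambda_m:=\textup{pr}_{\ell(m),m}\circ\lambda'_{\ell(m)}$; by Remark~\ref{rem:level-lowering} this is again an omni-supercuspidal type of level $p^m$, and the cofinal nested subsequence $\{U_{p,m}\}$ still forms a basis of open neighborhoods of $1$ with $U_{p,m'}$ normal in $U_{p,m}$ for $m'\ge m$. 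I would then define $\psi_m:U_{p,m}\xrightarrow{\lambda_m}\bZ/p^m\bZ\hookrightarrow A_m^\times$ (so $\psi_m$ factors through $\bZ/p^m\bZ$) and let $U_{p,m}$ act on $A_m$ by multiplication by $\psi_m$. Since $U_{p,m}$ acts trivially on $V_{\bZ_p}/(p^m)$, one has $V_{\bZ_p}/(p^m)\simeq \bZ_p^d/(p^m)$ as $\bZ_p[U_{p,m}]$-modules, so the functoriality of $\Lambda\mapsto M(U^pU_{p,m},\Lambda)$ recalled before \eqref{eq:TS} yields a $\T^S$-equivariant isomorphism $M(U^pU_{p,m},V_{\bZ_p}/(p^m))\simeq M(U^pU_{p,m},\bZ_p^d/(p^m))$ together with the matching identification of Hecke algebras. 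Applying Theorem~\ref{prop:congruence} with $N=d$ and this very choice of $(U_{p,m},\lambda_m)$ — its proof only uses an omni-supercuspidal type of level $p^m$ — then produces the isomorphisms \eqref{eq:M-non-const} and \eqref{eq:Hecke-mod-pm-2}. The supercuspidality assertion is immediate, exactly as in the proof of Theorem~\ref{prop:congruence}: $M(U^pU_{p,m},A_m)\otimes_{\bZ_p}\overline\bQ_p$ decomposes as a sum of $M(U^pU_{p,m},(\overline\bQ_p)_{\chi\circ\lambda_m})$ over nontrivial characters $\chi$ of $\bZ/p^m\bZ$, and each contributing automorphic representation is supercuspidal at all places above $p$ because $(U_{p,m},\lambda_m)$ is omni-supercuspidal.

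I expect no genuine obstacle here: all the real mathematical content sits in Theorem~\ref{thm:existence-omnisctypes} (hence in Proposition~\ref{prop:sc-type} and the abundance results) and in Theorem~\ref{prop:congruence}. The only point requiring a little care is the bookkeeping in the first step, namely producing the single sequence $\{U_{p,m}\}$ that simultaneously forms a basis of open neighborhoods with the normality property, underlies omni-supercuspidal types of level exactly $p^m$, \emph{and} acts trivially on $V_{\bZ_p}/(p^m)$; this is handled by passing to a cofinal subsequence of the types furnished by Theorem~\ref{thm:existence-omnisctypes} and lowering their levels via Remark~\ref{rem:level-lowering}.
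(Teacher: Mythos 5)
Your proposal is correct and follows essentially the same route as the paper: pass to a cofinal subsequence of the omni-supercuspidal types from Theorem~\ref{thm:existence-omnisctypes} so that $U_{p,m}$ acts trivially on $V_{\bZ_p}/(p^m)$, lower the level via Remark~\ref{rem:level-lowering}, identify $V_{\bZ_p}/(p^m)$ with $\bZ_p^{d}/(p^m)$ as a $U_{p,m}$-module, and then invoke the constant-coefficient argument with $N=d$. The one slight presentational difference is that you cite Theorem~\ref{prop:congruence} as a black box (correctly noting its proof works for any omni-supercuspidal type of the right level), whereas the paper simply re-runs that argument inline; both are fine.
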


\begin{remark}\label{rem:model-over-L}
	If $V'_{\C}$ is an algebraic representation of $(\Res_{F/\Q} G)\times_{\Q}\C$ over $\C$,
 then we can choose a model of the representation $V'_\mathbb{C}$ over $\cO_L$ (e.g. the sum of the Weyl modules as in \cite[p.~183]{Jantzen} corresponding to the irreducible components of $V'_\mathbb{C}$), i.e. a representation $V$ of $\tilde {\mathbf G}$ over $\cO_L$ whose base change to $\mathbb{C}$ is the representation $V'_\mathbb{C}$  of $(\Res_{F/\Q} G)\times_{\Q} \mathbb{C}$. This way we obtain the setup needed at the start of this subsection.
\end{remark}

\begin{remark} \label{rem:different-incarnations}
 The space $M(U^pU_{p,m}, V_{\ol\Q_p})$, where $V_{\ol\Q_p}=V_{\bZ_p}\otimes_{\bZ_p}{\ol\Q_p}$, can be described in terms of classical automorphic forms on $G$.
  Here we fix field embeddings $\ol\Q\hra \ol\Q_p$ and $\ol\Q\hra \C$ and a compatible isomorphism $\iota:\ol\Q_p\simeq \C$, and extend scalars using these embeddings to define $V_{\ol\Q_p}$ and $V_\C$. Thus $V_\C$ is an algebraic representation of $(\Res_{L/\Q} \tilde{\mathbf G}_L)\times_\Q \C = (\Res_{F/\Q} G)\times_\Q (L\otimes_\Q \C)$, which we can restrict to a representation of $(\Res_{F/\Q} G)\times_\Q \C$ via the obvious embedding $\C\hra L\otimes_\Q \C$. The resulting representation can be viewed either as an algebraic representation of $(\Res_{F/\Q} G)\times_\Q \C$,
  or as a continuous representation of $G_\infty(\C)\supset G_\infty(\bR)$.
  For simplicity, suppose that the center of $G$ is anisotropic over $F$ and and that $G_\infty(\bR)$ is connected (in addition to being compact modulo center). Write $\mathcal A_G$ for the space of $L^2$-automorphic forms on $G(F)\backslash G(\A_F)$ (as in 2.1.2 of \cite{Sor13} but without the need to fix a central character; note that his $G$ is our $\Res_{F/\Q}G$). Then Lemma 2 of \emph{loc.~cit.}~gives a $\T^S_\C$-equivariant isomorphism
  $$ \iota M(U^pU_{p,m}, V_{\ol\Q_p}) \simeq \Hom_{G_\infty(\bR)}(V^\vee_\C,\mathcal A_G)^{U^p U_{p,m}}.$$

\end{remark}

\begin{proof}[Proof of Theorem \ref{prop:congruenceV}]
	Recall that an open subgroup of $G_p(\Q_p)$ acts continuously on $V_{\Z_p}$ for the $p$-adic topology.	Hence, for every integer $m\ge 1$, there exists an open subgroup of $G_p(\Q_p)$ that acts trivially on (the finite set) $V_{\Z_p}/p^m V_{\Z_p}$.	
	
	Now let $\{(U_{p,n},\lambda_n)\}_{n\ge 1}$ be a sequence of omni-supercuspidal types for $G_p(\Q_p)=G(F\otimes_\Q \Q_p)$ as in Theorem \ref{thm:existence-omnisctypes}. By the preceding paragraph, there is an increasing sequence $n_1<n_2<\cdots$ such that $U_{p,n_m}$ stabilizes $V_{\Z_p}$ and acts trivially on $V_{\Z_p}/p^m V_{\Z_p}$ for every $m$.
	Let $\mathrm{pr}_{i,j}:\Z/p^i \Z\twoheadrightarrow \Z/p^j\Z$ denote the canonical surjection when $i\ge j\ge 1$. Then
	$$(U'_{p,m},\lambda'_{p,m}):=(U_{p,n_m},\mathrm{pr}_{n_m,m}\circ \lambda_{n_m}), \qquad m\ge 1,$$
	is an omni-supercuspidal type of level $p^m$. Moreover, by construction, we have
	$$M(U^p U'_{p,m},V_{\Z_p}/(p^m)) \simeq  M(U^p U'_{p,m},\Z_p/(p^m) )^{\oplus \dim_{\Z_p} V_{\bZ_p}},$$
	where the action of $U'_{p,m}$ is trivial on the right hand side and induced by that on $V_{\Z_p}$ on the left hand side.
	Now we can proceed as in the proof of Theorem \ref{prop:congruence} : We let $U'_{p,m}$ act on $A_m$ via the character  $\psi'_m:U'_{p,m}\stackrel{\lambda'_{p,m}}{\twoheadrightarrow} \Z/p^m\Z\hra A^\times_m$ to obtain a $\Z_p/p^m$-linear isomorphism
	\begin{equation}
	M(U^p U'_{p,m},V_{\Z_p}/(p^m)) \simeq  M(U^p U'_{p,m},A_m/(T-1))^{\oplus \dim_{\Z_p} V_{\bZ_p}}.
	\end{equation}
	As at the start of \S\ref{sec:congruence}, we obtain a $\Z_p/p^m$-algebra isomorphism
	\begin{equation}
	\T^S(U^p U'_{p,m},V_{\Z_p}/(p^m)) \simeq \T^S(U^p U'_{p,m},A_m/(T-1)),\qquad m\ge 1,
	\end{equation}
	which is compatible with \eqref{eq:M-non-const} via the respective Hecke algebra actions on both sides.	
\end{proof}	

\begin{remark}
	In fact the argument of this section still goes through and produces some congruence without the assumption that $(U_{p,m},\lambda_{p,m})$ is omni-supercuspidal, which plays a role only in applications. However the outcome is less interesting without a careful choice of the pair $(U_{p,m},\lambda_{p,m})$.
\end{remark}

\subsection{An application to Galois representations}\label{subsec:app}

  We illustrate how to employ Theorems \ref{prop:congruence} and \ref{prop:congruenceV}  to construct Galois representations from automorphic representations in a suitable context. The idea is to reduce to the case when automorphic representations have supercuspidal components. In fact, Remark 7.4 of \cite{Scholze-LT} reads \textit{``...and it seems reasonable to expect that one could do a similar argument in the compact unitary case, providing an alternative to the construction of Galois representations of Shin \cite{Shi11} and Chenevier--Harris \cite{CH13}, by reducing directly to the representations constructed by Harris--Taylor.''}\,\footnote{We copied the sentence except that the bibliographic items have been adapted. Remark 7.4 of \cite{Scholze-LT} also mentions the case of Hilbert modular forms but we chose to concentrate on the more complicated case treated here.} Confirmation of this is the goal of this section.

Let $n\in \Z_{\ge 2}$. Recall that $F$ is a totally real field, and let $E$ be a totally imaginary quadratic extension of $F$ with complex conjugation $c\in \Gal(E/F)$. For a finite place $w$ of $E$, we write $\textup{LL}_w$ for the unramified local Langlands correspondence for $\GL_n(E_w)$, from irreducible unramified representations of $\GL_n(E_w)$ to continuous semisimple unramified $n$-dimensional representations of the Weil group $W_{E_w}$ of $E_w$ (with coefficients in $\C$, up to isomorphisms).

Let $\Pi$ be a regular C-algebraic cuspidal automorphic representation of $\GL_n(\A_E)$ that is conjugate self-dual, i.e. its contragredient $\Pi^\vee$ is isomorphic to the complex conjugate $\Pi^c$. Let $S_E$ be the set of all infinite places and the finite places of $E$ where $\Pi$ is ramified, and $S$ all the places of $F$ below $S_E$.
Fix a prime $p$ and an isomorphism $\iota:\ol\Q_p\simeq \C$.
A well known theorem by Clozel (\!\!\cite[Thm 1.1]{Clo91}, based on \cite{Kot92b}) states the following. At each place $w$ of $E$, write $|\det_w|$ for the determinant map on $\GL_n(E_w)$ composed with the absolute value on $E_w$ which is normalized to send a uniformizer to the inverse of the residue field cardinality.

\begin{theorem}[\!\!\textup{\cite{Clo91}}]\label{thm:HT}
   Suppose that there exists a finite place $v$ of $E$ where $\Pi_v$ is a discrete series representation. Then there exists a continuous representation
   $$\rho_{\Pi,\iota}: \Gal(\ol E/E)\ra \GL_n(\ol \Q_p)$$
   which is unramified outside $S_E$
   such that $\textup{LL}_w(\Pi_w)\otimes |\det_w|^{(1-n)/2}$ is isomorphic to the semisimplification of $\iota\rho_{\Pi,\iota}|_{W_{E_w}}$ for all $w\notin S_E$.
\end{theorem}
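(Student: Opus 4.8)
\textbf{Proof strategy for Theorem~\ref{thm:HT}.} The plan is to follow the classical route: realize $\Pi$ in the cohomology of a \emph{compact} unitary Shimura variety and run the Langlands--Kottwitz method. First I would descend $\Pi$ to a unitary group. Using that $\Pi$ is cuspidal, regular C-algebraic and conjugate self-dual, quadratic base change for $\GL_n$ (Arthur--Clozel) together with the descent criteria of \cite{Clo91,HT01} produce a cuspidal automorphic representation $\pi$ of a unitary (similitude) group attached to $E/F$ whose standard base change to $\GL_n/E$ recovers $\Pi$ up to a character twist. The key role of the hypothesis that $\Pi_v$ is a discrete series at a finite place $v$ is that it lets one replace the quasi-split group by an inner form $G$ over $\bQ$ coming from a division algebra $B/E$ with an involution of the second kind: one takes $B$ ramified at $v$ (so $G$ is anisotropic modulo center at $v$) and with signature $(1,n-1)$ at one archimedean place of $F$ and definite at the others, so that $\Pi_v$, being essentially square-integrable, transfers through Jacquet--Langlands to a representation of $B_v^\times$. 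The associated Shimura variety $\mathrm{Sh}_K(G)$ is then a \emph{proper} smooth scheme of dimension $n-1$ over its reflex field, which one may take to be $E$; properness is what makes the whole argument tractable, since it removes boundary and intersection-cohomology complications.

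Next I would analyze the $p$-adic \'etale cohomology $H^*_{\mathrm{et}}(\mathrm{Sh}_K(G)_{\ol E},\mathcal{L}_\xi)$ with coefficients in the lisse sheaf $\mathcal{L}_\xi$ attached to the algebraic representation matching the infinitesimal character of $\Pi$ (after transport along $\iota$). This is a finite-dimensional continuous $\Gal(\ol E/E)$-representation carrying a commuting Hecke action, and by the vanishing theorems for the cohomology of these simple Shimura varieties only the middle degree contributes to the tempered (here cuspidal) part. The Langlands--Kottwitz method --- counting $\bF_q$-points of Kottwitz's integral model, stabilizing the resulting sum (using the base-change fundamental lemma in the relevant case, together with Kottwitz's conjectural stabilization, now a theorem after Ng\^o and Arthur--Clozel--Labesse), and matching it against the Arthur--Selberg trace formula for $G$ --- identifies, on the $\pi_f$-isotypic part of the middle cohomology, the Frobenius at each $w\notin S_E$ with the Satake parameter of $\pi_w$. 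Comparing central/similitude characters and unwinding the base-change identity $\mathrm{BC}(\pi)=\Pi$ then shows this isotypic part is, up to semisimplification, a direct sum of copies of a single $n$-dimensional semisimple representation $\rho$ of $\Gal(\ol E/E)$, unramified outside $S_E$, whose $\mathrm{Frob}_w$-eigenvalues are those of $\textup{LL}_w(\Pi_w)\otimes|\det_w|^{(1-n)/2}$ for all $w\notin S_E$. I would then set $\rho_{\Pi,\iota}:=\rho$; it is well-defined up to isomorphism since a semisimple Galois representation is determined by its Frobenius traces via Chebotarev density.

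The step I expect to be the main obstacle is the stabilized point count and its comparison with the trace formula: the precise identification of the local terms at the good places with Satake parameters and the bookkeeping of endoscopic and similitude contributions. This is the technical heart of \cite{Kot92b} and \cite{HT01}, and it is exactly why the discrete-series hypothesis at $v$ is imposed --- it forces $\mathrm{Sh}_K(G)$ to be proper, so no Eisenstein or boundary terms intervene and the middle-degree cohomology is the only relevant one. A secondary, more bookkeeping-level difficulty is to pin down the normalization, i.e.\ that $\rho_{\Pi,\iota}$ is genuinely $n$-dimensional with the twist by $|\det_w|^{(1-n)/2}$ as stated rather than some further Tate twist; this one settles by tracking the similitude character of $G$ and the Hodge--Tate weights coming from $\mathcal{L}_\xi$.
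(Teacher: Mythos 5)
The paper does not prove this theorem: it is imported as a black box, explicitly attributed to Clozel \cite[Thm.~1.1]{Clo91} (building on Kottwitz \cite{Kot92b}), with no argument supplied in the main text. The whole point of \S3.3 is to \emph{use} this result as input and remove the discrete-series hypothesis via the congruences of Theorem~\ref{prop:congruenceV}, so there is no proof in the paper for you to compare against.

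As a sketch of how Clozel and Kottwitz actually establish the statement, your outline has the right shape (descent to a unitary similitude group, inner form ramified at $v$ giving a proper Shimura variety, middle-degree \'etale cohomology, Langlands--Kottwitz point count, comparison with the trace formula), but one historical point is off. You invoke ``Kottwitz's conjectural stabilization, now a theorem after Ng\^o and Arthur--Clozel--Labesse'' as if the full stabilized trace formula were needed. In fact the 1991--92 argument deliberately sidesteps this: the discrete-series condition at $v$ is exploited to build $G$ from a \emph{division algebra} $B$ with involution of the second kind, and for such ``Kottwitz-simple'' groups the elliptic endoscopic groups other than the quasi-split inner form make no contribution (the relevant $\fkp$-adic orbital integral identities trivialize), so one can get by with a pseudo-stabilization rather than the full endoscopic apparatus. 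This is precisely why Clozel's result long predates Ng\^o's proof of the fundamental lemma. A second small gap: you should be explicit that $\rho_{\Pi,\iota}$ is extracted as a factor of the $\pi_f$-isotypic part of cohomology only up to multiplicity, and that passing from the similitude group to $\GL_n/E$ requires correcting by the similitude character, which is where the twist $|\det_w|^{(1-n)/2}$ is bookkept. Neither issue is fatal to the sketch, but they are the two places where a careless reading of \cite{Clo91,Kot92b} would produce a wrong normalization or an anachronistic dependence on later work.
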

\begin{remark}
  Harris and Taylor showed in \cite{HT01} (see Thm.~VII.1.9 therein) that $\Pi_w$ and  $\rho_{\Pi,\iota}|_{W_{E_w}}$ still correspond under the local Langlands correspondence at ramified primes not above $p$.
   The latter result was later also obtained in \cite{Sch-LLC} by a different approach.
\end{remark}

We remove the assumption that  $\Pi_v$ is a discrete series representation from the above theorem when $p$ is not too small using congruences, to obtain Theorem \ref{thm:main-app}. The condition on $p$ is due to Proposition \ref{prop:0-toral-abundance}.

\begin{theorem}\label{thm:main-app}
  Suppose that $p>n$.
  Then
   for every regular C-algebraic conjugate self-dual cuspidal automorphic representation $\Pi$ of $\GL_n(\A_E)$,
    there exists a continuous representation
   $$\rho_{\Pi,\iota}: \Gal(\ol E/E)\ra \GL_n(\ol \Q_p)$$
   which is unramified outside $S_E$
   such that $\textup{LL}_w(\Pi_w)\otimes |\det_w|^{(1-n)/2}$ is isomorphic to the semisimplification of $\iota\rho_{\Pi,\iota}|_{W_{E_w}}$ for all $w\notin S_E$.
\end{theorem}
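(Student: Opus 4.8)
The plan is to transfer $\Pi$ to a definite unitary group, feed it into the congruences of Theorem~\ref{prop:congruenceV}, and then propagate back to $\Pi$ the Galois representations supplied by Theorem~\ref{thm:HT} for the congruent forms, which are supercuspidal (hence square-integrable) at $p$. The transfer and propagation are both organized through a Hecke algebra, so that the end result is produced by a pseudorepresentation (determinant) argument rather than by matching individual representations.

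\textbf{Step 1 (descent to a definite unitary group).} Choose a unitary group $G/F$ attached to $E/F$ which is an outer form of $\GL_n$, so that $G\times_F E\simeq \GL_n$, with $G(F\otimes_\Q\R)$ compact. Then $G$ has absolute root system $A_{n-1}$, so $\Cox(G)=n$ and the hypothesis $p>n$ is exactly $p>\Cox(G)$. Since $\Pi$ is regular C-algebraic conjugate self-dual cuspidal, stable base change and descent for unitary groups (Labesse; Mok; Kaletha--Minguez--Shin--White), after possibly choosing the inner form of the correct sign when $n$ is even and after a preliminary soluble base change that preserves cuspidality of $\Pi$, produce a cuspidal automorphic representation $\pi$ of $G(\A_F)$ whose base change to $\GL_n(\A_E)$ is $\Pi$ and whose archimedean component is the finite-dimensional representation $W_\infty$ matching the infinitesimal character of $\Pi_\infty$. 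Hence, for a sufficiently small tame level $U^p$ and the $\cO_L$-coefficient system $V$ attached to $W_\infty^{\vee}$ (Remark~\ref{rem:model-over-L}, Remark~\ref{rem:different-incarnations}), $\pi$ contributes to $M(U^pU_p,V_{\ol\Q_p})$ for $U_p$ small, and its Hecke eigensystem is an integral point $c_\pi\colon \T^S(U^pU_p,V_{\Z_p})\to\ol\Z_p$ (integrality because the Hecke operators preserve the lattice $M(U^pU_p,V_{\cO_L})$), with unramified local parameters $\iota^{-1}\bigl(\textup{LL}_w(\Pi_w)\otimes|\det_w|^{(1-n)/2}\bigr)$ for $w\notin S_E$.

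\textbf{Step 2 (congruence) and Step 3 (Galois determinant over the Hecke algebra, then specialize).} Apply Theorem~\ref{prop:congruenceV} to $G$: one gets $\{U_{p,m}\}$ a neighborhood basis of $1$, characters $\psi_m$, $\T^S$-equivariant isomorphisms $M(U^pU_{p,m},V_{\Z_p}/(p^m))\simeq M(U^pU_{p,m},A_m/(T-1))^{\oplus\dim_{\Z_p}V}$ together with the induced isomorphism \eqref{eq:Hecke-mod-pm-2}, and the property that every automorphic constituent of $M(U^pU_{p,m},A_m)\otimes\ol\Q_p$ is supercuspidal at all places above $p$. For each such constituent $\sigma$, its base change to $\GL_n(\A_E)$ is conjugate self-dual and, being supercuspidal at a place above $p$, is a discrete series at a finite place of $E$; Theorem~\ref{thm:HT} (with Harris--Taylor, and the standard extension of the construction to conjugate self-dual isobaric sums for the non-cuspidal constituents) attaches an $n$-dimensional semisimple Galois representation of $\Gal(\ol E/E)$, unramified outside $S_E$, with Frobenius traces given by Hecke eigenvalues. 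Since $\T^S(U^pU_{p,m},A_m)$ embeds after inverting $p$ into the product of $\ol\Z_p$ over these constituents, and the Frobenius traces lie in the image of $\T^S$, Chenevier's formalism of determinants packages these into an $n$-dimensional determinant $D_m$ of $\Gal(\ol E/E)$ valued in $\T^S(U^pU_{p,m},A_m)$, unramified outside $S_E$, with $D_m(\Frob_w)$ computed by the Hecke operator at $w$. Reducing $D_m$ along $\T^S(U^pU_{p,m},A_m)\twoheadrightarrow \T^S(U^pU_{p,m},A_m/(T-1))\simeq \T^S(U^pU_{p,m},V_{\Z_p}/(p^m))$ (Remark~\ref{rem:Hecke-quotient}), and then gluing over $m$ using the non-constant-coefficient analogue of \eqref{eq:T-as-inv-lim} (valid here since the center of $\Res_{F/\Q}G$ is anisotropic, so Gross's condition holds), one obtains a determinant $D$ of $\Gal(\ol E/E)$ valued in $\T^S(U^pU_p,V_{\Z_p})$, unramified outside $S_E$, with Frobenius traces the Hecke operators. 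Specializing $D$ along $c_\pi$ and applying Taylor's reconstruction of a semisimple representation from a determinant over a field yields $\rho_{\Pi,\iota}\colon\Gal(\ol E/E)\to\GL_n(\ol\Q_p)$, unramified outside $S_E$, with $\mathrm{tr}\,\rho_{\Pi,\iota}(\Frob_w)=c_\pi(T_w)$; by Chebotarev density and Brauer--Nesbitt this gives the asserted isomorphism of $\bigl(\iota\rho_{\Pi,\iota}|_{W_{E_w}}\bigr)^{\mathrm{ss}}$ with $\textup{LL}_w(\Pi_w)\otimes|\det_w|^{(1-n)/2}$ for all $w\notin S_E$. If a soluble base change was used in Step~1, descend $\rho_{\Pi,\iota}$ back to $\Gal(\ol E/E)$ in the usual way.

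\textbf{Main obstacle.} The congruences of Theorem~\ref{prop:congruenceV} match \emph{spaces} of automorphic forms and their Hecke algebras, not individual eigensystems (cf.\ the caveat after Theorem~\ref{prop:congruence}): one cannot simply assert that $\pi$ is congruent mod $p^m$ to a single supercuspidal-at-$p$ form and apply Theorem~\ref{thm:HT} to it. The substantive work is therefore Step~3, namely assembling the Galois representations of the many supercuspidal-at-$p$ constituents into the determinant $D_m$ over $\T^S(U^pU_{p,m},A_m)$ — which requires controlling integrality and interpolation of Frobenius traces, handling the non-cuspidal conjugate self-dual isobaric constituents of $M(U^pU_{p,m},A_m)\otimes\ol\Q_p$, and checking compatibility of the $D_m$ in the inverse limit in view of the delicate behaviour of Hecke algebras under reduction mod $p^m$ described in Remark~\ref{rem:Hecke-quotient}. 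A secondary, but not entirely routine, point is arranging the descent of $\Pi$ to the \emph{definite} inner form $G$ with the correct parity and verifying that $\pi$ contributes with the expected weight and level.
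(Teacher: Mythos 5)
Your proposal follows the same overall architecture as the paper's proof: descend $\Pi$ to a definite unitary group $G$ (with $\Cox(G)=n$, so $p>n$ is exactly the running hypothesis), feed $\pi$ into the congruences of Theorem~\ref{prop:congruenceV}, attach Galois representations via Theorem~\ref{thm:HT} to the supercuspidal-at-$p$ constituents, glue them into a Chenevier determinant over the big Hecke algebra, and specialize at $c_\pi$. That is precisely the paper's strategy.

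There is, however, one step you have passed over that is a genuine gap as written. The explicit Frobenius formula $\det(1+t\rho(\Frob_w))=\sum_i t^i N(w)^{i(i-1)/2} T_w^{(i)}$ only makes sense at places $w$ of $E$ that are \emph{split} over $F$: the operators $T_w^{(i)}$ are defined via a fixed isomorphism $G(F_{\ov w})\simeq\GL_n(E_w)$, and it is only at split places that the Satake parameters of $\sigma_{\ov w}$ translate directly into those of $\Sigma_w$. Accordingly the paper works with the Hecke algebra $\T^S_{\Spl}$ generated by split-place operators, and it prefaces the whole argument with a reduction ``using automorphic base change over quadratic extensions as in the proof of [HT01, Thm.~VII.1.9]'' so that it suffices to verify local-global compatibility at $w\in\Spl^S_E$. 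Your argument, as stated, would at best establish the Frobenius identity at split $w$ and then invoke Chebotarev, but Chebotarev density over the split primes pins down the determinant (hence the semisimple $\rho_{\Pi,\iota}$), not the compatibility at the remaining inert $w\notin S_E$; one must restore that compatibility separately, and the quadratic base change trick is exactly the tool. Your parenthetical ``descend $\rho_{\Pi,\iota}$ back ... in the usual way'' gestures at this, but the reduction should be stated up front as a reduction of the \emph{statement}, not a cleanup at the end. (A secondary, non-fatal point: the ``non-cuspidal conjugate self-dual isobaric constituents'' you flag do not actually occur — any $\sigma$ contributing to $M(U^pU_{p,m},A_m)\otimes\ol\Q_p$ has $\sigma_p$ supercuspidal, so its base change $\Sigma$ has a supercuspidal local component and is forced to be cuspidal; Labesse's theorems deliver this directly, so no extension of Theorem~\ref{thm:HT} to isobaric sums is needed.)
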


\begin{remark}\label{rem:use-appD-1b}
The assumption that $p>n$ can be removed by appealing to Appendix D, which proves Theorem \ref{thm:existence-omnisctypes} for all primes $p$, see also Remark \ref{rem:use-appD-1}.
\end{remark}

\begin{remark}
  This theorem is not new. A stronger statement on the existence and local-global compatibility for $\rho_{\Pi,\iota}$ has been known by \cite{CHL11b,Shi11,CH13} without restriction on~$p$. The local-global compatibility was further strengthened by \cite{Car12, Car14,BLGGT14-LGC}. Much of \cite{Shi11} was reproved in \cite{SS13} by a simpler method. The point is that the proof here is still simpler as there is no eigenvariety as in \cite{CH13} and no elaborate geometric and endoscopic arguments as in \cite{Shi11}, as far as Theorem \ref{thm:HT} is taken for granted. (The geometry and harmonic analysis involved in Theorem \ref{thm:HT} are less complicated than those of \cite{Shi11,SS13}.)
\end{remark}

\begin{proof}
  We will freely use the notions and base change theorems of \cite{Lab11} for unitary groups to go between automorphic representations on unitary groups and general linear groups.

  Let $\textup{Spl}^S_{E}$ (resp. $\textup{Spl}^S_{F}$) be the set of places of $E$ (resp. $F$) outside $S_E$ (resp. $S$) that are split in $E/F$. By a standard reduction step using automorphic base change over quadratic extensions as in the proof of \cite[Thm.~VII.1.9]{HT01} (also see the proof of \cite[Prop.~7.4]{Shi11}), it suffices to show the compatibility for $\rho_{\Pi,\iota}$ only at $w\in \textup{Spl}^S_{E}$.

  By a patching argument (executed as in the proof of \cite[Thm.~3.1.2]{CH13}), we further reduce to the case where
  \begin{itemize}
    \item $[F:\Q]$ is even,
    \item every place of $F$ above $p$ is split in $E$,
    \item every finite place of $F$ is unramified in $E$.
  \end{itemize}
  Since $[F:\Q]$ is even, there exists a unitary group $G$
  over $F$ which is an outer form of $\GL_n$ with respect to the quadratic extension $E/F$ such that $G$ is quasi-split at all finite places and anisotropic at all infinite places. We may view $\Pi$ as a representation of $(\Res_{E/F}(G\otimes_F E))(\A_F)$, which is isomorphic to $\GL_n(\A_E)$; in particular $\Pi_x$ stands for the component of $\Pi$ at a place $x$ of $F$. Since $\Pi$ is conjugate self-dual, by \cite[Thm.~5.4,~5.9]{Lab11}, there exists an automorphic representation $\pi$ on $G(\A_F)$ such that $\Pi_x$ is the unramified base change of $\pi_x$ at all places $x$ of $F$ outside $S$.

    We set up some more notation. For $w \in \textup{Spl}^S_{E}$, we denote by $\ov w \in \textup{Spl}^S_{F}$ the restriction of $w$ to $F$.
 Recall that we denote by $U^p=\prod_{x\nmid p\infty} U_x$ a (fixed) compact open subgroup of $G(\bA_F^{\infty, p})$ such that $U_x$ is hyperspecial for $x \notin S$.
 For $x \in \textup{Spl}^S_{F}$ and for each $w$ above $x$, we fix isomorphisms $i_w:G(F_x)\simeq \GL_n(E_w)$ carrying $U_x$ onto $\GL_n(\cO_{E_w})$ such that $\pi_x\simeq \Pi_w$ via $i_w$.
 We write $\varpi_w$ for a uniformizer of $E_w$ and write $T^{(i)}_w$ for the following double coset
    $$T^{(i)}_w:=\left[\GL_n(\cO_{E_w}) \begin{pmatrix} \varpi_w \textrm{I}_i & 0\\ 0 & \textrm{I}_{n-i} \end{pmatrix}\GL_n(\cO_{E_w}) \right],$$
    which we might also view as a double coset of  $U^S\backslash G(\A_F^S)/U^S$ by requiring all other factors to be the trivial double coset.
   We also denote by $T^{(i)}_w$ the double coset operator corresponding to this double coset acting  on appropriate spaces that can be deduced from the context.
    Given an irreducible unramified representation $\sigma_w$ of $\GL_n(E_w)$ (or $G(F_x)$), we write $T^{(i)}_w(\sigma_w)$ for the eigenvalue of $T^{(i)}_w$ on the one-dimensional space $\sigma_w^{\GL_n(\cO_{E_w})}$.
    For convenience, we introduce the following \emph{variant of the big Hecke algebra}: define $\T^S_{\Spl}$ to be the $\Z$-subalgebra of $\Z[U^S\backslash G(\A_F^S)/U^S]$ generated by $T_{w}^{(i)}$ for $w\in \textup{Spl}^S_E$ (excluding $w$ not split over $F$) and $1\le i\le n$. Replacing $\T^S$ with $\T^S_{\Spl}$, we define other Hecke algebras to be the image of $\T^S_{\Spl}$ in the endomorphism algebras of appropriate spaces of automorphic forms.
    Note that Theorems \ref{prop:congruence} and \ref{prop:congruenceV} are still valid with $\T^S_{\Spl}$ in place of $\T^S$: indeed we retain the same isomorphism between the same spaces of automorphic forms, and the $\T^S_{\Spl}$-equivariance is simply weaker than the $\T^S$-equivariance.

     We choose $U=U_pU^p\subset G(\A_F^{\infty})$ sufficiently small such that $\pi^{\infty}$ has nonzero $U$-fixed vectors, while keeping $U_x$ hyperspecial for $x \notin S$.     Since $G_\infty(\R)$ is compact, we see that $\pi_\infty^\vee$ comes from an irreducible algebraic representation $V'_{\C}$ of  $G_\infty\times_\R \C=(\Res_{F/\Q}G)\times_\Q \C$.
     As in Remark \ref{rem:model-over-L}, there exists a finite Galois extension $L/\Q$ in $\C$ containing $F$ such that $(\Res_{F/\Q}G)\times_\Q L$ is split, thus $(\Res_{F/\Q}G)\times_\Q L\simeq \tilde{\mathbf G}_L$ for a split group $\tilde{\mathbf G}$ over $\cO_L$, and such that there is an algebraic representation $V$ of $\tilde{\mathbf G}$ over $\cO_L$ giving a model for $V'_{\C}$.
According to \S\ref{subsec:non-constant}, this leads to an action of $U_p$ on the corresponding free $\bZ_p$-module $V_{\bZ_p}$ such that $\pi$ contributes to $M(U, V_{\bZ_p})$, see Remark \ref{rem:different-incarnations}. (Note that $\pi_\infty$ is a direct summand of $(V\otimes_{\Z} \C)^\vee =(V_{\bZ_p}\otimes_{\Z_p} \C)^\vee$ as a $G_\infty(\R)$-representation.)
    We let $\{U_{p,m}\}_{m \geq 1}$ be compact open subgroups of $U \subset G_p(\bQ_p)$ with an action of $U_{p,m}$ on $A_m$ arising from multiplication by a character $\psi_m: U_{p,m} \ra A_m^\times$ for $m \geq 1$, factoring through $\lambda_m: U_{p,m} \ra \Z/p^m\Z$, as in Theorem \ref{prop:congruenceV}. This means we have
  \begin{equation}\label{eq:cong-modpm-modT-1}
\T_{\Spl}^S(U^p U_{p,m},V_{\bZ_p}/(p^m)) \simeq \T_{\Spl}^S(U^p U_{p,m},A_m/(T-1)),\qquad m\ge 1,
\end{equation}
  Let $\cA^S(U^p U_{p,m},A_m)$ be the set of irreducible $\T^S_{\Spl,\ol\Q_p}$-modules appearing as a constituent of $M(U^p U_{p,m},A_m)\otimes_{\Z_p}\ol\Q_p$. This set is identified with the set of $\underline{\sigma}^S = \{\underline{\sigma}_x\}_{x\in \Spl_F^S}$, where $\underline{\sigma}_x$ is an irreducible unramified representation of $G(F_x)$, such that there exists an automorphic representation $\sigma$ of $G(\A_F)$ satisfying
  \begin{itemize}
    \item $\sigma_x\simeq \underline{\sigma}_x$ for $x\in \Spl_F^S$,
    \item $(\sigma^{\infty,p})^{U^p}\neq \{0\}$,
    \item $\sigma_\infty=\one$, i.e. the archimedean components of $\sigma$ are trivial, and
    \item $\Hom_{U_{p,m}}(\psi\circ \lambda_m, \sigma_p)\neq 0$ for some nontrivial character $\psi:\Z/p^m\Z\ra \C^*$.
  \end{itemize}
  The last condition implies that $\sigma_p$ is a supercuspidal representation of $G_p(\bQ_p)$.
  For each $\underline{\sigma}^S\in \cA^S(U^p U_{p,m},A_m)$, we choose a $\sigma$ as two sentences above. By base change theorems \cite[Cor.~5.3, Thm.~5.9]{Lab11} we obtain a cuspidal conjugate self-dual automorphic representation $\Sigma$ of $\GL_n(\A_E)$ that is regular and C-algebraic such that $\Sigma_x$ is the unramified base change of $\sigma_x$ at each place $x$ of $F$ outside $S$ and also that $\Sigma_p$ is supercuspidal.
  It follows from Theorem \ref{thm:HT} that there exists a continuous semisimple representation
  $$\rho_{\Sigma,\iota}: \Gal(\ol E/E)\ra \GL_n(\ol \Q_p)$$
   which is unramified outside $S_E$ such that $\textup{LL}_w(\Sigma_w)$ is isomorphic to the semisimplification of $\iota\rho_{\Sigma,\iota}|_{W_{E_w}}$ for all $w\notin S_E$. By the Chebotarev density and Brauer--Nesbitt theorems, $\rho_{\Sigma,\iota}$ is independent of the choice of $\sigma$ up to isomorphism. Thus we write $\rho_{\underline{\sigma}^S,\iota}:=\rho_{\Sigma,\iota}$. Let $E_S$ denote the maximal extension of $E$ in $\ol E$ unramified outside $S_E$. Then $\rho_{\underline{\sigma}^S,\iota}$ factors through $\Gal(E_S/E)$. At $w\in \textup{Spl}^S_{E}$, let $N(w)\in \Z_{\ge 1}$ denote the absolute norm of the finite prime $w$. Using $t$ as an auxiliary variable, the compatibility at $w$ of Theorem \ref{thm:HT} means that (cf.~\cite[Prop.~3.4.2.~part~2]{CHT08})
 \begin{equation}\label{eq:char(Frob)}
  \det(1+t \rho_{\underline{\sigma}^S,\iota}(\Frob_w))=\sum_{i=0}^n t^i N(w)^{i(i-1)/2} T_w^{(i)}(\underline{\sigma}_{\ol w}).
  \end{equation}
  On the other hand,
  $$\T_{\Spl}^S(U^p U_{p,m},A_m)\hra \T_{\Spl}^S(U^p U_{p,m},A_m)\otimes_{\Z_p} \ol\Q_p \simeq \prod_{\underline{\sigma}^S\in \cA^S(U^p U_{p,m},A_m)} \ol\Q_p,$$
  where for each $w \in \Spl_E^S$ and each $1 \leq i \leq n$, the image of $T_{w}^{(i)} \in \T_{\Spl}^S(U^p U_{p,m},A_m)$ in the $\underline{\sigma}^S$-component is  the scalar in $\ol\Q_p$ by which $T_{w}^{(i)}$ acts on the $x$-component of $\underline{\sigma}^S$ (viewed as a representation of $\GL_n(E_w)$ via $i_w$). Let $\Mat_{n \times n}(\cdot)$ denote the $n\times n$-matrix algebra over the specified coefficient ring.
  For $m \geq 1$, write
  $$\rho_m:\Gal(E_S/E) \ra  \prod_{\underline{\sigma}^S\in \cA^S(U^p U_{p,m},A_m)} \GL_n\left( \ol\Q_p \right)   \hookrightarrow \Mat_{n \times n} \left( \prod_{\underline{\sigma}^S\in \cA^S(U^p U_{p,m},A_m)} \ol\Q_p \right), $$

   where
    the $\underline{\sigma}^S$-part of the first map is $\rho_{\underline{\sigma}^S,\iota}$. We can extend this map linearly to a map
    $$\rho_m^B:B[\Gal(E_S/E)] \ra  \Mat_{n \times n}(B) , $$
for every $(\prod_{\underline{\sigma}^S\in \cA^S(U^p U_{p,m},A_m)} \ol\Q_p)$-algebra $B$.
By composing $\rho_m^B$ with the determinant, we produce a continuous $n$-dimensional determinant map  in the sense of Chenevier \cite{Chenevier-det}
  $$ d_m:\ol\Q_p[\Gal(E_S/E)]\ra  \prod_{\underline{\sigma}^S\in \cA^S(U^p U_{p,m},A_m)} \ol\Q_p $$
(which consists of maps $d_m^B$ for all $(\prod_{\underline{\sigma}^S\in \cA^S(U^p U_{p,m},A_m)} \ol\Q_p)$-algebra $B$, but we usually omit the index $B$ from the notation)  such that
 \begin{equation}\label{eq:dm-LGC}
 \det(1+t\rho_m(\gamma))=d_m(1+t\gamma),\qquad \gamma\in \Gal(E_S/E)
 \end{equation}

 We deduce from  \eqref{eq:char(Frob)} that all the coefficients of the characteristic polynomial $\chi(\Frob_w,t):=d_m(t-\Frob_w)$ in the sense of Chenevier are contained in $\T_{\Spl}^S(U^p U_{p,m},A_m)$ for all $w\in \textup{Spl}^S_E$. The same holds for all elements of $\Gal(E_S/E)$ because the union of Frobenius conjugacy classes over $\textup{Spl}^S_{E}$ is dense in $\Gal(E_S/E)$ by the Chebotarev density theorem. Hence it follows from \cite[Corollary~1.14]{Chenevier-det} that
 $d_m$ is the scalar extension of a $\T_{\Spl}^S(U^p U_{p,m},A_m)$-valued $n$-dimensional continuous determinant (in Chenevier's sense):
   $$ \bZ_p[\Gal(E_S/E)]\ra \T_{\Spl}^S(U^p U_{p,m},A_m)$$
   satisfying \eqref{eq:dm-LGC}. To save notation, we still write $d_m$ for the latter. Thus
   $$d_m(1+t \Frob_w)=\sum_{i=0}^n t^i N(w)^{i(i-1)/2} T_w^{(i)},\qquad w \in \textup{Spl}^S_{E}.$$
   Via \eqref{eq:cong-modpm-modT-1} we obtain a continuous $n$-dimensional $\T_{\Spl}^S(U^p U_{p,m},V_{\bZ_p}/(p^m))$-valued determinant
   $$D_m:(\bZ_p/(p^m))[\Gal(E_S/E)]\ra  \T_{\Spl}^S(U^p U_{p,m},V_{\bZ_p}/(p^m))$$
   such that
   	\begin{equation} \label{eq:Dm}
   	D_m(1+t\Frob_w) = \sum_{i=0}^n t^i N(w)^{i(i-1)/2} T_w^{(i)}
   	\end{equation} (equality taken inside $\T_{\Spl}^S(U^p U_{p,m},V_{\bZ_p}/(p^m))$) for $w \in \textup{Spl}^S_{E}$. We have the restriction map
   $$ \mathrm{res}_{m,1}: \T_{\Spl}^S(U^p U_{p,m},V_{\bZ_p}/(p^m)) \ra \T_{\Spl}^S(U^p U_{p,1},V_{\bZ_p}/(p^m))$$
	and for $m'\leq m$ the projection map
   $$ \mathrm{pr}_{m,m'}: \T_{\Spl}^S(U^p U_{p,1},V_{\bZ_p}/(p^m)) \ra \T_{\Spl}^S(U^p U_{p,1},V_{\bZ_p}/(p^{m'})). $$
   From \eqref{eq:Dm} and the density of Frobenii, we deduce that for $m\geq {m'} \geq 1$, we have
   $$\mathrm{pr}_{m,{m'}} \circ \mathrm{res}_{m,1} \circ D_m (1 + t \gamma) = \mathrm{res}_{{m'},1} \circ D_{m'} (1 + t \gamma) \quad \gamma \in \Gal(E_S/E).$$
By Amitsur's formula \cite[(1.5)]{Chenevier-det} and the properties of the determinant, we see that the $n$-dimensional determinant $\mathrm{res}_{m,1} \circ D_m$ is uniquely determined by its values on $1+t\Gal(E_S/E)$. Hence we obtain an equality of $\T_{\Spl}^S(U^p U_{p,1},{V_{\bZ_p}}/(p^{m'}))$-valued $n$-dimensional continuous determinants
   $$\mathrm{pr}_{m,{m'}} \circ \mathrm{res}_{m,1} \circ D_m = \mathrm{res}_{{m'},1} \circ D_{m'} $$
Taking the inverse limit over $\mathrm{res}_{m,1}\circ D_m$ for $m \geq 1$ (\!\!\cite[Lem.~3.2]{Chenevier-det}) and using from Remark \ref{rem:Hecke-quotient} that $\T_{\Spl}^S(U^p U_{p,1},{V_{\bZ_p}})= \varprojlim\limits_{m}\T_{\Spl}^S(U^p U_{p,1},{V_{\bZ_p}}/(p^m))$,
   we obtain a $\T_{\Spl}^S(U^p U_{p,1},{V_{\bZ_p}})$-valued $n$-dimensional continuous determinant
   $$D:\bZ_p[\Gal(E_S/E)]\ra  \T_{\Spl}^S(U^p U_{p,1},V_{\bZ_p})$$
   with $D(1+t\Frob_w)= \sum_{i=0}^n t^i N(w)^{i(i-1)/2} T_w^{(i)}$. Since $\pi$ contributes to $M(U^pU_{p,1},V_{\bZ_p})$ (as $\pi$ contributes to $M(U,V_{\bZ_p})$ and $U_{p,1}\subset U_p$), it gives rise to a $\bZ_p$-algebra morphism
   $$c_\pi: \T_{\Spl}^S(U^p U_{p,1},V_{\bZ_p})\ra \ol \Q_p,\qquad T_w^{(i)}\mapsto T_w^{(i)}(\pi_{\ol w}),\quad \forall w\in \textup{Spl}^S_E. $$
   The composition $c_\pi\circ D$ yields a continuous $n$-dimensional $\ol\Q_p$-valued determinant. It follows from \cite[Thm.~A, Ex.~2.34]{Chenevier-det} that $c_\pi\circ D$ arises from a continuous representation $\rho_\pi:\Gal(E_S/E)\ra \GL_n(\ol\Q_p)$ in the sense that
   $$c_\pi(D(1+t\gamma))=\det(1+t\rho_\pi(\gamma)),\qquad \gamma\in \Gal(E_S/E).$$
   Therefore we conclude that for $w\in \textup{Spl}^S_{E}$,
   $$\det(1+t\rho_\pi(\Frob_w))=c_\pi(D(1+t \Frob_w))= \sum_{i=0}^n t^i N(w)^{i(i-1)/2} T_w^{(i)}(\pi_{\ol w}).$$
   That is, $\textup{LL}_w(\pi_{\ol w})\otimes |\det_w|^{(1-n)/2}$ is isomorphic to the semisimplification of $\iota\rho_{\pi}|_{W_{E_w}}$.
   The proof of the theorem is complete by setting $\rho_{\Pi,\iota}:=\rho_\pi$.
\end{proof}

\subsection{Density of supercuspidal points in the Hecke algebra}\label{subsec:density}

As another application of our main local theorem, we show that the supercuspidal locus is Zariski dense in the spectrum of the Hecke algebra of $p$-adically completed (co)homology following Emerton--Pa\v{s}k\=unas \cite{EP18}. Using Bushnell--Kutzko's study of types for $\GL_n$, they proved the result for a global definite unitary group which is isomorphic to a general linear group at $p$. Their machinery is quite general, enabling us to extend their result to general reductive groups which are compact modulo center at $\infty$ once we combine it with our local construction.

We retain the same notation as at the start of \S\ref{sec:congruence}. We may and will assume that $F=\Q$ by replacing $G$ with $\Res_{F/\Q}G$ as this does not sacrifice the quality of the theorem.
 As in \emph{loc.~cit.}~we assume that the central torus $Z(G)^0$ has the same $\Q$-rank and $\R$-rank. Let $L$ be a finite extension of $\Q_p$ with ring of integers $\cO$. (We have renewed the use of $L$ here to be consistent with \cite{EP18}. In \S\ref{subsec:non-constant} the letter $L$ denoted a certain number field which we will denote by $\mathbb L$ below.) This will be our coefficient field for the involved representations. Fix an algebraic closure $\ol L$ of $L$ and a uniformizer $\varpi\in \cO$.
So far in this section, we worked with Hecke algebras as $\Z_p$-algebras acting on the space of automorphic forms as $\Z_p$-modules, but everything carries over verbatim with $\cO$ and $\varpi$ in place of $\Z_p$ and $p$. This extension is not strictly necessary but sometimes convenient as irreducible algebraic representations of $G_{\ol \Q_p}$ need not be defined over $\Q_p$.
If $U_p$ is a compact open subgroup of $G(\bQ_p)$, then we define the completed group algebra of $U_p$ over $\cO$ to be $\cO[[U_p]]:=\varprojlim_{U'_p} \cO[U_p/U'_p]$, where the limit is taken over open normal subgroups $U'_p\subset U_p$. The topology on $\cO[[U_p]]$ is given by the projective limit (with the usual topology on $\cO[U_p/U'_p]$ as a finite free $\cO$-module). Whenever we work with $\cO[[U_p]]$-modules, we work in the category of compact linear-topological $\cO[[U_p]]$-modules (resp. $\cO$-modules) and
denote the Hom space by $\Hom^{\textup{cont}}_{\cO[[U_p]]}(\cdot,\cdot)$ (resp. $\Hom^{\textup{cont}}_{\cO}(\cdot,\cdot)$).

Let $U^p=\prod_{w\nmid p,\infty} U_w$ be an open compact subgroup of $G(\A^{\infty,p})$ such that $U_w$ is hyperspecial for all $w$ away from a finite set of places $S$.
Define $Y(U^pU_p):=G(\Q)\backslash G(\A)/U^p U_p G(\R)^\circ$.
Consider the completed homology
$$ \tilde H_0(U^p):=\varprojlim\limits_{U_p} H_0(Y(U^pU_p),\cO),$$
where $U_p$ runs over open compact subgroups of $G(\Q_p)$. Then $\tilde H_0(U^p)$ is a finitely generated $\cO[[U_p]]$-module that is $\cO$-torsion free for any compact open subgroup $U_p$ of $G(\bQ_p)$. If $U^p$ or $U_p$ is sufficiently small, for instance if $U^p U_p$ is a neat subgroup (in the sense of \cite[\S0]{PinkThesis}), then $U_p$ acts on points of $G(\Q)\backslash G(\A)/U^p G(\R)^\circ$ with trivial stabilizers, and $\tilde H_0(U^p)$ is free over $\cO[[U_p]]$. We topologize $\tilde H_0(U^p)$ as a finitely generated $\cO[[U_p]]$-module, using the topology of $\cO[[U_p]]$. This is equivalent to the inverse limit topology where the topology on $ H_0(Y(U^pU_p),\cO)$ arises from the topology of $\cO$.

We also define the completed cohomology
\begin{equation}\label{eq:completed-cohomology}
\tilde H^0(U^p):=\varprojlim\limits_{s\ge 1} \varinjlim\limits_{U_p} H^0(Y(U^pU_p),\cO/\varpi^s),
\end{equation}
which is complete for the $\varpi$-adic topology, or equivalently the inverse limit topology over $s$ of the discrete topology on the direct limit.
In our earlier notation, $H^0(Y(U^pU_p),\cO/\varpi^s)=M(U^p U_p,\cO/\varpi^s)$.
We have a canonical isomorphism $\tilde H^0(U^p)=\Hom^{\textup{cont}}_{\cO}(\tilde H_0(U^p),\cO)$ as topological $\cO$-modules, where the topology on the latter is given by the supremum norm.

We define the ``big'' Hecke algebra
\begin{equation}\label{eq:Hecke-complete}
\T^S(U^p):=\varprojlim\limits_{U_p,~s} \T^S(U^pU_p,\cO/\varpi^s),
\end{equation}
recalling that $ \T^S(U^pU_p,\cO/\varpi^s)$ was introduced around \eqref{eq:TS}, where the $\Z_p$-algebra setup earlier extends to the $\cO$-algebra setup in the evident manner.
We equip $\T^S(U^p)$ with the profinite topology via \eqref{eq:Hecke-complete}, where the finite set $\T^S(U^pU_p,\cO/\varpi^s)$ is equipped with the discrete topology.
For each compact open subgroup $U_p$ of $G(\bQ_p)$ and a locally algebraic representation $V$ of $U_p$ over $L$, we have $\T^S(U^p)$-equivariant isomorphisms
$$\Hom_{\cO[[U_p]]} (\tilde H_0(U^p),V^*) = \Hom_{U_p}(V,\tilde H^0(U^p)_L)= M(U^p U_p, V^*),$$
as explained in \cite[5.1]{EP18}. The $\T^S(U^p)$-module structure is semisimple as it is the case on the space of algebraic automorphic forms.

Let $\{V_i\}_{i\in I}$ be a family of continuous representations of $U_p$ on finite dimensional $L$-vector spaces.
We recall from \cite[Def.~2.6]{CDP14} (see Lemmas 2.7 and 2.10 therein for equivalent characterizations):
\begin{definition}\label{def:capture}
  Let $M$ be a compact linear-topological $\cO[[U_p]]$-module.
  We say that $\{V_i\}$ \textbf{captures} $M$ if there is no nontrivial (i.e. other than $M=Q$) quotient $M\twoheadrightarrow Q$ inducing an isomorphism
$$\Hom^{\textup{cont}}_{\cO[[U_p]]}(Q,V_i^*)\simeq \Hom^{\textup{cont}}_{\cO[[U_p]]}(M,V_i^*),\qquad \forall i\in I.$$
\end{definition}

\begin{proposition}\label{prop:capture}
  Assume $p>\Cox(G)$.
  Then there exist
\begin{itemize}
  \item an open compact pro-$p$ subgroup $U_p$ of $G(\bQ_p)$ and
  \item a countable family of smooth representations $\{V_i\}_{i\in I}$ of $U_p$ on finite dimensional $L$-vector spaces
\end{itemize}
such that the following hold:
\begin{itemize}
  \item $(U_p,V_i\otimes_L \ol L)$ is a supercuspidal type for every $i$,
  \item $\{V_i\}_{i\in I}$ captures $\cO[[U_p]]$.
\end{itemize}
  Moreover $U_p$ can be chosen to be arbitrarily small.
\end{proposition}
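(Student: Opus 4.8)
The plan is to feed the omni-supercuspidal types of Theorem \ref{thm:existence-omnisctypes} into the machinery of \cite[\S5]{EP18}, as already suggested by Pa\v{s}k\=unas in the introduction. Apply Theorem \ref{thm:existence-omnisctypes} to the reductive group $G\times_\Q\Qp$ over $\Qp$ (which is legitimate since $\mathrm{char}(\Qp)=0$ and $\Cox(G\times_\Q\Qp)=\Cox(G)$, the absolute root system being unchanged): we obtain a sequence $\{(U_m,\lambda_m)\}_{m\ge1}$ of omni-supercuspidal types for $G(\Qp)$ of level $p^m$, with $U_1\supseteq U_2\supseteq\cdots$ a basis of open neighbourhoods of $1$, with $U_{m'}$ normal in $U_m$ for $m'\ge m$, and with each $U_m$ pro-$p$ (in the simple case these are Moy--Prasad filtration subgroups $G_{y,r_m/2+}$ with $r_m/2>0$, hence pro-$p$, and the reduction steps in the proof of Theorem \ref{thm:existence-omnisctypes} — products and isogeny images — preserve this). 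Fix $m_0\ge1$ and put $U_p:=U_{m_0}$; it is pro-$p$, open and compact, and can be made arbitrarily small by enlarging $m_0$. For each $m\ge m_0$ let $V_m$ be an $L$-rational model of $\cind_{U_m}^{U_p}(\rho_m)$, where $\rho_m$ is the reduced regular representation $L[\Z/p^m\Z]/L$ inflated along $\lambda_m\colon U_m\twoheadrightarrow\Z/p^m\Z$, so that over $\ol L$ one has $V_m\otimes_L\ol L=\bigoplus_{\psi\ne1}\cind_{U_m}^{U_p}(\psi\circ\lambda_m)$, the sum over nontrivial characters $\psi$ of $\Z/p^m\Z$. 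This is a finite-dimensional smooth $L$-representation of $U_p$ (factoring through the finite $p$-group $U_p/\mathrm{core}_{U_p}(\ker\lambda_m)$), and $I:=\{m:m\ge m_0\}$ is countable.

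That $(U_p,V_m\otimes_L\ol L)$ is a supercuspidal type is formal: since $[U_p:U_m]<\infty$, Frobenius reciprocity gives $\Hom_{U_p}(\cind_{U_m}^{U_p}(\psi\circ\lambda_m),\pi)\cong\Hom_{U_m}(\psi\circ\lambda_m,\pi|_{U_m})$ for every smooth irreducible representation $\pi$ of $G(\Qp)$, and the right-hand side being nonzero forces $\pi$ supercuspidal because $(U_m,\psi\circ\lambda_m)$ is a supercuspidal type by the omni property; a finite direct sum of supercuspidal types is again a supercuspidal type. The remaining, and essential, point is that $\{V_m\}_{m\ge m_0}$ captures $\cO[[U_p]]$. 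Unwinding Definition \ref{def:capture} for the cyclic module $M=\cO[[U_p]]$, this is equivalent to $\bigcap_{m\ge m_0}\mathrm{Ann}_{\cO[[U_p]]}(V_m^*)=0$. Here one cannot argue over $L$: no supercuspidal type of $U_p$ can contain the trivial representation of $U_p$ (otherwise the non-supercuspidal trivial representation of $G(\Qp)$ would violate the defining property of a type), so over $L[[U_p]]$ the family provably fails to capture. The resolution exploits that $\cO[[U_p]]$ is a local ring — the completed group algebra of a pro-$p$ group over $\cO$ — in which idempotents do not lift from $L[[U_p]]$; following \cite[\S5]{EP18}, one detects a given nonzero $x\in\cO[[U_p]]$ by choosing $m$ with $x$ nonzero modulo $\ker\lambda_m$ (possible since both $\{\ker\lambda_m\}$ and $\{U_m\}$ are cofinal among open subgroups of $U_p$, the levels $p^m$ tending to infinity), and transporting this integrally along the short exact sequence $0\to V_m^\circ\to\cO[U_p/\ker\lambda_m]\to\cO[U_p/U_m]\to0$ (with $V_m^\circ$ a $U_p$-stable lattice in $V_m$, and the last term up to commensurability). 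This is exactly the argument Pa\v{s}k\=unas carries out in \emph{loc.\ cit.}, which goes through verbatim with our omni-supercuspidal types replacing the Bushnell--Kutzko types used there for $\GL_n$.

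The hard part is this capture step. The construction of the family, the supercuspidal-type property, the pro-$p$ and ``arbitrarily small'' assertions, and the countability of $I$ are all immediate consequences of Theorem \ref{thm:existence-omnisctypes} and Frobenius reciprocity; by contrast, capture is genuinely delicate — its naive $L$-coefficient version is false — and its verification must proceed integrally, which is precisely where the finer analysis of \cite[\S5]{EP18} is indispensable. (If one prefers, the family $\{V_m\}$ may additionally be enlarged by $U_p$-conjugates or further finite inductions of the $\psi\circ\lambda_m$, still keeping all members supercuspidal types, to make the integral argument run as cleanly as in \emph{loc.\ cit.})
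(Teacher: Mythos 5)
Your overall architecture matches the paper's: start from the omni-supercuspidal types $(U_m,\lambda_m)$ of Theorem~\ref{thm:existence-omnisctypes}, take $U_p$ a small pro-$p$ member of the filtration, induce from the $U_m$ to $U_p$, and verify supercuspidal type via Frobenius reciprocity and capture integrally. The paper chooses $\{V_i\}$ to be the irreducible $L$-constituents of $\mathrm{Ind}_{U_{p,m}}^{U_p}\psi_m$, where $\psi_m$ is the rank-$[L_m:L]$ $L$-model of a $\Z/p^m\Z$-valued character with target in $L_m=L(\zeta_{p^m})$; you induce the full reduced regular representation $L[\Z/p^m\Z]/L$ instead. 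These two families generate essentially the same block, so that difference is harmless, and your treatment of the supercuspidal-type bullet is correct.

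The genuine gap is the capture bullet, which you rightly flag as the crux but then do not actually prove. You state that ``the argument Pa\v{s}k\=unas carries out in \emph{loc.~cit.}~goes through verbatim,'' but this is not so: \cite[\S5]{EP18} is written for Bushnell--Kutzko types of $\GL_n$, and the present paper had to supply a new proof (indeed the acknowledgments thank Pa\v{s}k\=unas specifically for help with this proposition). The mechanism the paper actually uses is quite specific: one works with the cyclotomic lattice $\psi_m^{\circ}$ over $\cO_m=\cO_L[\zeta_{p^m}]$ and observes that, modulo the uniformizer $\varpi_m$ of $\cO_m$, the $p^m$-th roots of unity become $1$, so $U_{p,m}$ acts \emph{trivially} on $\psi_m^{\circ*}/(\varpi_m)$; a diagram chase using projectivity of $\cO[[U_{p,m}]]$ then upgrades the Frobenius-reciprocity isomorphism $\Hom_{\cO[[U_{p,m}]]}(Q,\psi_m^*)\simeq\Hom_{\cO[[U_{p,m}]]}(M,\psi_m^*)$ to the mod-$\varpi_m$ statement, yielding $(M/\varpi)_{U_{p,m}}\xrightarrow{\sim}(Q/\varpi)_{U_{p,m}}$ for all $m$, hence $M/\varpi\simeq Q/\varpi$ (as $\{U_{p,m}\}$ is a neighborhood basis), and topological Nakayama plus $\cO$-torsion-freeness of $Q$ \cite[Lem.~2.4]{EP18} finishes. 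None of this — the role of $\cO_m$, the ramification trick that trivializes the action mod $\varpi_m$, the passage to coinvariants, the Nakayama step — appears in your sketch.

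Your proposed replacement, ``transport integrally along $0\to V_m^\circ\to\cO[U_p/\ker\lambda_m]\to\cO[U_p/U_m]\to 0$,'' does not obviously give capture and is not clearly well-formed as stated: $\ker\lambda_m$ sits inside $U_m\subsetneq U_p$ and is not assumed normal in $U_p$, so $U_p/\ker\lambda_m$ is only a coset space, and even granting the intended interpretation as induced modules it is unclear how ``$x$ nonzero modulo $\ker\lambda_m$'' translates into $x\notin\mathrm{Ann}(V_m^*)$. Your reformulation of capture as $\bigcap_m\mathrm{Ann}_{\cO[[U_p]]}(V_m^*)=0$ for the cyclic module $M=\cO[[U_p]]$ is a useful observation, but it is the starting point, not the argument. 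In short: the outline is right, the easy parts are right, but the hard part is asserted rather than proved, and the assertion (``goes through verbatim from EP18'') is not accurate.
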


\begin{remark}
  This proves a result similar to \cite[Cor.~4.2]{EP18} but slightly stronger in that our proposition implies the conclusion there via Lemmas 2.8 and 2.10 of \emph{loc.~cit.}
  Note that the proof below still produces $\{V_i\}_{i\in I}$ capturing $\cO[[U_p]]$ as far as $U_{p,m}$ is a sequence of pro-$p$ groups forming a neighborhood basis of 1.
  Indeed the proof readily adapts to other types such as principal series types.
\end{remark}

\begin{remark}
  As can be seen in the proof, we construct $V_i$ to be irreducible representations of $U_p$ over $L$ which may become reducible over $\ol L$.
\end{remark}

\begin{proof}
 Let $\{(U_{p,m},\lambda_m)\}_{m\ge1}$ be a sequence for $G(\Q_p)$ as in Theorem \ref{thm:existence-omnisctypes}, where we may assume that $U_p:=U_{p,1}$ is a pro-$p$ group and arbitrarily small (see the first paragraph in the proof of Theorem \ref{thm:existence-omnisctypes}).

 Let $L_m\subset \ol L$ denote the totally ramified extension of $L$ generated by $p^m$-th roots of unity, with $\cO_m$ denoting its ring of integers and  $\varpi_{m}$ a uniformizer in $\cO_m$.
 We compose $\lambda_m:U_{p,m}\ra \Z/p^m \Z$ with a fixed character $\Z/p^m\Z\hra \cO_m^\times$ to define a smooth character $\psi^\circ_m: U_{p,m}\ra \cO_m^\times$. Put $\psi_m:=\psi^\circ_m\otimes_{\cO_m} L_m$.
 The dual representations, i.e., inverse characters, corresponding to $\psi^\circ_m$ and $\psi_m$ are denoted by $\psi^{\circ *}_m$ and $\psi^*_m$.
   For an embedding $\sigma:L_m\hra \ol L$ over $L$, write $\psi^*_{m,\sigma}:=\psi^*_m\otimes_{L_m,\sigma} \ol L$ (viewed as a one-dimensional representation over $\ol L$).
  We will think of $\psi^\circ_m, \psi^{\circ *}_m$ (resp.~$\psi_m,\psi^*_m$) as representations on a free $\cO$-module (resp.~$L$-module) of rank $[L_m:L]$. The notation is consistent as $\psi^*_m$ is then the dual representation of $\psi_m$ over $L$.
 The multiplication by $\varpi_m$ (on the underlying $\cO_m$-module) yields a $U_{p,m}$-equivariant map $[\varpi_m]:\psi^{\circ *}_m \ra \psi^{\circ *}_m$. Since $p^m$-th roots of unity become trivial in $\cO_m/(\varpi_m)$, the $U_{p,m}$-action on the cokernel $\psi^{\circ *}_m/(\varpi_m)$ is trivial.

 Take $\{V_i\}$ to be the family of irreducible $L$-subrepresentations of
 $\textup{Ind}^{U_p}_{U_{p,m}}\psi_m$ (which is semisimple) for all $m\ge 1$.
 Since $(U_{p,m},\lambda_m)$ is omni-supercuspidal, and since $\psi_m\otimes_L \ol{L}\simeq \oplus_{\sigma\in \Hom_L(L_m,\ol L)} \psi_m\otimes_{L_m,\sigma} \ol L$ as a $U_{p,m}$-representation, the pair
 $(U_{p,m},\psi_m\otimes_L \ol{L})$ is a supercuspidal type. Via Frobenius reciprocity, it follows that $(U_p,V_i\otimes_L \ol{L})$ is also a supercuspidal type.

 It remains to prove that $\{V_i\}$ captures $\cO[[U_p]]$.
 Put $M:=\cO[[U_p]]$ and let $Q$ be the smallest quotient of $M$ such that $\Hom^{\textup{cont}}_{\cO[[U_p]]}(Q,V_i^*)\simeq \Hom^{\textup{cont}}_{\cO[[U_p]]}(M,V_i^*), \forall i\in I.$ This implies that $Q$ captures $\{V_i\}$.
 We need to show that $M \stackrel{\sim}{\ra} Q$.

 For each $m$, we have $\textup{Ind}^{U_p}_{U_{p,m}} \psi^*_m$ isomorphic to the direct sum of $V^*_i$'s for suitable indices $i$. So
 $M\twoheadrightarrow Q$ induces an isomorphism
 $$\Hom^{\textup{cont}}_{\cO[[U_p]]}(Q, \textup{Ind}^{U_p}_{U_{p,m}}\psi^*_m) \stackrel{\sim}{\ra} \Hom^{\textup{cont}}_{\cO[[U_p]]}(M, \textup{Ind}^{U_p}_{U_{p,m}}\psi^*_m),\qquad m\ge 1.$$
 Via Frobenius reciprocity,
 $$\Hom^{\textup{cont}}_{\cO[[U_{p,m}]]}(Q, \psi^*_m)\stackrel{\sim}{\ra} \Hom^{\textup{cont}}_{\cO[[U_{p,m}]]}(M, \psi^*_m),\qquad m\ge 1.$$
 The isomorphism continues to hold when $\psi^*_m$ is replaced with $\psi^{\circ *}_m$.
  Indeed, the injectivity is obvious as the map is induced by the surjection $M\twoheadrightarrow Q$. For the surjectivity, notice that the cokernel is a finitely generated $\cO$-module that is torsion free and vanishes after taking $\otimes_{\cO} L$. Now we consider the following commutative diagram with exact rows and vertical maps induced by $M\twoheadrightarrow Q$.
  $$\xymatrix{
    \Hom^{\textup{cont}}_{\cO[[U_{p,m}]]}(Q, \psi_m^{\circ *}) \ar[r]^-{[\varpi_m]} \ar[d]^-{\sim} & \Hom^{\textup{cont}}_{\cO[[U_{p,m}]]}(Q,\psi_m^{\circ *}) \ar[r] \ar[d]^-{\sim}  & \Hom^{\textup{cont}}_{\cO[[U_{p,m}]]}(Q, \psi^{\circ *}_m/(\varpi_m))  \ar[d]^-{\alpha}\\
    \Hom^{\textup{cont}}_{\cO[[U_{p,m}]]}(M, \psi_m^{\circ *}) \ar[r]^-{[\varpi_m]} & \Hom^{\textup{cont}}_{\cO[[U_{p,m}]]}(M,\psi_m^{\circ *}) \ar[r]^-{\beta} & \Hom^{\textup{cont}}_{\cO[[U_{p,m}]]}(M, \psi^{\circ *}_m/(\varpi_m))
  }$$
  The map $\alpha$ is injective by surjectivity of $M\twoheadrightarrow Q$.
  To see that $\alpha$ is surjective, it is enough to check that $\beta$ is surjective; this is true by projectivity of $M$ (as a compact linear-topological $\cO[[U_{p,m}]]$-module).
   Hence $\alpha$ gives an isomorphism
  $$\Hom^{\textup{cont}}_{\cO[[U_{p,m}]]}(Q, \psi^{\circ *}_m /(\varpi_m)) \stackrel{\sim}{\ra} \Hom^{\textup{cont}}_{\cO[[U_{p,m}]]}(M,\psi^{\circ *}_m /(\varpi_m))$$
  as vector spaces over $\cO/(\varpi)$. The Hom spaces do not change if $Q$ and $M$ are replaced with $Q/(\varpi)$ and $M/(\varpi)$.
   Since $U_{p,m}$ acts trivially on $\psi^{\circ *}_m/(\varpi_m)$, we deduce that
 $$ (M/(\varpi))_{U_{p,m}}\stackrel{\sim}{\ra}  (Q/(\varpi))_{U_{p,m}},\qquad m\ge 1,$$
   where the subscripts signify the $U_{p,m}$-coinvariants. Since $\{U_{p,m}\}$ is a neighborhood basis of 1, we deduce that $M/(\varpi)\stackrel{\sim}{\ra} Q/(\varpi)$.
   As $Q$ is $\cO$-torsion free by \cite[Lem.~2.4]{EP18}, this implies that $(\ker(M\twoheadrightarrow Q))/(\varpi)=\{0\}$.
   The topological Nakayama's lemma for compact $\cO[[U_{p,m}]]$-modules \cite[Lem.~5.2.18]{CohomologyOfNumberFields} implies that $\ker(M\twoheadrightarrow Q)=\{0\}$. Hence $M\stackrel{\sim}{\ra} Q$, and $\{V_i \}$ captures $M$.
\end{proof}

\begin{corollary}\label{cor:capture}
    Assume $p>\Cox(G)$, and let $U_p$ and $\{V_i\}$ be as in Proposition \ref{prop:capture}.
    If $U^p$ or $U_p$ is sufficiently small (e.g., if $U^p U_p$ is neat), then for each continuous finite dimensional representation $W$ of $G(\Q_p)$ over $L$, the family $\{V_i\otimes_L W\}$
    captures $\tilde H_0(U^p)$ and
  the evaluation morphism
  $$ \bigoplus_i \Hom_{U_p}(V_i\otimes W,\tilde H^0 (U^p){_L})\otimes (V_i\otimes W) \longrightarrow \tilde H^0 (U^p)_L$$
  has dense image.
\end{corollary}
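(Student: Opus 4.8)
The plan is to obtain both assertions formally from Proposition~\ref{prop:capture}, from the freeness of completed homology recorded above, and from the calculus of the notion of capture developed in \cite{CDP14}. Throughout I work under the stated hypothesis, so that $U^pU_p$ is neat (or, more weakly, small enough that $\tilde H_0(U^p)$ is free over $\cO[[U_p]]$, say of rank $r$) --- this is the only input needed on the homology side. Given the continuous finite-dimensional representation $W$ of $G(\Q_p)$ over $L$, I fix a $U_p$-stable $\cO$-lattice $W^\circ\subset W|_{U_p}$, which exists since $U_p$ is compact, and put $d=\dim_L W$.

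\textbf{Reducing the twisted family to the untwisted one.} For any compact linear-topological $\cO[[U_p]]$-module $M$ there is a natural isomorphism of $U_p$-representations $(V_i\otimes_L W)^*\simeq \Hom_\cO(W^\circ,V_i^*)$ --- because $W^\circ$ is finite free over $\cO$ and $V_i^*$ is an $L$-vector space --- so that tensor--hom adjunction gives a natural isomorphism
\[
\Hom^{\textup{cont}}_{\cO[[U_p]]}\big(M,(V_i\otimes_L W)^*\big)\;\simeq\;\Hom^{\textup{cont}}_{\cO[[U_p]]}\big(M\otimes_\cO W^\circ,\,V_i^*\big).
\]
Since $\otimes_\cO W^\circ$ is exact and sends a quotient $M\twoheadrightarrow Q$ to the quotient $M\otimes_\cO W^\circ\twoheadrightarrow Q\otimes_\cO W^\circ$, whose kernel is $\ker(M\twoheadrightarrow Q)\otimes_\cO W^\circ$ and hence vanishes exactly when $\ker(M\twoheadrightarrow Q)$ does ($W^\circ$ being nonzero and free over $\cO$), one concludes that $\{V_i\otimes_L W\}$ captures $M$ whenever $\{V_i\}$ captures $M\otimes_\cO W^\circ$.

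\textbf{Conclusion.} Apply this with $M=\tilde H_0(U^p)$. By freeness and the standard ``untwisting'' isomorphism $\cO[[U_p]]\otimes_\cO W^\circ\simeq \cO[[U_p]]^{\oplus d}$ of $\cO[[U_p]]$-modules (given by $g\otimes w\mapsto g\otimes g^{-1}w$), one has $\tilde H_0(U^p)\otimes_\cO W^\circ\simeq \cO[[U_p]]^{\oplus rd}$. Proposition~\ref{prop:capture} says $\{V_i\}$ captures $\cO[[U_p]]$, and capture is stable under finite direct sums (immediate from the generation characterization in \cite[\S2]{CDP14}), so $\{V_i\}$ captures $\tilde H_0(U^p)\otimes_\cO W^\circ$, hence $\{V_i\otimes_L W\}$ captures $\tilde H_0(U^p)$. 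This proves the first assertion; the density statement then follows from the capture/evaluation duality of \cite[Lem.~2.10]{CDP14} (cf.\ \cite[Lem.~2.7]{CDP14}): under the identifications $\tilde H^0(U^p)=\Hom^{\textup{cont}}_\cO(\tilde H_0(U^p),\cO)$ and $\Hom_{U_p}(V_i\otimes_L W,\tilde H^0(U^p)_L)\simeq \Hom^{\textup{cont}}_{\cO[[U_p]]}(\tilde H_0(U^p),(V_i\otimes_L W)^*)$ recalled above, the evaluation morphism has dense image in $\tilde H^0(U^p)_L$ if and only if $\{V_i\otimes_L W\}$ captures $\tilde H_0(U^p)$, which holds since $\tilde H_0(U^p)$ is finitely generated and $\cO$-torsion free over $\cO[[U_p]]$.

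The step I expect to require the most care is the reduction above: one must check that the adjunction isomorphism respects the topologies, so that it is legitimate inside the category of compact linear-topological $\cO[[U_p]]$-modules in which capture is formulated, and that it is genuinely functorial in $M$, so that capture really descends from $\tilde H_0(U^p)\otimes_\cO W^\circ$ to the twisted family on $\tilde H_0(U^p)$. The remaining ingredients --- freeness of $\tilde H_0(U^p)$, the untwisting isomorphism, stability of capture under finite direct sums, and the capture/evaluation duality --- are either already recorded in the surrounding text or are formal facts from \cite{CDP14}.
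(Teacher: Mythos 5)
Your argument is correct and follows the same route as the paper's: both start from freeness of $\tilde H_0(U^p)$ over $\cO[[U_p]]$ when $U^pU_p$ is neat, invoke Proposition \ref{prop:capture} for $\cO[[U_p]]$, pass to the twisted family $\{V_i\otimes_L W\}$ capturing $\tilde H_0(U^p)$, and read off density from the capture/evaluation duality of \cite[Lem.~2.10]{CDP14}. The only difference is that the paper obtains the twisted-capture step by citing \cite[Lem.~2.8, 2.9]{EP18} as black boxes, whereas you have unpacked those lemmas (the $\cO[[U_p]]$-linear tensor--hom adjunction, the untwisting isomorphism $\cO[[U_p]]\otimes_\cO W^\circ\simeq\cO[[U_p]]^{\oplus d}$, and stability of capture under finite direct sums, the last of which is indeed clean via the density characterization applied to direct sums of free $\cO[[U_p]]$-modules); this makes your version more self-contained but is not a genuinely different method.
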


\begin{proof}
  As remarked earlier, if $U^p U_p$ is neat, then $\tilde H_0 (U^p)$ is a free $\cO[[U_p]]$-module.
  Therefore Lemmas 2.8 and 2.9 of \cite{EP18} imply that $\{V_i\otimes_L W\}$
    captures $\tilde H_0(U_p)$.
    The assertion about density is simply an equivalent characterization of capture in \cite[Lem.~2.10]{CDP14} (or \cite[Lem.~2.3]{EP18}),
    noting that $\tilde H^0 (U^p)_L = \Hom^{\textup{cont}}_{\cO}(\tilde H_0(U^p),L)$.
\end{proof}

\begin{remark}
 To apply the corollary when $U^p$ is fixed (and $p>\Cox(G)$), we choose small enough $U_p$ such that $U^p U_p$ is neat. This is possible as $U_p$ can be made arbitrarily small in Proposition \ref{prop:capture}.
\end{remark}

Here is an informal discussion of the corollary. Recall that $ \Hom_{U_p}(V_i\otimes W,\tilde H^0 (U^p)_L)= M(U^p U_p, (V_i\otimes W)^*)$.
If $W$ is the restriction of an irreducible algebraic representation of $G_{\mathbb L}$ (we are writing $\mathbb L$ for the number field $L$ in \S\ref{subsec:non-constant}),
this is a space of algebraic automorphic forms such that every automorphic representation $\pi$ that contributes to it  (as in Remark \ref{rem:different-incarnations}) has the properties that $\pi_p$ is supercuspidal (since $\pi_p$ contains the type $(U_p,V_i)$) and that $\pi|_{G(\R)^0}=W$.
Thus the corollary roughly asserts that automorphic forms which are supercuspidal at $p$ and have ``weight'' $W$ at $\infty$ form a dense subspace in the completed cohomology.

 Now we formulate a density statement in terms of Hecke algebras.
 Fix an open maximal ideal $\mathfrak{m}\subset \T^S(U^p)$ and consider the $\mathfrak m$-adic localization $\T^S(U^p)_{\mathfrak m}$, which is a direct factor of $\T^S(U^p)$ as a topological ring. (See \cite[5.1]{EP18}, cf.~\eqref{Ch_rem} and Remark C.5.)
 Fix a representation $W$ of $G(\Q_p)$ coming from an irreducible algebraic representation as in the preceding paragraph.
 Let $$\Sigma(W)_{\textup{sc}}\subset \textup{Spec}\, \T^S(U^p)_{\mathfrak m}[1/p]$$
  denote the subset of closed points such that the corresponding
 morphism $\T^S(U^p)_{\mathfrak m}[1/p]\ra \ol L$ (up to the $\Gal(\ol L/L)$-action) comes from an eigen-character of $\T^S(U^p)$ in
 $M(U^p U_p, W^*)$ for some $U_p$, and such that the eigenspace gives rise to an automorphic representation of $G(\A)$ whose component at $p$ is supercuspidal.

\begin{theorem} \label{thm:zariskidense}
  If $p>\Cox(G)$, the subset $\Sigma(W)_{\textup{sc}} \subset \textup{Spec}\, \T^S(U^p)_{\fkm}$ is Zariski dense.
\end{theorem}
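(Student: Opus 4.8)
The plan is to run the Emerton--Pa\v{s}k\=unas strategy of \cite[\S5.2]{EP18}, but feeding in the supercuspidal types produced by Proposition~\ref{prop:capture} and Corollary~\ref{cor:capture} in place of the Bushnell--Kutzko types used there. First I would fix $U^p$ as in the statement and, using Proposition~\ref{prop:capture}, pick a pro-$p$ open subgroup $U_p\subset G(\Q_p)$ small enough that $U^pU_p$ is neat, together with the countable family $\{V_i\}_{i\in I}$ of smooth $L$-representations of $U_p$ such that each $(U_p,V_i\otimes_L\ol L)$ is a supercuspidal type. By Corollary~\ref{cor:capture}, the family $\{V_i\otimes_L W\}_{i\in I}$ captures $\tilde H_0(U^p)$ and the evaluation map
\[
\textup{ev}_W\colon\ \bigoplus_{i\in I}\Hom_{U_p}(V_i\otimes W,\tilde H^0(U^p)_L)\otimes_L(V_i\otimes W)\ \longrightarrow\ \tilde H^0(U^p)_L
\]
has dense image. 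I would then recall the identifications $\Hom_{U_p}(V_i\otimes W,\tilde H^0(U^p)_L)=M(U^pU_p,(V_i\otimes W)^*)$ and the fact, noted in the discussion following Corollary~\ref{cor:capture}, that every Hecke eigensystem occurring in this space comes from an automorphic representation of $G(\A)$ of weight $W$ at $\infty$ whose component at $p$ is supercuspidal (it contains the supercuspidal type $(U_p,V_i)$); since such a supercuspidal $\sigma_p$ has nonzero $U_p'$-invariants for $U_p'$ small, that eigensystem also occurs in $M(U^pU_p',W^*)$ for suitable $U_p'$, so, when it factors through $\T^S(U^p)_{\fkm}[1/p]$, it defines a point of $\Sigma(W)_{\textup{sc}}$.

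Next I would pass to the $\fkm$-localization. Since $\T^S(U^p)_{\fkm}$ is a direct factor of $\T^S(U^p)$ as a topological ring, $\tilde H_0(U^p)_{\fkm}$ and $\tilde H^0(U^p)_{\fkm}$ are direct summands, $\textup{ev}_W$ composed with the $\T^S(U^p)$-equivariant projection still has dense image in $\tilde H^0(U^p)_{\fkm,L}$, and capture persists. Moreover $\T^S(U^p)$ acts faithfully on $\tilde H^0(U^p)$: by \eqref{eq:Hecke-complete} and \eqref{eq:completed-cohomology} an element annihilating $\tilde H^0(U^p)$ maps to $0$ in each $\T^S(U^pU_p,\cO/\varpi^s)$. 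Since $\tilde H^0(U^p)=\Hom^{\textup{cont}}_{\cO}(\tilde H_0(U^p),\cO)$ is $\cO$-torsion free, $\T^S(U^p)_{\fkm}$ acts faithfully on $\tilde H^0(U^p)_{\fkm}$, and an element of $\T^S(U^p)_{\fkm}$ that annihilates $\tilde H^0(U^p)_{\fkm,L}$ already annihilates $\tilde H^0(U^p)_{\fkm}$, hence is $0$.

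With these preliminaries the argument closes as follows. Let $t\in\T^S(U^p)_{\fkm}$ vanish at every point of $\Sigma(W)_{\textup{sc}}$. Each $M(U^pU_p,(V_i\otimes W)^*)_{\fkm}$ is a space of algebraic automorphic forms, on which $\T^S(U^p)_{\fkm}$ acts semisimply, and all of its eigensystems are points of $\Sigma(W)_{\textup{sc}}$ by the first paragraph; hence $t$ annihilates every $\Hom_{U_p}(V_i\otimes W,\tilde H^0(U^p)_{\fkm,L})$. As the Hecke action on the source of the (localized) evaluation map is only through these $\Hom$-factors, $t$ annihilates the source, hence by equivariance and density of the image it annihilates $\tilde H^0(U^p)_{\fkm,L}$, so $t=0$. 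Therefore $\bigcap_{\mathfrak p\in\Sigma(W)_{\textup{sc}}}\mathfrak p=(0)$ in $\T^S(U^p)_{\fkm}$, and $\overline{\Sigma(W)_{\textup{sc}}}=V((0))=\textup{Spec}\,\T^S(U^p)_{\fkm}$, i.e.\ $\Sigma(W)_{\textup{sc}}$ is Zariski dense. I do not expect a deep new obstacle here: the heavy lifting (constructing enough omni-supercuspidal types, and proving the capture statement) is already done in Theorem~\ref{thm:existence-omnisctypes} and Proposition~\ref{prop:capture}/Corollary~\ref{cor:capture}. The points requiring care are bookkeeping ones — compatibility of ``capture''/density with the passage to the $\fkm$-localization, faithfulness of the big Hecke algebra on completed (co)homology, and the type-theoretic interpretation of $M(U^pU_p,(V_i\otimes W)^*)$ as supercuspidal-at-$p$ forms of weight $W$ — each of which is supplied by the inputs just recalled.
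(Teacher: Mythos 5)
Your proof is correct and follows the same strategy as the paper: deploy Proposition~\ref{prop:capture} and Corollary~\ref{cor:capture} to obtain supercuspidal types $(U_p,V_i)$ such that $\{W\otimes V_i\}$ captures $\tilde H_0(U^p)_{\fkm}$, identify $\Hom_{U_p}(V_i\otimes W,\tilde H^0(U^p)_{\fkm,L})$ with a semisimple $\T^S(U^p)_{\fkm}[1/p]$-module of algebraic automorphic forms all of whose eigensystems lie in $\Sigma(W)_{\textup{sc}}$, and deduce density from capture. Where you diverge from the paper's proof is in the last step: the paper cites the abstract density criterion \cite[Prop.~2.11, Rem.~2.12--2.13]{EP18} (with Remark 2.13 invoked precisely because $\T^S(U^p)_{\fkm}$ is not known to be Noetherian), whereas you unwind that criterion into a direct annihilator argument -- any $t\in\bigcap_{\mathfrak p\in\Sigma(W)_{\textup{sc}}}\mathfrak p$ kills each $\Hom$-factor, hence the dense image of the evaluation map, hence by continuity all of $\tilde H^0(U^p)_{\fkm,L}$, hence $t=0$ by faithfulness, so $\overline{\Sigma(W)_{\textup{sc}}}=V(0)$. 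This is a legitimate simplification: your verification that the intersection of the primes is zero gives $\overline{\Sigma(W)_{\textup{sc}}}=\textup{Spec}\,\T^S(U^p)_{\fkm}$ with no Noetherian hypothesis at all, so the appeal to \cite[Rem.~2.13]{EP18} is dispensed with. The supporting facts you rely on (faithfulness of $\T^S(U^p)_{\fkm}$ on $\tilde H^0(U^p)_{\fkm}$ via surjectivity of the transition maps, $\cO$-torsion-freeness allowing passage to $L$-coefficients, continuity of Hecke operators on the Banach space, and that an eigensystem in $M(U^pU_p,(V_i\otimes W)^*)$ also occurs in some $M(U^pU_p',W^*)$) are each correctly justified. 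In short: same route, but you have replaced a black-box citation by the underlying argument.
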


\begin{remark}\label{rem:use-appD-2}
The assumption that $p>\Cox(G)$ can be removed as in Remark \ref{rem:use-appD-1} by using Appendix D.
\end{remark}

\begin{proof}
  Essentially the same argument as in the proof of \cite[Thm.~5.1]{EP18} works, so we only sketch the proof.
  Choose a sufficiently small $U_p$ and $\{V_i\}$ as in Proposition \ref{prop:capture}.
  By Corollary \ref{cor:capture}, $\{W\otimes V_i\}$ captures the $\mathfrak m$-adic localization $\tilde H_0(U^p)_{\mathfrak m}$, which is a direct summand of $\tilde H_0(U^p)$. As in \emph{loc.~cit.}~we obtain
  that
  \begin{eqnarray*}
  \Hom^{\textup{cont}}_{\cO[[U_p]]}(\tilde H_0(U^p)_{\mathfrak m},(W\otimes V_i)^*)&\simeq& \Hom^{\textup{cont}}_{\cO[[U_p]]}(W\otimes V_i,\tilde H^0(U^p)_{\mathfrak m}\otimes L) \\
  & \simeq &\Hom^{\textup{cont}}_{\cO[[U_p]]}(W\otimes V_i,(\tilde H^0(U^p)_{\mathfrak m}\otimes L)^{\textup{alg}}),
  \end{eqnarray*}
  where $(\cdot)^{\textup{alg}}$ designates the subspace of locally algebraic vectors for the $U_p$-action. Thus $\T^S(U^p)_{\fkm}[1/p]$ acts semisimply on $(\tilde H^0(U^p)_{\mathfrak m}\otimes L)^{\textup{alg}}$ as it is the case on the space of algebraic automorphic forms. Moreover the support of $\Hom^{\textup{cont}}_{\cO[[U_p]]}(W\otimes V_i,(\tilde H^0(U^p)_{\mathfrak m}\otimes L)^{\textup{alg}})$ in the maximal spectrum of $\T^S(U^p)_{\fkm}[1/p]$ is contained in $\Sigma(W)_{\textup{sc}}$ as explained in \emph{loc.~cit.} (The supercuspidality at $p$ comes from the fact that $(U_p,V_i)$ is a supercuspidal type.)
  Finally the Zariski density of $\Sigma(W)_{\textup{sc}}$ follows from
  Proposition 2.11 of \cite{EP18} based on Remarks 2.12 and 2.13 therein with $R=\T^S(U^p)_{\fkm}$. (We need Remark 2.13 as $\T^S(U^p)_{\fkm}$ is not known to be Noetherian in general; refer to the discussion in \S5.1 of \emph{loc.~cit.} Sometimes the Noetherian property can be proved, as in Appendix \ref{app:Galois-reps}.)
\end{proof}

\appendix
\numberwithin{equation}{section}

\section{Calculations for $D_{2N+1}$} \label{app:Dodd}
  The purpose of this appendix is to prove Proposition \ref{prop:0-toral-abundance} for split groups $G$ of type $D_{2N+1}$. In this appendix, by ``the proof'' we will always refer to the proof of Proposition \ref{prop:0-toral-abundance}.
  We maintain the notation from there. Recall that it suffices to exhibit an elliptic maximal torus $T\subset G$ such that for every $n\in \Z_{\ge 1}$, there exists a $G$-generic element $X\in \mathfrak t^*$ of depth $-r$ with $n<r\le n+1$. Put $s:=2N+1$ in favor of simpler notation.

  Our strategy is similar to the other cases of the proof. We take $E/F$ to be the unramified extension of degree $2s-2=4N$ with $\Gal(E/F)=\langle \sigma \rangle$,  where $\sigma$ denotes the (arithmetic) Frobenius automorphism. Recall that $T^{\textup{sp}}$ is a split maximal torus in $G$.

  Let $\{e_1,...,e_s\}$ be a basis for $X_*(T^{\textup{sp}})\otimes_{\Z}\R$. Without loss of generality, i.e. by changing the basis if necessary, we let the simple coroots be
  $$\check\alpha_i=e_i-e_{i+1},\quad 1\le i\le s-1,\qquad \check\alpha_s=e_{s-1}+e_s,$$
  and the coroots $\check\Phi=\{ \pm( e_i\pm e_j): 1\le i<j\le s\}$.
  Take the Coxeter element $w=s_{\check\alpha_1}s_{\check\alpha_2}\hdots s_{\check\alpha_s}$, where $s_{\check\alpha_i}$ denotes the reflection on $X_*(T^{\textup{sp}})\otimes_{\Z}\R$ corresponding to $\check\alpha_i$.
The order of $w$ is equal to $\Cox(G)=2s-2=4N$. An easy computation shows:
	\begin{eqnarray*}
		w(\check \alpha_i)&=& \check \alpha_{i+1},\qquad 1\le i\le s-3, \\
		w(\check \alpha_{s-2})&=& \check \alpha_1+\check \alpha_2+\cdots+\check \alpha_{s}\\
		w(\check \alpha_{s-1})&=& -(\check \alpha_1+\check \alpha_2+\cdots+\check \alpha_{s-1})\\
		w(\check \alpha_{s})&=& -(\check \alpha_1+\check \alpha_2+\cdots+\check \alpha_{s-2}+\check \alpha_{s})
	\end{eqnarray*}
  As in the earlier proof, we define $T$ over $F$ from the cocycle $\ol f:\Gal(E/F)\ra W$ sending $\sigma$ to $w$. (A cocycle here is a homomorphism as $\Gal(E/F)$ acts trivially on $W$.) We may and will identify $T_E$ with $T^{\textup{sp}}_E$ and fix $T$ henceforth. Let $r\in \Z$. Then giving $X\in \ft^*(E)_{-r}$ is equivalent to assigning $a_1,...,a_s\in \cO_E$ such that
  $$X(H_{\varpi_F^r\check\alpha_1})=a_1,~...,~X(H_{\varpi_F^r\check\alpha_s})=a_s$$
  since $\{\varpi_F^r\check\alpha_i\}_{1\le i\le s}$ is an $\cO_E$-basis of $\ft(E)_{r}$.
  (To see this, it is enough to check it for $r=0$. In this case, $\{\check\alpha_i\}_{1\le i\le s}$ generates a subgroup of the free $\Z$-module $X_*(T_E)$ with index coprime to $p$, since $p>\Cox(G)$ and thus $p$ does not divide the order of the Weyl group. It follows that $\{\check\alpha_i\}_{1\le i\le s}$ indeed generates $\ft(E)_0$ over $\cO_E$. Since $\ft(E)_0$ is a free $\cO_E$-module of rank $s$, linear independence follows.)

  The $G$-genericity means that we need
  \begin{equation}\label{eq:Dodd-1}
  v(X(H_{\check\beta}))=-r,\qquad \forall \check\beta\in \check\Phi.
  \end{equation}
  On the other hand, $X$ descends to an $\cO_F$-linear functional on $\ft_{r}$ if
   \begin{equation}\label{eq:Dodd-2}
    X(\sigma(H_{\varpi_F^r\check\alpha_i}))=\sigma(a_i),\qquad \forall 1\le i\le s.
   \end{equation}
 Hence in order to prove Proposition \ref{prop:0-toral-abundance} for split groups $G$ of type $D_{2N+1}=D_s$, it suffices to find $X$ satisfying \eqref{eq:Dodd-1} and \eqref{eq:Dodd-2}. Moreover, it suffices to find such an $X$ only when $r=0$, since the case $n<r\le n+1$ follows by multiplying $X$ with $\varpi_F^{-n-1}$.
  Thus we set $r=0$ from now on. Then \eqref{eq:Dodd-2} can be rewritten as
\begin{equation}\label{eq:Dodd-3}
\begin{array}{r@{}l}
		\sigma(a_i)&{}= a_{i+1},\qquad 1\le i\le s-3, \\
		\sigma(a_{s-2})&{}= a_1+a_2+\cdots+a_{s},\\
		\sigma(a_{s-1})&{}= -(a_1+a_2+\cdots+a_{s-1}),\\
		\sigma(a_{s})&{}= -(a_1+a_2+\cdots+a_{s-2}+a_{s+1}).
\end{array}
\end{equation}

  Take $\zeta$ to be a primitive $({q^{2s-2}-1})$-th root of unity in $\cO_E$, where $q$ is the residue field cardinality of $F$ (so $k_E=\F_{q^{2s-2}}$).
     Set
   $$a=\zeta^{\frac{q^{s-1}+1}{2}},\qquad b=\zeta^{\frac{q^{2(s-1)}-1}{2(q-1)}}.$$
  We would like to verify that the following solution to the system of equations \eqref{eq:Dodd-3} works:
 $$ a_i=\sigma^{i-1}(a),\quad 1\le i\le s-2,$$
 \begin{eqnarray}
  a_{s-1}&=&\frac12 (b-(a+\sigma(a)+\cdots+\sigma^{s-3}(a))+\sigma^{s-2}(a)),\nonumber\\
   a_{s}&=&\frac12 (-b-(a+\sigma(a)+\cdots+\sigma^{s-3}(a))+\sigma^{s-2}(a)).\nonumber
  \end{eqnarray}
   A simple computation shows
   \begin{equation}\label{eq:property-ab-typeD}
   \sigma^{s-1}(a)=-a,\qquad \sigma(b)=-b,
   \end{equation}
   $$a_{s-1}-a_s=b,\qquad a_{s-1}+a_s=-(a+\sigma(a)+\cdots+\sigma^{s-3}(a))+\sigma^{s-2}(a).$$
   Using this, it is elementary to check that \eqref{eq:Dodd-3} is satisfied.

   It remains to prove that \eqref{eq:Dodd-1} holds with $r=0$.
   As a preparation, observe that the reduction $\ol{a}\in k_E$ of $a$ generates $k_E$, namely
   \begin{equation}\label{eq:stabilizer-of-a-typeD}
   \sigma^j(\ol{a})=\ol{a},\quad j\in \Z_{\ge 1}\qquad \Rightarrow \qquad (2s-2)|j.
   \end{equation}
 When each coroot $\check\beta$ is written (uniquely) as $\check\beta=\sum_{i=1}^s \lambda_i \check\alpha_i$ with $\lambda_i\in \Z$,
 the condition imposed by \eqref{eq:Dodd-1} is that $v(\sum_{i=1}^s \lambda_i X( H_{\check\alpha_i}))=0$.
  As $\check\beta$ runs through all coroots (enough to consider positive coroots), the following are the conditions to check:
\begin{enumerate}
  \item $v(a_{i+1}+\cdots + a_j)=0$ for $0\le i<j\le s$.
  \item $v(a_{i+1}+\cdots + a_s + (a_{j+1}+\cdots + a_{s-2}))=0$ for $0\le i\le j\le s-2$.
  \item $v(a_{i+1}+\cdots + a_{s-2}+a_s)=0$ for $0\le i < s-2$.
\end{enumerate}
Divide Case (1) into (1a) $j\le s-2$, (1b) $j=s-1$, and (1c) $j=s$.
To check an element of $\cO_E$ has valuation zero, it suffices to show that the reduction is nonzero in $k_E$.

\vspace{.1in}

\paragraph{\textbf{Cases (1a), (1c), and (2)}}
  In these cases, the condition to be checked has the form
  \begin{equation}\label{eq:Case-1a,2}
  \sigma^{i'}(\ol{a})+\cdots + \sigma^{j'}(\ol{a})\neq 0,\qquad 1\le j'-i' \le s-1.
  \end{equation}
  Indeed this is clear in Case (1a) with $i'=i$ and $j'=j-1$. In Case (1c), it suffices to show that
  $-\sigma(\ol{a}_{i+1}+\cdots + \ol{a}_s)\neq 0$ but the left-hand side equals, via \eqref{eq:property-ab-typeD},
  $$-\sigma\left(\sigma^{s-2}(\ol{a})-(\ol{a}+\sigma(\ol{a})+\cdots + \sigma^{i-1}(\ol{a}))\right)
  = \ol{a}+\sigma(\ol{a})+ \cdots + \sigma^{i}(\ol{a}).$$
  So this case corresponds to showing \eqref{eq:Case-1a,2} with $i'=0$ and $j'=i$.
  In Case (2),
  $$\ol{a}_{i+1}+\cdots + \ol{a}_s + (\ol{a}_j+\cdots + \ol{a}_{s-2})
  =-a-\sigma(\ol{a})-\cdots-\sigma^{i-1}(\ol{a})+\sigma^{j}(\ol{a})+\sigma^{j+1}(\ol{a})+\cdots+\sigma^{s-2}(\ol{a})$$
  $$=\sigma^{j}(\ol{a})+\sigma^{j+1}(\ol{a})+\cdots+\sigma^{s-2+i}(\ol{a})$$
  so the condition that this expressions is non-zero amounts to \eqref{eq:Case-1a,2} with $i'=j$ and $j'=s-2+i$. Note that $j'-i'=s-2+i-j\le s-2$.

  Now the verification of \eqref{eq:Case-1a,2} is the same as for type A in the proof of Proposition \ref{prop:0-toral-abundance}, based on \eqref{eq:stabilizer-of-a-typeD}.

\paragraph{\textbf{Case (1b)}}
  To prove the claim by contradiction, we suppose that
  $$2(\ol{a}_{i+1}+\ol{a}_{i+2}+\cdots + \ol{a}_{s-1})=
  - (\ol{a}+\sigma(\ol{a})+\cdots + \sigma^{i-1}(\ol{a})) + (\sigma^i(\ol a)+\cdots+\sigma^{s-2}(\ol{a}))+\ol{b}=0.$$
  We apply $\sigma^2$ to the equation and subtract it from the original equation, recalling that $\sigma^2(\ol{b})=\ol{b}$. Then
$$ -\ol{a}-\sigma(\ol a) + 2( \sigma^i(\ol a)+\sigma^{i+1}(\ol a)) -\sigma^{s-1}(\ol a)-\sigma^s(\ol a)=0.$$
  This is simplified via \eqref{eq:property-ab-typeD} as $\sigma^i(\ol a)+\sigma^{i+1}(\ol a)=0$, also using that $q$ is odd.
  Hence $\sigma(\ol a)=-\ol a$ and therefore $\sigma^2(\ol{a})=\ol{a}$, which contradicts \eqref{eq:stabilizer-of-a-typeD} (as $s\ge 5$).

\vspace{.1in}

\vspace{.1in}

\paragraph{\textbf{Case (3)}}

 Again suppose that
  $$2(\ol{a}_{i+1}+\cdots + \ol{a}_{s-2}+\ol{a}_s)=
  - (\ol{a}+\sigma(\ol{a})+\cdots + \sigma^{i-1}(\ol{a})) + (\sigma^i(\ol a)+\cdots+\sigma^{s-2}(\ol{a}))-\ol{b}=0.$$
  The equation is the same as in Case (1b) except that the coefficient of $\ol{b}$ has opposite sign, which does not affect the argument, so we reach contradiction in the same way as before.

\section{Calculations for $E_6$} \label{app:E6}

  Here we prove Proposition \ref{prop:0-toral-abundance} for a split group $G$ of type $E_6$.
  That is, we exhibit an elliptic maximal torus $T\subset G$, and a $G$-generic element $X\in \mathfrak t^*$ of depth $-r$ for the same $T$ (as $r$ varies). The notation in the proof of the proposition is maintained.

	We denote the simple coroots of $E_6$ by $\check \alpha_1, \check\alpha_2, \hdots, \check\alpha_6$ as shown in Figure \ref{Figure:E6}.
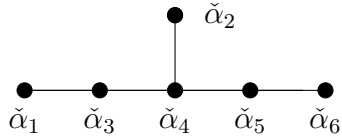
\begin{figure}[ht]
	\centering
	\begin{tikzpicture}
	
	\draw (0,0) -- (3.7,0);
	\draw (0,0) -- (4,0);
	\draw (2,0) -- (2,1);
	
	\draw[fill=black] (0,0) circle(.1);
	\draw[fill=black] (1,0) circle(.1);
	\draw[fill=black] (2,0) circle(.1);
	\draw[fill=black] (2,1) circle(.1);
	\draw[fill=black] (2,1) circle(.1);
	\draw[fill=black] (3,0) circle(.1);
	\draw[fill=black] (4,0) circle(.1);
	
	\node at (0,-.4) {$\check\alpha_1$};
	\node at (2.6,1) {$\check\alpha_2$};
	\node at (1,-.4) {$\check\alpha_3$};
	\node at (2,-.4) {$\check\alpha_4$};
	\node at (3,-.4) {$\check\alpha_5$};	
	\node at (4,-.4) {$\check\alpha_6$};
	
	\end{tikzpicture}
	\caption{Dynkin diagram for (the dual of) $E_6$}	
	\label{Figure:E6}	
\end{figure}

Then the positive coroots of $E_6$ are\footnote{The list coincides with the one in \cite[Planche~V]{Bourbaki-Lie-Ch456} up to identifying roots with coroots. This is fine as the root system of type $E_6$ is self-dual.}
\begin{enumerate}
	\item $\check\alpha_1+\hdots +\check\alpha_j-\check\alpha_2$ for $1 < j  \leq  6$
	\item $\check\alpha_i+\hdots +\check\alpha_j$ for $3 \le i \leq j \leq  6$
	\item $\check\alpha_2+\hdots +\check\alpha_j-\check\alpha_3$ for $3 \leq  j \leq  6$
	\item $\check\alpha_i+\hdots +\check\alpha_4$ for $1 \le i \leq 2$
	\item $\check\alpha_i+\hdots +\check\alpha_j$ for $1 \le i \leq 2 \leq 5 \leq j \leq 6$
	\item $\check\alpha_i+\hdots +\check\alpha_j+\check\alpha_4$ for $1 \le i \leq 2 \leq 5 \leq j \leq 6$
	\item $\check\alpha_2+\check\alpha_3+2\check\alpha_4+2\check\alpha_5+\check\alpha_6$
	\item $\check\alpha_1+\check\alpha_2+\check\alpha_3+2\check\alpha_4+2\check\alpha_5+\check\alpha_6$
	\item $\check\alpha_1+ \check\alpha_2+2\check\alpha_3+2\check\alpha_4+\check\alpha_5$
	\item $\check\alpha_1+ \check\alpha_2+2\check\alpha_3+2\check\alpha_4+\check\alpha_5+\check\alpha_6$
	\item $\check\alpha_1+ \check\alpha_2+2\check\alpha_3+2\check\alpha_4+2\check\alpha_5+\check\alpha_6$
	\item $\check\alpha_1+ \check\alpha_2+2\check\alpha_3+3\check\alpha_4+2\check\alpha_5+\check\alpha_6$
	\item $\check\alpha_1+ 2\check\alpha_2+2\check\alpha_3+3\check\alpha_4+2\check\alpha_5+\check\alpha_6$
\end{enumerate}
(Note that (1)--(4) correspond to coroots of subroot systems of type $A$.)
	
Let $w_h = s_2s_3s_5s_1s_4s_6$, where $s_i$ denotes the reflection corresponding to $\check\alpha_i$. Then $w_h$ is a Coxeter element, hence it has order $\Cox(G)=12$ and its eigenvalues when acting on the complex vector space spanned by the coroots are $\zeta_{12},\zeta_{12}^{4}, \zeta_{12}^{5}, \zeta_{12}^{7}, \zeta_{12}^{8}, \zeta_{12}^{11}$, where $\zeta_{12}$ denotes a primitive (complex) twelfth root of unity (see \cite[3.7.~Table~1 and 3.19.~Theorem]{Hum90}). Hence $w:=w_h^{4}$ is an elliptic element, i.e. does not have any nonzero fixed vector when acting on the above complex vector space. One easily calculates that
	\begin{eqnarray*}
		w(\check\alpha_1)&=& -\check\alpha_1-\check\alpha_2-\check\alpha_3-\check\alpha_4 \\
		w(\check\alpha_2)&=& \check\alpha_1 + \check\alpha_3 +\check\alpha_4 + \check\alpha_5 + \check\alpha_6\\
		w(\check\alpha_3)&=& \check\alpha_1 + \check\alpha_2 + \check\alpha_3 + 2\check\alpha_4 + \check\alpha_5 \\
		w(\check\alpha_4)&=& -\check\alpha_1-\check\alpha_2-2\check\alpha_3-3\check\alpha_4-2\check\alpha_5-\check\alpha_6 \\
		w(\check\alpha_5)&=& \check\alpha_2 + \check\alpha_3 +2\check\alpha_4 + \check\alpha_5 + \check\alpha_6\\
		w(\check\alpha_6)&=& -\check\alpha_2-\check\alpha_4-\check\alpha_5 -\check\alpha_6
	\end{eqnarray*}

	Let $E$ be a cubic Galois extension of $F$. Fix a generator $\sigma$ of $\Gal(E/F)$ and define $f: \Gal(F^{\textup{sep}}/F) \twoheadrightarrow \Gal(E/F) \ra W$ by sending $\sigma\in\Gal(E/F)$ to $w$. As in the proof of Proposition \ref{prop:0-toral-abundance}, $f$ gives rise to (the conjugacy class of) a maximal torus $T$ of $G$. Since $w$ is elliptic, the torus $T$ is elliptic.
	We divide into two cases.

\vspace{.05in}
	
\paragraph{\textbf{Case (1)}: when $F$ does not contain a nontrivial third root of unity.}
 In this case we let $E$ be the unramified cubic extension of $F$.	Let $a \in E$ be an element of valuation zero such that $a+\sigma(a)+\sigma^2(a)=0$ and such that the image  $\bar a$ of $a$ in the residue field $k_E$ of $E$ is a generator for the field extension $k_E/k_F$ (see Lemma \ref{Lemma-a}), and define
$$a_1=a_2=a_4=\sigma(a), \quad a_3=a-2\sigma(a), \quad
		a_5=a+\sigma(a), \quad a_6=-3a-2\sigma(a).
$$
	Then
	\begin{eqnarray*}
		\sigma(a_1)&=& -a_1-a_2-a_3-a_4 ,\\
		\sigma(a_2)&=& a_1 + a_3 +a_4 + a_5 + a_6,\\
		\sigma(a_3)&=& a_1 + a_2 + a_3 + 2a_4 + a_5, \\
		\sigma(a_4)&=& -a_1-a_2-2a_3-3a_4-2a_5-a_6, \\
		\sigma(a_5)&=& a_2 + a_3 +2a_4 + a_5 + a_6,\\
		\sigma(a_6)&=& -a_2-a_4-a_5 -a_6.
	\end{eqnarray*}
	Hence the linear functional $X$ on $\ft(E)_{y,n+1}$ defined by $X(\varpi_F^{n+1}H_{\check \alpha_i})=a_i$ descends to a linear functional on $\ft_{y,n+1}$.

	We claim that $\{1, \bar a, \sigma(\bar a)\}$ is a $k_F$-basis for $k_E$. To this end, suppose that there exist $c_1, c_2, c_3 \in k_F$ such that $c_1+ c_2 \bar a + c_3 \sigma(\bar a) =0$. Applying $\sigma$, we have $c_1+c_2\sigma(\bar a)+c_3(-\bar a - \sigma(\bar a))=0$. Taking the difference of the two equations, we obtain
$$(c_2-2c_3)\sigma(\bar a)=(c_2+c_3)\bar a.$$ If $c_2-2c_3\neq 0$ then $\sigma(\bar a)=c\bar a$ with $c=(c_2+c_3)/(c_2-2c_3)$, thus $\bar a=\sigma^3(\bar a)=c^3 \bar a$.
Since $F$ does not contain any nontrivial third root of unity, neither does $k_F$, and hence $c=1$. But then $\sigma(\bar a)=\bar a$, contradicting $k_E=k_F(\bar a)\neq k_F$.
Therefore $c_2-2c_3=0$, thus also $c_2+c_3=0$ as $\bar a\neq 0$. Since $p>\Cox(G)>3$, it follows that $c_2=c_3=0$. Hence $c_1=0$ as well, proving the desired linear independence of $\{1, \bar a, \sigma(\bar a)\}$ over $k_F$.
	
	Now  using the linear independence of $\bar a$ and $\sigma(\bar a)$ together with the explicit formulas for the (positive) coroots of $E_6$ above, it is easy to check that $v(X(H_{\check \alpha}))=-(n+1)$ for all coroots $\check \alpha$ of $G_E$ with respect to $T_E$. Hence $X$ is $G$-generic of depth $n+1$.
	
\vspace{.05in}

	\paragraph{\textbf{Case (2)}: when $F$ contains a nontrivial third root of unity $\zeta$.}
	In this case let $E$ be the totally ramified extension $F(\varpi_E)$ for a root $\varpi_E$ of the equation $x^3-\varpi_F=0$. As our choice of $\zeta$ and $\varpi_E$ is flexible, we may assume that $\sigma(\varpi_E)=\zeta \varpi_E$. 	
	We set
	$$a_1=2, \quad a_2=a_4=a_5=1, \quad a_3=-4-2\zeta, \quad a_6=3\zeta .$$
	Then
	\begin{eqnarray*}
		\zeta a_1 &=& -a_1-a_2-a_3-a_4, \\
		\zeta a_2  &=& a_1 + a_3 +a_4 + a_5 + a_6,\\
		\zeta a_3 &=& a_1 + a_2 + a_3 + 2a_4 + a_5, \\
		\zeta a_4 &=& -a_1-a_2-2a_3-3a_4-2a_5-a_6, \\
		\zeta a_5 &=& a_2 + a_3 +2a_4 + a_5 + a_6,\\
		\zeta a_6 &=& -a_2-a_4-a_5 -a_6.
	\end{eqnarray*}
	Thus the linear functional $X$ on $\ft(E)_{y,n+1/3}$ defined by $X(\varpi_E^{3n+1}H_{\check \alpha_i})=a_i$ descends to a linear functional on $\ft_{y,n+1/3}$. It remains to check that $v(X(H_{\check\alpha}))=-(n+\frac{1}{3})$ for all coroots $\check\alpha$ of $G_E$ with respect to $T_E$. Note that it suffices to consider the positive coroots. Using the explicit formulas above, we obtain that for all positive coroots $\check \alpha$,
	\begin{eqnarray*}
X(\varpi_E^{3n+1}H_{\check\alpha}) & \in &
\{ 1, 2, 3, -2-4 \zeta \}\cup \{ i-2 \zeta \, | \,  -4 \leq i \leq 1\} \cup  \{ i- \zeta \, | \,  -3 \leq i \leq 1\}\\
&  & \cup \{ i + \zeta \, | \,  -2 \leq i \leq 3\} \cup \{ i+3 \zeta \, | \,  0 \leq i \leq 3 \} \, .
\end{eqnarray*}
	If $\zeta \notin \bF_p$, then $p>\Cox(G)=12 >4$  implies that the image $\overline{X(\varpi_E^{3n+1}H_{\check\alpha})}$ of $X(\varpi_E^{3n+1}H_{\check\alpha}) \in \cO_E$ in the residue field $k_E$ is non-zero, hence $v(X(\varpi_E^{3n+1}H_{\check\alpha}))=0$ as desired.
	If $\zeta \in \bF_p$, then one can treat each case separately and show that $\overline{X(\varpi_E^{3n+1}H_{\check\alpha})} \neq 0$ using that $p>12$. More precisely it is obvious that $\overline{X(\varpi_E^{3n+1}H_{\check\alpha})} \neq 0$ when the value of $X(\varpi_E^{3n+1}H_{\check\alpha})$ is an integer, a multiple of $\zeta$, or of the form $\pm 1 \pm \zeta$. In the remaining cases, we have the form $X(\varpi_E^{3n+1}H_{\check\alpha})=c_1 + c_2 \zeta$, and one verifies that $c_1^3 \not\equiv -c_2^3 \mod p$ in each case so that $\overline{c_1 + c_2 \zeta}\neq 0$.
	
We conclude that $X$ is $G$-generic of depth $n+\frac{1}{3}$.

\section{A note on Galois representations associated to automorphic forms (by Vytautas Pa\v{s}k\=unas)}\label{app:Galois-reps}

\markright{\textsc{Appendix C. A note on Galois representations associated to automorphic forms}}
\markleft{\textsc{Vytautas Pa\v{s}k\=unas}}

 The aim of this note is to explain how to deduce Theorem 3.3.3 (in the main article) by replacing the use of Theorem 3.2.1 in its proof
by the density results proved in \cite{EP}.  We put ourselves in the setting of the proof of Theorem 3.3.3. In particular,  $F$ is a totally real field, $E$ is a totally imaginary quadratic extension of $F$,
 $S$ is a finite set of places of $F$ containing all the places above $p$ and $\infty$, $S_E$ is the set of places of $E$ above $S$,  $E_S$ is the maximal extension of $E$ unramified outside $S$,
   $G$ is a unitary group over $F$ which is an outer form of $\GL_n$ with respect to the quadratic extension $E/F$ such that $G$ is quasi-split at all finite places and anisotropic at all infinite places. We assume that all the places of $F$ above $p$ split in $E$. This implies that $G(F\otimes_{\QQ} \Qp)$
is isomorphic to a product of $\GL_n(F_v)$ for $v\mid p$. Let $U^p$ be a compact  open subgroup of $G(\mathbb A^{\infty, p}_F)$. If $U_p$ is a compact open subgroup  of $G(F\otimes_{\QQ} \Qp)$ then the double coset space
  $$Y(U^p U_p):= G(F)\backslash G(\mathbb A_F)/ U^p U_p G(F \otimes_{\QQ} \RR)^{\circ}$$
is a finite set. From our point of view the key objects are the completed cohomology
$$ \Htilde(U^p):= \varprojlim_{m} \varinjlim_{U_p} H^0(Y(U^p U_p), \ZZ/p^m \ZZ),$$
where the inner limit is taken over all open compact subgroups $U_p$ and the big Hecke algebra:
\begin{equation}\label{defnT}
\mathbb T^S_{\mathrm{Spl}}(U^p):= \varprojlim_{m, U_p} \mathbb T^S_{\mathrm{Spl}}(U^p U_p, \ZZ/p^m \ZZ),
\end{equation}
where  $\mathbb T^S_{\mathrm{Spl}}(U^p U_p, \ZZ/p^m \ZZ)$ is the image the algebra $\mathbb T^S_{\mathrm{Spl}}$, defined in the proof
of Theorem 3.3.3, in $\End_{\Zp}( H^0(Y(U^p U_p), \ZZ/p^m \ZZ))$. This algebra is denoted by $\mathbb T'$ in \cite[\S 5.2]{EP}.
We use the projective limit to define the topology on $\mathbb T^S_{\mathrm{Spl}}(U^p)$, which makes it into a profinite ring. We also note that
$H^0(Y(U^p U_p), \ZZ/p^m \ZZ)$ is just a space of $\ZZ/p^m \ZZ$-valued functions on $Y(U^pU_p)$ and so coincides with $M(U^p U_p, \ZZ/p^m \ZZ)$
in \S3.

We will show that $\mathbb T^S_{\mathrm{Spl}}(U^p)$ is a noetherian semi-local ring and will attach a Galois representation
of $\Gal(E_S/E)$ to each maximal ideal of $\mathbb T^S_{\mathrm{Spl}}(U^p)[1/p]$, assuming the result of Clozel recalled in Theorem 3.3.1.
This is slightly more general than Theorem 3.3.3, as we allow maximal ideals, which do not correspond to the classical automorphic forms, and we do
not have a restriction on the prime $p$, as we work with Bushnell--Kutzko types.

We may identify  $\Htilde(U^p)$ (resp. $\Htilde(U^p)_{\Qp}$)  with the space of continuous $\Zp$-valued (resp. $\Qp$-valued) functions on the profinite set $$Y(U^p):=G(F)\backslash G(\mathbb A_F)/ U^p  G(F \otimes_{\QQ} \RR)^{\circ}\cong\varprojlim_{U_p} Y(U^p U_p).$$
 The action of $G(F\otimes_{\QQ} \Qp)$ on $Y(U^p)$ makes $\Htilde(U^p)_{\Qp}$ into an admissible unitary
$\Qp$-Banach space representation of $G(F\otimes_{\QQ} \Qp)$ with unit ball equal to $\Htilde(U^p)$. We fix an open pro-$p$ subgroup $K$ of $G(F\otimes_{\QQ} \Qp)$, which acts freely on $Y(U^p)$ with finitely many orbits. This enables us to identify
$\Htilde(U^p)$ as a representation of $K$ with a direct sum of  finitely many copies of $\mathcal C(K, \Zp)$, the space of continuous functions from $K$ to
$\Zp$ on which $K$ acts via right translations. Thus the Schikhof dual $M:=\Hom^{\cont}_{\Zp}( \Htilde(U^p), \Zp)$ is a free $\Zp[[K]]$-module of finite rank
and we may apply the results of \cite{EP} to $M$.
We point out that it follows from the Schikhof duality that $\Htilde(U^p)_{\Qp}$ is  isometric to the Banach space
representation denoted by $\Pi(M)$ in \cite[Lem.\,2.3]{EP}.

In \cite[\S 3.4]{EP} we construct a countable family $\{V_i\}_{i\in \mathbb N}$ of smooth absolutely irreducible representations of $K$
defined over a finite extension of $\Qp$ such that if $\pi$ is a smooth irreducible $\Qpbar$-representation of $G(F\otimes_{\QQ} \Qp)$ and
$\Hom_K(V_i, \pi)$ is non-zero then $\pi$ is a supercuspidal representation.
(We actually work with $\GL_n(F_v)$ with $v\mid p$, but the argument readily adapts to the product of such groups by taking tensor products.) Moreover, the
evaluation map
\begin{equation}\label{dense}
\ev: \bigoplus_{i\ge 1} \Hom_K(V_i, \Htilde(U^p)_{\Qp})\otimes V_i \rightarrow \Htilde(U^p)_{\Qp}
\end{equation}
has dense image, see Proposition 3.26, together with Lemmas 2.3 and 2.17 in \cite{EP}. This density result is the key input in this note.

The representations  $V_i$ are obtained as direct summands of inductions of certain characters of open subgroups of $K$, analogous to the characters $(U_{p,m}, \psi\circ \lambda_m)$ in the main text, but  constructed using Bushnell--Kutzko theory of types. This theory is not available for every reductive group, but it does not impose any restrictions on the prime $p$.
To orientate the reader we point out that  we may identify
$\Hom_K(V_i, \Htilde(U^p)_{\Qp})$ with $M(U^p K, V_i^*)$ in \S3 by sending $\varphi$ to the function $f:Y(U^p K)\rightarrow V^*_i$, which satisfies $f(x)(v)= \varphi(v)(x)$, for all $x\in Y(U^p K)$ and $v\in V_i$. As explained in \S3, see also \cite[Prop.\,3.2.4]{em0},
the space $\Hom_K(V_i, \Htilde(U^p)_{\Qp})$ is related to the space of automorphic forms on $G$, the action of $\mathbb T^S_{\mathrm{Spl}}(U^p)[1/p]$
on this finite dimensional vector space is semi-simple and the maximal ideals in the support of $\Hom_K(V_i, \Htilde(U^p)_{\Qp})$ correspond
to certain classical automorphic forms, such that the associated automorphic representations are supercuspidal at places above $p$.

For $k\ge 1$ let $A_k$ be the image of $\oplus_{i=1}^k \Hom_K(V_i, \Htilde(U^p)_{\Qp})\otimes V_i$ in  $\Htilde(U^p)_{\Qp}$and
let $A_{\infty}$ be the image of \eqref{dense}.  Let $\mathfrak a_k$ be the $\mathbb T^S_{\mathrm{Spl}}(U^p)$-annihilator of $A_k$.
Each $\Zp$-algebra homomorphism $x: \mathbb T^S_{\mathrm{Spl}}(U^p)/\mathfrak a_k \rightarrow \Qpbar$ will correspond to a set
of Hecke eigenvalues appearing in $\Hom_K(V_i, \Htilde(U^p)_{\Qp})\otimes\Qpbar$ for some $1\le i \le k$, and hence will correspond to a classical
automorphic form which, by construction of $V_i$, will be supercuspidal at all places above $p$. Hence, to such
$x$ we may attach a Galois representation $\rho_x: \Gal(E_S/E)\rightarrow \GL_n(\Qpbar)$, using the results of Clozel. Moreover,
$(\mathbb T^S_{\mathrm{Spl}}(U^p)/\mathfrak a_k)[1/p]$ is semi-simple.

\begin{lemma}\label{open_max_ideals} If  $U_p$ is  an open pro-$p$ subgroup of
$G(F\otimes_{\QQ} \Qp)$,  then the open maximal ideals of $\mathbb T^S_{\mathrm{Spl}}(U^p)$ coincide with  the maximal ideals of $\mathbb T^S_{\mathrm{Spl}}(U^pU_p, \ZZ/p\ZZ)$. In particular,  $\mathbb T^S_{\mathrm{Spl}}(U^p)$
has only finitely many open maximal ideals.
\end{lemma}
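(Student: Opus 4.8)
The plan is to show that every open maximal ideal of $\mathbb{T}^S_{\mathrm{Spl}}(U^p)$ is ``detected at level $U_p$ modulo $p$'', and conversely, after which the finiteness statement is immediate since $H^0(Y(U^pU_p),\ZZ/p\ZZ)$ is a finite-dimensional $\Fp$-vector space and $\mathbb{T}^S_{\mathrm{Spl}}(U^pU_p,\ZZ/p\ZZ)$ acts on it faithfully, hence is a finite ring with finitely many maximal ideals. One direction is formal: $\mathbb{T}^S_{\mathrm{Spl}}(U^p)$ is a filtered projective limit of the finite rings $\mathbb{T}^S_{\mathrm{Spl}}(U^pU_p',\ZZ/p^m\ZZ)$ with \emph{surjective} transition maps (each is a quotient of the abstract algebra $\mathbb{T}^S_{\mathrm{Spl}}$, and the transition maps are compatible with these quotient maps), so each structure map $\mathbb{T}^S_{\mathrm{Spl}}(U^p)\twoheadrightarrow \mathbb{T}^S_{\mathrm{Spl}}(U^pU_p,\ZZ/p\ZZ)$ is surjective; in particular the pre-image of a maximal ideal of $\mathbb{T}^S_{\mathrm{Spl}}(U^pU_p,\ZZ/p\ZZ)$ is an open maximal ideal of $\mathbb{T}^S_{\mathrm{Spl}}(U^p)$.

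For the reverse inclusion, let $\mathfrak m\subset\mathbb{T}^S_{\mathrm{Spl}}(U^p)$ be an open maximal ideal. Since $\mathbb{T}^S_{\mathrm{Spl}}(U^p)$ is a profinite $\Zp$-algebra, the residue field $\mathbb{T}^S_{\mathrm{Spl}}(U^p)/\mathfrak m$ is finite of characteristic $p$, so $p\in\mathfrak m$. Being open, $\mathfrak m$ contains the kernel of the projection to some $\mathbb{T}^S_{\mathrm{Spl}}(U^pU_p'',\ZZ/p^m\ZZ)$; since that structure map kills $p$ after reducing modulo $p$ and $p\in\mathfrak m$, we may take $m=1$, and since the transition maps for shrinking $U_p''$ are surjective we may further assume $U_p''=:U_p'$ is an open \emph{normal} subgroup of $U_p$. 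Thus $\mathfrak m$ is the pre-image of a maximal ideal $\mathfrak m'$ of $R':=\mathbb{T}^S_{\mathrm{Spl}}(U^pU_p',\ZZ/p\ZZ)$, with $k:=\mathbb{T}^S_{\mathrm{Spl}}(U^p)/\mathfrak m=R'/\mathfrak m'$. Now put $V:=H_0(Y(U^pU_p'),\ZZ/p\ZZ)$, the permutation $\Fp[U_p/U_p']$-module on the finite set $Y(U^pU_p')$, on which $R'$ acts faithfully; faithfulness over the Artinian ring $R'$ forces $V_{\mathfrak m'}\neq 0$, hence $V\otimes_{R'}k\neq 0$ by Nakayama.

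Next I would use that $H_0(Y(U^pU_p),\ZZ/p\ZZ)$ is the module of $U_p/U_p'$-coinvariants $V_{U_p/U_p'}$ (coinvariants of a permutation module are the permutation module on orbits, and $Y(U^pU_p')/(U_p/U_p')=Y(U^pU_p)$). Since coinvariants are right exact and the $R'$- and $U_p/U_p'$-actions commute, $(V\otimes_{R'}k)_{U_p/U_p'}=(V_{U_p/U_p'})\otimes_{R'}k=H_0(Y(U^pU_p),\ZZ/p\ZZ)\otimes_{R'}k$; and the left-hand side is nonzero because $V\otimes_{R'}k$ is a nonzero module over the group algebra $k[U_p/U_p']$, which is local (its maximal ideal being the augmentation ideal) as $U_p/U_p'$ is a $p$-group, so its coinvariants are nonzero by Nakayama. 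Thus $\mathfrak m$ lies in the support of the $\mathbb{T}^S_{\mathrm{Spl}}(U^p)$-module $H_0(Y(U^pU_p),\ZZ/p\ZZ)$; since that action factors through $R_0:=\mathbb{T}^S_{\mathrm{Spl}}(U^pU_p,\ZZ/p\ZZ)$ and is faithful there (so the $\mathbb{T}^S_{\mathrm{Spl}}(U^p)$-annihilator equals $\ker(\mathbb{T}^S_{\mathrm{Spl}}(U^p)\twoheadrightarrow R_0)$), a nonzero element of that kernel not lying in $\mathfrak m$ would act both invertibly and by zero on the nonzero space $H_0(Y(U^pU_p),\ZZ/p\ZZ)/\mathfrak m H_0(Y(U^pU_p),\ZZ/p\ZZ)$, a contradiction. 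Hence $\mathfrak m\supseteq\ker(\mathbb{T}^S_{\mathrm{Spl}}(U^p)\twoheadrightarrow R_0)$, i.e. $\mathfrak m$ is the pre-image of a maximal ideal of $\mathbb{T}^S_{\mathrm{Spl}}(U^pU_p,\ZZ/p\ZZ)$, completing the bijection and, with it, the finiteness assertion.

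The hard part is not conceptual but bookkeeping: carefully matching the Hecke-algebra quotients at the three levels $U_p'\subseteq U_p$ and modulo $p$ versus modulo $p^m$, and justifying the reduction of an arbitrary open maximal ideal to one detected at a finite level $U_p'\subseteq U_p$ modulo $p$. (Alternatively, one could instead invoke the finite generation of $\tilde H_0(U^p)$ over $\Zp[[U_p]]$ and argue as in \cite{EP}, but the elementary argument above seems shortest.)
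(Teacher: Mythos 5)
Your argument is correct and follows essentially the same route as the paper's: reduce to a maximal ideal of $\mathbb T^S_{\mathrm{Spl}}(U^pU'_p,\ZZ/p^m\ZZ)$ with $U'_p\subseteq U_p$, use faithfulness to see the localization (equivalently the fiber at $\mathfrak m$) is nonzero, and then use that $U_p/U'_p$ is a finite $p$-group acting on a nonzero $\mathbb{F}_p$-vector space to pass to level $U_p$ modulo $p$. The only differences are cosmetic — you work with $H_0$, coinvariants, and Nakayama for the local ring $\mathbb{F}_p[U_p/U'_p]$, while the paper works with $H^0$, invariants, and the standard fixed-point fact for $p$-groups, and you reduce to $m=1$ slightly earlier — but the underlying mechanism is identical.
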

\begin{proof} The transition maps in \eqref{defnT} are surjective and thus a maximal ideal $\mathfrak m$ of  $\mathbb T^S_{\mathrm{Spl}}(U^p)$ is open if and only if it
is equal to the preimage of a maximal ideal of  $\mathbb T^S_{\mathrm{Spl}}(U^pU'_p, \ZZ/p^m\ZZ)$ under the
surjection $\mathbb T^S_{\mathrm{Spl}}(U^p)\twoheadrightarrow \mathbb T^S_{\mathrm{Spl}}(U^pU_p', \ZZ/p^m\ZZ)$, for some open subgroup
$U'_p$ and $m\ge 1$. Using the surjectivity of the transition maps we may assume that $U_p'$ is contained in $U_p$.
Since the action of  $\mathbb T^S_{\mathrm{Spl}}(U^pU_p', \ZZ/p^m\ZZ)$ on $H^0(Y(U^p U_p'), \ZZ/p^m \ZZ)$ is faithful by definition,
we conclude that the localisation $H^0(Y(U^p U'_p), \ZZ/p^m \ZZ)_{\mm}$ is non-zero. Since the module is $p$-torsion and $U_p$ is pro-$p$
the $U_p$-invariants of its reduction modulo $p$ are non-zero. Since these operations commute with localisation, we conclude that
$H^0(Y(U^p U_p), \ZZ/p \ZZ)_{\mm}$ is non-zero and so $\mm$ is a maximal ideal of $\mathbb T^S_{\mathrm{Spl}}(U^pU_p, \ZZ/p\ZZ)$.
\end{proof}

If $\mm$ is an open maximal ideal of $\mathbb T^S_{\mathrm{Spl}}(U^p)$ we let $\Htilde(U^p)_{\mm}$ and $\mathbb T^S_{\mathrm{Spl}}(U^p)_{\mm}$
be the $\mm$-adic completions of $\Htilde(U^p)$ and $\mathbb T^S_{\mathrm{Spl}}(U^p)$, respectively.
It follows from the Chinese remainder theorem applied at each finite level that
\begin{equation}\label{Ch_rem}
\Htilde(U^p)\cong \prod_{\mm} \Htilde(U^p)_{\mm}, \quad \mathbb T^S_{\mathrm{Spl}}(U^p)\cong \prod_{\mm} \mathbb T^S_{\mathrm{Spl}}(U^p)_{\mm},
\end{equation}
where the (finite) product is taken over all open maximal ideals of $\mathbb T^S_{\mathrm{Spl}}(U^p)$.
\begin{remark} It follows from \eqref{Ch_rem} that the completion of $\Htilde(U^p)$ and $\mathbb T^S_{\mathrm{Spl}}(U^p)$ at an open maximal ideal $\mm$ coincides with the localisation,
because inverting an element of $\mathbb T^S_{\mathrm{Spl}}(U^p)$, which maps to $1$ in  $\mathbb T^S_{\mathrm{Spl}}(U^p)_{\mm}$ and to $0$ in other completions, kills off the other factors.
\end{remark}
For $k\ge 1$ let $A_k^0:= A_{k} \cap \Htilde(U^p)$ and let $A_{\infty}^0:=A_{\infty} \cap \Htilde(U^p)$. Then $A^0_k/p^m$ injects into $A^0_{\infty}/p^m$ and, since $A_{\infty}$ is dense in $\Htilde(U^p)_{\Qp}$, we have
\begin{equation}\label{reduce}
\Htilde(U^p)/ p^m \cong A_{\infty}^0/ p^m \cong \varinjlim_{k\ge 1}A^0_k/p^m , \quad \forall m\ge 1.
\end{equation}
 In particular, there exists $k$, such that $A^0_k/p$ contains $H^0(Y(U^p K), \ZZ/p\ZZ)$.  For such $k$, $\mathfrak a_k$ will also annihilate $H^0(Y(U^p K), \ZZ/p\ZZ)$.
 Hence, there is a surjection  $\mathbb T^S_{\mathrm{Spl}}(U^p)/\mathfrak a_k \twoheadrightarrow  \mathbb T^S_{\mathrm{Spl}}(U^pK, \ZZ/p\ZZ)$.
 It follows from Lemma \ref{open_max_ideals} that the maximal ideals $\mathbb T^S_{\mathrm{Spl}}(U^p)/\mathfrak a_k$ coincide with the open maximal ideals of $\mathbb T^S_{\mathrm{Spl}}(U^p)$. We note that, since $A_k^0$ is a finite free $\Zp$-module, the same applies to $\mathbb T^S_{\mathrm{Spl}}(U^p)/\mathfrak a_k$ and to its localisation $(\mathbb T^S_{\mathrm{Spl}}(U^p)/\mathfrak a_k)_{\mm}$.
In particular, the quotient topology on  $\mathbb T^S_{\mathrm{Spl}}(U^p)/\mathfrak a_k$ coincides
with the $p$-adic one and every maximal ideal of $\mathbb T^S_{\mathrm{Spl}}(U^p)/\mathfrak a_k$ is open.

 \begin{lemma}\label{correct} Let  $\rho_x: \Gal(E_S/E)\rightarrow \GL_n(\Qpbar)$ be the Galois representation corresponding to  a $\Zp$-algebra homomorphism $x: \mathbb (\mathbb T^S_{\mathrm{Spl}}(U^p)/\mathfrak a_k)_{\mm} \rightarrow \Qpbar$. Then the function $D_x:\Zp[\Gal(E_S/E)]\rightarrow \Qpbar$, $a\mapsto
 \det(\rho_x(a))$ takes values in the image of $x$. Moreover, there is a semi-simple Galois representation $\rhobar: \Gal(E_S/E)\rightarrow \GL_n(\Fpbar)$
  such that the function $$\overline{D}: \Zp[\Gal(E_S/E)]\rightarrow \Fpbar, \quad a\mapsto \det(\rhobar(a))$$ takes values in the residue field $\kappa(\mm)$ and
  $$ \overline{D}(a)\equiv D_x(a)\pmod{\mm}, \quad \forall a \in \Zp[\Gal(E_S/E)]$$
  and for all $\Zp$-algebra homomorphisms $x: \mathbb (\mathbb T^S_{\mathrm{Spl}}(U^p)/\mathfrak a_k)_{\mm} \rightarrow \Qpbar$.
 \end{lemma}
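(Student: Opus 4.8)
The plan is to assemble the representations $\rho_x$ into a single $n$-dimensional determinant in the sense of Chenevier \cite{Chenevier-det} with values in the finite $\Zp$-algebra $R:=(\mathbb T^S_{\mathrm{Spl}}(U^p)/\mathfrak a_k)_{\mm}$, and then read off both assertions by specialization and by reduction modulo $\mm$. First I would collect the ring-theoretic facts about $R$ recorded above: it is a direct factor of the finite free $\Zp$-algebra $\mathbb T^S_{\mathrm{Spl}}(U^p)/\mathfrak a_k$, hence itself finite free over $\Zp$, local with residue field $\kappa(\mm)$, and $R[1/p]$ is semisimple, so it is a finite product of fields. Consequently the set $X$ of $\Zp$-algebra homomorphisms $x\colon R\to\Qpbar$ is finite, the diagonal map $\iota_R\colon R\hookrightarrow B:=\prod_{x\in X}\Qpbar$ is injective with compact, hence closed, image, and each $x(R)$ is contained in $\Zpbar$ (elements of $R$ are integral over $\Zp$); moreover $x^{-1}(\mathfrak m_{\Zpbar})$ is a prime of $R$ containing $p$, therefore maximal, therefore equal to $\mm$, so $x$ induces an embedding $\overline x\colon\kappa(\mm)\hookrightarrow\Fpbar$.

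For the first assertion I would set $D_x:=\det\rho_x$, the continuous $n$-dimensional determinant of $\Gal(E_S/E)$ over $\Qpbar$ attached to $\rho_x$, and $D_B:=\prod_{x\in X}D_x$, the corresponding determinant over $B$. For a place $w$ of $E$ split over $F$ and unramified outside $S_E$, Theorem \ref{thm:HT} applied to the classical automorphic form cut out by $x$, together with the identity \eqref{eq:char(Frob)}, gives $D_x(1+t\,\Frob_w)=\sum_{i=0}^n t^i N(w)^{i(i-1)/2}x(T_w^{(i)})$, so that the characteristic polynomial of $\Frob_w$ under $D_B$ has coefficients in $\iota_R(R)$. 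By the Chebotarev density theorem the conjugacy classes of these Frobenii are dense in $\Gal(E_S/E)$; since $D_B$ is continuous, $\iota_R(R)$ is closed, and a determinant on a group algebra is determined by the characteristic polynomials of group elements (Amitsur's formula \cite[(1.5)]{Chenevier-det}), the characteristic polynomial of every $\gamma\in\Gal(E_S/E)$ has coefficients in $\iota_R(R)$. Then \cite[Cor.~1.14]{Chenevier-det} shows that $D_B$ is the scalar extension along $\iota_R$ of an $n$-dimensional continuous determinant $D^{\mathrm{univ}}\colon\Zp[\Gal(E_S/E)]\to R$. Composing with the coordinate projection $B\twoheadrightarrow\Qpbar$, which restricts on $R$ to $x$, yields $D_x=x\circ D^{\mathrm{univ}}$; hence $D_x$ takes values in $\mathrm{im}(x)$. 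This is precisely the descent argument already used in the proof of Theorem \ref{thm:main-app}, with $R$ in place of $\mathbb T^S_{\mathrm{Spl}}(U^pU_{p,m},A_m)$.

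For the second assertion I would compose $D^{\mathrm{univ}}$ with the reduction $R\twoheadrightarrow R/\mm=\kappa(\mm)$ to obtain a $\kappa(\mm)$-valued $n$-dimensional continuous determinant $\overline D$ (the reduction of a determinant along a ring map is again a determinant), base-change it along $\kappa(\mm)\hookrightarrow\Fpbar$, and invoke \cite[Thm.~A]{Chenevier-det} to produce a continuous semisimple representation $\rhobar\colon\Gal(E_S/E)\to\GL_n(\Fpbar)$ with $\det\rhobar=\overline D\otimes_{\kappa(\mm)}\Fpbar$; by construction $\det\rhobar$ takes values in $\kappa(\mm)$, and $\rhobar$ depends only on $\mm$ (the integer $k$ being fixed), not on the individual $x$. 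The congruence is then formal: for $a\in\Zp[\Gal(E_S/E)]$ we have $D_x(a)=x(D^{\mathrm{univ}}(a))\in\Zpbar$, whose image in $\Zpbar/\mathfrak m_{\Zpbar}=\Fpbar$ is $\overline x(\overline D(a))$ precisely because $x^{-1}(\mathfrak m_{\Zpbar})=\mm$; thus $\overline D(a)$ and $D_x(a)$ agree in $\Fpbar$ under $\kappa(\mm)\hookrightarrow\Fpbar$, which is the asserted $\overline D(a)\equiv D_x(a)\pmod{\mm}$, for every $x\in X$.

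I do not expect a genuinely deep obstacle here: the statement is a formal consequence of Chenevier's formalism once the Galois representations of Clozel are in hand. The one point requiring care is verifying the topological hypotheses needed to apply \cite[Cor.~1.14]{Chenevier-det}, namely that "characteristic polynomials of a Chebotarev-dense set of Frobenii lie in a closed subring" forces the determinant to descend; this is handled by the compactness of $R$, the continuity of $D_B$, and Amitsur's formula, exactly as in the main text. A secondary bookkeeping point is the integrality input $x(R)\subseteq\Zpbar$, which holds because $\mathbb T^S_{\mathrm{Spl}}(U^p)$ acts on $\Zp$-lattices of automorphic forms, so the operators $T_w^{(i)}$ satisfy monic polynomials over $\Zp$. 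Everything else is a direct application of \cite[Cor.~1.14]{Chenevier-det} and \cite[Thm.~A]{Chenevier-det} together with Theorem \ref{thm:HT} and Chebotarev.
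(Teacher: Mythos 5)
Your proof is correct, but it takes a genuinely different route from the one in the appendix. The paper proves this lemma in a much more elementary way: it defines $\rhobar$ directly as the semisimplification of the reduction modulo $p$ of a $\Gal(E_S/E)$-stable lattice in $\rho_x$ for one fixed $x$, then observes (via Chebotarev density and Amitsur's formula) that it suffices to check both assertions on elements of the form $1+t\Frob_w$ with $w\notin S_E$ split over $F$, where they follow immediately from the explicit formula \eqref{eq:char(Frob)} expressing $\det(1+t\rho_x(\Frob_w))$ in terms of Hecke operators applied by $x$. No use of Chenevier's descent result \cite[Cor.~1.14]{Chenevier-det} is made at this stage; that heavy machinery enters only later, in Lemma \ref{image}, and even there the appendix prefers to exploit the universal property of the deformation ring $R_{\overline D}$ rather than descend the product determinant $D_B$ directly. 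Your proposal instead front-loads the construction of the $R$-valued determinant $D^{\mathrm{univ}}$ via \cite[Cor.~1.14]{Chenevier-det}, thereby bundling the content of Lemma \ref{correct} with (part of) Lemma \ref{image} into a single argument and then reading off both assertions by specialization along $x$ and reduction modulo $\mm$. This is valid and in fact mirrors the argument already used in the main text's proof of Theorem \ref{thm:main-app}, but it is heavier than necessary for the lemma in isolation: the appendix deliberately keeps this lemma lightweight so that $R_{\overline D}$ can be introduced afterward as the natural receptacle, with $D^{\mathrm{univ}}$ then arising as the tautological deformation rather than via a fresh descent. Both approaches yield the same ultimate structure; yours buys a more streamlined presentation of $D^{\mathrm{univ}}$ at the cost of invoking the descent theorem earlier and twice over the course of the argument.
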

\begin{proof} Let $\rhobar$ be the semisimplification of the reduction modulo $p$ of a $\Gal(E_S/E)$-stable lattice in $\rho_x$, for some $x$.
As explained in the proof of Theorem 3.3.3 using density arguments it is enough to check the assertions for $a=1+ t \Frob_w$ with $w\not\in S_E$ split over $F$. The assertion then follows from equation (3.3.6), which expresses the characteristic polynomial of $\rho_x(\Frob_w)$ in terms of
Hecke operators.
\end{proof}

The function  $\overline{D}: \Zp[\Gal(E_S/E)]\rightarrow \kappa(\mm)$  is a continuous $n$-dimensional determinant in the sense of Chenevier \cite{chen}. The universal deformation ring $R_{\overline{D}}$ of $\overline{D}$ is a complete local noetherian
 algebra over the ring of Witt vectors of $\kappa(\mm)$ by \cite[Prop.\,3.3, 3.7, Ex.\,3.6]{chen}. It follows from Lemma \ref{correct} that $D_x$ is a deformation of $\overline{D}$ and hence induces a map $R_{\overline{D}}\rightarrow \Qpbar$.
 By taking the product over all $\Zp$-algebra homomorphisms $x: (\mathbb T^S_{\mathrm{Spl}}(U^p)/\mathfrak a_k)_{\mm} \rightarrow \Qpbar$
we obtain a continuous map
 \begin{equation}\label{map}
 R_{\overline{D}} \rightarrow \prod_{x} \Qpbar\cong (\mathbb T^S_{\mathrm{Spl}}(U^p)/\mathfrak a_k)_{\mm}\otimes_{\Zp} \Qpbar,
 \end{equation}
 where the last isomorphism follows since $(\mathbb T^S_{\mathrm{Spl}}(U^p)/\mathfrak a_k)_{\mm}[1/p]$ is semi-simple and finite over $\Qp$.
 \begin{lemma}\label{image} The map \eqref{map} induces a surjection $R_{\overline{D}}\twoheadrightarrow (\mathbb T^S_{\mathrm{Spl}}(U^p)/\mathfrak a_k)_{\mm}$.
 \end{lemma}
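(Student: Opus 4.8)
The plan is to show that the image of the map \eqref{map} is exactly $\mathbb T:=(\mathbb T^S_{\mathrm{Spl}}(U^p)/\mathfrak a_k)_{\mm}$. Recall from the discussion preceding the lemma that $\mathbb T$ is a finite free $\Zp$-module, hence a complete local noetherian $\Zp$-algebra with finite residue field $\kappa(\mm)$, and that it sits as a $\Zp$-lattice — in particular as a compact, hence closed, subset — inside the target $\prod_x \Qpbar$ of \eqref{map}. I would argue in two steps: first that \eqref{map} factors through $\mathbb T$, and then that the resulting homomorphism $R_{\overline D}\to\mathbb T$ is surjective.

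For the factorization, the idea is to assemble the Galois determinants $D_x$ from Lemma \ref{correct} into a single continuous $n$-dimensional determinant $D:=\prod_x D_x\colon\Zp[\Gal(E_S/E)]\to\prod_x\Qpbar$, a determinant valued in a finite product being the same thing as a compatible family of determinants over the factors. By equation (3.3.6) (as recalled through Lemma \ref{correct}), for every prime $w\notin S_E$ split over $F$ one has
$$D(1+t\,\Frob_w)=\sum_{i=0}^n t^i\, N(w)^{i(i-1)/2}\, T^{(i)}_w,$$
and since $N(w)\in\Zp^\times$ for such $w$ all these coefficients lie in $\mathbb T$. The characteristic-polynomial coefficient functions $\gamma\mapsto\Lambda_i(\gamma)$ attached to $D$ are continuous, the Frobenii $\Frob_w$ over split $w$ are dense in $\Gal(E_S/E)$ by Chebotarev, and $\mathbb T$ is closed in $\prod_x\Qpbar$; hence $D$ takes values in $\mathbb T$. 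By Lemma \ref{correct} we have $D\bmod\mm=\overline D$, so $D$ is a deformation of $\overline D$ over the local ring $\mathbb T$, and the universal property of $R_{\overline D}$ (Chenevier) then yields a unique local homomorphism of $W(\kappa(\mm))$-algebras $\varphi\colon R_{\overline D}\to\mathbb T$. Unwinding the construction of \eqref{map} shows that \eqref{map} is the composite of $\varphi$ with the inclusion $\mathbb T\hookrightarrow\prod_x\Qpbar$; in particular the image of \eqref{map} lies in $\mathbb T$.

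For surjectivity, observe that $\varphi(R_{\overline D})$ is a subring of $\mathbb T$ containing $\Zp$ and, by the displayed formula, each Hecke operator $T^{(i)}_w=N(w)^{-i(i-1)/2}\varphi(\Lambda_i(\Frob_w))$ with $w\notin S_E$ split over $F$ and $1\le i\le n$. On the other hand $\mathbb T^S_{\mathrm{Spl}}$ is by definition generated over $\Z$ by the $T^{(i)}_w$, so $\mathbb T^S_{\mathrm{Spl}}(U^p)/\mathfrak a_k$ is topologically generated over $\Zp$ by their images; being a finitely generated $\Zp$-module with $\Zp$ noetherian, it is in fact generated as a $\Zp$-algebra by finitely many of them, and the same then holds for its quotient $\mathbb T$. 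Hence $\varphi(R_{\overline D})=\mathbb T$, which is the assertion of the lemma.

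The only mildly delicate points I anticipate are the continuity and closedness bookkeeping ensuring that $D$ lands in $\mathbb T$ rather than merely in $\prod_x\Qpbar$, and the passage from topological to honest generation of $\mathbb T$ by finitely many Hecke operators. The substantive inputs — the determinant formalism and finiteness of $R_{\overline D}$ due to Chenevier, the local–global formula (3.3.6), and Lemma \ref{correct} — are already in place, so I do not expect any serious obstacle.
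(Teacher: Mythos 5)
Your proof is correct and follows essentially the same strategy as the paper's: both arguments rest on the fact that the universal determinant ring is topologically generated by characteristic-polynomial coefficients of Frobenii, that by (3.3.6) these coefficients are (unit multiples of) the Hecke operators $T^{(i)}_w$, and that these Hecke operators generate $(\mathbb T^S_{\mathrm{Spl}}(U^p)/\mathfrak a_k)_{\mm}$ as a $\Zp$-algebra. The paper phrases it by letting $R'$ be the image of \eqref{map} and identifying $R'$ directly with the closure of the ring generated by the coefficients, whereas you first factor \eqref{map} through the Hecke algebra via the universal property and then check surjectivity; the organization differs slightly but the substance does not.
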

 \begin{proof} This is proved in the course of the proof of Theorem 3.3.3 - let $R'$ be the image of \eqref{map} and let $D'$ be the tautological deformation of $\overline{D}$ to $R'$.
 Then $R'$ is equal to the closure of the subring generated by the
 coefficients  of $D'(1+t \Frob_w)$ for all  $w\not\in S_E$
 and these are contained in  $(\mathbb T^S_{\mathrm{Spl}}(U^p)/\mathfrak a_k)_{\mm}$. Since the coefficients of $D'(1+t \Frob_w)$ can be expressed in terms of
 Hecke operators, they  are contained in $R'$. Since these generate $(\mathbb T^S_{\mathrm{Spl}}(U^p)/\mathfrak a_k)_{\mm}$ the map
 is surjective. We note that  in the case of modular forms  the analogous argument appears in \cite[\S 2.2]{carayol}.
 \end{proof}

 \begin{theorem} The maps $R_{\overline{D}}\twoheadrightarrow (\mathbb T^S_{\mathrm{Spl}}(U^p)/\mathfrak a_k)_{\mm}$ for $k\ge 1$ induce a surjection
 $R_{\overline{D}}\twoheadrightarrow \mathbb T^S_{\mathrm{Spl}}(U^p)_{\mm}$. In particular, $\mathbb T^S_{\mathrm{Spl}}(U^p)_{\mm}$ is noetherian and
 for every $\Zp$-algebra homomorphism $x: \mathbb T^S_{\mathrm{Spl}}(U^p)_{\mm}\rightarrow \Qpbar$ there is a continuous semi-simple representation
 $\rho_x: \Gal(E_S/ E) \rightarrow \GL_n(\Qpbar)$ such that
\begin{equation}\label{char_poly_frob}
 \det( 1+ t \rho_x(\Frob_w))= \sum_{i=0}^n t^i N(w)^{i(i-1)/2} x(T_w^{(i)}), \quad \forall w\not \in S_E.
 \end{equation}
 \end{theorem}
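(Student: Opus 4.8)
The plan is to reduce the entire statement to a single cofinality fact relating the annihilator ideals $\mathfrak a_k$ to the profinite topology of $\mathbb T^S_{\mathrm{Spl}}(U^p)$; everything else is then formal. Since $A_1^0\subseteq A_2^0\subseteq\cdots$, the ideals form a descending chain $\mathfrak a_1\supseteq\mathfrak a_2\supseteq\cdots$ of closed ideals, with transition surjections $(\mathbb T^S_{\mathrm{Spl}}(U^p)/\mathfrak a_k)_{\mm}\twoheadrightarrow(\mathbb T^S_{\mathrm{Spl}}(U^p)/\mathfrak a_{k'})_{\mm}$ for $k\ge k'$. The first and crucial step is to prove
$$\mathbb T^S_{\mathrm{Spl}}(U^p)_{\mm}\ \xrightarrow{\ \sim\ }\ \varprojlim_{k}\,(\mathbb T^S_{\mathrm{Spl}}(U^p)/\mathfrak a_k)_{\mm}$$
as topological rings. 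Injectivity is the assertion $\bigcap_k\mathfrak a_k=0$: this intersection annihilates every $A_k^0$, hence $\Htilde(U^p)/p^m\cong\varinjlim_k A_k^0/p^m$ for all $m$ by \eqref{reduce}, hence $\Htilde(U^p)$, on which $\mathbb T^S_{\mathrm{Spl}}(U^p)$ acts faithfully by construction. Surjectivity (and then the homeomorphism, being a continuous bijection of compact Hausdorff rings) follows once one knows that every open ideal $\ker\bigl(\mathbb T^S_{\mathrm{Spl}}(U^p)\twoheadrightarrow\mathbb T^S_{\mathrm{Spl}}(U^pU_p,\ZZ/p^m\ZZ)\bigr)$ contains some $\mathfrak a_k$; this holds because $H^0(Y(U^pU_p),\ZZ/p^m\ZZ)$ embeds Hecke-equivariantly into $\Htilde(U^p)/p^m=\varinjlim_k A_k^0/p^m$ and, being finite, already lands in a single $A_k^0/p^m$, which $\mathfrak a_k$ kills, so $\mathfrak a_k$ maps to $0$ in $\mathbb T^S_{\mathrm{Spl}}(U^pU_p,\ZZ/p^m\ZZ)$ by faithfulness. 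Finally, localising at $\mm$---a direct factor, compatibly with all transition maps, by \eqref{Ch_rem}---preserves the identification. This cofinality step is precisely where the density statement \eqref{dense} (equivalently \eqref{reduce}) enters, and it is the only non-formal point; I expect it to be the main obstacle, although it is short once the density input is granted.

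Granting the displayed isomorphism, the surjection $R_{\overline{D}}\twoheadrightarrow\mathbb T^S_{\mathrm{Spl}}(U^p)_{\mm}$ is obtained as follows. The surjections $R_{\overline{D}}\twoheadrightarrow(\mathbb T^S_{\mathrm{Spl}}(U^p)/\mathfrak a_k)_{\mm}$ of Lemma \ref{image} are all induced by the tautological deformation of $\overline{D}$---which on Frobenii is given by the Hecke-operator formula---so they are compatible with the transition maps of the inverse system; passing to the limit yields a continuous $W(\kappa(\mm))$-algebra homomorphism $R_{\overline{D}}\to\mathbb T^S_{\mathrm{Spl}}(U^p)_{\mm}$. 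Its image is closed, since $R_{\overline{D}}$ is compact (a quotient of a formal power series ring in finitely many variables over $W(\kappa(\mm))$ by \cite{chen}) and $\mathbb T^S_{\mathrm{Spl}}(U^p)_{\mm}$ is Hausdorff; and it is dense, since it surjects onto each $(\mathbb T^S_{\mathrm{Spl}}(U^p)/\mathfrak a_k)_{\mm}$ by Lemma \ref{image}. A closed dense subring is everything, so the map is surjective, and $\mathbb T^S_{\mathrm{Spl}}(U^p)_{\mm}$, being a quotient of the noetherian ring $R_{\overline{D}}$, is noetherian.

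For the Galois representations, I would assemble the tautological determinants $D'_k$ on $(\mathbb T^S_{\mathrm{Spl}}(U^p)/\mathfrak a_k)_{\mm}$ into a continuous $n$-dimensional determinant $D\colon\Zp[\Gal(E_S/E)]\to\mathbb T^S_{\mathrm{Spl}}(U^p)_{\mm}$ via \cite[Lem.~3.2]{chen}; the finite-level formulas of Lemma \ref{correct} together with density of Frobenii in $\Gal(E_S/E)$ give
$$D(1+t\,\Frob_w)=\sum_{i=0}^n t^i\,N(w)^{i(i-1)/2}\,T_w^{(i)}\qquad\text{for all }w\notin S_E\text{ split over }F.$$
Given a $\Zp$-algebra homomorphism $x\colon\mathbb T^S_{\mathrm{Spl}}(U^p)_{\mm}\to\Qpbar$, the composite $x\circ D$ is a continuous $n$-dimensional $\Qpbar$-valued determinant, so by Chenevier's reconstruction theorem \cite{chen} it is the determinant of a continuous semisimple representation $\rho_x\colon\Gal(E_S/E)\to\GL_n(\Qpbar)$; evaluating the displayed formula at $1+t\,\Frob_w$ and applying $x$ gives \eqref{char_poly_frob}, completing the proof.
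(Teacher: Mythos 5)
Your proof is correct and reaches the same conclusion as Pa\v{s}k\=unas', with the same essential ingredients (Lemma~\ref{image}, the density statement~\eqref{reduce}, the ``compact image is closed and dense'' argument, and Chenevier's reconstruction theorem), but it organizes the key step a little differently. You first establish the ring-theoretic identity $\mathbb T^S_{\mathrm{Spl}}(U^p)_{\mm}\xrightarrow{\sim}\varprojlim_k(\mathbb T^S_{\mathrm{Spl}}(U^p)/\mathfrak a_k)_{\mm}$ (injectivity from $\bigcap_k\mathfrak a_k$ acting as zero on $\Htilde(U^p)$, surjectivity from the $\mathfrak a_k$'s being cofinal among open ideals via~\eqref{reduce}), and then obtain the map $R_{\overline D}\to\mathbb T^S_{\mathrm{Spl}}(U^p)_{\mm}$ by passing to the limit of the compatible surjections of Lemma~\ref{image}. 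The paper instead avoids asserting that inverse-limit description: it assembles the compatible $R_{\overline D}$-actions on the $A^0_{k,\mm}$ into a continuous $R_{\overline D}$-action on $\Htilde(U^p)_\mm=\varprojlim_m\varinjlim_k(A^0_k/p^m)_\mm$ using~\eqref{reduce}, and observes that this action factors through $\mathbb T^S_{\mathrm{Spl}}(U^p)_{\mm}$, which produces the same map without separately identifying $\mathbb T^S_{\mathrm{Spl}}(U^p)_{\mm}$ as $\varprojlim_k(\mathbb T^S_{\mathrm{Spl}}(U^p)/\mathfrak a_k)_\mm$. Your route is a bit more abstract and makes the cofinality role of~\eqref{reduce} fully explicit; the paper's route stays closer to the module. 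One small point you should make explicit: the surjectivity of $\mathbb T^S_{\mathrm{Spl}}(U^p)_{\mm}\to\varprojlim_k(\mathbb T^S_{\mathrm{Spl}}(U^p)/\mathfrak a_k)_{\mm}$ via the Cauchy-sequence argument uses that each $\mathfrak a_k$ is \emph{closed} in the profinite topology. You assert this in passing, and it is indeed true---$\mathfrak a_k=\bigcap_m\mathrm{Ann}(A^0_k/p^m)$ and each $\mathrm{Ann}(A^0_k/p^m)$ is open because $A^0_k/p^m$ embeds (being finite) into a single $H^0(Y(U^pU_p),\ZZ/p^m\ZZ)$---but this deserves a line since it is exactly where the argument would otherwise have a gap.
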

 \begin{proof} Using Lemma \ref{image} we obtain a continuous action of $R_{\overline{D}}$ on
 $A^0_{k, \mm}$ compatible with the inclusions
 $A^0_{k, \mm}\subset A^0_{k+1, \mm}$, for $k\ge 1$. Thus for each $m\ge 1$ we obtain a continuous action of $R_{\overline{D}}$ on
 $$\varinjlim_{k\ge 1} (A^0_{k}/ p^m)_{\mm} \cong (A^0_{\infty}/ p^m)_{\mm} \cong (\Htilde(U^p)/ p^m)_{\mm},$$
 where the last isomorphism follows  from \eqref{reduce}.
 By passing to the projective limit we obtain a continuous action of $R_{\overline{D}}$ on $\Htilde(U^p)_{\mm}$, which factors through the action of
 $\mathbb T^S_{\mathrm{Spl}}(U^p)_{\mm}$. Let $R$ be the image of the map $R_{\overline{D}}\rightarrow \mathbb T^S_{\mathrm{Spl}}(U^p)_{\mm}$.
 Since $R_{\overline{D}}$ is a complete local  noetherian ring with a finite residue field it is compact. Since $\mathbb T^S_{\mathrm{Spl}}(U^p)_{\mm}$ is profinite it is Hausdorff and hence $R$ is closed in $\mathbb T^S_{\mathrm{Spl}}(U^p)_{\mm}$.  But $R$ is also dense, since by construction $R$ surjects onto $\mathbb T^S_{\mathrm{Spl}}(U^pU_p, \ZZ/p^m \ZZ)_{\mm}$
for all $U_p$ and $m\ge 1$. Hence, $R= \mathbb T^S_{\mathrm{Spl}}(U^p)_{\mm}$.

 Let $D_R$ be the tautological deformation of $\overline{D}$ to $R$. Then
$$D_R( 1+ t \Frob_w )= \sum_{i=0}^n t^i N(w)^{i(i-1)/2} T_w^{(i)}, \quad \forall w\not\in S_E,$$
as this relation holds for all $D_x$ and hence in
$(\mathbb T^S_{\mathrm{Spl}}(U^p)/\mathfrak a_k)_{\mm}$ by construction. If $x: R\rightarrow \Qpbar$
is a homomorphism of $\Zp$-algebras then by \cite[Thm.\,2.12]{chen} there is a unique semi-simple representation $\rho_x: \Gal(E_S/E)\rightarrow \GL_n(\Qpbar)$
such that
$$x(D_R(1+t g))=\det (1+ t\rho_x(g)), \quad \forall g\in \Gal(E_S/E).$$
Since $x\circ D_R$ is continuous, the representation $\rho_x$ is continuous by \cite[Ex.\,2.34]{chen}.
 \end{proof}
 \begin{remark} It follows from \eqref{Ch_rem}  and the theorem above that $\mathbb T^S_{\mathrm{Spl}}(U^p)$ is noetherian and we may attach a Galois representation satisfying \eqref{char_poly_frob} to any $\Zp$-algebra homomorphism $x:\mathbb T^S_{\mathrm{Spl}}(U^p)\rightarrow \Qpbar$.
 \end{remark}

 \textbf{Acknowledgements}. I would like to thank the organisers  Brandon Levin, Rebecca Bellovin, Matthew Emerton and David Savitt for inviting me to the  Banff Workshop \textit{Modularity and Moduli Spaces}, which took place in Oaxaca in October 2019, where I first heard Jessica Fintzen talk about her joint work with Sug Woo Shin. I would like to thank Jessica Fintzen and Sug Woo Shin for kindly agreeing to include this note as an appendix to their paper and
 for their comments on earlier drafts.

\renewcommand\refname{References for Appendix \ref{app:Galois-reps}}

	\section{A non-explicit proof of the existence of omni-supercuspidal types (by Rapha\"el Beuzart-Plessis)}

\markright{\textsc{Appendix D. A non-explicit proof of the existence of omni-supercuspidal types}}
\markleft{\textsc{Rapha\"el Beuzart-Plessis}}

	The goal of this appendix is to give another proof of Theorem C on the existence of (sufficiently many) omni-supercuspidal types for reductive groups over local fields of characteristic zero. The argument is very close to that in the paper \cite{BP}, whose purpose was only to show the existence of one supercuspidal representation. It doesn't use type theory but instead a notion of cusp forms due to Harish-Chandra \cite{HC}. As in \cite{BP} and contrary to the proof given in the main article, this alternative approach is non-explicit in that we do not exhibit any omni-supercuspidal type. On the other hand, one advantage of the method is that we are able to bypass the assumption on the residual characteristic made in Theorem C. We end this appendix by explaining why this theorem cannot hold for local fields of positive characteristic.
	
	We now fix some notations and recall the result. Let $F$ be a local non-Archimedean field of characteristic zero with residual characteristic $p$, $\cO\subset F$ be its ring of integers and $\cP\subset \cO$ be the maximal ideal. We fix a uniformizer $\varpi\in \cP$. Let $G$ be a connected reductive group defined over $F$. For our purpose, it will be convenient to adopt a slightly more general definition of {\em omni-supercuspidal type} than in the main body of the paper. More precisely, in this appendix, an {\em omni-supercuspidal type} will be a pair $(U,\lambda)$ where $U$ is a compact-open subgroup of $G(F)$ and $\lambda:K\to A$ is a smooth character valued in an (arbitrary) abelian group $A$ such that for every nontrivial character $\chi: A\to \mathbb{C}^\times$, $(U,\chi\circ \lambda)$ is a supercuspidal type. Note that if $(U,\lambda)$ is an omni-supercuspidal type with $\lambda: U\to A$ and $\mu:A\to B$ is a surjective morphism of abelian groups then $(U,\mu\circ \lambda)$ is also an omni-supercuspidal type. Thus, if $B=\mathbb{Z}/p^m\mathbb{Z}$ for some $m\geqslant 1$ then $(U,\mu\circ \lambda)$ is an omni-supercuspidal type of level $p^m$ as defined in the paper. The goal of this appendix is to prove the following result.
	
	\begin{theorem}\label{theo1 appendix}
		For every open subgroup $V\subset G(F)$ and $m\geqslant 1$, there exists an omni-supercuspidal type $(U_m,\lambda_m)$ of level $p^m$ with $U_m\subset V$.
	\end{theorem}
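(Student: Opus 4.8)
\emph{The plan} is to follow the method of \cite{BP}: rather than exhibiting an omni-supercuspidal type, I will produce, for the given open $V\subseteq G(F)$ and $m\geq 1$, an explicit Moy--Prasad subgroup $U_m\subseteq V$ together with a surjective character $\lambda_m\colon U_m\twoheadrightarrow \Z/p^m\Z$, and verify the defining property by a soft argument built on Harish--Chandra's notion of cusp form. Throughout one works with complex coefficients; the torus case is immediate (all smooth irreducible representations of $p$-adic tori are supercuspidal), and one treats general reductive $G$ directly. The crux is the following criterion, which involves no hypothesis on $p$: \emph{if $U\subseteq G(F)$ is compact open and $\eta\colon U\to\C^\times$ is a smooth character whose restriction to $N_P(F)\cap U$ is nontrivial for every proper $F$-parabolic $P=M_PN_P$, then $(U,\eta)$ is a supercuspidal type.} It is enough to check this for the finitely many standard maximal proper parabolics attached to a fixed minimal parabolic, since the Jacquet module along a conjugate, resp.\ larger, unipotent radical vanishes as soon as it does along these.

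\emph{Proof of the criterion} (the one genuinely analytic point). Let $\pi$ be irreducible smooth and let $v\neq 0$ satisfy $\pi(u)v=\eta(u)v$ for all $u\in U$. Fix a standard maximal proper $P=M_PN_P$. Since $N_P(F)\cap U$ is compact and $\eta$ restricts to a nontrivial character of it,
\[
\int_{N_P(F)\cap U}\pi(n)v\,dn=\left(\int_{N_P(F)\cap U}\eta(n)\,dn\right)v=0,
\]
so $v$ lies in the kernel $\pi(N_P)$ of the Jacquet map $\pi\to\pi_{N_P}$; as $\pi$ is generated by $v$, the quotient $\pi_{N_P}$ is generated by the image of $v$ and hence vanishes. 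Thus all proper Jacquet modules of $\pi$ vanish and $\pi$ is supercuspidal. (Equivalently — this is the framing of \cite{BP}, \cite{HC} — the substitution $n\mapsto nn_0$ with $n_0\in N_P(F)\cap U$ and $\eta(n_0)\neq 1$ shows the idempotent $e_\eta\in C_c^\infty(G(F))$ has vanishing constant term along every proper parabolic, i.e.\ $e_\eta$ is a Harish--Chandra cusp form, so any $\pi$ with $\pi(e_\eta)\neq 0$ is supercuspidal.) In particular, once $\lambda_m$ is built so that $\lambda_m(N_P(F)\cap U_m)=\Z/p^m\Z$ for every standard maximal proper $P$, then for any nontrivial $\psi\colon\Z/p^m\Z\to\C^\times$ the character $\psi\circ\lambda_m$ is nontrivial on each such $N_P(F)\cap U_m$, so $(U_m,\psi\circ\lambda_m)$ is a supercuspidal type; that is, $(U_m,\lambda_m)$ is an omni-supercuspidal type of level $p^m$.

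\emph{The construction, and the main obstacle.} Fix a maximal $F$-split torus $S$, a point $x\in\sA(S,F)$ in its apartment, and the relative simple roots $a_1,\dots,a_l$ associated to a minimal parabolic; the standard maximal proper parabolic $P_i=M_iN_i$ omits $a_i$, so $a_i$ is a relative root occurring in $N_i$ while $a_j$ ($j\neq i$) is not. Choose $r\in\R_{>0}$ large enough that $G(F)_{x,r}\subseteq V$ and $r\geq e_F m$, and set $U_m:=G(F)_{x,r}$. Since $r>0$, $U_m/G(F)_{x,2r}$ is abelian and the Moy--Prasad isomorphism identifies it with $\fg(F)_{x,r}/\fg(F)_{x,2r}$, which at $x\in\sA(S,F)$ splits as the direct sum of $\fg_0(F)_{x,r}/\fg_0(F)_{x,2r}$ and the root pieces $\fg_a(F)_{x,r}/\fg_a(F)_{x,2r}$, $a\in\Phi(G,S)$ — a defining property of Moy--Prasad filtrations in the apartment. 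Each $\fg_{a_i}(F)_{x,r}/\fg_{a_i}(F)_{x,2r}$ is a nonzero finite abelian $p$-group of exponent $p^{\lceil r/e_F\rceil}\geq p^m$ — this is precisely where $\textup{char}(F)=0$ enters, the assertion being false for function fields and $m\geq 2$ — hence admits a surjection $\mu_i$ onto $\Z/p^m\Z$. Put $\lambda_m:=\sum_{i=1}^l\mu_i\circ\pr_{a_i}$, with $\pr_{a_i}$ the projection to the $a_i$-summand; if $l=0$ (no proper $F$-parabolics, e.g.\ for tori or forms anisotropic mod center) take instead any surjection $\fg(F)_{x,r}/\fg(F)_{x,2r}\twoheadrightarrow\Z/p^m\Z$, the criterion then being vacuously applicable. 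The Moy--Prasad isomorphism carries $N_i(F)\cap U_m=N_i(F)_{x,r}$ onto $\bigoplus_a\fg_a(F)_{x,r}/\fg_a(F)_{x,2r}$ with $a$ running over the relative roots of $N_i$; among the $a_j$ only $a_i$ occurs there, so $\lambda_m$ restricts on this subgroup to $\mu_i\circ\pr_{a_i}$, which is onto $\Z/p^m\Z$, and in particular $\lambda_m$ itself is surjective. The real work sits here: forcing a \emph{single} character of $U_m$ to be simultaneously surjective on the finitely many subgroups $N_i(F)\cap U_m$, which rests on the compatibility of the Moy--Prasad filtration with the relative root datum of $(G,S)$ (a Bruhat--Tits structural input requiring no constraint on $p$) together with the existence of $\Z/p^m\Z$-quotients of the root-line pieces. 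Everything else — the criterion, the reduction to standard maximal parabolics, and lowering the level by composing $\lambda_m$ with $\Z/p^m\Z\to\Z/p^{m'}\Z$ — is routine, and at no stage is a bound on the residue characteristic needed.
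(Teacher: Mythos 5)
Your proposed criterion --- if $\eta|_{N_P(F)\cap U}$ is nontrivial for \emph{every} proper $F$-parabolic $P$, then $(U,\eta)$ is a supercuspidal type --- is correct, but your reduction of it to the finitely many standard maximal proper parabolics is where the argument breaks. To show $\pi_{N_i}=0$ for a standard maximal $N_i$ from a vector $v$ transforming by $\eta$ under $U$, you must show that every translate $\pi(g)v$ dies in $\pi_{N_i}$, and the vanishing of $\int_{N''}\pi(n)\pi(g)v\,dn$ for a suitable compact open $N''\subset N_i(F)$ requires $\eta$ to be nontrivial on $g^{-1}N_i(F)g\cap U$, i.e.\ on the intersection of $U$ with a \emph{conjugate} unipotent radical, and these conjugates sweep out a whole variety of parabolics. (The same point reappears in the cusp-form reformulation: $\int_{N(F)}f_{U,\eta}(gu)\,du=0$ must hold for all $g$ and all proper $N$, not just the standard ones, and the computation you sketch gives $\int_{N(F)}f_{U,\eta}(gu)\,du=\eta(gu_0)\int_{N(F)\cap U}\eta\,du$, which vanishes precisely when $\eta|_{N(F)\cap U}$ is nontrivial.) Your explicit $\lambda_m=\sum_i\mu_i\circ\pr_{a_i}$ is trivial on every Moy--Prasad root piece $\fg_a(F)_{x,r}/\fg_a(F)_{x,2r}$ with $a$ negative or positive but non-simple; in particular it is trivial on $\overline N_i(F)\cap U_m$ for the opposite unipotent radicals, so $\int_{\overline N_i(F)}f_{U_m,\psi\circ\lambda_m}(u)\,du=\vol(\overline N_i(F)\cap U_m)\neq 0$ and $f_{U_m,\psi\circ\lambda_m}$ is not a cusp form. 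Hence $(U_m,\lambda_m)$ as you define it is not an omni-supercuspidal type.

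The paper's appendix avoids exactly this pitfall by replacing the list of standard unipotent radicals with a conjugation-invariant condition: it selects a lattice $L_0\subset\fg(F)$ and an element $Y_1\in\fg^*(F)$ so that the whole coset $Y_1+L_0^\perp$ lies in the set of \emph{elliptic} elements of $\fg^*(F)$, meaning $Y\notin\fn^\perp(F)$ for the nilpotent radical $\fn$ of \emph{every} proper parabolic subalgebra, not just the standard ones. The character is then $\exp(X)\mapsto\psi(\langle Y_{n+m},X\rangle)$, its Fourier transform is the indicator of a coset of $L_0^\perp$, and the cusp-form criterion (your criterion, correctly applied) reduces to this coset being contained in $\fg^*(F)_{\elli}$; openness of the elliptic locus is what lets one shrink $L_0$ to force this. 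To rescue your approach, the functional defining $\lambda_m$ would have to pair nontrivially with $\fn(F)\cap L_m$ for every proper $\fn$, which is precisely ellipticity and is not produced by a sum over simple-root pieces --- it requires the elliptic-element input, at which point you have essentially reproduced the paper's proof.
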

	
	\subsection{Proof of Theorem \ref{theo1 appendix}}
	
	Let $\mathfrak{g}$ be the Lie algebra of $G$, $\mathfrak{g}^*$ be the dual of $\mathfrak{g}$ and $\langle .,.\rangle$ for the canonical pairing between $\mathfrak{g}$ and $\mathfrak{g}^*$. An element $Y\in \mathfrak{g}^*(F)$ is called {\emph elliptic} if for every proper parabolic subalgebra $\mathfrak{p}\subsetneq \mathfrak{g}$ with nilpotent radical $\mathfrak{n}$, we have $Y\notin \mathfrak{n}^\perp(F)$ where $\mathfrak{n}^\perp$ stands for the orthogonal of $\mathfrak{n}$ in $\mathfrak{g}^*$. We denote by $\mathfrak{g}^*(F)_{\elli}$ the open subset of elliptic elements in $\mathfrak{g}^*(F)$.
	
	\begin{remark}
	Choosing a $G$-invariant nondegenerate bilinear form $B:\mathfrak{g}\times \mathfrak{g}^*\to \mathbb{G}_a$, we get an equivariant isomorphism $\mathfrak{g}^*\simeq \mathfrak{g}$ identifying $\mathfrak{g}^*(F)_{\elli}$ with the usual subset of elliptic elements in $\mathfrak{g}(F)$ that is: elements $X\in \mathfrak{g}(F)$ that do not belong to any proper parabolic subalgebra. In particular, if $T\subset G$ is an elliptic maximal torus (that is; anisotropic modulo the center of $G$), regular elements in the Lie algebra $\mathfrak{t}(F)$ are elliptic in this sense. Since such a maximal torus always exists \cite[Theorem 6.21]{PR}, we deduce that $\mathfrak{g}^*(F)_{\elli}$ is nonempty. 
	\end{remark} 
	
	Let $\psi: F\to \mathbb{C}^\times$ be a continuous additive character which is trivial on $\cO$ but not on $\cP^{-1}$. By a lattice in a finite-dimensional $F$-vector space $V$, we mean a finitely generated $\cO$-submodule $L\subset V$ such that $L\otimes_{\cO} F=V$. For every lattice $L\subset \mathfrak{g}(F)$ we denote by $L^\perp\subset \mathfrak{g}^*(F)$ the lattice of elements $Y\in \mathfrak{g}^*(F)$ such that $\langle Y,X\rangle\in \cO$ for every $X\in L$. We define similarly $(L^*)^\perp\subset \mathfrak{g}(F)$ for every lattice $L^*\subset \mathfrak{g}^*(F)$.
	
	\begin{lemma}\label{lem1 appendix}
		Let $Y_1\in \mathfrak{g}^*(F)_{\elli}$. Then, there exists a lattice $L_0\subset \mathfrak{g}(F)$ such that
		$$Y_1+L_0^\perp\subset \mathfrak{g}^*(F)_{\elli} \;\;\; \mbox{ and } \;\;\; \langle Y_1,L_0\rangle=\cP^{-1}.$$
	\end{lemma}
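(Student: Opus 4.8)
The plan is to leverage two structural features of $\mathfrak{g}^*(F)_{\elli}$. It is open, as recorded in the statement. And it is a \emph{cone}, i.e.\ stable under scaling by $F^\times$, simply because its complement is the union of the $F$-subspaces $\mathfrak{n}^\perp(F)$ over all proper parabolic subalgebras $\mathfrak{p}$. In particular $Y_1\neq 0$, so the linear form $\langle Y_1,\cdot\rangle\colon\mathfrak{g}(F)\to F$ is surjective, and one may fix $X_0\in\mathfrak{g}(F)$ with $\langle Y_1,X_0\rangle=\varpi^{-1}$.

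First I would record a compatible pair of direct sum decompositions. Since $\langle Y_1,X_0\rangle\neq 0$, we may write $\mathfrak{g}(F)=FX_0\oplus D$ with $D:=\ker\langle Y_1,\cdot\rangle$ and $\mathfrak{g}^*(F)=FY_1\oplus D'$ with $D':=\{Y\in\mathfrak{g}^*(F):\langle Y,X_0\rangle=0\}$, and the canonical pairing restricts to a perfect pairing $D\times D'\to F$. Next I would use openness together with conicality to extract a good lattice: the set $(1+\cP)Y_1$ is compact (as $1+\cP$ is compact) and is contained in $\mathfrak{g}^*(F)_{\elli}$ (as the latter is a cone and $1+\cP\subseteq\cO^\times$), so a routine compactness argument produces a lattice $N'\subseteq\mathfrak{g}^*(F)$ with $(1+\cP)Y_1+N'\subseteq\mathfrak{g}^*(F)_{\elli}$; replacing $N'$ by $N:=N'\cap D'$, which is a lattice in $D'$, we still have $(1+\cP)Y_1+N\subseteq\mathfrak{g}^*(F)_{\elli}$.

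I would then set $L_0:=\cO X_0\oplus N^\vee$, where $N^\vee:=\{X\in D:\langle Y,X\rangle\in\cO\text{ for all }Y\in N\}$ is the dual lattice of $N$ in $D$ under the perfect pairing above. The two required properties follow by direct computation. Since $N^\vee\subseteq D=\ker\langle Y_1,\cdot\rangle$, we get $\langle Y_1,L_0\rangle=\cO\langle Y_1,X_0\rangle=\cO\varpi^{-1}=\cP^{-1}$. For the second property, write $Y=\lambda Y_1+Y'$ according to $\mathfrak{g}^*(F)=FY_1\oplus D'$; then $Y\in L_0^\perp$ if and only if $\langle Y,X_0\rangle=\lambda\varpi^{-1}\in\cO$, i.e.\ $\lambda\in\cP$, and $\langle Y',N^\vee\rangle\subseteq\cO$, i.e.\ $Y'\in(N^\vee)^\vee=N$. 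Hence $L_0^\perp=\cP Y_1\oplus N$, so $Y_1+L_0^\perp=(1+\cP)Y_1+N\subseteq\mathfrak{g}^*(F)_{\elli}$, which completes the argument.

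The only point that requires care is the coordination carried out in the middle step: the constraint $\langle Y_1,L_0\rangle=\cP^{-1}$ forces $L_0$ to be ``small'' precisely in the $X_0$-direction, hence $L_0^\perp$ to be correspondingly ``large'' in the $Y_1$-direction, and one must ensure that this unavoidable fattening does not leave the elliptic locus. Conicality of $\mathfrak{g}^*(F)_{\elli}$ is exactly what makes this work, since it lets the compact slice $(1+\cP)Y_1$ absorb the $Y_1$-direction; everything else is standard lattice bookkeeping.
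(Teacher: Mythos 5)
Your proof is correct, and it takes a genuinely different route from the paper's. The paper's proof first splits into cases on whether $G/Z(G)$ is anisotropic. In the non-anisotropic case it picks, by openness, a lattice $L_1^*\subset\mathfrak{g}^*(F)$ with $Y_1+L_1^*\subset\mathfrak{g}^*(F)_{\elli}$, enlarges it to $L_2^*:=\cP Y_1+L_1^*$, sets $L_0:=(L_2^*)^\perp$, and establishes $\langle Y_1,L_0\rangle=\cP^{-1}$ by contradiction: if the inclusion $\langle Y_1,L_0\rangle\subset\cP^{-1}$ were strict, then $0\in Y_1+L_0^\perp$, contradicting that $0$ is not elliptic. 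Conicality of $\mathfrak{g}^*(F)_{\elli}$ enters the paper's argument only through the identity $Y_1+L_2^*=(1+\cP)(Y_1+L_1^*)$, used tacitly. You instead make conicality the organizing principle: you split $\mathfrak{g}(F)=FX_0\oplus D$ and $\mathfrak{g}^*(F)=FY_1\oplus D'$ along the nondegenerate pair $(X_0,Y_1)$, then build $L_0=\cO X_0\oplus N^\vee$ so that both required properties come out by direct computation rather than by contradiction. This buys you a single, uniform argument with no case split and a fully explicit $L_0$; the cost is a bit more lattice bookkeeping (the direct-sum decompositions, the restriction of the pairing to $D\times D'$, and the biduality $(N^\vee)^\vee=N$). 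One small caveat: your opening remark that the complement of $\mathfrak{g}^*(F)_{\elli}$ is the union of the subspaces $\mathfrak{n}^\perp(F)$ and hence $Y_1\neq 0$ only shows $0\notin\mathfrak{g}^*(F)_{\elli}$ when proper parabolics exist; if $G/Z(G)$ is anisotropic the union is empty, so $0$ is vacuously elliptic, and $Y_1\neq 0$ must be treated as an implicit hypothesis. That is harmless --- the conclusion $\langle Y_1,L_0\rangle=\cP^{-1}$ is impossible for $Y_1=0$, so the paper's proof carries exactly the same implicit assumption --- but the justification you gave is not quite the right one in that degenerate case.
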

	
	\begin{proof}
		First, consider the case where $G/Z(G)$ is anisotropic. Then, $\mathfrak{g}^*(F)_{\elli}=\mathfrak{g}^*(F)$ and any lattice $L_0\subset \mathfrak{g}(F)$ such that $\langle Y_1,L_0\rangle=\cP^{-1}$ satisfies the requirement.
		
		Assume now that $G/Z(G)$ is not anisotropic. In particular, $0\in \mathfrak{g}^*(F)$ is not elliptic. As $\mathfrak{g}^*(F)_{\elli}$ is open, there exists a lattice $L_1^*\subset \mathfrak{g}^*(F)$ such that $Y_1+L_1^*\subset \mathfrak{g}^*(F)_{\elli}$. Set $L_2^*=\cP Y_1+L_1^*$ (again a lattice in $\mathfrak{g}^*(F)$) and $L_0=(L_2^*)^\perp$. Then, we have
		$$\displaystyle Y_1+L_0^\perp=Y_1+L_2^*=(1+\cP)\cdot (Y_1+L_1^*)\subseteq \mathfrak{g}^*(F)_{\elli}.$$
		On the other hand, since $\cP Y_1\subset L_2^*=L_0^\perp$, we have $\langle Y_1,L_0\rangle\subset \cP^{-1}$ and it cannot happen that $\langle Y_1,L_0\rangle\subset \cO$ as otherwise $0\in Y_1+L_0^\perp$ and $0$ is not elliptic. Therefore, $\langle Y_1,L_0\rangle=\cP^{-1}$ and we see that $L_0$ has the required property.
	\end{proof}
	
	From now on, we choose $Y_1$ and $L_0$ as in the previous lemma. For every integer $n\geqslant 0$, we set $L_n=\varpi^n L_0$ and $Y_n=\varpi^{-n+1}Y_1$. Let $\omega$ be a neighborhood of $0$ in $\mathfrak{g}(F)$ on which the exponential map $\exp: \omega\to G(F)$ is well-defined and one-to-one. Then, there exists an integer $n_0\geqslant 1$ such that for every $n'\geqslant n\geqslant n_0$, $L_n \subset \omega$, $K_n=\exp(L_n)$ is a compact-open subgroup of $G(F)$ and $K_{n'}$ is normal in $K_n$. For $m\geqslant 1$ and $n\geqslant n_0$, we define a map
	$$\displaystyle \lambda_{n,m}:K_n\to \cP^{-m}/\cO$$
	by
	$$\displaystyle \exp(X)\mapsto \langle Y_{n+m},X\rangle+\cO,\;\; X\in L_n.$$
	
	From now on we fix $m\geqslant 1$.
	
	\begin{lemma}
		There exists $n_1\geqslant n_0$ such that for every $n\geqslant n_1$, $\lambda_{n,m}$ is a character.
	\end{lemma}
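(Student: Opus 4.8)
The plan is to deduce this from the Baker--Campbell--Hausdorff formula together with the estimate $\langle Y_1,L_0\rangle=\cP^{-1}$ of the previous lemma. First I would record the basic scaling. Since $L_n=\varpi^n L_0$ and $Y_{n+m}=\varpi^{1-n-m}Y_1$, for any $j\in\mathbb{Z}$ one has $\langle Y_{n+m},\varpi^j L_0\rangle=\cP^{j-n-m}$; in particular $\langle Y_{n+m},L_n\rangle=\cP^{-m}$, so $\lambda_{n,m}$ indeed takes values in $\cP^{-m}/\cO$, while $\langle Y_{n+m},L_{n+m}\rangle=\cO$, so $\lambda_{n,m}$ is trivial on $\exp(L_{n+m})$. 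As soon as $n+m\ge n_0$ the latter equals $K_{n+m}$, so $\lambda_{n,m}$ factors through the finite group $K_n/K_{n+m}$ and is in particular smooth; only the homomorphism property remains.

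For that, fix a constant $c_0\ge 0$ with $[L_0,L_0]\subset\varpi^{-c_0}L_0$; by induction every iterated bracket of length $k$ of elements of $L_n$ then lies in $\varpi^{kn-(k-1)c_0}L_0$. The Baker--Campbell--Hausdorff series $Z(X,X')=X+X'+\tfrac12[X,X']+\cdots$ has rational coefficients whose degree-$k$ part has denominators of $p$-adic valuation bounded linearly in $k$, the slope depending only on $p$ and the ramification of $F$; hence there is $n_1\ge n_0$, depending only on these data and on $c_0$ and $m$, such that for all $n\ge n_1$ the series converges on $L_n$ and, for $X,X'\in L_n$, the unique $W\in L_n$ with $\exp(X)\exp(X')=\exp(W)$ (which exists since $L_n\subset\omega$ and $\exp$ is injective on $\omega$) equals $Z(X,X')$, and moreover $Z(X,X')-X-X'\in\varpi^{2n-c_1}L_0$ for a fixed constant $c_1$ absorbing $c_0$ and the denominators.

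Then, for $n\ge n_1$, the scaling computation gives
\[
\big\langle Y_{n+m},\,Z(X,X')-X-X'\big\rangle\ \subset\ \langle Y_{n+m},\varpi^{2n-c_1}L_0\rangle\ =\ \cP^{n-c_1-m},
\]
which is contained in $\cO$ once $n\ge c_1+m$; enlarging $n_1$ accordingly we obtain
$\lambda_{n,m}(\exp(X)\exp(X'))=\langle Y_{n+m},Z(X,X')\rangle+\cO=\langle Y_{n+m},X\rangle+\langle Y_{n+m},X'\rangle+\cO=\lambda_{n,m}(\exp X)+\lambda_{n,m}(\exp X')$, so $\lambda_{n,m}$ is a character. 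The step requiring genuine care is the uniform control of the correction term $Z(X,X')-X-X'$: one must check that the $p$-power denominators in the series are, degree by degree, dominated by the factors of $\varpi$ gained from bracketing elements of the small lattice $L_n$ once $n$ is large. This is exactly where $n$ being sufficiently large is used, and it is also the only place where $\mathrm{char}(F)=0$ enters, since in positive characteristic the relevant denominators need not be invertible; everything else is bookkeeping with lattices.
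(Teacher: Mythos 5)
Your proof is correct and follows essentially the same route as the paper's: both invoke the Baker--Campbell--Hausdorff formula to bound the correction term $Z(X,X')-X-X'$ inside $\varpi^{2n-r}L_0$ for some fixed $r$ (your $c_1$), and then use the scaling $\langle Y_{n+m},\varpi^j L_0\rangle=\cP^{j-n-m}$ coming from $\langle Y_1,L_0\rangle=\cP^{-1}$ to see the correction is killed by the pairing once $n\geq r+m$. The only difference is that you spell out the derivation of the constant $r$ from the bracket bound $[L_0,L_0]\subset\varpi^{-c_0}L_0$ and the $p$-adic denominator estimates for the BCH coefficients, whereas the paper asserts the existence of $r$ without further comment.
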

	
	\begin{proof}
		By the Baker-Campbell-Hausdorff formula, there exists $r\geqslant 0$ such that for every $n\geqslant n_0$ and $X,Y\in L_n$ we have
		$$\displaystyle \exp(X)\exp(Y)\in \exp(X+Y+\varpi^{2n-r}L_0).$$
		To conclude, it suffices to choose $n_1\geqslant n_0$ such that
		$$\displaystyle \langle Y_m,\varpi^{n_1-r}L_0\rangle\subset \cO.$$
	\end{proof}
	
	As the sequence $(K_n)_{n\geqslant n_0}$ form a decreasing basis of neighborhoods of $1$ and there exists a surjective homomorphism $\cP^{-m}/\cO\to \bZ/p^{m'}\bZ$ (e.g. induced from the trace of $F$ over $\mathbb{Q}_p$), we see that the following proposition implies Theorem \ref{theo1 appendix}.
	
	\begin{proposition}\label{prop1 appendix}
		There exists $n_2\geqslant n_1$ such that for $n\geqslant n_2$, the pair $(K_n,\lambda_{n,m})$ is an omni-supercuspidal type.
	\end{proposition}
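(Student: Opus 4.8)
The plan is to follow the strategy of \cite{BP}: attach to the character $\chi\circ\lambda_{n,m}$ an idempotent-type element of the Hecke algebra, show that it is a cusp form in the sense of Harish-Chandra, and conclude from the fact that cusp forms annihilate every non-supercuspidal irreducible representation. First I would reduce and rewrite the characters. Fix $m\geqslant 1$; as noted before Theorem \ref{theo1 appendix}, and since $\cP^{-m}/\cO$ surjects onto $\ZZ/p^m\ZZ$, it suffices to find $n_2$ such that for every $n\geqslant n_2$ and every nontrivial $\chi:\cP^{-m}/\cO\to \mathbb{C}^\times$ the pair $(K_n,\chi\circ \lambda_{n,m})$ is a supercuspidal type. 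Every such $\chi$ has the form $t+\cO\mapsto \psi(at)$ for a unique nonzero $a\in \cO/\cP^m$, and then $v(a)\in\{0,\dots,m-1\}$; writing $a=\varpi^j u$ with $u\in \cO^\times$, a direct computation with the definition of $\lambda_{n,m}$ gives
$$(\chi\circ \lambda_{n,m})(\exp X)=\psi(\langle Y',X\rangle)=:\theta_{Y'}(\exp X),\qquad X\in L_n,$$
where $Y':=uY_{n'}$ and $n':=n+m-j$, so that $n+1\leqslant n'\leqslant n+m$. The point of this normalization is the inclusion
$$Y'+L_n^\perp=\varpi^{-n'+1}\big(uY_1+\varpi^{\,n'-1-n}L_0^\perp\big)\subseteq \varpi^{-n'+1}u\,(Y_1+L_0^\perp)\subseteq \mathfrak{g}^*(F)_{\elli},$$
which uses $n'-1-n\geqslant 0$, the identity $uL_0^\perp=L_0^\perp$, the $F^\times$-invariance of $\mathfrak{g}^*(F)_{\elli}$, and Lemma \ref{lem1 appendix}.

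Next, for $n\geqslant n_0$ I would set $f_n:=\vol(K_n)^{-1}\,\overline{\theta_{Y'}}\,\mathbf{1}_{K_n}\in C_c^\infty(G(F))$. For any irreducible smooth representation $\pi$ of $G(F)$, $\pi(f_n)$ is the projection onto the $\theta_{Y'}$-isotypic part of $\pi|_{K_n}$, so $\pi(f_n)\neq 0$ if and only if $\Hom_{K_n}(\theta_{Y'},\pi)\neq 0$. Recalling that a cusp form $f$ satisfies $\mathrm{Ind}_P^{G(F)}\sigma(f)=0$ for every proper parabolic $P=MN$ and every smooth representation $\sigma$ of $M(F)$ (the action of $f$ unwinds, after the change of variables $n\mapsto mnm^{-1}$, into integrals of the shape $\int_{N(F)}f(xny)\,dn$, all of which vanish by cuspidality), and hence $\pi(f)=0$ for every non-supercuspidal irreducible $\pi$ (being a subquotient of such an induced representation), it therefore suffices to prove: \emph{$f_n$ is a cusp form for all $n$ sufficiently large, uniformly in $\chi$.} Once this is known, any $\pi$ with $\Hom_{K_n}(\theta_{Y'},\pi)\neq 0$ is supercuspidal, so $(K_n,\chi\circ\lambda_{n,m})$ is a supercuspidal type and $(K_n,\lambda_{n,m})$ an omni-supercuspidal type, which gives level $p^m$ after composing with a surjection $\cP^{-m}/\cO\to\ZZ/p^m\ZZ$.

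Finally, to establish the cusp-form property I would use Harish-Chandra's criterion \cite{HC}: $f\in C_c^\infty(G(F))$ is a cusp form if and only if $\int_{N(F)}f(xny)\,dn=0$ for every proper parabolic $P=MN$ and all $x,y\in G(F)$. Fix $P=MN$, $x$, $y$. The integrand is supported on $\{n\in N(F): xny\in K_n\}$, which is empty or a single right coset of $x^{-1}K_nx\cap N(F)$; using that $K_n$ is a group, the $\Ad$-invariance of Haar measure, and that $\theta_{Y'}$ is a character, the integral collapses to a nonzero constant times $\int_{K_n\cap N''(F)}\overline{\theta_{Y'}}(\nu)\,d\nu$, where $N''=xNx^{-1}$ is the unipotent radical of the proper parabolic $xPx^{-1}$. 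For $n$ large (so that $L_n\subseteq\omega$ and $\exp$ identifies $L_n\cap\mathfrak{n}''(F)$ with $K_n\cap N''(F)$ for every unipotent radical $N''$), this equals a nonzero constant times $\int_{L_n\cap\mathfrak{n}''(F)}\psi(-\langle Y',Z\rangle)\,dZ$, which is $0$ unless the additive character $Z\mapsto\psi(\langle Y',Z\rangle)$ is trivial on the lattice $L_n\cap\mathfrak{n}''(F)$, i.e. unless $Y'\in (L_n\cap\mathfrak{n}''(F))^\perp=L_n^\perp+\mathfrak{n}''(F)^\perp$ (a standard identity for a lattice and a subspace under a perfect pairing). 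But if $Y'=a+b$ with $a\in L_n^\perp$ and $b\in\mathfrak{n}''(F)^\perp$, then $Y'-a=b\in\mathfrak{n}''(F)^\perp$, contradicting $Y'-a\in Y'+L_n^\perp\subseteq\mathfrak{g}^*(F)_{\elli}$ since $\mathfrak{n}''$ is the nilradical of a proper parabolic. Hence all constant terms of $f_n$ vanish, $f_n$ is a cusp form, and the bound $n_2$ (depending only on $m$, through $n_1$ and the domain $\omega$) works. The one delicate point — and the reason one cannot simply invoke a single parabolic — is precisely this uniformity over all $\chi$ and all conjugates of proper parabolics; it is exactly what the inclusion $Y'+L_n^\perp\subseteq\mathfrak{g}^*(F)_{\elli}$, coming from Lemma \ref{lem1 appendix}, delivers.
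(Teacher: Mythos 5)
Your argument is correct, but it takes a genuinely different route from the one in Appendix D. The paper proves a clean equivalence (Lemma D.4) between ``$(K,\chi)$ is a supercuspidal type'' and ``$f_{K,\chi}$ is a cusp form'' via Jacquet modules of $\cind_K^{G}(\chi)$, then descends to the Lie algebra: it shows $\varphi_{m,x}$ is a cusp form on $\mathfrak g(F)$ by computing its Fourier transform $\widehat{\varphi_{m,x}}=\vol(L_0)\mathbf 1_{-xY_m+L_0^\perp}$ and checking the support is elliptic (Lemma D.6), and then invokes the transfer lemma from \cite{BP} (Lemma D.5) to conclude that $\exp_*\varphi_{n,m,x}$ is a cusp form on the group for $n$ large. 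You instead bypass Lemma D.4 by working with the projector $f_n=\vol(K_n)^{-1}\overline{\theta_{Y'}}\mathbf 1_{K_n}$ and the standard fact that cusp forms kill parabolic inductions, and you bypass Lemmas D.5 and D.6 by computing every constant term of $f_n$ directly: reducing to $\int_{K_n\cap N''(F)}\overline{\theta_{Y'}}$, pulling back through $\exp$, and using $(L_n\cap\mathfrak n''(F))^\perp=L_n^\perp+\mathfrak n''(F)^\perp$ together with the inclusion $Y'+L_n^\perp\subseteq\mathfrak g^*(F)_{\elli}$. This is essentially an inlining of the proof of the BP lemma, specialized to the indicator-of-a-coset functions at hand, and it correctly identifies the one uniformity that must be won over all conjugate parabolics and all characters $\chi$: your computation shows it comes for free from Lemma D.1, exactly as in the paper. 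The paper's route buys modularity (citing \cite{BP} rather than reproving its constant-term estimate) and localizes the analytic content on the Lie algebra side where the Fourier computation is immediate; your route is more self-contained and makes visible why the elliptic neighborhood is the crucial input. One place you compress is the uniform claim that $\exp$ identifies $L_n\cap\mathfrak n''(F)$ with $K_n\cap N''(F)$ for \emph{every} unipotent radical $\mathfrak n''$: this does hold once $\omega$ is taken small enough (e.g.\ so that the matrix $\log$ series converges on $\exp(\omega)$ in a fixed faithful representation, whence $\log_G$ and $\log_{N''}$ agree on $K_n\cap N''(F)$ independently of $N''$), but it deserves a sentence since the parabolics and their conjugates form an unbounded family.
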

	
	Before proving the proposition, we make some reductions.
	
	First, since all the nontrivial characters of $\cP^{-m}/\cO$ are of the form $y\mapsto \psi_x(y):=\psi(xy)$ for some $x\in \cO - \cP^m$, we may as well fix $x\in \cO - \cP^m$ and prove the existence of $n_2\geqslant n_1$ such that for $n\geqslant n_2$, the pair $(K_n,\psi_x\circ \lambda_{n,m})$ is a supercuspidal type.
	
	Secondly, following Harish-Chandra, we call a function $f\in C_c^\infty(G(F))$ a {\em cusp form} if for every proper parabolic subgroup $P=MN\subsetneq G$ we have
	$$\displaystyle \int_{N(F)} f(gu) du=0, \mbox{ for every } g\in G(F).$$
	Similarly, a function $\varphi\in C_c^\infty(\mathfrak{g}(F))$ is said to be a {\em cusp form} if for every proper parabolic subgroup $P=MN\subsetneq G$ we have
	$$\displaystyle \int_{\fn(F)} \varphi(X+U) dU=0, \mbox{ for every } X\in \mathfrak{g}(F)$$
	where $\fn$ stands for the Lie algebra of the unipotent radical $N$ of $P$.
	
	The following characterization of supercuspidal types is probably well-known but for lack of a proper reference, we sketch the argument.
	
	\begin{lemma}\label{lem2 appendix}
		Let $K\subset G(F)$ be a compact-open subgroup and $\chi:K\to \mathbb{C}^\times$ a smooth character. Then, the pair $(K,\chi)$ is a supercuspidal type if and only if the function
		$$\displaystyle f_{K,\chi}: g\in G(F)\mapsto \left\{\begin{array}{ll}
			\chi(g) \mbox{ if } g\in K, \\
			0 \mbox{ otherwise,}
		\end{array}\right.$$
		is a cusp form.
	\end{lemma}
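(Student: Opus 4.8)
The plan is to reduce the lemma to two standard inputs: an explicit description of the operator $\pi(f_{K,\chi})$ on an irreducible smooth representation $\pi$, and Harish-Chandra's principle that cusp forms are exactly the functions acting by $0$ on all non-supercuspidal representations. Writing $\pi(f)=\int_{G(F)}f(g)\pi(g)\,dg$, a one-line Schur-orthogonality computation shows that $\pi(f_{K,\chi})$ is $\mathrm{vol}(K)$ times the projection of the underlying space onto its $\chi^{-1}$-isotypic part $V^{K,\chi^{-1}}=\{v:\pi(k)v=\chi(k)^{-1}v\ \forall k\in K\}$; here the appearance of $\chi^{-1}$ rather than $\chi$ is only an artifact of the normalization of $\pi(f)$, and it is immaterial because passing to contragredients (using admissibility of irreducible smooth representations) shows that $(K,\chi)$ is a supercuspidal type precisely when $(K,\chi^{-1})$ is. Thus it suffices to prove the equivalence with $\chi^{-1}$ in place of $\chi$, and the effective statement becomes $\pi(f_{K,\chi})\neq 0\iff \pi^{K,\chi^{-1}}\neq 0\iff \pi|_K$ contains $\chi^{-1}$.

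For the direction ``$f_{K,\chi}$ a cusp form $\Rightarrow (K,\chi)$ a supercuspidal type'', I would first establish the general fact that a cusp form $f$ kills every parabolically induced representation $\rho=\mathrm{Ind}_P^G\tau$ with $P=MN\subsetneq G$ proper and $\tau$ an irreducible smooth representation of $M(F)$. Realizing $\mathrm{Ind}_P^G\tau$ on functions $\phi\colon G(F)\to V_\tau$ with $\phi(mng)=\delta_P(m)^{1/2}\tau(m)\phi(g)$, using the Iwasawa decomposition $G(F)=M(F)N(F)K_0$ for a maximal compact $K_0$, and substituting $g_0\mapsto k^{-1}g_0$, one finds that $(\rho(f)\phi)(k)$ is built from integrals $\int_{N(F)}f(xny)\,dn$ with $x\in k^{-1}M(F)$ and $y\in K_0$; conjugating $N$ past $x$ (which turns it into $k^{-1}Nk$, the unipotent radical of the proper parabolic $k^{-1}Pk$) and then moving $y$ to the left rewrites each such integral as $\int_{N'(F)}f(zu)\,du$ over the unipotent radical $N'$ of a proper parabolic at a point $z\in G(F)$, which vanishes by the defining cusp-form property of $f$. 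Hence $\rho(f)=0$; since every non-supercuspidal irreducible is a subquotient of such a $\rho$, it follows that $\pi(f)=0$, hence $\pi^{K,\chi^{-1}}=0$, for all non-supercuspidal $\pi$, i.e. $(K,\chi^{-1})$ — equivalently $(K,\chi)$ — is a supercuspidal type.

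For the converse, ``$(K,\chi)$ a supercuspidal type $\Rightarrow f_{K,\chi}$ a cusp form'', I would argue contrapositively. If $f:=f_{K,\chi}$ is not a cusp form, then some constant term $f^{(P)}\in C_c^\infty(M(F))$ along a proper parabolic $P=MN$ is nonzero, so $\tau(f^{(P)})\neq 0$ for some irreducible smooth $\tau$ of $M(F)$ (using that $C_c^\infty(M(F))$ acts faithfully on the direct sum of all its irreducible smooth representations); running the Iwasawa-type computation of the previous paragraph in reverse then gives $(\mathrm{Ind}_P^G\tau)(f)\neq 0$, i.e. $(\mathrm{Ind}_P^G\tau)^{K,\chi^{-1}}\neq 0$. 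Since $\mathrm{Ind}_P^G\tau$ has finite length and the functor $V\mapsto V^{K,\chi^{-1}}$ (cutting out by an idempotent of the Hecke algebra) is exact, some composition factor $\pi$ of $\mathrm{Ind}_P^G\tau$ satisfies $\pi^{K,\chi^{-1}}\neq 0$; and $\pi$, being a subquotient of a parabolic induction from a proper parabolic, is non-supercuspidal. Hence $(K,\chi^{-1})$, equivalently $(K,\chi)$, is not a supercuspidal type.

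I expect the main obstacle to be the bookkeeping in the two Iwasawa-decomposition computations — keeping the modulus characters $\delta_P$ and the conjugates of $N$ by elements of $K_0$ straight — together with making precise, on the converse side, the passage between ``$\int_{N(F)}f(gu)\,du\neq 0$ for some $g$'' and ``the constant term $f^{(P)}$ is a nonzero element of $C_c^\infty(M(F))$''. These steps are routine but somewhat fiddly, and are exactly the places where one normally defers to the literature (Harish-Chandra's harmonic analysis, and the treatments in Casselman's notes or Renard's book).
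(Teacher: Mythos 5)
Your proof is correct in outline and reaches the right conclusion, but it takes a genuinely different route from the appendix. You work on the operator side: identify $\pi(f_{K,\chi})$ as (up to normalization) the projector onto the $\chi^{-1}$-isotypic subspace, pass between $(K,\chi)$ and $(K,\chi^{-1})$ via contragredients and admissibility, and invoke Harish-Chandra's constant-term theory in both directions -- cusp forms annihilate all representations parabolically induced from a proper $P$, and a function that fails cuspidality along $P$ has a nonzero constant term, hence acts nontrivially on some $\mathrm{Ind}_P^G\tau$, whose composition factors are non-supercuspidal. Beuzart-Plessis instead works on the module side: by Frobenius reciprocity, $(K,\chi)$ is a supercuspidal type iff $\cind_K^G(\chi)$ is quasi-cuspidal; this compact induction embeds in the right regular representation $C_c^\infty(G(F))$ as the subrepresentation generated by $f_{K,\chi}$; by exactness of the Jacquet functor, vanishing of its proper Jacquet modules reduces to vanishing of the images of $R(g)f_{K,\chi}$ in $C_c^\infty(G(F))_N$, and the isomorphism $C_c^\infty(G(F))_N\simeq C_c^\infty(G(F)/N(F))$ translates that directly into the cusp-form condition. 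The paper's route is shorter and requires no Iwasawa decomposition, no modulus-character bookkeeping, and no detour through $\chi^{-1}$ or contragredients; your route is closer to the classical harmonic-analytic formulation and makes the link to Harish-Chandra's cusp forms more explicit. The spots you flag as fiddly (the Iwasawa computation, and the precise passage from ``$\int_{N(F)}f(gu)\,du\neq0$ for some $g$'' to ``some $\mathrm{Ind}_P^G\tau$ detects $f$'') are exactly where the extra work lives relative to the published argument, but they are standard and your sketch is sound.
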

	
	\begin{proof}
		We know that $(K,\chi)$ is a supercuspidal type if and only if the compactly induced representation $\cind_K^G(\chi)$ is itself supercuspidal\footnote{By this, we just mean that all of the proper Jacquet modules of $\cind_K^G(\chi)$ vanish. Since this representation is not always admissible (e.g. when the center of $G(F)$ is non-compact), certain authors might prefer the terminology {\em quasi-cuspidal}.}. Thus, it suffices to check that $f_{K,\chi}$ is a cusp form if and only if all the proper Jacquet modules of $\cind_K^G(\chi)$ are zero. Let $P=MN$ be a proper parabolic subgroup of $G$. There is a natural embedding of $\cind_K^G(\chi)$ in the right regular representation $R$ on $C_c^\infty(G(F))$. Moreover, the image of this embedding is generated (as a module over $G(F)$) by the function $f_{K,\chi}$. Therefore, by exactness of the functor $(.)_N$ of $N(F)$-coinvariants, we just need to check that, for $g\in G(F)$, the image of $R(g)f_{K,\chi}$ in $C_c^\infty(G(F))_N$ is zero if and only if 
		$$\displaystyle \int_{N(F)} f_{K,\chi}(g'ug) du=0,\mbox{ for every } g'\in G(F).$$
		But this readily follows from the fact that the map
		$$\displaystyle C_c^\infty(G(F))\to C_c^\infty(G(F)/N(F)),\; f\mapsto \left(gN(F) \mapsto \int_{N(F)} f(gu)du\right)$$
		induces an isomorphism $C_c^\infty(G(F))_N\simeq C_c^\infty(G(F)/N(F))$.
	\end{proof}
	
	Set $f_{n,m,x}=f_{K_n,\psi_x\circ \lambda_{n,m}}$. By the above lemma, we are reduced to showing that $f_{n,m,x}$ is a cusp form for $n$ sufficiently large. For every $n\geqslant 0$, we let $\varphi_{n,m,x}\in C_c^\infty(\mathfrak{g}(F))$ be the function defined by
	$$\displaystyle \varphi_{n,m,x}(X)=\left\{\begin{array}{ll}
		\psi(x\langle Y_{n+m}, X\rangle) \mbox{ if } X\in L_n, \\
		0 \mbox{ otherwise,}
	\end{array} \right.$$
	for every $X\in \mathfrak{g}(F)$. Note that
	$$\displaystyle \varphi_{n,m,x}(X)=\varphi_{m,x}(\varpi^{-n}X)$$
	where for simplicity of notation we have set $\varphi_{m,x}=\varphi_{0,m,x}$, and
	$$\displaystyle f_{n,m,x}=\exp_* \varphi_{n,m,x}$$
	where for $\varphi\in C_c^\infty(\mathfrak{g}(F))$, we write $\exp_* \varphi$ for the function on $G(F)$ given by
	$$\displaystyle (\exp_*\varphi)(g)=\left\{\begin{array}{ll}
		\varphi(X) \mbox{ if } g=\exp(X) \mbox{ with } X\in \omega, \\
		0 \mbox{ otherwise,}
	\end{array} \right.$$
	for $g\in G(F)$.

	The following lemma is \cite[(5)]{BP}.
	
	\begin{lemma}
		Let $\varphi\in C_c^\infty(\mathfrak{g}(F))$ be a cusp form and set $\varphi_\lambda(X)=\varphi(\lambda^{-1}X)$ for $\lambda\in F^\times$, $X\in \mathfrak{g}(F)$. Then, for $\lambda$ sufficiently close to $0$ the function $f_\lambda=\exp_*\varphi_\lambda$ is a cusp form on the group.
	\end{lemma}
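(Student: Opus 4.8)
The plan is to verify directly the defining property of a cusp form for $f_\lambda=\exp_*\varphi_\lambda$: for every proper parabolic subgroup $P=MN\subsetneq G$ and every $g\in G(F)$ we want $\int_{N(F)}f_\lambda(gu)\,du=0$, once $\lambda$ is small enough in a way depending on $\varphi$, $G$ and $F$ but not on $P$ or $g$. Fix such a $P$. Since $f_\lambda$ is supported on the compact set $\Omega_\lambda:=\exp(\lambda\cdot\mathrm{supp}\,\varphi)$, which lies inside $\exp(\omega)$ and is a neighbourhood of $1$ as soon as $\lambda\cdot\mathrm{supp}\,\varphi\subseteq\omega$, the integral vanishes trivially unless $g\in\Omega_\lambda N(F)$; in the latter case write $g=g_0v$ with $g_0\in\Omega_\lambda$ and $v\in N(F)$, and use the translation invariance of Haar measure on $N(F)$ to reduce to $g=g_0=\exp(X_0)$ with $X_0\in\lambda\cdot\mathrm{supp}\,\varphi$. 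In particular $X_0$ is ``of size $O(\lambda)$'', i.e.\ contained in $\lambda L$ for a fixed lattice $L\subset\fg(F)$.

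Next, since $N$ is unipotent and $\mathrm{char}(F)=0$, the exponential $\exp\colon\fn(F)\xrightarrow{\sim}N(F)$ is an isomorphism of $F$-analytic groups carrying Haar measure to Haar measure up to a positive constant $c_{\fn}$, so that $\int_{N(F)}f_\lambda(\exp(X_0)u)\,du=c_{\fn}\int_{\fn(F)}\varphi_\lambda\big(\log(\exp(X_0)\exp(U))\big)\,dU$. The integrand is nonzero only if $\log(\exp(X_0)\exp(U))\in\lambda\cdot\mathrm{supp}\,\varphi$, and since $X_0$ is of size $O(\lambda)$ a Baker--Campbell--Hausdorff estimate forces $U$ to be of size $O(\lambda)$ as well, uniformly in $X_0$, once $\lambda$ is small. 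On that region, writing $\log(\exp(X_0)\exp(U))=X_0+U+R(X_0,U)$ where $R$ collects the higher terms of the Hausdorff series, every term of $R$ involves at least two Lie brackets of $X_0$ and $U$, so $R(X_0,U)$ is of size $O(\lambda^2)$, the implied constant depending only on $G$ and $F$.

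The key point is the ($p$-adic) local constancy of $\varphi$: there is a lattice $L_\varphi\subset\fg(F)$ with $\varphi(X+Y)=\varphi(X)$ for all $X\in\fg(F)$ and $Y\in L_\varphi$, equivalently $\varphi_\lambda(X+Y)=\varphi_\lambda(X)$ for $Y\in\lambda L_\varphi$. For $\lambda$ small the $O(\lambda^2)$ bound on $R$ guarantees $R(X_0,U)\in\lambda L_\varphi$ throughout the (common, $O(\lambda)$-sized) support, so the two functions $U\mapsto\varphi_\lambda(\log(\exp(X_0)\exp(U)))$ and $U\mapsto\varphi_\lambda(X_0+U)$ on $\fn(F)$ coincide and the correction term simply disappears. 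Hence
$$\int_{N(F)}f_\lambda(\exp(X_0)u)\,du=c_{\fn}\int_{\fn(F)}\varphi_\lambda(X_0+U)\,dU=c_{\fn}\,|\lambda|^{\dim\fn}\int_{\fn(F)}\varphi(\lambda^{-1}X_0+U')\,dU'=0,$$
the last equality being the cusp-form property of $\varphi$ (with $\lambda^{-1}X_0\in\fg(F)$). As $P$ and $g$ were arbitrary, $f_\lambda$ is a cusp form.

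The main obstacle is the bookkeeping in the two size estimates: first that, on the support of the integrand, $U$ is forced to be of size $O(\lambda)$ (a convergence/injectivity estimate for $\exp$ near $0$), and then that the Baker--Campbell--Hausdorff remainder is a full order smaller, $O(\lambda^2)$, so that after the scaling by $\lambda$ it lands inside the constancy lattice $\lambda L_\varphi$. Everything else --- the support reduction on $g$ and the passage from $N(F)$ to $\fn(F)$ --- is routine, and it is worth noting that no genericity or residue-characteristic hypothesis enters anywhere, which is precisely why this route proves Theorem~\ref{theo1 appendix} for all primes $p$; over a local field of characteristic $p$ the same scheme breaks down because $\varphi_\lambda$ with $\lambda=\varpi^n$ can only detect characters of exponent $p$.
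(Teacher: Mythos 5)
Your argument is correct, and it is the same argument as in the paper's cited source~\cite[(5)]{BP} (the paper itself only cites, without reproducing the proof): reduce by support considerations to $g=\exp(X_0)$ with $X_0$ small, push the $N(F)$-integral to $\fn(F)$ via the unipotent exponential, use the Baker--Campbell--Hausdorff expansion together with local constancy of $\varphi$ to replace $\log(\exp(X_0)\exp(U))$ by $X_0+U$, rescale, and invoke the Lie-algebra cusp-form condition. One small slip: the BCH remainder $R(X_0,U)$ is a sum of terms each involving \emph{at least one} Lie bracket (so at least bilinear in $(X_0,U)$), not ``at least two Lie brackets''; the first term is $\tfrac12[X_0,U]$. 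This does not affect the estimate, since bilinearity already gives $R(X_0,U)=O(\lambda^2)$ uniformly, which is what you use.
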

	
	By the above, we are finally reduced to checking that $\varphi_{m,x}$ is a cusp form. We define a Fourier transform $C_c^\infty(\mathfrak{g}(F))\to C_c^\infty(\mathfrak{g}^*(F))$, $\varphi\mapsto \widehat{\varphi}$, by
	$$\displaystyle \widehat{\varphi}(Y)=\int_{\mathfrak{g}(F)} \varphi(X) \psi(\langle Y,X\rangle) dX,\; \varphi\in C_c^\infty(\mathfrak{g}(F)), Y\in \mathfrak{g}^*(F)$$
	where $dX$ is a Haar measure on $\mathfrak{g}(F)$, the precise normalization of which does not really matter. We have the following characterization of cusp forms on $\mathfrak{g}(F)$.
	
	\begin{lemma}
		A function $\varphi\in C_c^\infty(\mathfrak{g}(F))$ is a cusp form if and only if $\widehat{\varphi}$ is supported in $\mathfrak{g}^*(F)_{\elli}$.
	\end{lemma}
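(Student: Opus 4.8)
The plan is to reduce the equivalence to a single standard fact of $p$-adic Fourier analysis, carried out one proper parabolic at a time: the Fourier transform turns integration over a linear subspace into restriction to its annihilator, and it is injective on $C_c^\infty$ of a $p$-adic vector space.

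First I would fix a proper parabolic subgroup $P=MN\subsetneq G$ over $F$, write $\mathfrak{n}=\mathrm{Lie}(N)$, and let $\mathfrak{n}^\perp\subset \mathfrak{g}^*$ be its annihilator. For $\varphi\in C_c^\infty(\mathfrak{g}(F))$ put $\varphi_{\mathfrak{n}}(X):=\int_{\mathfrak{n}(F)}\varphi(X+U)\,dU$; this is again locally constant and compactly supported, and invariance of Haar measure shows it factors through $\mathfrak{g}(F)/\mathfrak{n}(F)$. Choosing an $F$-linear complement $\mathfrak{q}(F)$ with $\mathfrak{g}(F)=\mathfrak{n}(F)\oplus\mathfrak{q}(F)$ and normalizing measures so that $dX=dU\,dX'$, Fubini gives, for $Y\in\mathfrak{n}^\perp(F)$ and using $\langle Y,U+X'\rangle=\langle Y,X'\rangle$,
$$\widehat{\varphi}(Y)=\int_{\mathfrak{q}(F)}\Big(\int_{\mathfrak{n}(F)}\varphi(U+X')\,dU\Big)\psi(\langle Y,X'\rangle)\,dX'=\int_{\mathfrak{q}(F)}\varphi_{\mathfrak{n}}(X')\,\psi(\langle Y,X'\rangle)\,dX'.$$
Thus $\widehat{\varphi}|_{\mathfrak{n}^\perp(F)}$ is exactly the Fourier transform of $\varphi_{\mathfrak{n}}$ viewed as a function on $\mathfrak{q}(F)\cong\mathfrak{g}(F)/\mathfrak{n}(F)$, whose Pontryagin dual via $\psi$ and $\langle\cdot,\cdot\rangle$ is $\mathfrak{n}^\perp(F)$. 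By Fourier inversion on the $p$-adic vector space $\mathfrak{q}(F)$, which applies since $\varphi_{\mathfrak{n}}\in C_c^\infty$, we conclude that $\varphi_{\mathfrak{n}}\equiv 0$ on $\mathfrak{g}(F)$ if and only if $\widehat{\varphi}$ vanishes identically on $\mathfrak{n}^\perp(F)$.

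Then I would take the conjunction over all proper $F$-parabolics $P$ (equivalently over all proper parabolic subalgebras $\mathfrak{p}\subsetneq\mathfrak{g}$; the finiteness of the number of $G(F)$-conjugacy classes is reassuring but not needed). By definition $\varphi$ is a cusp form precisely when $\varphi_{\mathfrak{n}}\equiv 0$ for the nilpotent radical $\mathfrak{n}$ of every such $P$, hence precisely when $\widehat{\varphi}$ vanishes on $\bigcup_{\mathfrak{p}\subsetneq\mathfrak{g}}\mathfrak{n}_{\mathfrak{p}}^\perp(F)$, which by the definition of ellipticity is $\mathfrak{g}^*(F)\setminus\mathfrak{g}^*(F)_{\elli}$. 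Since $\varphi$ is locally constant, so is $\widehat{\varphi}$, whence $\{\,Y:\widehat{\varphi}(Y)\neq 0\,\}$ is open and closed and coincides with $\mathrm{supp}\,\widehat{\varphi}$; therefore ``$\widehat{\varphi}$ vanishes off $\mathfrak{g}^*(F)_{\elli}$'' is literally the same as ``$\mathrm{supp}\,\widehat{\varphi}\subseteq\mathfrak{g}^*(F)_{\elli}$'', which is the assertion. The argument is entirely elementary; the only points deserving a word of care are the compatible normalization of Haar measures in the Fubini step and the remark just made that local constancy of $\widehat{\varphi}$ removes any issue about boundaries of supports, so there is no substantive obstacle here.
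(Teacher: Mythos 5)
Your proof is correct and is essentially the same as the paper's: the Fubini computation showing $\widehat{\varphi}|_{\mathfrak{n}^\perp(F)}$ equals the Fourier transform of $\varphi_{\mathfrak{n}}$ on $(\mathfrak{g}/\mathfrak{n})(F)$, followed by injectivity of the Fourier transform, is precisely what the paper encodes in its commutative diagram. The only presentational difference is that you spell the diagram out as a direct calculation and add a (true but not strictly necessary) remark about local constancy of $\widehat{\varphi}$.
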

	
	\begin{proof}
		This is essentially \cite[Proof of Lemma 1]{BP}. Let $P=MN\subsetneq G$ be a proper parabolic subgroup and $\mathfrak{n}$ be the Lie algebra of $N$. We have a commutative diagram
		$$\displaystyle \xymatrix{ C_c^\infty(\mathfrak{g}(F)) \ar[r]^{\cF_{\mathfrak{g}}} \ar[d]^{\int_{\fn}} & C_c^\infty(\mathfrak{g}^*(F)) \ar[d]^{\Res} \\ C_c^\infty((\mathfrak{g}/\fn)(F)) \ar[r]^{\cF_{\mathfrak{g}/\fn}} & C_c^\infty((\mathfrak{g}/\fn)^*(F))}$$
		where $\cF_{\mathfrak{g}}$ denotes the Fourier transform already introduced, $\cF_{\mathfrak{g}/\fn}$ is the Fourier transform on $(\mathfrak{g}/\fn)(F)$ defined in a similar way (with respect to the measure quotient of the Haar measures on $\mathfrak{g}(F)$ and $\fn(F)$), the right vertical arrow is the restriction map to $\fn^\perp(F)=(\mathfrak{g}/\fn)^*(F)$ and the left vertical arrow sends a function $\varphi\in C_c^\infty(\mathfrak{g}(F))$ to the function
		$$\displaystyle X\in \mathfrak{g}(F)/\fn(F)\mapsto \int_{\fn(F)} \varphi(X+U) dU.$$
		By definition, a function $\varphi\in C_c^\infty(\mathfrak{g}(F))$ is a cusp form if and only if its image by this left vertical arrow is zero for every proper parabolic subgroup $P$. By injectivity of the Fourier transform, and commutativity of the above diagram, this is equivalent to asking that the Fourier transform $\widehat{\varphi}$ vanishes on $\fn^\perp(F)$ for every proper parabolic subgroup $P=MN\subsetneq G$. But, this last condition is just saying that $\widehat{\varphi}$ is supported in $\mathfrak{g}^*(F)_{\elli}$.
	\end{proof}
	
	An easy computation show that $\widehat{\varphi_{m,x}}=\vol(L_0) \mathbf{1}_{-xY_m+L_0^\perp}$ and therefore we aim to check that $-xY_m+L_0^\perp\subset \mathfrak{g}^*(F)_{\elli}$. As
	$$\displaystyle -xY_m+L_0^\perp=-x\varpi^{-m+1}Y_1+L_0^\perp=-x\varpi^{-m+1}(Y_1+\varpi^{m-1}x^{-1}L_0^\perp)$$
	it is equivalent to showing that $Y_1+\varpi^{m-1}x^{-1}L_0^\perp\subset \mathfrak{g}^*(F)_{\elli}$. Since $x\in \cO-\cP^m$, we have $\varpi^{m-1}x^{-1}\in \cO$ and therefore
	$$\displaystyle Y_1+\varpi^{m-1}x^{-1}L_0^\perp\subset Y_1+L_0^\perp \subset \mathfrak{g}^*(F)_{\elli}$$
	by the choice of $Y_1$ and $L_0$ (see Lemma \ref{lem1 appendix}). This ends the proof of Proposition \ref{prop1 appendix} and therefore also of Theorem \ref{theo1 appendix}.

	\subsection{On the case of positive characteristic}
	
	We keep the previous notations except that we assume now that $F$ is a local non-Archimedean field of positive characteristic. Then, we claim that the obvious analog of Theorem \ref{theo1 appendix} does not hold anymore in this setting, at least when the group $G$ is not anisotropic modulo the center. More precisely, we have:
	
	\begin{proposition}
	Assume that $G$ is not anisotropic modulo the center. Then, there exists $n\geqslant 1$ such that $G$ has no omni-supercuspidal type of level $p^n$.
	\end{proposition}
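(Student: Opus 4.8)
The plan is to convert omni-supercuspidality into a numerical constraint by testing against the unipotent radical of a proper parabolic subgroup, using the cusp-form criterion for supercuspidal types from Lemma \ref{lem2 appendix} (whose proof is insensitive to the characteristic of $F$).

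First I would fix a proper parabolic $F$-subgroup $P=MN\subsetneq G$ --- one exists precisely because $G$ is not anisotropic modulo its center --- and set $d:=\dim N\geqslant 1$. Suppose, toward a contradiction, that $(U,\lambda)$ is an omni-supercuspidal type of level $p^{d+1}$, so that $\lambda:U\twoheadrightarrow\bZ/p^{d+1}\bZ$ and $(U,\chi\circ\lambda)$ is a supercuspidal type for every nontrivial character $\chi:\bZ/p^{d+1}\bZ\to\bC^\times$. By Lemma \ref{lem2 appendix} each function $f_{U,\chi\circ\lambda}$ is then a cusp form; specializing the cuspidality condition for $P$ to $g=1$ gives
\[
\int_{N(F)\cap U}\chi\bigl(\lambda(u)\bigr)\,du = 0 .
\]
By orthogonality of characters on the compact group $N(F)\cap U$ this forces $\chi\circ\lambda$ to be nontrivial on $N(F)\cap U$ for \emph{every} nontrivial $\chi$; since the kernels of the nontrivial characters of the cyclic group $\bZ/p^{d+1}\bZ$ are exactly its proper subgroups, this is equivalent to $\lambda|_{N(F)\cap U}$ being surjective onto $\bZ/p^{d+1}\bZ$.

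The contradiction will come from a feature peculiar to positive characteristic. Since $N$ is the unipotent radical of a parabolic, it is a split unipotent group, hence carries a chain $N=N_0\supset N_1\supset\cdots\supset N_d=\{1\}$ of normal $F$-subgroups with $N_i/N_{i+1}\simeq\bG_a$; as $\operatorname{char}F=p$, each $(N_i/N_{i+1})(F)\cong(F,+)$ has exponent $p$, and an immediate induction (an extension of a group of exponent dividing $p^a$ by one of exponent dividing $p^b$ has exponent dividing $p^{a+b}$) shows that $N(F)$, hence its subgroup $N(F)\cap U$, has exponent dividing $p^{d}$. But then the image of any homomorphism out of $N(F)\cap U$ has exponent at most $p^{d}$ and cannot be all of $\bZ/p^{d+1}\bZ$, contradicting the surjectivity just established. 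Hence $G$ has no omni-supercuspidal type of level $p^{d+1}$, which proves the proposition with $n=d+1$.

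I expect the only point requiring care --- and the conceptual core of the argument --- to be this bounded-exponent phenomenon: over a field of characteristic $p$ the groups $N(F)$ have uniformly bounded $p$-power exponent, whereas in characteristic zero they contain elements of arbitrarily high $p$-power order, which is exactly what makes the main-text construction succeed in characteristic zero and fail here. Everything else (Lemma \ref{lem2 appendix}, the reduction to $g=1$, character orthogonality on $N(F)\cap U$, and the split filtration of $N$) is standard and poses no real difficulty.
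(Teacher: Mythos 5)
Your proof is correct and follows essentially the same route as the paper's: both reduce via Lemma D.4 (the cusp-form criterion) to a statement about the restriction of $\chi\circ\lambda$ to $N(F)\cap U$, and both then invoke the bounded $p$-power exponent of $N(F)$ in characteristic $p$. The only cosmetic differences are that the paper works with the nilpotent index $m$ of $N$ (giving the slightly sharper bound $n=m+1$) and extracts the contradiction by exhibiting the explicit nontrivial character $\chi^{p^m}$ that restricts trivially to $N(F)\cap U$, whereas you use $d=\dim N$ and argue that the surjectivity of $\lambda|_{N(F)\cap U}$ forced by orthogonality is incompatible with the exponent bound.
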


\begin{proof}
This all come from the following lemma.

\begin{lemma}
Let $N$ be a unipotent algebraic group defined over $F$ with nilpotent index $m$, $U_N\subset N(F)$ be a compact-open subgroup and $\chi$ be a smooth character of $U_N$. Then, $\chi^{p^m}$ is trivial.
\end{lemma}

\begin{proof}
Indeed, let $N=N_0\supset N_1\supset \ldots \supset N_m=\{ 1 \}$ be a central series such that $N_i/N_{i+1}$ is abelian for $0\leqslant i\leqslant m-1$. Then, for every $i$, $N_i(F)/N_{i+1}(F)$ is an abelian group killed by $p$. Hence, by an easy induction, for every $u\in N(F)$ and $1\leqslant i\leqslant m$, we have $u^{p^i}\in N_i(F)$. Setting $i=m$ gives the result.
\end{proof}

Let $P=MN\subsetneq G$ be a proper parabolic subgroup and $m$ be the nilpotent index of $N$. We claim that $G$ has no omni-supercuspidal type of level $p^{m+1}$. Indeed, assume by way of contradiction that $(U,\lambda)$ is such an omni-supercuspidal type of level $p^{m+1}$. Let $\chi: \mathbb{Z}/p^{m+1}\mathbb{Z}\to \mathbb{C}^\times$ be a character of order $p^{m+1}$. Then, by the above lemma, $\chi^{p^m}\circ \lambda$ is trivial on $U_N:=U\cap N(F)$. In particular, we have
$$\displaystyle \int_{N(F)} f_{U,\chi^{p^m}\circ\lambda}(u)du=\vol(U_N)\neq 0$$
where $f_{U,\chi^{p^m}\circ\lambda}$ is the function associated to the pair $(U,\chi^{p^m}\circ\lambda)$ as in the previous section. It follows that the function $f_{U,\chi^{p^m}\circ\lambda}$ is not a cusp form and consequently, by Lemma \ref{lem2 appendix}, that $(U,\chi^{p^m}\circ\lambda)$ is not a supercuspidal type. As $\chi^{p^m}$ is non trivial, this contradicts the assumption that $(U,\lambda)$ was an omni-supercuspidal type.
\end{proof}
	
\begin{remark}
From the proof, we actually see that if $G$ has a proper (maximal) parabolic subgroup with an abelian unipotent radical then there is even no omni-supercuspidal type of level $p^2$. There are many examples of groups satisfying this condition, including all the quasi-split classical groups, and a classification of such (in the split case) is given in \cite[Remark 2.3]{RRS}.
\end{remark}

 \textbf{Acknowledgements}. The project leading to this publication has received funding from Excellence Initiative of Aix-Marseille University-A*MIDEX, a French ``Investissements d'Avenir" programme.

\renewcommand\refname{References for Appendix D}

\markright{\textsc{Congruences and supercuspidal representations}}
\markleft{\textsc{Jessica Fintzen and Sug Woo Shin}}

\renewcommand\refname{References}

\end{document}